\newtheorem{theorem}{Theorem}[section]
\newtheorem{proposition}[theorem]{Proposition}
\newtheorem{lemma}[theorem]{Lemma}
\newtheorem{corollary}[theorem]{Corollary}
\theoremstyle{definition}
\newtheorem{definition}[theorem]{Definition}
\newtheorem{remark}[theorem]{Remark}
\newcommand{\R}{\mathbb{R}}
\newcommand{\C}{\mathbb{C}}
\newcommand{\E}{\mathbb{E}}
\newcommand{\cE}{\mathcal{E}}
\renewcommand{\P}{\mathbb{P}}
\newcommand{\eps}{\varepsilon}
\newcommand{\I}{\mathbbm{1}}
\newcommand{\Tr}{\operatorname{Tr}}
\newcommand{\diag}{\operatorname{diag}}
\newcommand{\supp}{\operatorname{supp}}
\newcommand{\N}{\mathcal{N}}
\newcommand{\Id}{\operatorname{Id}}
\renewcommand{\Re}{\operatorname{Re}}
\renewcommand{\Im}{\operatorname{Im}}
\newcommand{\G}{\mathcal{G}}
\newcommand{\Var}{\operatorname{Var}}
\newcommand{\sign}{\operatorname{sign}}
\newcommand{\semi}{\mathrm{sc}}
\newcommand{\MP}{\mathrm{MP}}
\newcommand{\1}{\mathbf{1}}
\title{The Spectral Norm of Random Inner-Product Kernel Matrices}
\author{Zhou Fan$^1$}
\author{Andrea Montanari$^{1,2}$}
\address{$^1$Department of Statistics, Stanford University}
\address{$^2$Department of Electrical Engineering, Stanford University}
\email{zhoufan@stanford.edu, montanari@stanford.edu}
\thanks{ZF is supported by a Hertz Foundation Fellowship and an NDSEG
Fellowship (DoD, Air Force Office of Scientific Research, 32 CFR
168a).
AM is partially supported by NSF grants CCF-1319979 and DMS-1106627 and the AFOSR grant
FA9550-13-1-0036}
\begin{document}
\maketitle
\begin{abstract}
We study an ``inner-product kernel'' random matrix model, whose
empirical spectral distribution was shown by Xiuyuan Cheng and Amit Singer to
converge to a deterministic measure in the large $n$ and $p$ limit. We provide
an interpretation of this limit measure as the additive free convolution of a
semicircle law and a Marcenko-Pastur law. By comparing the tracial moments of
this random matrix to those of a deformed GUE matrix with the same limiting
spectrum, we establish that for odd kernel functions, the spectral norm of this
matrix convergences almost surely to the edge of the limiting spectrum. Our
study is motivated by the analysis of a
covariance thresholding procedure for the statistical detection and estimation
of sparse principal components, and our results characterize the
limit of the largest eigenvalue of the thresholded sample covariance matrix in
the null setting.
\end{abstract}

\section{Introduction}
Let $X \in \R^{p \times n}$ be a random matrix with independent entries of mean
0 and variance 1, and let $\hat{\Sigma}=n^{-1}XX^T$ be the sample covariance.
Define a matrix $K(X) \in \R^{p \times p}$ entrywise as
\begin{equation}\label{eq:K}
K(X)_{ii'}=\begin{cases} \frac{1}{\sqrt{n}}k(\sqrt{n}\hat{\Sigma}_{ii'}) &
i \neq i' \\ 0 & i=i' \end{cases}
\end{equation}
where $k:\R \to \R$ is a (nonlinear) ``kernel'' function. In this paper, we
study the spectral norm $\|K(X)\|$ in the asymptotic regime $n,p \to \infty$
such that $p/n \to \gamma \in (0,\infty)$, when $k$ is a fixed function
independent of $n$ and $p$.

Our study of this model is motivated by the analysis of a covariance
thresholding procedure proposed in \cite{krauthgameretal} and subsequently
analyzed in \cite{deshpandemontanari} for the sparse PCA problem in statistics.
In the simplest setting, this problem may be formulated as follows:

\subsection{Sparse PCA}\label{subsec:sparsePCA}
Consider a data matrix $X \in \R^{p \times n}$ with independent columns
distributed as $\N(0,\Sigma)$, where $\Sigma$ is a $p \times p$ covariance
matrix of the ``spiked model'' form
\begin{equation}\label{eq:spikedmodel}
\Sigma=\Id+\lambda vv^T
\end{equation}
with $\lambda>0$ a constant and $v \in \R^p$ a vector of unit Euclidean norm.
Assume further that $\|v\|_0 \ll p$ where $\|v\|_0$ denotes the number of
nonzero entries of $v$, and (for simplicity of discussion)
that each such nonzero entry equals
$\pm 1/\sqrt{\|v\|_0}$. Based on observing $X$, we would like to detect the
spike (i.e.\ distinguish this from the null model $\Sigma=\Id$)
and to recover the support of $v$ \cite{aminiwainwright,berthetrigollet1}.

As $n,p \to \infty$ with $p/n \to \gamma \in (0,\infty)$, in the
``supercritical'' regime $\lambda>\lambda^*$ where $\lambda^*:=\sqrt{\gamma}$,
the largest eigenvalue $\lambda_{\max}(\hat{\Sigma})$ separates from the bulk,
and the corresponding eigenvector $\hat{v}$ partially aligns with $v$.
Consequently, consistent spike detection and support recovery may be performed
using $\lambda_{\max}(\hat{\Sigma})$ and $\hat{v}$ \cite{krauthgameretal}.
However, in the ``subcritical'' regime $\lambda<\lambda^*$,
$|\hat{v}^Tv| \to 0$ almost surely, 
$\lambda_{\max}(\hat{\Sigma})$ cannot distinguish the null and spiked models,
and furthermore no test using only the eigenvalues of $\hat{\Sigma}$ can
distinguish the models with probability approaching one
\cite{johnstonelu2,baiketal,baiksilverstein,paul,nadler,onatskietal}.
In this regime, \cite{krauthgameretal} proposed to exploit
the sparsity of $v$ by applying a thresholding operation
$x \mapsto k_\tau(\sqrt{n}x)/\sqrt{n}$ entrywise to $\hat{\Sigma}$ to yield a
matrix $M_\tau(X)$, and then performing a spectral decomposition of
$M_\tau(X)$.
Here, $\tau>0$ is a constant and $k_\tau:\R \to \R$ is a threshold
function satisfying $k_\tau(x)/x \to 1$ as $x \to \pm \infty$ and
$k_\tau(x)=0$ for $|x| \leq \tau$, so that entries
of $\hat{\Sigma}$ of magnitude less than $\tau/\sqrt{n}$ are set to 0 while
large entries are essentially preserved. (The matrix $K(X)$ in (\ref{eq:K}) when
$k:=k_\tau$ is precisely $M_\tau(X)$ with diagonal set to 0.)

The choice of threshold level $\tau/\sqrt{n}$ is motivated by the following
consideration: For $\lambda<\lambda^*$, it is in fact conjectured that
no polynomial-time algorithm can consistently detect the spike or recover the
support of $v$ if $\|v\|_0 \gtrsim n^{1/2+\eps}$, for any $\eps>0$
\cite{berthetrigollet2,krauthgameretal}. Hence it is believed that the most
difficult setting which permits a computationally tractable solution to these
problems is when $\|v\|_0 \asymp \sqrt{n}$. In this setting,
both the non-zero off-diagonal entries of $\Sigma$ (the ``signal'')
and the fluctuations of the entries of $\hat{\Sigma}$ (the ``noise'') are
of order $1/\sqrt{n}$, so the threshold must also be of order $1/\sqrt{n}$
to preserve the signal while reducing the noise.

\begin{figure}
\includegraphics[width=0.8\textwidth]{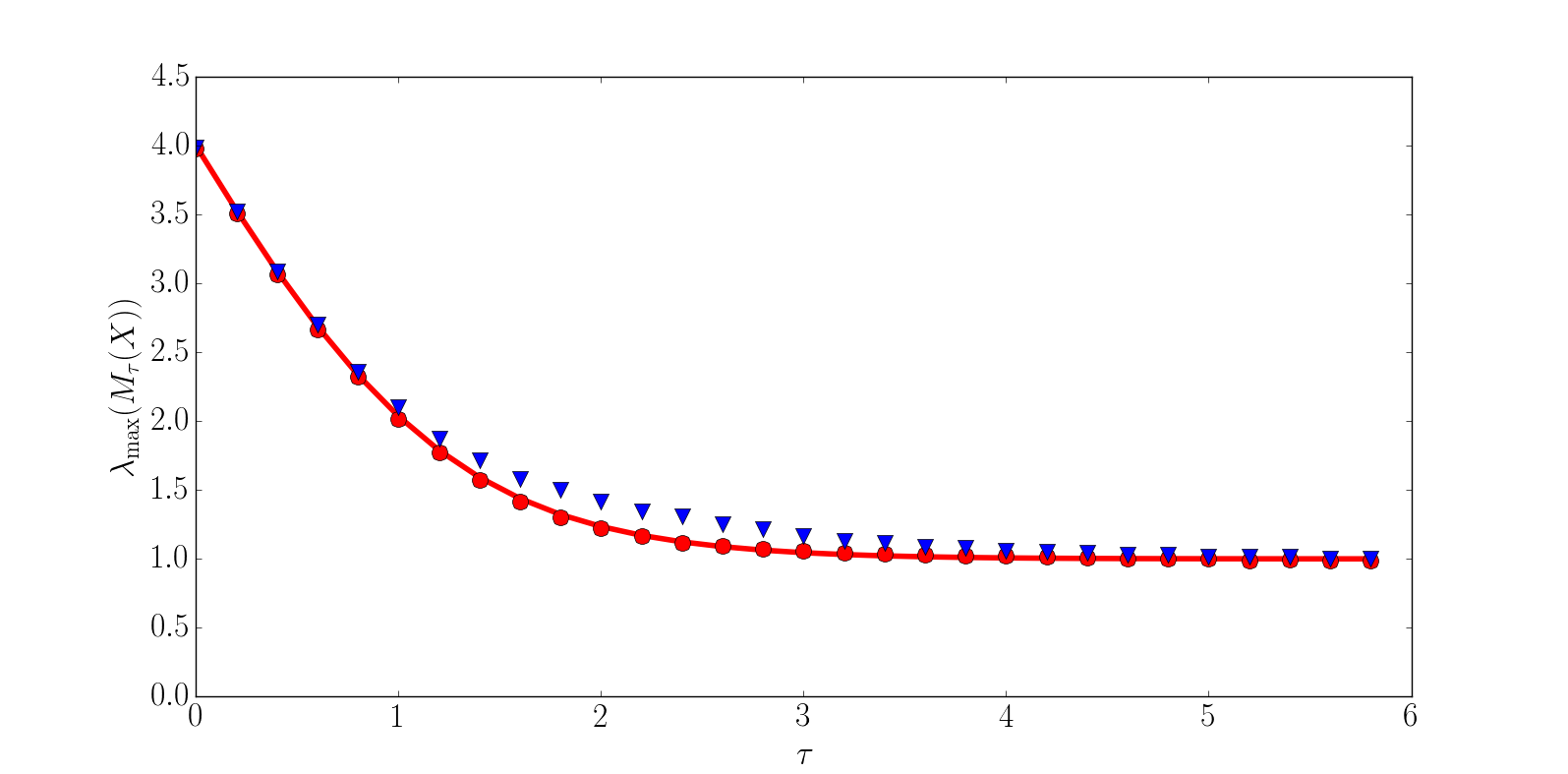}
\caption[SparsePCA Caption]
{Largest eigenvalue of the thresholded covariance matrix $M_\tau(X)$ for
a threshold function $k_\tau$ under $\Sigma=\Id$ (red circles)
and $\Sigma=\Id+\lambda vv^T$ (blue triangles), for $n=p=2000$, $\lambda=0.9$,
and $\|v\|_0=0.3\sqrt{n}$. The asymptotic prediction $\|\mu_{a,\nu,\gamma}\|+1$
is shown as the red curve, where $a:=\E[\xi k_\tau(\xi)]$,
$\nu:=\E[k_\tau(\xi)^2]$, and $\gamma:=p/n=1$.
The threshold function $k_\tau$ here is a smoothed
soft-threhold, defined by $k_\tau(x)=0$ for $|x| \leq 0.8\tau$,
$k_\tau(x)=\sign(x)(|x|-\tau)_+$ for $|x| \geq 1.2\tau$, and quadratic
interpolation in between.\footnotemark}\label{fig:sparsePCA}
\end{figure}

In \cite{deshpandemontanari}, it was shown that for any $\lambda>0$,
spike detection and support recovery based on $\lambda_{\max}(M_\tau(X))$ and
the corresponding eigenvector can
succeed with probability approaching 1 when $\|v\|_0 \leq c\sqrt{n}$, for
some constants $c:=c(\lambda)>0$ and $\tau:=\tau(c,\lambda)>0$. This phenomenon
is illustrated in Figure \ref{fig:sparsePCA},\footnotetext{The proof of our
main result requires a technical condition that $k(x)$ is continuously
differentiable. Oftentimes threshold functions used in practice are not smooth
in this sense,
but the same qualitative phenomena regarding detection and support recovery
should hold for both smooth and non-smooth thresholds.}
which shows that for a range of
thresholds $\tau$, there is a difference between the values of
$\lambda_{\max}(M_\tau(X))$ under the null model $\Sigma=\Id$ and under a
spiked alternative with $\lambda<\lambda^*$ and sparsity
$\|v\|_0 \asymp \sqrt{n}$. The main result of this paper strengthens the
non-asymptotic analysis in \cite{deshpandemontanari} under the null model
$\Sigma=\Id$ to establish an exact asymptotic value for
$\lambda_{\max}(M_\tau(X))$ in terms of $k_\tau$. Procedurally, this indicates
the point above which this method should reject the null
model in favor of a spiked alternative. We are not aware of a similar analytic
characterization of the value of $\lambda_{\max}(M_\tau(X))$ under the
alternative model; such a characterization may yield insight on the
exact critical sparsity level $c^*(\lambda)$ and optimal choices of $\tau$ and
$k_\tau$ for spike detection using this method to succeed.

In the null model of this example, since all diagonal entries of
$\hat{\Sigma}$ concentrate around 1 and thresholding essentially preserves the
diagonal, the thresholded sample covariance
$M_\tau(X)$ satisfies $\|M_{\tau}(X)-(K(X)+\Id)\| \to 0$, where $K(X)$ is as in
(\ref{eq:K}) for $k:=k_\tau$. Hence the largest eigenvalue limit of $M_\tau(X)$
is simply that of $K(X)$ translated by 1. For odd and increasing threshold
functions, the condition of Corollary \ref{cor:largesteig} below is satisfied,
so the largest eigenvalue of $K(X)$ equals its spectral norm.

\subsection{Properties of the limit measure}
For the model (\ref{eq:K}), the weak limit of the empirical spectral measure
$p^{-1}\sum_i \delta_{\lambda_i(K(X))}$ of $K(X)$ was characterized by
Cheng and Singer \cite[Theorem 3.4 and Remark 3.2]{chengsinger}. We restate this
result in the following form:
\begin{theorem}[Cheng, Singer]\label{thm:ESD}
Let $X \in \R^{p \times n}$ have entries $x_{ij} \overset{IID}{\sim} \N(0,1)$.
For $\xi \sim \N(0,1)$, suppose
$\E[k(\xi)]=0$, $\E[k(\xi)^2]<\infty$, and
$\int k(x)^2 |q_n(x)-q(x)|dx \to 0$ as $n \to \infty$
where $q$ and $q_n$ are the density
functions of the laws of $\xi$ and $\sqrt{n}\hat{\Sigma}_{12}$.
Then, denoting $a:=\E[\xi k(\xi)]$ and $\nu:=\E[k(\xi)^2]$,
as $n,p \to \infty$ with $p/n \to \gamma \in
(0,\infty)$,
\[\frac{1}{p}\sum_{i=1}^p \delta_{\lambda_i(K(X))} \Rightarrow
\mu_{a,\nu,\gamma}\]
weakly almost surely, where $\mu_{a,\nu,\gamma}$ is a deterministic measure
whose Stieltjes transform $m:\C^+ \to \C^+$ is the unique solution (in $\C^+$,
for any $z \in \C^+$) to the equation
\begin{equation}\label{eq:stieltjes}
-\frac{1}{m(z)}=z+a\left(1-\frac{1}{1+a\gamma
m(z)}\right)+\gamma(\nu-a^2)m(z).
\end{equation}
\end{theorem}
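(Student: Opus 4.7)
The plan is to identify $\mu_{a,\nu,\gamma}$ as the additive free convolution of a dilated-shifted Marchenko--Pastur law and a semicircle law, via a Hermite decomposition of $k$. For $\xi \sim \N(0,1)$, write $k(\xi) = a\xi + g(\xi)$ with $a = \E[\xi k(\xi)]$; Hermite orthogonality then gives $\E[\xi g(\xi)] = 0$ and $\E[g(\xi)^2] = \nu - a^2$. This induces the matrix splitting
\[ K(X) = a\, K_1(X) + K_g(X), \]
where $K_1(X)_{ii'} = \hat{\Sigma}_{ii'}$ and $K_g(X)_{ii'} = g(\sqrt{n}\hat{\Sigma}_{ii'})/\sqrt{n}$ off the diagonal (with both diagonals zero). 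The argument then reduces to three claims: (i) $aK_1(X)$ has the pushforward-of-Marchenko--Pastur limiting ESD; (ii) $K_g(X)$ has the semicircle limiting ESD with variance $\gamma(\nu - a^2)$; and (iii) the two summands are asymptotically free.

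For (i), note that $aK_1(X) = a\hat{\Sigma} - a\diag(\hat{\Sigma})$, and since $\E[\hat{\Sigma}_{ii}] = 1$ with $\Var(\hat{\Sigma}_{ii}) = O(1/n)$, the difference $a\diag(\hat{\Sigma}) - a\Id$ has Hilbert--Schmidt norm $O(1) = o(\sqrt{p})$ almost surely. Consequently the ESD of $aK_1(X)$ has the same limit as that of $a\hat{\Sigma} - a\Id$, which by the classical Marchenko--Pastur theorem and a change of variables has Stieltjes transform $m_1$ satisfying $-1/m_1(z) = z + a - a/(1 + a\gamma m_1(z))$; this matches the first two terms on the right-hand side of (\ref{eq:stieltjes}).

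For (ii), I would run the method of moments. Each entry $K_g(X)_{ii'}$ has mean zero and variance $(\nu - a^2)/n + o(1/n)$, and entries with disjoint index sets are approximately uncorrelated. Expanding $g = \sum_{d \geq 2} c_d H_d/\sqrt{d!}$ and applying Wick's theorem to the Gaussian entries of $X$, the trace $p^{-1}\E\,\Tr K_g(X)^s$ becomes a sum over closed walks $i_1 \to \cdots \to i_s \to i_1$ on $[p]$ carrying Hermite-degree labels $d_t \geq 2$. The key combinatorial input is that, because $g$ has no $H_1$ component, every edge of the walk must be traversed at least twice; standard Wigner-type counting then shows that only configurations in which each edge is used exactly twice with matching Hermite degree survive at leading order. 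These contribute $\mathrm{Cat}(s/2)\cdot \gamma^{s/2}(\nu - a^2)^{s/2}$ for even $s$ and $0$ for odd $s$, which are the semicircle moments corresponding to $-1/m_2(z) = z + \gamma(\nu - a^2) m_2(z)$, matching the last term of (\ref{eq:stieltjes}).

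The substantive step, and the main obstacle, is claim (iii). Since both $aK_1(X)$ and $K_g(X)$ are deterministic functions of the same Gaussian $X$, classical asymptotic-freeness theorems for independent or unitarily invariant ensembles (Voiculescu) do not apply directly. I would prove freeness via a joint moment/Wick expansion of $p^{-1}\E\,\Tr[a K_1(X) + K_g(X)]^s$, labeling each factor along the Hermite basis so that every step of the walk carries either a \emph{linear} ($H_1$, contributing to $K_1$) or a \emph{nonlinear} ($H_d$ with $d \geq 2$, contributing to $K_g$) label. The orthogonality $\E[H_d(\xi) H_{d'}(\xi)] = 0$ for $d \neq d'$ suppresses every Wick pairing that mixes the two types on a single edge at leading order, and the surviving diagrams decompose into non-crossing alternating blocks of linear and nonlinear parts---precisely the combinatorial hallmark of free independence. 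Computing the additive free convolution of the two marginal limits (by summing the $R$-transforms and rearranging) then reproduces (\ref{eq:stieltjes}). Almost sure convergence, rather than just convergence of expected moments, follows from a Gaussian Poincar\'e variance bound on $p^{-1}\Tr(K(X) - z\Id)^{-1}$ combined with Borel--Cantelli.
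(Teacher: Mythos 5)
The paper does not prove Theorem \ref{thm:ESD} at all: it is quoted from Cheng and Singer \cite{chengsinger}, whose argument is a resolvent/Stieltjes-transform analysis via Gaussian conditioning (together with the same Hermite decomposition of $k$ you use). Your proposal therefore takes a genuinely different route --- the moment method plus asymptotic freeness of the linear and nonlinear Hermite components --- and it is exactly the alternative the authors gesture at in the ``Further related literature'' discussion, where they assert (without giving details) that their combinatorial estimates, specialized to fixed moment order $l$, would yield an alternative proof of Theorem \ref{thm:ESD} together with the freeness underlying (\ref{eq:mudecomposition}). Your steps (i) and (ii) are sound: the diagonal replacement plus a Hoffman--Wielandt-type perturbation bound for $aK_1$, and the ``every edge traversed at least twice, double edges dominate'' counting for $K_g$, are standard and mirror the combinatorics the paper carries out in harder form (for $l\asymp\log n$) in Section \ref{sec:Q} and Appendix \ref{appendixcombinatorics}.

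Two things keep this from being a complete proof. First, step (iii) is asserted rather than proved. To establish asymptotic freeness you must show that \emph{all} alternating mixed moments of the centered families vanish in the limit; the claim that Hermite orthogonality ``suppresses every Wick pairing that mixes the two types on a single edge'' has to be converted into a quantitative count over walk/pairing configurations, and this is where essentially all the work lies. Note in particular that $aK_1(X)$ and $K_g(X)$ are built from the same rows $X_i$, so entries of the two matrices on a common edge are dependent (merely uncorrelated), and vanishing of mixed moments does not follow from pairwise orthogonality alone --- one needs the full multi-labeling combinatorics (the analogue of Lemmas \ref{lemmadistinctmultilabels}--\ref{lemmatripleijpairs} at fixed $l$). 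Second, the theorem assumes only $\E[k(\xi)^2]<\infty$, whereas the moment method requires a finite Hermite truncation $k^{(D)}$; you need a truncation step, e.g.\ controlling $p^{-1}\E\|K(X)-K^{(D)}(X)\|_F^2 \lesssim \E[(k-k^{(D)})(\xi)^2]$ (this is where the hypothesis $\int k^2|q_n-q|\to 0$ enters), to pass from polynomial kernels to general $L^2$ kernels. Neither gap is a wrong turn --- the strategy is viable and is the one the authors themselves endorse --- but as written the argument proves the theorem only modulo the freeness computation and the truncation.
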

\noindent This result was generalized by Do and Vu to the setting
of non-Gaussian entries $x_{ij}$ in \cite{dovu}.

Before stating our main results,
let us discuss some basic properties of this limit measure:
For a linear kernel function $k(x)=ax$, $\mu_{a,\nu,\gamma}$
is a translation and rescaling of the Marcenko-Pastur law.
Interestingly, it was observed in \cite{chengsinger} that for kernel functions
for which $a=0$, $\mu_{a,\nu,\gamma}$ is a Wigner semicircle law. In fact, the
measure $\mu_{a,\nu,\gamma}$ in general is the additive free convolution
(in the sense of Voiculescu \cite{voiculescuconvolution}) of these two laws.
\begin{proposition}\label{prop:freeconvolution}
Let $\mu_{\semi}$ be the semicircle law supported on $[-2,2]$ and let
$\sqrt{\gamma(\nu-a^2)}\mu_{\semi}$ denote the law of $\sqrt{\gamma(\nu-a^2)}y$
for $y \sim \mu_\semi$. Let $\mu_{\MP,\gamma}$ be the standard
Marcenko-Pastur law that is the limiting spectral measure of $n^{-1}XX^T$ when
$X \in \R^{p \times n}$ and $p/n \to \gamma$, and let
$a(\mu_{\MP,\gamma}-1)$ denote the law of $a(y-1)$ for
$y \sim \mu_{\MP,\gamma}$. Then
\[\mu_{a,\nu,\gamma}=a(\mu_{\MP,\gamma}-1) \boxplus \sqrt{\gamma(\nu-a^2)}
\mu_{\semi}.\]
\end{proposition}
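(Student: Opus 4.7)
The natural strategy is to use the Voiculescu $R$-transform, which linearizes free additive convolution. Writing $G(z)=\int(z-x)^{-1}\,d\mu(x)$ for the Cauchy transform of $\mu$, $K=G^{-1}$ for its compositional inverse near $\infty$, and $R(w)=K(w)-1/w$, we have $R_{\mu_1\boxplus\mu_2}=R_{\mu_1}+R_{\mu_2}$. The plan is to compute the $R$-transforms of the two summands on the right-hand side, add them, and verify that the resulting functional equation for the Stieltjes transform $m=-G$ coincides with (\ref{eq:stieltjes}). Uniqueness of the Stieltjes transform of a compactly supported probability measure, together with the uniqueness clause in Theorem \ref{thm:ESD}, then forces the two measures to agree.

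The standard semicircle $\mu_\semi$ on $[-2,2]$ has $R(w)=w$, and a dilation $\mu\mapsto c\mu$ transforms $R_\mu(w)\mapsto c\,R_\mu(cw)$. Hence
\[R_{\sqrt{\gamma(\nu-a^2)}\,\mu_\semi}(w)=\gamma(\nu-a^2)\,w,\]
where $\nu-a^2\geq 0$ by Cauchy--Schwarz applied to $\E[\xi k(\xi)]$. The Marcenko--Pastur law $\mu_{\MP,\gamma}$ has the well-known $R$-transform $R_{\mu_{\MP,\gamma}}(w)=1/(1-\gamma w)$. Since translation by $-c$ subtracts $c$ from $R$ and dilation by $a$ sends $R(w)\mapsto a\,R(aw)$,
\[R_{a(\mu_{\MP,\gamma}-1)}(w)=a\left(\frac{1}{1-a\gamma w}-1\right)=\frac{a^2\gamma w}{1-a\gamma w}.\]

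Summing the two $R$-transforms and using $K(w)=R(w)+1/w$ yields
\[K(w)=\frac{1}{w}+\gamma(\nu-a^2)\,w+\frac{a^2\gamma w}{1-a\gamma w}.\]
Setting $w=G(z)$ in $z=K(w)$ and substituting $w=-m(z)$ to pass to the paper's sign convention produces
\[z=-\frac{1}{m}-\gamma(\nu-a^2)\,m-\frac{a^2\gamma m}{1+a\gamma m},\]
which rearranges, via the identity $\frac{a^2\gamma m}{1+a\gamma m}=a\bigl(1-\frac{1}{1+a\gamma m}\bigr)$, to exactly (\ref{eq:stieltjes}). The only point requiring care is the bookkeeping of the sign convention $m=-G$ together with the shift and scale rules for the $R$-transform; no serious technical obstacle arises, since both summands are compactly supported and all transforms are well defined on appropriate neighborhoods.
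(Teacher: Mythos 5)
Your proposal is correct and follows essentially the same route as the paper: read off the $\mathcal{R}$-transform from (\ref{eq:stieltjes}), identify its two summands as the $\mathcal{R}$-transforms of $a(\mu_{\MP,\gamma}-1)$ and $\sqrt{\gamma(\nu-a^2)}\mu_{\semi}$, and invoke additivity under $\boxplus$. The only difference is that you carry out explicitly the dilation/translation bookkeeping and the passage between $G$ and $m=-G$ that the paper dismisses as ``easily verified.''
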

\begin{proof}
By (\ref{eq:stieltjes}), the measure $\mu_{a,\nu,\gamma}$ has
$\mathcal{R}$-transform
\begin{equation}\label{eq:Rtransform}
\mathcal{R}(z)=-a\left(1-\frac{1}{1-a\gamma z}\right)+\gamma(\nu-a^2)z.
\end{equation}
It is easily verified that $-a(1-1/(1-a\gamma z))$ is the
$\mathcal{R}$-transform of $a(\mu_{\MP,\gamma}-1)$ and $\gamma(\nu-a^2)z$
is the $\mathcal{R}$-transform of $\sqrt{\gamma(\nu-a^2)}\mu_{\semi}$, and the
result follows from additivity of $\mathcal{R}$-transforms under additive free
convolution \cite{voiculescuconvolution}.
\end{proof}

Recalling that the additive free convolution of semicircle laws is itself a
semicircle law, Proposition \ref{prop:freeconvolution} implies the following
further decomposition of $\mu_{a,\nu,\gamma}$: Let
$\{h_d\}_{d=0}^\infty$ denote any orthonormal basis of functions $f:\R \to \R$
with respect to
the inner product $\langle f,g \rangle_\xi:=\E[f(\xi)g(\xi)]$ when $\xi \sim
\N(0,1)$, where $h_0(x)=1$ and $h_1(x)=x$. Consider the corresponding
orthogonal decomposition of the kernel function
\[k(x)=\sum_{d=1}^\infty a_dh_d(x)\]
(where $a_0=0$ because $\E[k(\xi)]=0$), and the decomposition
\[K(X)=\sum_{d=1}^\infty K_d(X)\]
where $K_d(X)$ is the matrix (\ref{eq:K}) with kernel function $a_dh_d(x)$.
Letting $\mu_d$ denote the limiting spectral measure of $K_d(X)$, which is
$a(\mu_{\MP,\gamma}-1)$ for $d=1$ and $|a_d|\gamma^{1/2}\mu_\semi$ for $d \geq
2$, the limiting spectral measure of $K(X)$ is given by
\begin{equation}\label{eq:mudecomposition}
\mu_{a,\nu,\gamma}=\mu_1 \boxplus \mu_2 \boxplus \mu_3 \boxplus \ldots.
\end{equation}
In the proof of our main result, we will apply such a decomposition of the
kernel matrix when each $h_d$ is the degree-$d$ Hermite polynomial.

Proposition \ref{prop:freeconvolution} implies, via the general
analysis of \cite{biane}, that $\mu_{a,\nu,\gamma}$ is compactly supported,
has one interval of support when $\gamma \leq 1$ and at most two intervals of
support when $\gamma>1$, and (except for the singularity at 0 in the
Marcenko-Pastur case $\nu=a^2$ and
$\gamma>1$) admits a density on all of $\R$ that is analytic in the interior
of the support. The following may also be deduced from the
$\mathcal{R}$-transform:
\begin{proposition}\label{prop:largesteig}
Let $\supp(\mu_{a,\nu,\gamma})$ denote the support of $\mu_{a,\nu,\gamma}$.
If $a \geq 0$, then
\[\max\{x:x \in \supp(\mu_{a,\nu,\gamma})\}
\geq -\min\{x:x \in \supp(\mu_{a,\nu,\gamma})\},\]
and if $a \leq 0$, then
\[\max\{x:x \in \supp(\mu_{a,\nu,\gamma})\}
\leq -\min\{x:x \in \supp(\mu_{a,\nu,\gamma})\}.\]
\end{proposition}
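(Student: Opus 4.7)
The plan is to reduce to the case $a>0$ via a reflection symmetry, and then establish the strict inequality $\max\supp\mu_{a,\nu,\gamma} > -\min\supp\mu_{a,\nu,\gamma}$ by comparing the $\mathcal{R}$-transforms of $\mu_{a,\nu,\gamma}$ and of its reflection through the origin. The case $a=0$ is immediate, since (\ref{eq:Rtransform}) gives $\mathcal{R}(z)=\gamma\nu z$, so $\mu_{0,\nu,\gamma}=\sqrt{\gamma\nu}\,\mu_{\semi}$ is a centered semicircle and both sides coincide. For $a<0$, I would first check from (\ref{eq:Rtransform}) that $\mathcal{R}_{-a,\nu,\gamma}(z)=-\mathcal{R}_{a,\nu,\gamma}(-z)$, which is the $\mathcal{R}$-transform of the reflection of $\mu_{a,\nu,\gamma}$; so $\mu_{-a,\nu,\gamma}$ is that reflection, and the case $a<0$ reduces to the case of positive parameter applied to $\mu_{-a,\nu,\gamma}$.

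Now suppose $a>0$, and write $K(y):=\mathcal{R}_{a,\nu,\gamma}(y)+1/y$ and $\tilde K(y):=\mathcal{R}_{-a,\nu,\gamma}(y)+1/y$, these being the functional inverses of the Cauchy transforms of $\mu_{a,\nu,\gamma}$ and $\mu_{-a,\nu,\gamma}$. A direct algebraic computation from (\ref{eq:Rtransform}) yields the key identity
\[K(y)-\tilde K(y)=\frac{2a^3\gamma^2 y^2}{1-a^2\gamma^2 y^2},\]
which is strictly positive on the real interval $(0,1/(a\gamma))$. On this same interval,
\[K'(y)=\frac{a^2\gamma}{(1-a\gamma y)^2}+\gamma(\nu-a^2)-\frac{1}{y^2}\]
is strictly increasing in $y$ (both $(1-a\gamma y)^{-2}$ and $-1/y^2$ are strictly increasing there) and runs from $-\infty$ to $+\infty$. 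Hence $K$ has a unique critical point $y^*\in(0,1/(a\gamma))$, at which $K$ achieves its minimum on that interval. By the square-root branch-point behavior of the Cauchy transform at a soft edge, this minimum value equals the right edge $E_+:=\max\supp\mu_{a,\nu,\gamma}$. Applying the analogous analysis to $\tilde K$, whose singularity is at $y=-1/(a\gamma)<0$ (so $\tilde K$ is smooth on all of $(0,\infty)$), gives $E_+(\mu_{-a,\nu,\gamma})=\min_{(0,\infty)}\tilde K$.

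Combining these with the pointwise strict inequality $K>\tilde K$ on $(0,1/(a\gamma))$,
\[E_+(\mu_{a,\nu,\gamma})=\min_{(0,1/(a\gamma))}K\;>\;\min_{(0,1/(a\gamma))}\tilde K\;\geq\;\min_{(0,\infty)}\tilde K=E_+(\mu_{-a,\nu,\gamma})=-\min\supp\mu_{a,\nu,\gamma},\]
which is the desired (in fact strict) inequality. The main technical point is the identification $E_+=\min_{(0,1/(a\gamma))}K$, and in particular that it remains valid in the multi-component regime possible when $\gamma>1$: there $K$ can possess additional critical points corresponding to interior edges of $\supp\mu_{a,\nu,\gamma}$, but the monotonicity of $K'$ on $(0,1/(a\gamma))$ shows that those extra critical points necessarily lie in $(1/(a\gamma),\infty)$, so the unique critical point of $K$ in $(0,1/(a\gamma))$ is always the one associated to the outermost right edge.
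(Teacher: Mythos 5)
Your route is genuinely different from the paper's, which disposes of the proposition in a few lines of free probability: expanding (\ref{eq:Rtransform}) around $z=0$ gives free cumulants $\kappa_1=0$, $\kappa_2=\gamma\nu$, $\kappa_l=a^l\gamma^{l-1}$ for $l\geq 3$, all nonnegative when $a\geq 0$; the moment--cumulant formula over non-crossing partitions then makes every moment of $\mu_{a,\nu,\gamma}$ nonnegative, whereas $\max\supp(\mu)<-\min\supp(\mu)$ would force the $l^{\text{th}}$ moment to be negative for large odd $l$. Within your analytic approach, the reflection identity $\mathcal{R}_{-a,\nu,\gamma}(z)=-\mathcal{R}_{a,\nu,\gamma}(-z)$, the computation $K(y)-\tilde K(y)=2a^3\gamma^2y^2/(1-a^2\gamma^2y^2)>0$ on $(0,1/(a\gamma))$, and the strict monotonicity of $K'$ there are all correct, and the strict conclusion for $a>0$ is a genuine bonus over the paper's statement.

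There is, however, a real gap at the step $\min_{(0,\infty)}\tilde K=E_+(\mu_{-a,\nu,\gamma})$, which is exactly the load-bearing inequality $\tilde K(y)\geq E_+(\mu_{-a,\nu,\gamma})$ on $(0,1/(a\gamma))$. The mechanism you invoke for $K$ --- $K'$ strictly increasing on $(0,1/(a\gamma))$, so any critical points attached to interior edges are pushed past the pole at $1/(a\gamma)$ --- does not transfer to $\tilde K$: its pole sits at $-1/(a\gamma)<0$, so $\tilde K'$ is not monotone on $(0,\infty)$, and in the two-component regime $\gamma>1$ \emph{all} of the positive critical points of $\tilde K$, including those attached to interior edges of $\supp(\mu_{-a,\nu,\gamma})$, lie in $(0,\infty)$, precisely where you take the minimum. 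On $(0,\tilde y^*)$ with $\tilde y^*=G_{\mu_{-a,\nu,\gamma}}(E_+^+)$ one does have $\tilde K=G^{-1}>E_+$, but beyond $\tilde y^*$ you must rule out $\tilde K$ turning over at a further critical point and dipping below $E_+$; that requires the full edge characterization (every real critical value of $\tilde K$ is an edge of the support, so a local maximum exceeding the top edge is impossible), which you assert rather than prove. The companion identification $E_+=\min_{(0,1/(a\gamma))}K$ also leans on the right edge being a soft square-root edge so that $K'$ vanishes at $G(E_+^+)$; this fails at the atom in the degenerate case $\nu=a^2$, $\gamma>1$, where $G(E_+^+)=\infty$ and the picture must be argued separately (though only the inequality $E_+\geq\min K$ is actually needed on that side, and that part is automatic). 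These facts can be extracted from Biane's analysis, which the paper cites for the structure of $\supp(\mu_{a,\nu,\gamma})$, but as written your proof is incomplete at its key step; the cumulant-positivity argument sidesteps all of this.
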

\begin{proof}
Replacing $k(x)$ by $-k(x)$, it suffices to consider $a \geq 0$. The
$\mathcal{R}$-transform (\ref{eq:Rtransform}) admits the series expansion
\[\mathcal{R}(z)=\gamma \nu z+\sum_{l \geq 2} a^{l+1}\gamma^l z^l\]
around $z=0$, implying that the free cumulants of $\mu_{a,\nu,\gamma}$ are
given by $\kappa_1=0$, $\kappa_2=\gamma \nu$, and $\kappa_l=a^l\gamma^{l-1}$
for $l \geq 3$ \cite{speicher}. The moments of $\mu_{a,\nu,\gamma}$ are then
\[\int x^l \mu_{a,\nu,\gamma}(dx)=\sum_{\pi \in \mathrm{NC}_l}
\prod_{S \in \pi} \kappa_{|S|},\]
where $\mathrm{NC}_l$ denotes the set of all non-crossing partitions of
$\{1,\ldots,l\}$ \cite{speicher}. In particular, when $a \geq 0$, all moments of
$\mu_{a,\nu,\gamma}$ are non-negative, whereas if
$\max\{x:x \in \supp(\mu_{a,\nu,\gamma})\}<
-\min\{x:x \in \supp(\mu_{a,\nu,\gamma})\}$,
then the $l^\text{th}$
moment must be negative for a sufficiently large odd integer $l$.
\end{proof}

The support of $\mu_{a,\nu,\gamma}$ is easily numerically computed, as
(\ref{eq:stieltjes}) is a cubic equation in $m(z)$,
and $\supp(\mu_{a,\nu,\gamma})$ is
the set of $z \in \R$ for which this cubic equation has an imaginary root.
The explicit form for the density function of $\mu_{a,\nu,\gamma}$ was provided
in \cite[Appendix A]{chengsinger}.

\subsection{Main results}
Denoting $\|\mu_{a,\nu,\gamma}\|=\max\{|x|:x \in \supp(\mu_{a,\nu,\gamma})\}$,
the following is the main result of this paper:
\begin{theorem}\label{thm:main}
Suppose $k:\R \to \R$ is odd (i.e.\ $k(-x)=-k(x)$) and continuously
differentiable, with $|k'(x)| \leq Ae^{\beta |x|}$ for some constants 
$A,\beta>0$ and all $x \in \R$. Let $X \in \R^{p \times n}$ have entries
$x_{ij} \overset{IID}{\sim} \N(0,1)$. Then with $\mu_{a,\nu,\gamma}$ as defined
in Theorem \ref{thm:ESD}, almost surely as $n,p \to \infty$
with $p/n \to \gamma \in (0,\infty)$,
\[\|K(X)\| \to \|\mu_{a,\nu,\gamma}\|.\]
\end{theorem}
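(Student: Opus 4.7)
The inequality $\liminf_{n,p\to\infty}\|K(X)\|\ge \|\mu_{a,\nu,\gamma}\|$ is immediate from Theorem \ref{thm:ESD}, since weak convergence forces eigenvalues to accumulate near every point of $\supp(\mu_{a,\nu,\gamma})$. The real work is the matching upper bound, which I would obtain by the method of high tracial moments: choose $\ell=\ell(n)\to\infty$ slowly (say $\ell\asymp\log n$), apply Markov to $\|K(X)\|^{2\ell}\le \Tr K(X)^{2\ell}$, and upgrade convergence in probability to almost-sure convergence via Borel--Cantelli, once one pins down the leading behavior of $\E\,\Tr K(X)^{2\ell}$.

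\textbf{Hermite decomposition and a deformed-GUE surrogate.} To compute these moments I would expand the kernel in Hermite polynomials, $k=\sum_{d\ge 1}a_d h_d$ (only odd $d$ contribute since $k$ is odd), and write $K(X)=\sum_d K_d(X)$ as in (\ref{eq:mudecomposition}). The $d=1$ piece is a rescaled, recentered Wishart converging to $a(\mu_{\MP,\gamma}-1)$, while each $K_d(X)$ with $d\ge 3$ converges in spectrum to $|a_d|\sqrt{\gamma}\,\mu_{\semi}$. The Hilbert--Schmidt tail $\E\,\Tr(K-K^{\le D})^{2}/p=\gamma\sum_{d>D}a_d^2$ vanishes as $D\to\infty$ (by $\nu<\infty$), so it is enough to handle $K^{\le D}(X):=\sum_{d=1}^{D}K_d(X)$ for each fixed $D$. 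Against $K^{\le D}$ I introduce the surrogate
\[
M^{\le D}\;:=\;K_1(X)+\sum_{\substack{d=3\\ d\text{ odd}}}^{D}W_d,
\]
where the $W_d$ are independent $p\times p$ GUE matrices (independent of $X$) with entrywise variances tuned so that $W_d$ has limit spectrum $|a_d|\sqrt{\gamma}\,\mu_{\semi}$. By Proposition \ref{prop:freeconvolution} and asymptotic freeness of a Wishart with independent GUEs, $M^{\le D}$ has the same limit spectral measure $\mu^{\le D}_{a,\nu,\gamma}$ as $K^{\le D}$, and standard edge results (Bai--Yin for the Wishart part, the Wigner edge together with Gaussian concentration for each $W_d$) give $\|M^{\le D}\|\to\|\mu^{\le D}_{a,\nu,\gamma}\|$ almost surely.

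\textbf{Moment comparison and main obstacle.} The core step is the estimate
\[
\bigl|\E\,\Tr(K^{\le D}(X))^{2\ell}-\E\,\Tr(M^{\le D})^{2\ell}\bigr|\;\le\;\eps^{2\ell}\,p
\]
for every $\eps>0$, all sufficiently large $n$, and $\ell$ growing as $\log n$. Expanding both traces entrywise and inserting the Hermite expansion of each factor, $\E\,\Tr(K^{\le D}(X))^{2\ell}$ becomes a sum over indices $i_1,\dots,i_{2\ell}\in[p]$ and degrees $d_1,\dots,d_{2\ell}\in[D]$ of expectations $\prod_t h_{d_t}(\sqrt{n}\,\hat{\Sigma}_{i_t i_{t+1}})$. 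These expectations are evaluated by Wick's theorem applied to the \emph{underlying} Gaussian entries $x_{ij}$ (the $K$-entries themselves being dependent), producing a diagrammatic sum: planar/non-crossing contractions reproduce exactly the free-convolution moments of $\mu^{\le D}_{a,\nu,\gamma}$ and are matched term-by-term by the analogous Wick expansion of $M^{\le D}$, while crossing or degenerate diagrams are of lower order in $n$. Oddness of $k$ enters essentially here: with only odd Hermite degrees, self-contractions at a single vertex (``tadpoles'') vanish, which would otherwise generate an uncancellable mean shift since the diagonal of $K(X)$ has been zeroed out. The main obstacle is to prove per-diagram estimates of the form $n^{-\Omega(g)}$ for a genus-type excess parameter $g$ and to couple them with sharp enumeration to absorb the combinatorial growth of the moment expansion at $\ell\asymp\log n$, uniformly in the degree assignments $d_1,\dots,d_{2\ell}$ (requiring the exponential growth bound on $k'$ to control how many terms in the Hermite expansion can contribute effectively). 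Sending first $n\to\infty$ and then $D\to\infty$, and using continuity of the edge of $\mu^{\le D}_{a,\nu,\gamma}$ in the truncation parameter, completes the proof.
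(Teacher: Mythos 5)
Your outline of the polynomial/truncated part --- high moments with $\ell\asymp\log n$, Hermite expansion, and comparison of $\E\,\Tr(K^{\le D})^{2\ell}$ with a deformed-GUE-type surrogate controlled by a diagrammatic excess parameter --- is essentially the strategy the paper uses to prove Theorem \ref{thm:polynomial} (there the surrogate is a single GUE plus a Wishart, and the ``per-diagram estimates coupled with sharp enumeration'' that you flag as the main obstacle are the content of Lemma \ref{lemmatraceQl}, Proposition \ref{proplabelmapping}, and Appendix \ref{appendixcombinatorics}). The genuine gap is in your reduction to fixed $D$. You discard the Hermite tail $K-K^{\le D}$ on the grounds that $\E\,\Tr(K-K^{\le D})^2/p=\gamma\sum_{d>D}a_d^2$ is small, but that is a bound on the normalized Frobenius norm, i.e.\ on the second moment of the empirical spectral distribution of the tail matrix; it only says that \emph{most} eigenvalues of the tail are small. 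What you need is the \emph{operator} norm $\|K-K^{\le D}\|$, and $\|A\|\le\|A\|_F$ only yields $\|K-K^{\le D}\|\lesssim\sqrt{p}\,\bigl(\sum_{d>D}a_d^2\bigr)^{1/2}$, which diverges for every fixed $D$. Nothing soft rules out the tail contributing an $O(1)$ outlier eigenvalue; Figure \ref{fig:spikes} shows that a single Hermite component can do exactly that when $a_2\neq 0$, so some structural input is genuinely required.

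Closing this gap is the main technical contribution of the paper: Theorem \ref{thm:concentration} gives an $O(\max(p/n,\sqrt{p/n}))$ operator-norm bound for the kernel matrix of a remainder kernel $r$, proved via a Latala-style covering net, Maurey--Pisier interpolation, and a symmetrization step that uses oddness of $r$. Note moreover that this bound requires a \emph{pointwise} estimate $|r'(x)|\le\eps e^{\beta|x|}$, which an $L^2(\N(0,1))$-small Hermite tail need not satisfy; accordingly the paper does not truncate the Hermite series at all, but approximates $k'$ uniformly in the weighted sup norm $e^{\beta|x|}$ by an even polynomial (Carleson's theorem, Theorem \ref{theorempolyapprox}) and sets $r=k-q$ with $q$ the odd polynomial antiderivative, so that the remainder is both odd and has uniformly small derivative. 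Your proposal as written has no substitute for this step, so the interchange of limits ($n\to\infty$ then $D\to\infty$) at the end is not justified.
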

\noindent Proposition \ref{prop:largesteig} yields the following corollary:
\begin{corollary}\label{cor:largesteig}
Under the conditions of Theorem \ref{thm:main}, if $a:=\E[\xi k(\xi)] \geq
0$, then almost surely
\[\lambda_{\max}(K(X)) \to \max\{x:x \in \supp(\mu_{a,\nu,\gamma})\}.\]
\end{corollary}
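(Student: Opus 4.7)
The plan is to sandwich $\lambda_{\max}(K(X))$ between matching upper and lower bounds, using Theorem \ref{thm:main} for the upper bound, Theorem \ref{thm:ESD} for the lower bound, and Proposition \ref{prop:largesteig} to identify the two limits.

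For the upper bound, I would start from the elementary inequality $\lambda_{\max}(K(X)) \leq \|K(X)\|$, valid because $K(X)$ is symmetric so its spectral norm is the largest of $|\lambda_i(K(X))|$. Theorem \ref{thm:main} then gives, almost surely,
\[\limsup_{n \to \infty} \lambda_{\max}(K(X)) \leq \|\mu_{a,\nu,\gamma}\|.\]
Now I invoke Proposition \ref{prop:largesteig}: since $a \geq 0$, the rightmost point of $\supp(\mu_{a,\nu,\gamma})$ is at least as far from $0$ as the leftmost point, so $\|\mu_{a,\nu,\gamma}\|=\max\{x:x\in\supp(\mu_{a,\nu,\gamma})\}$. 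This converts the spectral-norm bound into the desired upper bound on the largest eigenvalue.

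For the lower bound, I would use the weak convergence of the empirical spectral distribution from Theorem \ref{thm:ESD}. Fix $\eps > 0$ and let $x^* := \max\{x:x\in\supp(\mu_{a,\nu,\gamma})\}$. By definition of support, $\mu_{a,\nu,\gamma}([x^*-\eps, x^*]) > 0$. Weak convergence together with the Portmanteau theorem (applied to a continuity interval of $\mu_{a,\nu,\gamma}$ slightly inside $(x^*-\eps,x^*+\eps)$) implies that almost surely, for all large $n$, $K(X)$ has at least one eigenvalue in $[x^*-\eps, x^*+\eps]$. Hence $\liminf_n \lambda_{\max}(K(X)) \geq x^* - \eps$, and sending $\eps \to 0$ gives $\liminf_n \lambda_{\max}(K(X)) \geq x^*$.

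Combining the two bounds yields $\lambda_{\max}(K(X)) \to x^*$ almost surely. There is no real obstacle here: once Theorem \ref{thm:main} and Proposition \ref{prop:largesteig} are in hand, the corollary is essentially immediate, with the only slightly nontrivial step being the standard lower bound via weak convergence of empirical measures.
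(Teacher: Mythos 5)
Your argument is the same sandwich the paper uses: $\limsup\lambda_{\max}(K(X))\leq\|K(X)\|\to\|\mu_{a,\nu,\gamma}\|$ from Theorem \ref{thm:main}, $\liminf\lambda_{\max}(K(X))\geq\max\{x:x\in\supp(\mu_{a,\nu,\gamma})\}$ from weak convergence of the empirical spectral distribution, and Proposition \ref{prop:largesteig} to make the two ends coincide when $a\geq 0$. The Portmanteau step you describe for the lower bound is exactly what the paper implicitly invokes.

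The one thing you skip, which is in fact the bulk of the paper's proof, is verifying that the hypotheses of Theorem \ref{thm:ESD} hold under the hypotheses of Theorem \ref{thm:main}. The two theorems have different assumptions: Theorem \ref{thm:ESD} requires $\E[k(\xi)]=0$, $\E[k(\xi)^2]<\infty$, and the technical condition $\int k(x)^2\,|q_n(x)-q(x)|\,dx\to 0$, where $q_n$ is the density of $\sqrt{n}\hat\Sigma_{12}$. The first two follow quickly from oddness of $k$ and the bound $|k(x)|\leq Ce^{\beta|x|}$, but the third is not automatic: the paper establishes it by a uniform-integrability argument (bounding $\E[k(y)^2\I\{|y|\geq R\}]$ via Cauchy--Schwarz and the moment generating function of $y=\sqrt{n}\hat\Sigma_{12}$) combined with Lemmas C.4 and C.5 of \cite{chengsinger}. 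Without this verification you are not entitled to invoke Theorem \ref{thm:ESD} for the lower bound, so you should add that check; once it is in place the rest of your argument goes through as written.
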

\noindent (It may be verified, cf.\ our proof of the above corollary in Section
\ref{sec:proofoverview}, that any kernel function $k$ satisfying the conditions
of Theorem \ref{thm:main} also satisfies the conditions of Theorem
\ref{thm:ESD}.)

We will prove Theorem \ref{thm:main} via the following two auxiliary results,
the first giving a non-asymptotic concentration bound on
$\|K(X)\|$ that is of constant order when $n \asymp p$,
and the second providing an asymptotically tight bound
in the case where $k(x)$ is a polynomial function:

\begin{theorem}\label{thm:concentration}
Suppose $k:\R \to \R$ is odd, continuous, and differentiable almost everywhere
with $|k'(x)| \leq Ae^{\beta |x|}$ for some $A,\beta>0$ and all $x \in \R$. Let
$X \in \R^{p \times n}$ have entries $x_{ij} \overset{IID}{\sim} \N(0,1)$.
Then, for
any $\alpha>0$, there exist constants $C,C'>0$ depending only on $A$, $\beta$, 
and $\alpha$ such that
\[\P\left[\|K(X)\|>C\max\left(\frac{p}{n},\sqrt{\frac{p}{n}}\right)
\right] \leq C'(p^{-\alpha}+pe^{-\alpha n}).\]
\end{theorem}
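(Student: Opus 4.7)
My plan is to prove the bound via a trace moment method applied to a Hermite decomposition of the kernel. Because $k$ has exponential growth while typical values of $\sqrt{n}\hat\Sigma_{ii'}$ are $O(1)$ with sub-exponential tails, I would first restrict attention to the good event $\mathcal{E}_T = \{|\sqrt{n}\hat\Sigma_{ii'}| \leq T \text{ for all } i \neq i'\}$ with $T = C_0\log n$. By Hanson--Wright, $\P[|\sqrt{n}\hat\Sigma_{12}| > T] \leq 2 e^{-c\min(T^2, T\sqrt{n})}$, so a union bound yields $\P[\mathcal{E}_T^c] \leq C' p^{-\alpha}$ for a suitable $C_0 = C_0(\alpha,\beta)$. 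The $pe^{-\alpha n}$ term in the target bound is reserved for a complementary low-probability event (such as $\{\|X\|_\infty > c\sqrt{n}\}$, of probability at most $np e^{-cn}$) on which $\|K\|$ is controlled by a crude Frobenius bound of size polynomial in $n$, adequate for absorbing this event after Markov.

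On $\mathcal{E}_T$ I would expand $k$ in the normalized probabilists' Hermite basis $\{\tilde h_d\}$ (with $\E\tilde h_d(\xi)^2 = 1$) as $k(x) = \sum_{d \text{ odd}} c_d \tilde h_d(x)$, the even-$d$ coefficients vanishing by oddness of $k$, and truncate at $d \leq D = O(\log n)$ using the super-polynomial decay of $|c_d|$ implied by the exponential bound on $k'$. Correspondingly $K(X) = K_1(X) + \sum_{d \geq 3,\,\text{odd}} K_d(X)$. For the linear piece $K_1 = c_1(\hat\Sigma - \diag(\hat\Sigma))$, a standard spectral norm bound for sample covariance matrices gives $\|K_1\| \leq C|c_1| \max(p/n, \sqrt{p/n})$ with probability $\geq 1 - e^{-cn}$.

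For the higher-degree piece $K_{\geq 3} = \sum_{d \geq 3} K_d$, I would bound $\E\Tr(K_{\geq 3}^{2m})$ by expanding it as a sum over closed walks of length $2m$ in $[p]$. Substituting on each edge the Hermite chaos representation $\tilde h_d(\sqrt{n}\hat\Sigma_{ii'}) = n^{-d/2}\sum_{j_1 < \cdots < j_d}\prod_\ell X_{i,j_\ell}X_{i',j_\ell}$ (up to lower-order collision corrections from repeated column indices) and applying Wick's formula together with the $L^2$-orthogonality of Hermite polynomials forces the surviving walks to be those whose edges pair up with matching Hermite degrees and matching column-index multisets. A combinatorial enumeration of such ``paired'' walks, analogous to the moment calculation for Wigner matrices with entry variance $\sum_{d \geq 3} c_d^2/n \leq \|k\|_{L^2(\gamma)}^2/n$, yields
\[
\E\Tr(K_{\geq 3}^{2m}) \leq p\bigl(C\sqrt{p/n}\bigr)^{2m}
\]
for $m$ up to $c\sqrt{n}/\log n$, and Markov's inequality with $m \asymp \log p$ delivers $\|K_{\geq 3}\| \leq C\sqrt{p/n}$ with the desired probability.

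The main obstacle is the combinatorial moment bound for $K_{\geq 3}$: although Hermite orthogonality fixes the leading scale, one must verify that the collision-term corrections in the Hermite expansion of $\sqrt{n}\hat\Sigma_{ii'}$ and the non-planar/non-matching pairings of walk edges contribute only lower-order corrections, and that cross-degree terms between $K_d$ and $K_{d'}$ (permitted by the truncation $d \leq D$) do not accumulate across the $O(\log n)$ Hermite levels. The oddness hypothesis simplifies this considerably by restricting to odd $d$ only and by guaranteeing $\E k(\xi) = 0$, so that no rank-one mean outlier appears in any $K_d$.
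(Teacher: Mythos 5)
Your proposal has a genuine gap at the truncation step, and the gap is not incidental: it sits exactly where the theorem's content lies. The hypothesis $|k'(x)| \leq Ae^{\beta|x|}$ does \emph{not} give super-polynomial decay of the Hermite coefficients. Integration by parts gives $c_{d+1} = \E[k'(\xi)h_d(\xi)]/\sqrt{d+1}$, so the exponential bound on $k'$ only yields $\sum_d d\,c_d^2 \leq \E[k'(\xi)^2] < \infty$; the coefficients need decay only marginally faster than $d^{-1/2}$ (e.g.\ the soft-threshold kernel, the motivating example of the paper, has $k'$ bounded but non-smooth, and its coefficients decay at essentially this rate). Consequently the tail $r_D := \sum_{d>D} c_d \tilde h_d$ with $D = O(\log n)$ is small in $L^2(\gamma)$ but is still a genuinely non-polynomial kernel, and bounding the operator norm of the associated kernel matrix by $O(\sqrt{p/n})$ is precisely the statement of Theorem \ref{thm:concentration} for the kernel $r_D$ — your argument is circular at this point. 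Neither of the available crude tools closes it: the Frobenius bound gives only $\approx (p/\sqrt{n})\|r_D\|_{L^2(\gamma)}$, off by a factor $\sqrt{p}$, and on the event $\mathcal{E}_T$ with $T = C_0\log n$ the pointwise size of $\tilde h_d$ at $|x|\leq T$ is of order $T^d/\sqrt{d!}$, which for $d\asymp\log n$ is super-polynomially large in $n$, so pointwise control of $r_D$ on $[-T,T]$ fails without coefficient decay you do not have. There are secondary problems as well: conditioning on $\mathcal{E}_T$ destroys the independence needed for the Wick/pairing computation (you can discard the indicator only because $\Tr(K_{\geq 3}^{2m})\geq 0$, but then the truncation buys you nothing in the moment bound), and the combinatorial constants in any such pairing enumeration depend on the maximal Hermite degree, which you let grow like $\log n$ — the paper's own moment analysis (Theorem \ref{thm:polynomial} and Appendix A) is carried out only for fixed degree $D$ and its constants degrade with $D$.

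For comparison, the paper's proof of Theorem \ref{thm:concentration} uses no moments and no Hermite expansion at all. It bounds $\sup_{y} y^TK(X)y$ over Latala's graded covering net $D_2^p$, controls the gradient $\|\nabla_X (z^TK(X)y)\|$ on a high-probability event where the sums $\sum_{i} \exp(16\beta|X_i^TX_l|/\sqrt{n})$ are of order $p$, and obtains a moment generating function bound via the Maurey--Pisier interpolation $X_\theta = X'\cos\theta + X\sin\theta$; the oddness of $k$ enters through a symmetrization step ($\E[K(DX')\mid X']=0$ for random sign flips $D$), not through vanishing of even Hermite coefficients. This concentration route is what allows the theorem to apply to kernels that are merely $L^2$-small (the difference between $k$ and a polynomial approximant), which is exactly the regime your moment-based sketch cannot reach.
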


\begin{theorem}\label{thm:polynomial}
Let $k$ be a polynomial function such that $\E[k(\xi)]=0$ when $\xi \sim
\N(0,1)$, and let $a_2:=\tfrac{1}{\sqrt{2}}\E[k(\xi)(\xi^2-1)]$. Let $X \in
\R^{p \times n}$ have IID entries that are symmetric in law
($x_{ij}\overset{L}{=} -x_{ij}$) and satisfy $\E[x_{ij}^2]=1$ and
\begin{equation}\label{eq:momentassump}
\E[|x_{ij}|^k] \leq k^{\alpha k}
\end{equation}
for all $k \geq 2$ and some $\alpha>0$. Then
\[K(X)=\tilde{K}(X)+\tilde{R}(X)\]
where $\tilde{K}(X)$ and $\tilde{R}(X)$ are such that,
as $n,p \to \infty$ with $p/n \to \gamma \in (0,\infty)$, 
\begin{enumerate}
\item $\|\tilde{K}(X)\| \to \|\mu_{a,\nu,\gamma}\|$ almost surely, and
\item $\tilde{R}(X)=0$ if $a_2=0$, and otherwise $\tilde{R}(X)$ is of rank at
most two, with non-zero eigenvalues converging to $\pm a_2\gamma
\sqrt{(\E[x_{ij}^4]-1)/2}$.
\end{enumerate}
\end{theorem}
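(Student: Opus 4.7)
The plan is to work in the Hermite polynomial basis, as in the decomposition (\ref{eq:mudecomposition}). Since $\E[k(\xi)] = 0$ and $k$ is polynomial, write $k = \sum_{d=1}^D a_d h_d$ with $h_d$ the $L^2(\N(0,1))$-normalized Hermite polynomials, so that $a = a_1$ and $a_2 = \tfrac{1}{\sqrt{2}} \E[k(\xi)(\xi^2-1)]$, and correspondingly $K(X) = \sum_{d=1}^D K_d(X)$. For each $d$ I would expand $h_d\bigl(n^{-1/2}\sum_j x_{ij}x_{i'j}\bigr)$ by writing each power as a multisum over $(j_1,\ldots,j_d) \in [n]^d$ and splitting terms according to whether the $j_l$ are all distinct or not. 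Define $\tilde{K}_d(X)$ to be the ``Wick / all-distinct-index'' piece, with entries (for $i \neq i'$) of the form $\frac{a_d}{d!\, n^{(d+1)/2}} \sum_{j_1,\ldots,j_d\text{ distinct}} \prod_l x_{ij_l}x_{i'j_l}$, and set $\tilde{R}$ to be the rank-two matrix identified in the next paragraph, with $\tilde{K} := K - \tilde{R}$.

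For $d = 1$ there is no correction; for $d \geq 3$ the subtraction terms built into $h_d$ precisely cancel the leading expectations of the coinciding-index multisums, and the remaining fluctuations of $K_d - \tilde{K}_d$ have operator norm $o(1)$ almost surely, which I would verify via $\E \Tr((K_d - \tilde{K}_d)^{2m})$ bounds combining symmetry of $x_{ij}$ with (\ref{eq:momentassump}). The critical case is $d = 2$, where direct expansion gives
\[
(K_2 - \tilde{K}_2)_{ii'} = \frac{a_2}{\sqrt{2}\, n^{3/2}} \sum_{j=1}^n (x_{ij}^2 x_{i'j}^2 - 1), \qquad i \neq i'.
\]
Setting $\bar{Y}_{ij} := x_{ij}^2 - 1$ and $u := \bar{Y}\1_n/\sqrt{n} \in \R^p$, this rearranges to the rank-two matrix $\tilde{R} := \frac{a_2}{\sqrt{2}\, n}(u\1_p^T + \1_p u^T)$ plus the residual $\frac{a_2}{\sqrt{2}\, n^{3/2}} \bar{Y}\bar{Y}^T$, whose operator norm is $O(n^{-1/2})$ by the Marchenko--Pastur bound on $\|\bar{Y}\bar{Y}^T\|$, together with a diagonal correction of norm $O(1/n)$. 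A $2 \times 2$ eigenvalue computation in $\operatorname{span}(u,\1_p)$, using $\|u\|^2/p \to \E[x_{ij}^4] - 1$ (SLLN on the independent $u_i$) and $\langle u, \1_p\rangle = O_P(\sqrt{p})$, identifies the two nonzero eigenvalues of $\tilde{R}$ as converging to $\pm a_2\gamma\sqrt{(\E[x_{ij}^4]-1)/2}$, and $\tilde{R} = 0$ when $a_2 = 0$. Since $\tilde{R}$ is rank at most two, Weyl's inequality ensures that $\tilde{K}$ has the same limiting empirical spectral distribution $\mu_{a,\nu,\gamma}$ as $K$ (via the Do--Vu extension of Theorem \ref{thm:ESD}), and thus $\liminf \|\tilde{K}\| \geq \|\mu_{a,\nu,\gamma}\|$ a.s.

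The core step is showing $\limsup \|\tilde{K}\| \leq \|\mu_{a,\nu,\gamma}\|$ a.s. The Wick matrix $\tilde{K}$ is a fixed polynomial in the $x_{ij}$'s that is multilinear in each column of $X$, so $\E\Tr(\tilde{K}^{2m})$ admits a combinatorial expansion over labelled closed walks of length $2m$ on $[p]$ carrying auxiliary column-indices in $[n]$; symmetry of $x_{ij}$ forces the edges to pair, while (\ref{eq:momentassump}) limits the contribution of vertices of high multiplicity. I would compare this trace expansion term-by-term against that of the deformed GUE surrogate $W := \sqrt{\gamma(\nu-a^2)}\, G + a(ZZ^T/n - \Id)$, with $G$ an independent GUE of size $p$ and $Z \in \R^{p\times n}$ an independent standard Gaussian matrix; by Proposition \ref{prop:freeconvolution}, $W$ has limiting spectrum $\mu_{a,\nu,\gamma}$, and classical arguments give $\|W\| \to \|\mu_{a,\nu,\gamma}\|$ a.s. A graph-theoretic matching of the planar / non-crossing contributions in the two expansions, with non-planar errors absorbed via (\ref{eq:momentassump}), would yield $\E\Tr(\tilde{K}^{2m}) = \E\Tr(W^{2m}) + O(n^{-c}(Cm)^{Cm})$ for some $c > 0$; choosing $m = m_n$ growing slowly and applying Markov and Borel--Cantelli upgrades this to the desired a.s. upper bound.

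The main obstacle is precisely this trace comparison. The walk graphs arising from $\Tr(\tilde{K}^{2m})$ are genuinely bipartite in character: the $d = 1$ contribution uses MP-type Wishart edges joining row-vertices in $[p]$ through column-vertices in $[n]$, while the $d \geq 2$ contributions create semicircle-type edges carrying $d$ column-labels each, and these two species appear jointly in any fixed trace. Identifying the dominant non-crossing configurations and matching them to the $\mathrm{MP} \boxplus \mathrm{sc}$ free convolution structure encoded by (\ref{eq:Rtransform}) generalizes the Wigner moment method to this mixed setting; obtaining control uniform enough in $m$ to close the a.s.\ argument is the delicate point.
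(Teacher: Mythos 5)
Your overall strategy is the paper's: Hermite decomposition $k=\sum_d a_dh_d$, isolation of the all-distinct-index (``Wick'') part of each $K_d$, identification of the rank-two $d=2$ correction $\tilde{R}$, the lower bound from ESD convergence plus a finite-rank perturbation, and an upper bound for the dominant part by trace-moment comparison with a deformed GUE. The remainder analysis and the $2\times 2$ eigenvalue computation for $\tilde{R}$ are correct and coincide with the paper's Lemmas \ref{lemmaS}--\ref{lemmaR2} (the paper additionally splits off an explicit second-order term $r_{d,n}$ for every $d\geq 2$, obtained by induction on the three-term recurrence, but your direct splitting reaches the same place; note also the leading normalization should be $\sqrt{d!}$, not $d!$).

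The gap is the central moment comparison, which you correctly flag as the crux but whose proposed quantitative form would not close the argument. To pass from $\E\Tr(\tilde{K}^{2m})$ to $\|\tilde{K}\|$ with only a $(1+\eps)$ multiplicative loss one must take $m$ of order $\log n$, and in that regime your additive error $O(n^{-c}(Cm)^{Cm})=n^{C\log\log n-c}$ diverges and swamps $\E\Tr(W^{2m})\approx p\,\|\mu_{a,\nu,\gamma}\|^{2m}$, so an additive two-sided expansion of this type cannot work. More importantly, the paper does not prove an asymptotic equality of traces at all: it proves only the one-sided inequality, by constructing an explicit many-to-one map $\varphi$ from the walk configurations of $\Tr Q(X)^l$ (``multi-labelings,'' in which each column-vertex carries a $d$-tuple of indices) to those of $\Tr M^l$ (``simple-labelings''), and by bounding the \emph{weighted cardinality of each fiber} $\varphi^{-1}(\tilde{\mathcal{L}})$ in terms of the excess $\Delta$ of the configuration (Proposition \ref{proplabelmapping}). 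Configurations of excess $\Delta$ contribute \emph{multiplicative} factors of order $(\mathrm{poly}(l)/n)^{\Delta}$, which are then absorbed by running the GUE comparison at slightly shrunken dimensions $\tilde{n},\tilde{p}$ rather than treated as additive errors. This fiber-counting --- which requires the reversal device for improper good pairs, the connector/bad-single bookkeeping to show that only $O(\Delta)$ vertices can be freely relabeled, and which degenerates as $a\to 0$, forcing a separate continuity argument for the case $a=0$ --- is the missing idea; matching the non-crossing configurations alone does not control the sub-dominant ones uniformly in $m\asymp\log n$.
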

\noindent The precise form of the rank-two matrix $\tilde{R}(X)$ is given by
(\ref{eq:tildeR}) in Section \ref{sec:proofoverview}. We make the trivial
observation that $a_2=0$ and $\tilde{R}(X)=0$ if the polynomial $k$ is an odd
function.

\begin{figure}
\includegraphics[width=0.7\textwidth]{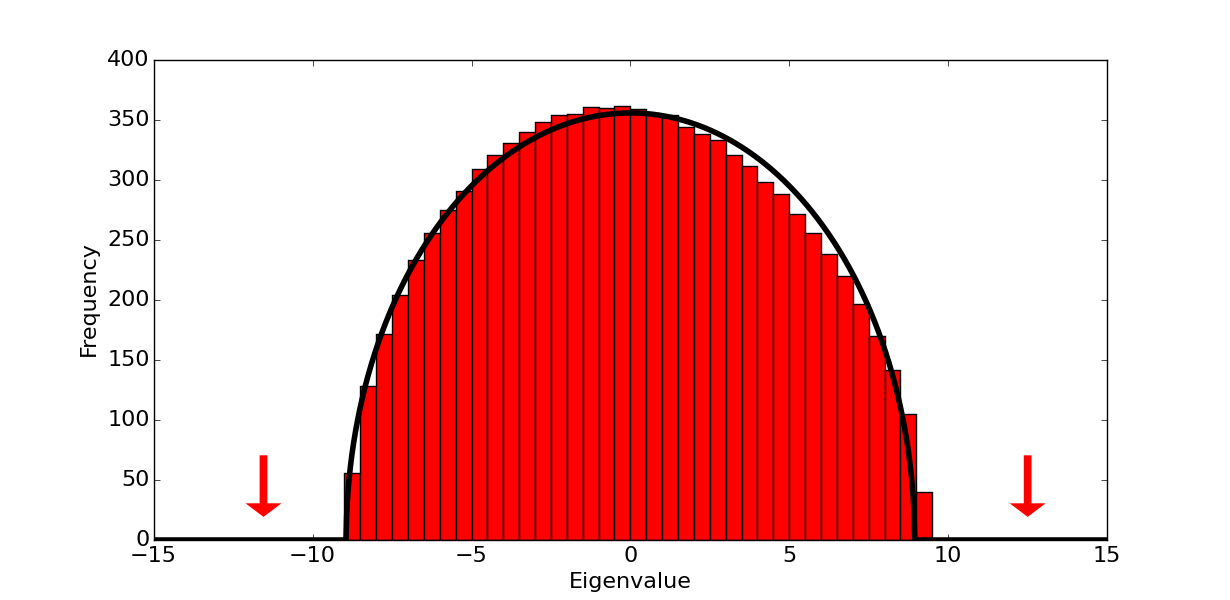}
\caption{Simulated spectrum of $K(X)$ when
$k(x):=h_2(x)+h_3(x)=\frac{1}{\sqrt{2}}(x^2-1)+\frac{1}{\sqrt{6}}(x^3-3x)$,
$n=1000$, and $p=10000$.
The semicircle limit for the spectral distribution is superimposed in black,
and the locations of
two observed outlier eigenvalues of $K(X)$ are indicated with red arrows.}
\label{fig:spikes}
\end{figure}

Theorem \ref{thm:main} follows from Theorems \ref{thm:concentration} and
\ref{thm:polynomial} via a polynomial approximation argument, which we present
in Section \ref{sec:proofoverview}. The assumption that $k(x)$ is odd, or more
specifically that $a_2=0$, is important: Figure \ref{fig:spikes} displays
the simulated spectrum of $K(X)$ for a kernel function where
$a_2 \neq 0$, in which we see that $\tilde{R}(X)$ contributes two spike
eigenvalues to $K(X)$ that fall outside of $\supp(\mu_{a,\nu,\gamma})$. In the
covariance thresholding application of Section \ref{subsec:sparsePCA}, 
commonly-used threshold functions are indeed odd. We recommend caution if using 
a non-odd threshold function, as the
possible presence of these spurious spike eigenvalues may lead to the incorrect
inference that $\Sigma$ has non-trivial spike eigenvectors, even in this null
setting where $\Sigma=\Id$.

\subsection{Further related literature}
The off-diagonal entries of $K(X)$ are the evaluations of a symmetric kernel
$f(u,v):=k(u^Tv/\sqrt{n})/\sqrt{n}$ on pairs of rows of $X$. Such matrices for
general kernels $f(u,v)$ are used in ``kernel methods'' in statistics and
machine learning, such as SVM classifiers 
\cite{boseretal} and kernel PCA \cite{scholkopfetal}. Koltchinskii and Gin\'e
\cite{koltchinskiigine} studied the spectra of kernel matrices in a
regime where each row of $X$ is sampled from a probability distribution over a
fixed space (for example $\R^n$ for fixed $n$), showing that under suitable
conditions, as $p \to \infty$, the spectrum converges to that of a limiting
infinite-dimensional operator. El Karoui \cite{elkarouikernel}
studied kernel matrices in the regime $n,p \to \infty$ with $p/n \to \gamma \in
(0,\infty)$ under the alternative scaling $f(u,v):=k(u^Tv/n)$, showing
that under mild conditions, the matrix is asymptotically equivalent to a linear
combination of $XX^T$, the all-1's matrix, and the identity, and hence the
limiting spectrum is Marcenko-Pastur. The scaling in (\ref{eq:K}) is different
from the regime considered in \cite{elkarouikernel}: Each off-diagonal entry of
$\hat{\Sigma}$ has typical size $1/\sqrt{n}$, and hence (\ref{eq:K}) applies the
nonlinearity $k$ to values of size $O(1)$ rather than $O(1/\sqrt{n})$.
This and more general scalings were studied
probabilistically in \cite{chengsinger}, and the results were further
generalized in \cite{dovu}. Let us
remark that \cite{koltchinskiigine,elkarouikernel} considered
distributions for the rows of $X$ where the entries are not necessarily IID,
but that the extension of our result to more general covariances
$\Sigma \in \R^{p \times p}$ for the application of
Section \ref{subsec:sparsePCA} will require the study of a model in
which the columns (rather than the rows) of $X$ are independent with this
covariance.

Sparse PCA has been widely studied in statistics for both the ``single spike''
model (\ref{eq:spikedmodel}) as well as multi-spike
models. Computationally-efficient procedures for estimating
sparse principal components include diagonal thresholding
\cite{johnstonelu1,johnstonelu2,birnbaumetal},
$\ell_1$- and model-selection-penalization approaches
\cite{jolliffeetal,zouetal,daspremontetal,shenhuang,wittenetal,caietal1},
iterative thresholding via the QR method \cite{ma}, approximate message passing
\cite{deshpandemontanari}, and covariance thresholding as discussed
in Section \ref{subsec:sparsePCA} \cite{krauthgameretal,deshpandemontanari}.
From the theoretical perspective, both exact
\cite{aminiwainwright,krauthgameretal} and approximate
\cite{johnstonelu2,birnbaumetal,ma,vulei,caietal1,caietal2} sparsity models have
been considered, and a major focus has been on rate-optimal recovery of the
sparse eigenvectors, their spanned subspace, and/or the sparse covariance
\cite{birnbaumetal,ma,caietal1,vulei,caietal2}. Support 
recovery and spike detection in the specific model (\ref{eq:spikedmodel}) were
considered in
\cite{aminiwainwright,berthetrigollet1,berthetrigollet2,krauthgameretal,deshpandemontanari}.
In this setting, non-polynomial-time algorithms can detect the spike and recover
the support even when $v$ has sparsity near-linear in $n$
\cite{aminiwainwright,berthetrigollet1,caietal2}, but it is conjectured that
polynomial-time methods require the higher sparsity levels $\|v\|_0 \lesssim
\sqrt{n}$. This problem is closely related to the planted clique
problem in computer science \cite{berthetrigollet1,berthetrigollet2} upon which
this conjecture is based, with a vector $v$ of sparsity $\|v\|_0 \asymp
\sqrt{n}$ corresponding to a planted clique of size $k \asymp \sqrt{n}$ in a
graph of $n$ vertices.

Consistency of elementwise hard-thresholding for estimating sparse covariance
matrices was studied in \cite{bickellevina,elkarouicovariance}.
Optimal rates of convergence under various matrix norms and sparsity models
were established in \cite{caizhou,caizhou2}, and generalizations to other
thresholding functions and to ``adaptive'' entry-specific thresholds were
studied respectively in \cite{rothmanetal} and \cite{cailiu}. Many analyses
assume that each row of $\Sigma$ contains $\ll \sqrt{n}$
non-zero elements and perform thresholding at the level $\sqrt{(\log
p)/n}$ or higher, which does not apply to the regime of interest discussed in
Section \ref{subsec:sparsePCA} where $v$ has sparsity $\|v\|_0 \asymp \sqrt{n}$
and non-zero elements of size $n^{-1/4}$. Thresholding at more general levels,
including $1/\sqrt{n}$, was studied in \cite{deshpandemontanari}, which
established a special case of Theorem \ref{thm:concentration} for the
soft-thresholding kernel function $k$.
The proof of \cite{deshpandemontanari} may be extended to globally Lipschitz
functions $k$, but we require the application of such a bound when $k$ is
the difference of the (possibly Lipschitz) kernel function of interest and a
polynomial approximation to this function.
This difference may increase at any polynomial rate as
$|x| \to \infty$, hence requiring new ideas in the proof of Theorem
\ref{thm:concentration} to extend beyond Lipschitz kernels.
Other spectral norm bounds for polynomial kernels were derived in
\cite{chengsinger} and \cite{kasiviswanathanrudelson}, but they do not
yield the desired bound of constant order when restricted to our setting.

In the context of random matrix theory, convergence of the
extremal eigenvalues of $K(X)$ was posed as an open question in
\cite{chengsinger}. For linear kernels $k$, $K(X)$ is equivalent to a
translation and rescaling of the sample covariance $\hat{\Sigma}$, and
almost-sure convergence of the extremal eigenvalues follows from
\cite{geman,yinetallargest,baiyinsmallest}. Proposition 
\ref{prop:freeconvolution} implies that in the general case, $K(X)$ has the same
limiting spectrum as a deformed Wigner matrix $W+V$ where $W$ is Wigner and
$V$ is deterministic with spectral measure converging to
$a(\mu_{\MP,\gamma}-1)$ \cite{voiculescumatrix}.
When $W$ is GUE and $V$ has no spike eigenvalues, the
results of \cite{capitaineetal,male} imply that the eigenvalues of $W+V$ stick
to the limiting support, and the fluctuations of the eigenvalues at the edges of
the support are also understood in various settings
\cite{shcherbina,capitainepeche,leeschnelli}. The proof of our main result
leverages the connection between these models.

Our proof uses the moment method and is different from the resolvent analysis
of \cite{chengsinger}, although the decomposition of
$k(x)$ in the Hermite polynomial basis plays an important role in both analyses.
While the resolvent method has been successful in establishing many properties
of Wigner and covariance matrices (see e.g.\
\cite{silversteinbai,gotzetikhomirov,taovu,erdosetal,pillaiyin}
as well as the recent work of \cite{srivastavavershynin,chafaitikhomirov}
in a non-independent setting), the model (\ref{eq:K}) for nonlinear kernels
does not have the same independence structure as these
models, and it is also not a sum of rank-one updates. These difficulties were
overcome in \cite{chengsinger} via Gaussian conditioning arguments, but
strengthening the bounds of \cite{chengsinger} to yield finer control of the
Stieltjes transform $m(z)$ near the real axis does not seem (in our viewpoint)
more straightforward than our moment-based approach. We believe that our
combinatorial estimates and moment-comparison argument, in the simpler setting
of a fixed moment $l$ not varying with $n$,
are sufficient to yield an alternative proof of
Theorem \ref{thm:ESD} and also to establish asymptotic freeness of the
matrices $K_1(X),K_2(X),\ldots$ leading to the decomposition
(\ref{eq:mudecomposition}). For brevity, we will not discuss this in the
current paper.

\subsection{Notation}
$\|v\|=(\sum_i v_i^2)^{1/2}$ denotes the Euclidean norm for vectors.
$\|X\|=\max_{\|v\|=1} \|Xv\|$ denotes the spectral norm (i.e.\ $\ell_2$-operator
norm) for matrices. $X_i$ denotes the $i^\text{th}$ row of $X$. 
If $X \in \R^{p \times p}$ is symmetric,
$\lambda_{\max}(X):=\lambda_1(X) \geq \ldots \geq \lambda_p(X)$ denote the
ordered eigenvalues of $X$.
$\supp(\mu)$ denotes the support of a measure $\mu$, and $\|\mu\|$ denotes
$\max\{|x|:x \in \supp(\mu)\}$.

In an asymptotic setting, for positive ($n,p$-dependent) quantities $a$ and $b$,
$a \asymp b$ means $ca \leq b \leq Ca$ for constants
$C,c>0$, $a \sim b$ means $a/b \to 1$, $a \ll b$ means $a/b \to 0$,
and $a \lesssim b$ means $a \leq Cb$
for a constant $C>0$.

We will use $i,i',i_1,i_2,\ldots$ for indices in $\{1,\ldots,p\}$,
and $j,j',j_1,j_2,\ldots$ for indices in $\{1,\ldots,n\}$.

\section{Overview of proof}\label{sec:proofoverview}
In this section, we summarize the high-level proof ideas for
Theorems \ref{thm:concentration} and \ref{thm:polynomial}, and we establish
Theorem \ref{thm:main} and Corollary \ref{cor:largesteig} using these results.

The proof of Theorem \ref{thm:concentration} uses a covering net argument:
\[\|K(X)\| \leq C\sup_{y \in D_2^p} y^TK(X)y,\]
for a constant $C>0$ and a finite covering net $D_2^p$ of the unit ball
$\{y \in \R^p:\|y\| \leq 1\}$.
We use a particular construction of a covering net due to Latala \cite{latala}:
\begin{definition}\label{def:D2}
For $m=\lceil \log_2 p \rceil$, let
\[D_2^p=\left\{y \in \R^p:\|y\| \leq 1,\;y_i^2 \in
\{0,1,2^{-1},2^{-2},\ldots,2^{-(m+3)}\}
\text{ for all } i \right\}.\]
For each $l=0,1,\ldots,m+3$, let $\pi_l:D_2^p \to D_2^p$ be defined by
$(\pi_l(y))_i=y_i\I\{y_i^2 \geq 2^{-l}\}$, and let $\pi_{l \setminus l-1}:
D_2^p \to D_2^p$ be defined by $(\pi_{l \setminus l-1}(y))_i
=y_i\I\{y_i^2=2^{-l}\}$.
\end{definition}

Corresponding to the identity $y=\sum_{l=0}^{m+3}
\pi_{l \setminus l-1}(y)$ for any $y \in D_2^p$,
$y^TK(X)y$ may be decomposed as
\begin{equation}\label{eq:yKy}
y^TK(X)y=\sum_{l=0}^{m+3} \pi_l(y)^TK(X)\pi_{l \setminus l-1}(y)
+\sum_{l=1}^{m+3} \pi_{l \setminus l-1}(y)^TK(X)\pi_{l-1}(y).
\end{equation}
Each of the terms
\[\sup_{y \in D_2^p} \pi_l(y)^TK(X)\pi_{l \setminus l-1}(y),\;\;\;\;
\sup_{y \in D_2^p} \pi_{l \setminus l-1}(y)^TK(X)\pi_{l-1}(y)\]
may be bounded via a standard union bound, with quantities of the form
$F_{y,z}(X):=y^TK(X)z$ controlled by bounding the gradient
$\|\nabla_X F_{y,z}(X)\|$ and applying Gaussian concentration of
measure for Lipschitz functions. The key idea of the construction of $D_2^p$ and
the decomposition (\ref{eq:yKy}) is that for each $l$, the union bound may be
applied over $y \in \pi_l(D_2^p)$, which has smaller cardinality for
smaller $l$. For larger $l$, the entries of $\pi_{l \setminus l-1}(y)$ are
smaller, which we will show implies stronger control of the gradient
$\|\nabla_X F_{y,z}(X)\|$ over a high-probability set $X \in \mathcal{G}$. The
moment generating function of $F_{y,z}(X)$ may be controlled using
the integration argument of Maurey and Pisier, by extending this
high-probability set to pairs of matrices $(X,X')$ in such a way that we remain
in this set along the entire integration path. The cardinality of 
$\pi_l(D_2^p)$ balances the moment generating function bound thus obtained for
each $l$, yielding Theorem \ref{thm:concentration}.
Details of this argument are given in Section \ref{sec:concentration}.

The proof of Theorem \ref{thm:polynomial} uses the moment method and a moment
comparison with a deformed GUE matrix. We first define the orthonormal
Hermite polynomials, which play a central role in our proof (as well as in the
proof in \cite{chengsinger} for Theorem \ref{thm:ESD}):
\begin{definition}\label{def:hermite}
Let $\{h_d\}_{d=0}^\infty$ denote the orthonormal Hermite polynomials
with respect to the inner product $\langle f,g \rangle_\xi=\E[f(\xi)g(\xi)]$
when $\xi \sim \N(0,1)$, i.e.\ $h_d$ is of degree $d$ and
$\langle h_d,h_{d'} \rangle_\xi=\I\{d=d'\}$.
\end{definition}
\noindent The first few such polynomials are given by $h_0(x)=1$, $h_1(x)=x$,
$h_2(x)=\frac{1}{\sqrt{2}}(x^2-1)$, and $h_3(x)=\frac{1}{\sqrt{6}}(x^3-3x)$.

Our proof of Theorem \ref{thm:polynomial} follows three high-level steps:
\begin{enumerate}
\item For IID random variables $z_1,\ldots,z_n$ with $\E[z_i]=\E[z_i^3]=0$
and $\E[z_i^2]=1$, we show that
\begin{equation}\label{eqapproxhermiteidentity}
\sqrt{d!}h_d\left(\frac{\sum_{i=1}^n z_i}{\sqrt{n}}\right)
\approx \sqrt{\frac{1}{n^d}}
\mathop{\sum_{j_1,\ldots,j_d=1}^n}_{j_1 \neq j_2 \neq \ldots \neq j_d}
\prod_{i=1}^d z_{j_i}.
\end{equation}
(The summation on the right side is over all tuples of
distinct indices $j_1,\ldots,j_d \in \{1,\ldots,n\}$.)
Each $h_d$ has leading coefficient $1/\sqrt{d!}$, so
$\sqrt{d!}h_d(x)=x^d+\text{lower degree terms}$.
Replacing $\sqrt{d!}h_d(x)$ with
$x^d$ on the left side would yield the right side of
(\ref{eqapproxhermiteidentity}) without the restriction that the indices of
summation
$j_1,\ldots,j_d$ are distinct; (\ref{eqapproxhermiteidentity}) states that
the terms of this summation in which the indices
$j_1,\ldots,j_d$ are not distinct are essentially cancelled out by the lower
degree terms of $\sqrt{d!}h_d(x)$. We prove this approximation in Section
\ref{sec:hermiteapprox} by induction on $d$, using the three-term recurrence
for Hermite polynomials.
The right side of (\ref{eqapproxhermiteidentity}) is of typical size $O(1)$,
and we also quantify the error of the approximation by computing
a second-order term, which is of typical size $O(n^{-1/2})$, and showing that
the third and higher-order terms in this approximation are of typical size
$O(n^{-1})$.
\item Since $k$ is a polynomial such that $\E[k(\xi)]=0$, we may write
\[k(x)=\sum_{d=1}^D a_dh_d(x)\]
where $D<\infty$ is the degree of $k$. Applying the approximation in step (1)
above to each $h_d$, we obtain a decomposition
\[K(X)=Q(X)+R(X)+S(X),\]
where $Q$, $R$, and $S$ correspond to the first-order, second-order, and
third-and-higher-order terms of these approximations (each summed over all
$d=1,\ldots,D$). We establish for the first-order matrix $Q(X)$ that
\[\limsup_{n,p \to \infty} \|Q(X)\| \leq \|\mu_{a,\nu,\gamma}\|\]
almost surely, via a moment comparison argument:
For an even integer $l \asymp \log n$, we apply the standard moment method bound
$\|Q(X)\|^l \leq \Tr Q(X)^l$ \cite{furedikomlos,geman}. By
(\ref{eqapproxhermiteidentity}), the non-diagonal entries of $Q(X)$ are given by
\[Q(X)_{ii'}=\sum_{d=1}^D a_dn^{-d/2} \mathop{\sum_{j_1,\ldots,j_d=1}^n}_{j_1
\neq j_2 \neq \ldots \neq j_d} \prod_{s=1}^d x_{ij_s}x_{i'j_s}.\]
We expand the trace $\Tr Q(X)^l$ and interpret the terms of the resulting sum as
labelings of a certain graph. We then consider
a deformed GUE matrix $M=W+V$ having the same limiting spectrum as $K(X)$,
and employ a combinatorial argument
to upper-bound $\E[\Tr Q(X)^l]$ using $\E[\Tr M^l]$. We conclude the proof by
using the known convergence result $\|M\| \to \|\mu_{a,\nu,\gamma}\|$ from
\cite{capitaineetal} and a concentration of measure argument to bound
$\E[\Tr M^l]$. We present the main ideas of this
step in Section \ref{sec:Q}, with details deferred to Appendices
\ref{appendixcombinatorics} and \ref{appendixdeformedGUE}.
\item Finally, we analyze the remainder matrices $R(X)$ and
$S(X)$ from the decomposition in step (2) above. It is easily shown that
$\|S(X)\| \to 0$. For $R(X)$, we may write
\[R(X)=\sum_{d=2}^D R_d(X),\]
where $R_d(X)$ is the contribution from the Hermite polynomial $h_d$. (The
linear polynomial $h_1$ does not have such a remainder term in the
decomposition.) We show $\|R_d(X)\| \to 0$ for each
$d \geq 3$, and $\|R_2(X)-\tilde{R}(X)\| \to 0$ where
\begin{equation}\label{eq:tildeR}
\tilde{R}(X)=\frac{a_2}{n\sqrt{2}}(v(X)\1^T+\1 v(X)^T),
\end{equation}
$\1=(1,\ldots,1) \in \R^p$, and $v(X) \in \R^p$ has entries
$(v(X))_i=\sum_{j=1}^n (x_{ij}^2-1)/\sqrt{n}$. Noting that $\tilde{R}(X)$ is a
rank-two matrix, this yields Theorem \ref{thm:polynomial} upon setting
$\tilde{K}(X)=K(X)-\tilde{R}(X)$. This argument and the conclusion of the proof
of Theorem \ref{thm:polynomial} are presented in Section \ref{sec:RS}.
\end{enumerate}

Let us now prove Theorem \ref{thm:main} and Corollary \ref{cor:largesteig}
using Theorems \ref{thm:concentration} and \ref{thm:polynomial}.
We approximate the derivative of the kernel function
by a polynomial using the following result:
\begin{theorem}[Carleson \cite{carleson}]\label{theorempolyapprox}
Suppose $w(x)$ is an even, lower semi-continuous function on $\R$ with
$1 \leq w(x)<\infty$, such that $\log w(x)$ is a convex function of
$\log x$. Let $C_w$ be the class of continuous functions on
$\R$ such that $\lim_{|x| \to \infty} f(x)/w(x)=0$ for all $f \in C_w$,
and suppose $C_w$ contains all polynomial functions. If $\int_1^\infty
(\log w(x))/x^2\,dx=\infty$, then for any $f \in C_w$ and $\eps>0$, there
exists a polynomial $P$ such that $|f(x)-P(x)|<\eps w(x)$ for all $x \in \R$.
\end{theorem}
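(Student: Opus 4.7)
The plan is to prove this by a duality argument, reducing the claim of density to a uniqueness statement about signed measures, combined with harmonic-analytic bounds in the upper half-plane $\H$ that leverage the divergence condition. By the Hahn--Banach theorem, polynomials fail to be dense in $C_w$ under the weighted sup-norm $\|f\|_w := \sup_x |f(x)|/w(x)$ if and only if there exists a nonzero finite signed Borel measure $\mu$ on $\R$ with $\int d|\mu|(t)/w(t) < \infty$ satisfying the moment-annihilation conditions $\int t^n\, d\mu(t) = 0$ for every integer $n \geq 0$. The entire problem therefore reduces to showing that, under the hypotheses on $w$, no such $\mu$ can exist.

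First I would associate to any such $\mu$ its Cauchy transform $F(z) := \int (t-z)^{-1}\, d\mu(t)$, which is analytic on $\C \setminus \R$. Inserting the moment-annihilation conditions into the geometric expansion $1/(t-z) = -\sum_{k \geq 0} t^k/z^{k+1}$, valid in a suitable regime, forces $F(z)$ to decay faster than any polynomial as $|z| \to \infty$ along vertical rays (the tail of the sum being absorbed using that $w(t) \to \infty$ by virtue of the divergence condition, together with integrability of $d|\mu|/w$). Separately, the bound $\int d|\mu|/w < \infty$ combined with the log-log convexity of $w$ yields a pointwise majorization of the form $\log|F(x+iy)| \leq U_w(x,y)$ up to lower-order corrections, where $U_w$ is the Poisson extension to $\H$ of the boundary data $-\log w(x)$ on $\R$.

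The crux is then a Phragm\'en--Lindel\"of / Ahlfors--Heins-type uniqueness principle in $\H$: any analytic function satisfying both the above vertical decay at infinity and the pointwise bound by $U_w$ must vanish identically. The log-log convexity of $w$ is exactly the regularity needed to form the Poisson majorant $U_w$ cleanly, while the divergence $\int_1^\infty \log w(x)/x^2\, dx = \infty$ plays the role of a Denjoy--Carleman quasianalyticity criterion: it forces $U_w$ to be so negative that no nonzero function of bounded type in $\H$ can be dominated by it on the boundary. Once one concludes $F \equiv 0$, the Stieltjes--Perron inversion formula recovers $\mu = 0$ from the vanishing of its resolvent, yielding the desired contradiction.

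The main obstacle is the harmonic majorization step: translating the $L^1(d|\mu|/w)$-type integrability into a usable pointwise upper bound on $\log|F|$ in the half-plane, and then rigorously applying the appropriate boundary-behavior theorem under the log-log convexity assumption on $w$. The other ingredients---Hahn--Banach duality, moment-driven decay of the Cauchy transform, and Stieltjes--Perron inversion---are classical. What is delicate is the interplay between the weight regularity (log-log convexity), the integrability side condition, and the quasianalyticity criterion encoded by the logarithmic integral; this is the analytic heart of Carleson's theorem and the reason it lies substantially deeper than Weierstrass approximation on a compact interval.
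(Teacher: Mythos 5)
This statement is not proved in the paper at all: it is Carleson's solution of the weighted (Bernstein) approximation problem, quoted verbatim with a citation to \cite{carleson} and used as a black box in the proof of Theorem \ref{thm:main}. So there is no internal proof to compare against; I can only assess your outline on its own terms, and as written it has two genuine problems.

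First, the duality reduction is set up with the weight on the wrong side. Since the norm on $C_w$ is $\|f\|_w=\sup_x|f(x)|/w(x)$, a bounded functional has the form $f\mapsto\int f(t)\,w(t)^{-1}\,d\nu(t)$ for a finite measure $\nu$, and writing $d\mu=d\nu/w$ the annihilating measure must satisfy $\int w(t)\,d|\mu|(t)<\infty$, not $\int d|\mu|(t)/w(t)<\infty$. The latter condition is vacuous (every finite measure satisfies it, as $w\geq 1$) and does not even guarantee that the moments $\int t^n\,d\mu(t)$ converge, so the geometric-series expansion of the Cauchy transform and the claimed superpolynomial decay of $F$ along vertical rays have no basis under your stated hypothesis. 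Second, the step you yourself identify as the crux --- passing from the integrability of the annihilating measure to a pointwise majorization $\log|F(x+iy)|\leq U_w(x,y)$ by the Poisson extension of $-\log w$, and then invoking an Ahlfors--Heins-type uniqueness principle --- is asserted rather than proved, and it is not a routine estimate: the Cauchy transform of a measure with $\int w\,d|\mu|<\infty$ does not directly obey such a bound, and the actual arguments (Mergelyan, Pollard, Koosis) go through the extremal function $\sup\{|P(z)|:\|P\|_w\leq 1\}$ or through boundary values of functions of bounded type, which is where the log-convexity and the divergence of $\int_1^\infty\log w(x)\,x^{-2}\,dx$ really do their work. It is also worth noting that Carleson's own proof, the one the paper cites, is deliberately elementary and avoids this duality machinery altogether. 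As it stands your text is a plan for the classical functional-analytic attack, with its central analytic lemma left open and its starting hypothesis misstated, so it cannot be accepted as a proof.
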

\begin{proof}[Proof of Theorem \ref{thm:main}]
By the given conditions, there exists $\beta>0$ such that
$\lim_{|x| \to \infty} |k'(x)|/e^{\beta|x|}=0$. Applying Theorem
\ref{theorempolyapprox} with $w(x)=e^{\beta|x|}$, for any $\eps>0$, there exists
a polynomial $\dot{q}$ such that $|k'(x)-\dot{q}(x)|<\eps e^{\beta|x|}$ for all
$x \in \R$. As $k$ is an odd function, $k'$ is even, so we may take $\dot{q}$
to be an even polynomial function. (Otherwise, take the polynomial to be
$\frac{1}{2}(\dot{q}(x)+\dot{q}(-x))$.) Let $q(x)=\int_0^x \dot{q}(x)dx$ for all
$x \in \R$, and let $r(x)=k(x)-q(x)$. Then $q$ is an odd polynomial function,
$r$ is hence also an odd function, and $|r'(x)|<\eps e^{\beta |x|}$ by
construction. Let $Q(X)$ be the kernel matrix (\ref{eq:K}) with kernel
function $q(x)$, and let $R(X)$ be the kernel matrix (\ref{eq:K})
with kernel function $r(x)$, so that $K(X)=Q(X)+R(X)$. (These matrices $Q(X)$
and $R(X)$ are not related to the matrices $Q$, $R$, and $S$ in the above proof
outline.)

Applying Theorem \ref{thm:concentration} with $\alpha=2$ to $R(X)$,
$\limsup_{n,p \to \infty} \|R(X)\|<\eps C_{\beta,\gamma}$ almost surely
for some constant $C_{\beta,\gamma}>0$. On
the other hand, if $q(x)=a_{0,\eps}+a_{1,\eps}h_1(x)+\ldots+a_{D,\eps}h_D(x)$
where $h_1,\ldots,h_D$ are the orthonormal Hermite polynomials of Definition
\ref{def:hermite}, then $a_{j,\eps}=0$ for all even $j$ (since $q$ is an odd
function), and Theorem \ref{thm:polynomial} implies $\|Q(X)\| \to
\|\mu_{a_\eps,\nu_\eps,\gamma}\|$ where $a_\eps=a_{1,\eps}$ and
$\nu_\eps=\sum_{d=1}^D a_{d,\eps}^2$. Hence, almost surely,
\[\|\mu_{a_\eps,\nu_\eps,\gamma}\|-\eps C_{\beta,\gamma}
<\liminf_{n,p \to \infty} \|K(X)\| \leq \limsup_{n,p \to \infty}
\|K(X)\|<\|\mu_{a_\eps,\nu_\eps,\gamma}\|+\eps C_{\beta,\gamma}\]
for any $\eps>0$.
Note that $|k(x)-q(x)| \leq \frac{\eps}{\beta}e^{\beta |x|}$ for all $x \in \R$,
so by dominated convergence $\lim_{\eps \to 0}
\E[(k(\xi)-q(\xi))^2]=0$ for $\xi \sim \N(0,1)$. Then $a_\eps \to a$ and
$\nu_\eps \to \nu$ as $\eps \to 0$, where $a:=\E[\xi k(\xi)]$ and
$\nu:=\E[k(\xi)^2]$. As $\|\mu_{a,\nu,\gamma}\|$ is continuous in $a$, $\nu$,
and $\gamma$, $\lim_{\eps \to 0}
\|\mu_{a_\eps,\nu_\eps,\gamma}\| \to \|\mu_{a,\nu,\gamma}\|$, and hence
taking $\eps \to 0$ yields
$\lim_{n,p \to \infty} \|K(X)\|=\|\mu_{a,\nu,\gamma}\|$ almost surely.
\end{proof}

\begin{proof}[Proof of Corollary \ref{cor:largesteig}]
We verify the conditions of Theorem \ref{thm:ESD}: The kernel function is odd
and bounded as $|k(x)| \leq Ce^{\beta |x|}$ for a constant $C:=C_{A,\beta}>0$,
so $\E[k(\xi)]=0$ and $\E[k(\xi)^2]<\infty$.
Writing $y:=\sqrt{n}\hat{\Sigma}_{12}$, for any $R>0$
\[\E[k(y)^2\I\{|y| \geq R\}] \leq C^2\E\left[e^{4\beta |y|}\right]^{1/2}
\P[|y| \geq R]^{1/2}.\]
Note that $\E[e^{4\beta y}]=\E[e^{-4\beta y}]=(1-16\beta^2/n)^{-n/2}$ for all
$n>16\beta^2$, so $\E[e^{4\beta |y|}]$ is bounded by a constant for all large
$n$. By Lemma C.4 of \cite{chengsinger}, $\P[|y| \geq R]^{1/2} \to 0$ as
$R \to \infty$ uniformly in $n$. Then Lemma C.5 of \cite{chengsinger} implies
that the remaining technical condition of Theorem \ref{thm:ESD} holds.
Theorems \ref{thm:ESD} and \ref{thm:main} then together imply
\[\max\{x:x \in \supp(\mu_{a,\nu,\gamma})\}
\leq \liminf_{n,p \to \infty} \lambda_{\max}(K(X))
\leq \limsup_{n,p \to \infty} \lambda_{\max}(K(X))
\leq \|\mu_{a,\nu,\gamma}\|,\]
and the result follows as the left and right sides coincide by
Proposition \ref{prop:largesteig}.
\end{proof}

\section{Proof of concentration inequality}\label{sec:concentration}
In this section, we prove Theorem \ref{thm:concentration} following the outline
sketched in Section \ref{sec:proofoverview}. By rescaling $k(x)$, we may
assume without loss of generality $A=1$. We denote by $X_i$ the $i^\text{th}$
row of $X$.

\begin{lemma}\label{lemma:goodset}
For any $\alpha,\beta>0$, there exist constants $C,C'>0$ depending only
on $\alpha$ and $\beta$ such that the following holds: Define
$\G(\alpha,\beta) \subset \R^{p \times n}
\times \R^{p \times n}$ as the set of pairs $(X,X')$ of $p \times n$ matrices
such that $\|X\| \leq \sqrt{p}+(1+\sqrt{2\alpha})\sqrt{n}$,
$\|X'\| \leq \sqrt{p}+(1+\sqrt{2\alpha}) \sqrt{n}$, and for each $l=1,\ldots,p$
\[\frac{1}{p}\mathop{\sum_{i=1}^p}_{i \neq l}
\exp\left(\frac{16\beta}{\sqrt{n}}|X_i^TX_l|\right) \leq C,
\hspace{0.2in}
\frac{1}{p}\mathop{\sum_{i=1}^p}_{i \neq l}
\exp\left(\frac{16\beta}{\sqrt{n}}|{X_i'}^TX_l'|\right) \leq C,\]
\[\frac{1}{p}\mathop{\sum_{i=1}^p}_{i \neq l}
\exp\left(\frac{16\beta}{\sqrt{n}}|{X_i'}^TX_l|\right)\leq C,
\hspace{0.2in}
\frac{1}{p}\mathop{\sum_{i=1}^p}_{i \neq l}
\exp\left(\frac{16\beta}{\sqrt{n}}|X_i^TX_l'|\right) \leq C.\]
If $X,X' \in \R^{p \times n}$ are random and independent with
$x_{ij},x_{ij}' \overset{IID}{\sim} \N(0,1)$, then
\[\P[(X,X') \notin \G(\alpha,\beta)]
\leq C'\left(p^{-\alpha}+pe^{-\alpha n}\right).\]
\end{lemma}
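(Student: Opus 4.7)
The plan is to show each of the six ``bad'' events in $\G(\alpha,\beta)^c$ has small probability and to combine via union bound. The spectral norm conditions are immediate from the Davidson--Szarek concentration inequality $\P[\|X\| > \sqrt{p} + \sqrt{n} + t] \leq e^{-t^2/2}$: choosing $t = \sqrt{2\alpha n}$ gives $\P[\|X\| > \sqrt{p} + (1+\sqrt{2\alpha})\sqrt{n}] \leq e^{-\alpha n}$, and similarly for $X'$. These two events account for the $pe^{-\alpha n}$ term up to constants, and the real work lies in the four exponential-moment averages, which are symmetric so I focus on the first.

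Fix $l \in \{1,\ldots,p\}$ and condition on $X_l$. Gaussian rotational invariance gives $\xi_i := X_i^TX_l/\|X_l\| \overset{IID}{\sim} \N(0,1)$ for $i \neq l$, so the $i$-th summand equals $\exp(\tfrac{16\beta\|X_l\|}{\sqrt{n}}|\xi_i|)$. On the chi-square event $E_l := \{\|X_l\|^2 \leq C_1 n\}$---with $C_1 = C_1(\alpha)$ chosen so that $\P[E_l^c] \leq e^{-\alpha n}$ via Cram\'er's bound for $\chi^2_n$---each summand is bounded by $W_i := \exp(c|\xi_i|)$ for $c := 16\beta\sqrt{C_1}$, and one checks $\mu := \E[W_i]$ and $\E[W_i^2]$ are constants depending only on $\alpha,\beta$. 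The $W_i$ are log-normal-like and not sub-exponential, so I would truncate at $T := \exp(c\sqrt{2(\alpha+3)\log p})$, noting $\P[W_i > T] = \P[|\xi_i| > \sqrt{2(\alpha+3)\log p}] \leq 2p^{-(\alpha+3)}$, hence $\P[\max_{i \neq l} W_i > T] \leq 2p^{-(\alpha+2)}$. On the complementary event the truncated variables $W_i' := W_i \wedge T$ are bounded by $T$, have variance at most $\E[W_i^2] = O(1)$, and Bernstein's inequality gives
\[\P\Bigl[\tfrac{1}{p-1}\sum_{i \neq l} W_i' > 2\mu\Bigr] \leq \exp\Bigl(-\Omega\bigl((p-1)/T\bigr)\Bigr),\]
which is super-polynomially small in $p$ since $T = p^{o(1)}$. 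Combining, the conditional probability that the average exceeds $2\mu$ is $O(p^{-(\alpha+2)})$ on $E_l$, and $O(p^{-(\alpha+2)} + e^{-\alpha n})$ unconditionally.

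Union-bounding over $l = 1,\ldots,p$ and the four conditions (the other three are identical, with $X_l$ or $X_l'$ playing the role of the anchor row; in each case the relevant inner products conditional on the anchor are independent Gaussians across the free index) gives total bad probability $O(p^{-(\alpha+1)} + pe^{-\alpha n})$, which combined with the spectral norm terms and absorbed into $C'$ yields the claimed $C'(p^{-\alpha} + pe^{-\alpha n})$. The main technical point---and the only nonroutine step---is the choice of truncation level $T = e^{c\sqrt{\log p}}$: it must be subpolynomial in $p$ so that the Bernstein exponent $(p-1)/T$ beats the $p$-fold union bound over $l$, yet large enough that the log-normal tail of $W_i$ gives $\P[W_i > T] \leq p^{-(\alpha+3)}$. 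Everything else reduces to routine Gaussian and $\chi^2$ concentration.
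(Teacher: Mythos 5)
Your proof is correct and follows the same overall skeleton as the paper's: standard Gaussian concentration for the operator norm conditions, rotational invariance to write $X_i^TX_l \overset{L}{=} \|X_l\|\xi_i$ with $\xi_i \overset{IID}{\sim} \N(0,1)$ conditionally on the anchor row, a chi-squared event confining $\|X_l\|^2 \lesssim n$, and a union bound over $l$ and the four sums. The one step where you genuinely diverge is the concentration of $\frac{1}{p}\sum_{i \neq l} e^{c|\xi_i|}$: the paper centers $f(\xi)=e^{c|\xi|}-\E[e^{c|\xi|}]$, notes $\E[|f|^{\alpha+2}]<\infty$, and invokes the Fuk--Nagaev inequality to get a deviation probability of order $p^{-\alpha-1}$ in one line, whereas you truncate at $T=e^{c\sqrt{2(\alpha+3)\log p}}$, pay $O(p^{-(\alpha+2)})$ for the truncation event via the Gaussian tail and a union bound over $i$, and apply Bernstein to the bounded truncated variables; your key observation that $T=p^{o(1)}$, so the Bernstein exponent $\Omega(p/T)$ is superpolynomial while the truncation level still captures the log-normal tail at the needed polynomial rate, is exactly what makes this work. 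Both routes give a per-$l$, per-condition failure probability that is $O(p^{-\alpha-1})$ plus an $e^{-\alpha n}$ chi-squared term, which survives the $p$-fold union bound. Your version is self-contained and elementary at the cost of the truncation bookkeeping (all of which checks out: the $\xi_i$ are conditionally IID in each of the four sums, $\E[W_i^2]=O(1)$, and $\E[W_i \wedge T]\le \E[W_i]=\mu$ so the deviation threshold in Bernstein is at least $(p-1)\mu$); the paper's is shorter but outsources the heavy-tailed concentration to a cited moment inequality.
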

\begin{proof}
By Corollary 5.35 of \cite{vershynin},
$\P\left[\|X\|>\sqrt{p}+(1+\sqrt{2\alpha})\sqrt{n}\right]
\leq 2e^{-\alpha n}$, and similarly for $X'$.

For $\xi \sim \N(0,1)$ and any $u>0$, $\E[e^{u|\xi|}] \leq
\E[e^{u\xi}]+\E[e^{-u\xi}]=2e^{\frac{u^2}{2}}$ and
$\Var[e^{u|\xi|}] \leq \E[e^{2u|\xi|}] \leq 2e^{2u^2}$.
Let $C(\alpha,u)$ and $c(\alpha,u)$ denote large and small constants that may
change from instance to instance.
Defining $f(\xi)=e^{u|\xi|}-\E[e^{u|\xi|}]$,
$\E[|f(\xi)|^{\alpha+2}] \leq C(\alpha,u)$.
Then for $\xi_1,\ldots,\xi_p \overset{IID}{\sim} \N(0,1)$,
applying Corollary 4 of \cite{fuknagaev} with $t=\alpha+2$,
\begin{align*}
&\P\left[\frac{1}{p}\sum_{i=1}^p e^{u|\xi_i|}
>3e^{\frac{u^2}{2}}\right]
\leq \P\left[\sum_{i=1}^p f(\xi_i)>pe^{\frac{u^2}{2}}\right]
\leq C(\alpha,u)p^{-\alpha-1}.
\end{align*}
For any $i \neq l$,
$(X_i^TX_l,X_l) \overset{L}{=} (\|X_l\|\xi_i,X_l)$ where $\xi_i \sim \N(0,1)$
is independent of $X_l$. Hence
\begin{align*}
\P\left[\frac{1}{p}\mathop{\sum_{i=1}^p}_{i \neq l}
\exp\left(\frac{16\beta}{\sqrt{n}}|X_i^TX_l|\right)>
3e^{\frac{128\beta^2\|X_l\|^2}{n}} \Bigg| X_l\right]
&\leq C\left(\alpha,\frac{\beta\|X_l\|}{\sqrt{n}}\right)p^{-\alpha-1},
\end{align*}
and
\[\P\left[\frac{1}{p}\mathop{\sum_{i=1}^p}_{i \neq l}
\exp\left(\frac{16\beta}{\sqrt{n}}|X_i^TX_l|\right)>
C(\alpha,\beta) \Bigg| \|X_l\|^2 \leq
(1+2\alpha+2\sqrt{\alpha})n\right] \leq C'(\alpha,\beta)p^{-\alpha-1}\]
for some constants $C(\alpha,\beta)$ and $C'(\alpha,\beta)$.
Lemma 1 of \cite{laurentmassart} implies the chi-squared tail bound
$\P[\|X_l\|^2>(1+2\alpha+2\sqrt{\alpha})n] \leq e^{-\alpha n}$.
The same argument holds for the analogous sums with ${X_i'}^TX_l$, $X_i^TX_l'$,
and ${X_i'}^TX_l'$ in place of $X_i^TX_l$, and the result follows by a union
bound over $l$.
\end{proof}
\begin{lemma}\label{lemma:MGF}
Let $y,z \in \R^p$ satisfy $\|y\| \leq 1$ and $\|z\| \leq 1$. Under the setup of
Theorem \ref{thm:concentration}, let $F(X)=z^TK_{n,p}(X)y$ and define
$\G(\alpha,\beta)$ as in Lemma \ref{lemma:goodset}. Then for a constant
$C:=C(\alpha,\beta)>0$ and any $t>0$,
\[\E\left[e^{t(F(X)-F(X'))}\I\{(X,X') \in \G(\alpha,\beta)\}
\right] \leq 2\exp\left(\frac{C\|y\|_{\infty}t^2p^{1/2}(n+p)}{n^2}
\right).\]
\end{lemma}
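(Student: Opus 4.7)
The plan is to use the Maurey--Pisier integration trick along a Gaussian rotation path. Set $X(\theta) := \cos(\theta)X + \sin(\theta)X'$ and $X'(\theta) := -\sin(\theta)X + \cos(\theta)X'$ for $\theta\in[0,\pi/2]$; this rotation preserves the joint Gaussian law of $(X,X')$, and $F(X)-F(X')=-\int_0^{\pi/2}\langle \nabla F(X(\theta)),X'(\theta)\rangle\,d\theta$. Jensen's inequality with respect to the uniform measure $\tfrac{2}{\pi}d\theta$ on $[0,\pi/2]$ then gives
\[
e^{t(F(X)-F(X'))}\le \frac{2}{\pi}\int_0^{\pi/2}\exp\!\Bigl(-\tfrac{\pi t}{2}\langle \nabla F(X(\theta)),X'(\theta)\rangle\Bigr)\,d\theta.
\]

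Multiply by $\I\{(X,X')\in\G(\alpha,\beta)\}$ and take expectations. For each fixed $\theta$, change variables to $(Y,Y') := (X(\theta),X'(\theta))$, which has the same Gaussian law as $(X,X')$. The exponent $16\beta$ in the definition of $\G(\alpha,\beta)$ in Lemma~\ref{lemma:goodset} was chosen precisely so that the set survives this rotation: expanding $|Y_i^T Y_l|$ and using $|\cos\theta|,|\sin\theta|\le 1$ bounds it by the sum of the four pairwise inner products $|X_i^T X_l|$, $|X_i^T X_l'|$, $|X_i'^T X_l|$, $|X_i'^T X_l'|$, and the convexity inequality $e^{a+b+c+d}\le\tfrac{1}{4}(e^{4a}+e^{4b}+e^{4c}+e^{4d})$ together with the four exponential-row-sum conditions in $\G$ control $\tfrac{1}{p}\sum_{i\ne l}e^{4\beta|Y_i^T Y_l|/\sqrt n}$ by a constant. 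Combined with the norm bounds, this lets us dominate $\I\{(X,X')\in\G\}$ by $\I\{Y\in\G_1\}$ for a single-matrix good set $\G_1$ that depends only on $Y$. Since $Y'$ is then an independent Gaussian matrix,
\[
\E_{Y'}\bigl[e^{-\frac{\pi t}{2}\langle\nabla F(Y),Y'\rangle}\bigr]=\exp\!\Bigl(\tfrac{\pi^2 t^2}{8}\|\nabla F(Y)\|_F^2\Bigr).
\]

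The main step is the deterministic gradient bound on $\G_1$. Differentiating $F(Y)=z^T K(Y)y$ yields
\[
\frac{\partial F}{\partial Y_{lj}} = \frac{z_l}{n}\sum_{i\ne l}y_i g_i Y_{ij} + \frac{y_l}{n}\sum_{i\ne l}z_i g_i Y_{ij}
\]
with $g_i:=k'(Y_l^T Y_i/\sqrt n)$ satisfying $|g_i|\le e^{\beta|Y_l^T Y_i|/\sqrt n}$. Summing the square of either term over $j$ and applying Cauchy--Schwarz in $j$ gives $\|Y\|^2 \sum_i y_i^2 g_i^2$, and the key two-step estimate
\[
\sum_i y_i^2 g_i^2 \le \|y\|_\infty \sum_i |y_i| g_i^2 \le \|y\|_\infty\,\|y\|_2\Bigl(\sum_i g_i^4\Bigr)^{1/2}
\]
extracts a single power of $\|y\|_\infty$ while paying only a square root in $\sum g_i^4$. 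On $\G_1$, $\sum_i g_i^4 \le \sum_i e^{4\beta|Y_i^T Y_l|/\sqrt n} \lesssim p$. Combining with $\|Y\|^2\lesssim n+p$ and $\|y\|\le 1$, summing over $l$, handling the second term identically with the roles of $y$ and $z$ swapped, and assuming without loss of generality that $\|z\|_\infty \le \|y\|_\infty$ gives
\[
\|\nabla F(Y)\|_F^2\le \frac{C\|y\|_\infty p^{1/2}(n+p)}{n^2}.
\]
Substituting this into the Gaussian MGF above and integrating trivially in $\theta$ (the factor $\tfrac{2}{\pi}\int_0^{\pi/2}$ simply cancels) produces the claim up to inessential constants.

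The main obstacle is the gradient estimate: a naive Cauchy--Schwarz would yield $p$ instead of $p^{1/2}$ and $\|y\|_\infty^2$ instead of $\|y\|_\infty$, which would be far too weak for the Latala covering-net argument in Section~\ref{sec:concentration}. The finer estimate proceeds through $(\sum g_i^4)^{1/2}$, which on the single-matrix side requires control of $\sum e^{4\beta|Y_i^T Y_l|/\sqrt n}$; via the convexity/rotation step this translates into the requirement $\sum e^{16\beta|\cdot|/\sqrt n}\lesssim p$ on the pair good set, which is exactly the condition that was imposed in Lemma~\ref{lemma:goodset}. The choice of the constant $16\beta$ is therefore dictated by two factors: one factor of $4$ from raising $g_i$ to the fourth power (inflating the coefficient from $\beta$ to $4\beta$) and another factor of $4$ from absorbing the rotation via the four-term convexity inequality.
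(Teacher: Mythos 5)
Your overall strategy is the same as the paper's: the Maurey--Pisier rotation $X(\theta)=\cos\theta\,X+\sin\theta\,X'$, Jensen in $\theta$, a conditional Gaussian MGF, and a gradient bound that extracts $p^{1/2}$ and a single power of $\|y\|_\infty$ via the fourth-power Cauchy--Schwarz, with the $16\beta$ in $\G(\alpha,\beta)$ absorbing exactly the two factors of $4$ you identify. Your treatment of the indicator is a harmless variant: you enlarge $\I\{(X,X')\in\G\}$ to an event depending only on $X(\theta)$ so that the expectation over $X'(\theta)$ is an exact Gaussian MGF, whereas the paper keeps the original indicator and pays a Cauchy--Schwarz to decouple it; both work, and the stability of the good set along the path (your four-term convexity step, the paper's H\"older step) is the same computation.

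There is, however, one genuine gap: the final ``assume without loss of generality that $\|z\|_\infty\le\|y\|_\infty$.'' This is not a WLOG. The lemma is deliberately asymmetric in $y$ and $z$ --- only $\|y\|_\infty$ appears in the bound --- and your argument as written controls the second gradient term $\frac{y_l}{n}\sum_{i\ne l}z_ig_iY_{ij}$ only in terms of $\|z\|_\infty$, so you end up proving the bound with $\max(\|y\|_\infty,\|z\|_\infty)$ in place of $\|y\|_\infty$. That weaker statement is useless downstream: in Lemma \ref{lemma:supbound} the lemma is applied with $y=\pi_{l\setminus l-1}(\cdot)$ (sup-norm $2^{-l/2}$) and $z=\pi_j(\cdot)$ (sup-norm possibly $1$), and the entire chaining argument lives off the $2^{-l/2}$ gain, which $\max(\|y\|_\infty,\|z\|_\infty)$ destroys. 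The missing observation is the symmetry $k'(Y_i^TY_l/\sqrt n)=k'(Y_l^TY_i/\sqrt n)$: after summing over $l$, the two double sums $\sum_l\sum_{i\ne l}g_{il}^2\,z_i^2y_l^2$ and $\sum_l\sum_{i\ne l}g_{il}^2\,y_i^2z_l^2$ are identical up to relabeling $(i,l)\mapsto(l,i)$, so both terms reduce to $\sum_i z_i^2\bigl(\sum_{l\ne i}g_{il}^2y_l^2\bigr)\le\max_i\sum_{l\ne i}g_{il}^2y_l^2$, and your two-step estimate then puts $\|y\|_\infty$ (and only $\|y\|_\infty$) on both contributions. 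With that replacement for the WLOG, the rest of your argument goes through.
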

\begin{proof}
Consider $F$ as a function from $\R^{pn}$ to $\R$. The gradient with respect to
column $l$ of $X$ is
\begin{align*}
\nabla_{X_l} F(X)&=\nabla_{X_l} \left(\sum_{i=1}^p
\mathop{\sum_{i'=1}^p}_{i' \neq i}
\frac{1}{\sqrt{n}}k\left(\frac{X_i^TX_{i'}}{\sqrt{n}}\right)z_iy_{i'}\right)\\
&=\mathop{\sum_{i=1}^p}_{i \neq l} \frac{1}{n}k'\left(\frac{X_i^TX_l}{\sqrt{n}}
\right)(z_iy_l+y_iz_l)X_i^T=\frac{y_l}{n}v_z^TX+\frac{z_l}{n}v_y^TX,
\end{align*}
for $v_y,v_z \in \R^p$ with
$(v_y)_i=k'(X_i^TX_l/\sqrt{n})y_i\I\{i \neq l\}$ and
$(v_z)_i=k'(X_i^TX_l/\sqrt{n})z_i\I\{i \neq l\}$. This yields the gradient
bound
\begin{align*}
\|\nabla F(X)\|^2 &=\sum_{l=1}^p \|\nabla_{X_l} F(X)\|^2
\leq \sum_{l=1}^p
\frac{2y_l^2}{n^2}\|X\|^2\|v_z\|^2+\frac{2z_l^2}{n^2}\|X\|^2\|v_y\|^2\\
&=\frac{4\|X\|^2}{n^2}\sum_{i=1}^p\mathop{\sum_{l=1}^p}_{l \neq i}
k'\left(\frac{X_i^TX_l}{\sqrt{n}}\right)^2z_i^2y_l^2
\leq \frac{4\|X\|^2}{n^2}\max_{i=1}^p 
\mathop{\sum_{l=1}^p}_{l \neq i}
k'\left(\frac{X_i^TX_l}{\sqrt{n}}\right)^2y_l^2,
\end{align*}
where the last inequality applies $v^TMw \leq \|v\|_1\|Mw\|_\infty$. Applying
Cauchy-Schwarz and the bound $\|y\|_4^2 \leq \|y\|_2\|y\|_\infty \leq
\|y\|_\infty$,
\begin{align}
\|\nabla F(X)\|^2 &\leq \frac{4\|X\|^2}{n^2}\max_{i=1}^p 
\left(\mathop{\sum_{l=1}^p}_{l \neq i}
k'\left(\frac{X_i^TX_l}{\sqrt{n}}\right)^4\right)^{1/2}
\left(\mathop{\sum_{l=1}^p}_{l \neq i} y_l^4\right)^{1/2}\nonumber\\
&\leq \frac{4\|X\|^2\|y\|_\infty}{n^2}\max_{i=1}^p 
\left(\mathop{\sum_{l=1}^p}_{l \neq i}
k'\left(\frac{X_i^TX_l}{\sqrt{n}}\right)^4\right)^{1/2}.
\label{eq:Lipschitzbound}
\end{align}

We apply the integration argument of Maurey and Pisier:
For each $\theta \in \left[0,\frac{\pi}{2}\right]$, let
$X_\theta=X'\cos\theta+X\sin\theta$
and $\tilde{X}_\theta=-X'\sin\theta+X\cos\theta$. Then
\begin{align*}
&\phantom{=}\E\left[e^{t(F(X)-F(X'))}\I\{(X,X') \in \G(\alpha,\beta)\}\right]\\
&=\E\left[\exp\left(\frac{2}{\pi}\int_0^{\frac{\pi}{2}} \frac{\pi t}{2}
\frac{d}{d\theta}F(X_\theta)d\theta\right)\I\{(X,X') \in \G(\alpha,\beta)\}
\right]\\
&\leq \E\left[\frac{2}{\pi}\int_0^{\frac{\pi}{2}}\exp\left(
\frac{\pi t}{2}\frac{d}{d\theta}F(X_\theta)\right)d\theta\,
\I\{(X,X') \in \G(\alpha,\beta)\}\right]\\
&=\frac{2}{\pi}\int_0^{\frac{\pi}{2}}\E\left[
\exp\left(\frac{\pi t}{2}\nabla F(X_\theta)^T\tilde{X}_\theta\right)
\I\{(X,X') \in \G(\alpha,\beta)\}\right]d\theta,
\end{align*}
where $\nabla F(X_\theta)^T\tilde{X}_\theta$ represents the vector
inner-product in $\R^{pn}$. Noting that $X_\theta$ and $\tilde{X}_\theta$ are
independent and both equal in law to $X$, we may first condition on $X_\theta$
and use the Cauchy-Schwarz inequality and the bound
$\E[e^{c|(\tilde{X}_{\theta})_{ij}|}]
\leq \E[e^{c(\tilde{X}_{\theta})_{ij}}]+
\E[e^{-c(\tilde{X}_{\theta})_{ij}}]\leq 2e^{\frac{c^2}{2}}$ to obtain
\begin{align*}
&\phantom{=}\E\left[e^{t(F(X)-F(X'))}\I\{(X,X') \in \G(\alpha,\beta)\}\right]\\
&\leq \frac{2}{\pi}\int_0^{\frac{\pi}{2}} \E\left[\E\left[
\exp\left(\pi t
\nabla F(X_\theta)^T\tilde{X}_\theta\right)\Bigg| X_\theta\right]^{\frac{1}{2}}
\E\left[\I\{(X,X') \in \G(\alpha,\beta)\}
\Bigg| X_\theta \right]^{\frac{1}{2}}\right]d\theta\\
&\leq \frac{4}{\pi}\int_0^{\frac{\pi}{2}}
\E\left[\exp\left(\frac{\pi^2t^2\|\nabla F(X_\theta)\|^2}{4}\right)
\E\left[\I\{(X,X') \in \G(\alpha,\beta)\}\bigg| X_\theta\right]^{\frac{1}{2}}
\right]d\theta\\
&\leq \frac{4}{\pi}\int_0^{\frac{\pi}{2}}
\E\left[\exp\left(\frac{\pi^2t^2\|\nabla F(X_\theta)\|^2}{2}\right)
\I\{(X,X') \in \G(\alpha,\beta)\}\right]^{\frac{1}{2}}d\theta.
\end{align*}

The definition of $\G(\alpha,\beta)$ implies that $\|\nabla F(X_\theta)\|^2$ is
controlled over the entire integration path $\theta \in [0,\frac{\pi}{2}]$:
We have
\[\|X_\theta\|^2 \leq 2\|X'\|^2(\cos\theta)^2+2\|X\|^2(\sin\theta)^2
\leq 2\max(\|X\|^2,\|X'\|^2),\]
and also
\begin{align*}
\mathop{\sum_{l=1}^p}_{l \neq i}
k'\left(\frac{(X_\theta)_i^T(X_\theta)_l}{\sqrt{n}}\right)^4
&\leq \mathop{\sum_{l=1}^p}_{l \neq i}
\exp\left(\frac{4\beta|(X_\theta)_i^T(X_\theta)_l|}{\sqrt{n}}\right)\\
&\leq \mathop{\sum_{l=1}^p}_{l \neq i}
\exp\left(\frac{4\beta(|X_i^TX_l|+|{X_i'}^TX_l|+|X_i^TX_l'|+|{X_i'}^TX_l'|)}
{\sqrt{n}}\right)\\
&\leq \left(\mathop{\sum_{l=1}^p}_{l \neq i} \exp\left(\frac{16\beta|X_i^TX_l|}
{\sqrt{n}}\right)\right)^{1/4}
\left(\mathop{\sum_{l=1}^p}_{l \neq i} \exp\left(\frac{16\beta|{X_i'}^TX_l|}
{\sqrt{n}}\right)\right)^{1/4}\\
&\hspace{0.5in}
\left(\mathop{\sum_{l=1}^p}_{l \neq i} \exp\left(\frac{16\beta|X_i^TX_l'|}
{\sqrt{n}}\right)\right)^{1/4}
\left(\mathop{\sum_{l=1}^p}_{l \neq i} \exp\left(\frac{16\beta|{X_i'}^TX_l'|}
{\sqrt{n}}\right)\right)^{1/4}
\end{align*}
by H\"older's inequality. Then for any
$(X,X') \in \G(\alpha,\beta)$ and $\theta \in [0,\frac{\pi}{2}]$,
(\ref{eq:Lipschitzbound}) implies
\[\|\nabla F(X_\theta)\|^2 \leq
\frac{C(\alpha,\beta)\|y\|_\infty p^{1/2}(n+p)}{n^2},\]
and the result follows.
\end{proof}

Let us now recall $D_2^p$, $\pi_{l \setminus l-1}$, and $\pi_l$ from
Definition \ref{def:D2}.
\begin{lemma}\label{lemma:netapprox}
For any symmetric matrix $M \in \R^{p \times p}$,
$\|M\| \leq 10\sup_{y \in D_2^p} y^TMy$.
\end{lemma}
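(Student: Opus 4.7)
The plan is a standard covering-net (rounding) argument for the spectral norm. Given a symmetric $M$, let $v$ be a unit vector with $|v^T M v|=\|M\|$; by the symmetry $y\in D_2^p\Leftrightarrow -y\in D_2^p$ and by applying the argument to $M$ or $-M$ as needed, I may assume $v^T M v=\|M\|\geq 0$. I will construct $y\in D_2^p$ close to $v$ and use the identity
\[v^T M v-y^T M y=2\,y^T M(v-y)+(v-y)^T M(v-y)\]
to pass from $|v^T M v|$ to $|y^T M y|$ up to an error controlled by $\|v-y\|$.

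To construct $y$, I round each coordinate of $v$: if $v_i^2\leq 2^{-(m+3)}$ set $y_i=0$; otherwise let $l_i\geq 0$ be determined by $2^{-(l_i+1)}<v_i^2\leq 2^{-l_i}$ and set $y_i=\sgn(v_i)\cdot 2^{-(l_i+1)/2}$, the largest admissible dyadic magnitude not exceeding $|v_i|$. Then $y_i^2\leq v_i^2$ coordinatewise, so $\|y\|^2\leq\|v\|^2\leq 1$ and $y\in D_2^p$. On the regular coordinates the per-entry error is bounded by $(v_i-y_i)^2\leq(\sqrt{2}-1)^2\cdot 2^{-(l_i+1)}\leq(3-2\sqrt{2})v_i^2$; on the tail, the total contribution is at most $p\cdot 2^{-(m+3)}\leq 1/8$ since $p\leq 2^m$. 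Summing, $\|v-y\|^2\leq(3-2\sqrt{2})+1/8$, so $\|v-y\|\leq c_0$ for an absolute constant $c_0<1$.

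Plugging the bounds $\|v\|=1$, $\|y\|\leq 1$, $\|v-y\|\leq c_0$ into the identity above yields $|v^T M v-y^T M y|\leq(2c_0+c_0^2)\|M\|$, hence
\[(1-2c_0-c_0^2)\|M\|\leq y^T M y\leq\sup_{y\in D_2^p}y^T M y.\]
If $c_0$ were small enough that $1-2c_0-c_0^2\geq 1/10$, the lemma would follow at once. The main obstacle is that the naive round-down only delivers $c_0\approx 0.55$, which is too large for this one-step argument.

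To close the gap I would iterate. Applying the rounding to the rescaled residual $(v-y_0)/\|v-y_0\|$ produces $y_1'\in D_2^p$; setting $y_1=\|v-y_0\|y_1'$ and continuing gives a decomposition $v=\sum_{k\geq 0}y_k$ with $y_k=\alpha_k y_k'$, $y_k'\in D_2^p$, and $\alpha_k\leq c_0^k$. Expanding $v^T M v=\sum_{j,k}y_j^T M y_k$, the diagonal terms are bounded directly by $\alpha_k^2\sup_{D_2^p}y^T M y$; the off-diagonal terms are handled via the polarization identity $2(y_j')^T M(y_k')=(y_j'+y_k')^T M(y_j'+y_k')-(y_j')^T M y_j'-(y_k')^T M y_k'$, where a further rounding applied to $(y_j'+y_k')/2$ reduces the first summand to $\sup_{D_2^p}y^T M y$ modulo an $O(c_0)\|M\|$ error. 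Summing the resulting geometric series yields an inequality of the form $\|M\|\leq C_1\sup_{D_2^p}y^T M y+C_2\|M\|$; with sufficient iteration depth (and, if needed, a modestly refined rounding that produces a smaller $c_0$) one arranges $C_2<9/10$ and $C_1\leq 10(1-C_2)$, giving the factor-$10$ bound as stated. The only non-routine part of the proof is the careful bookkeeping of these constants.
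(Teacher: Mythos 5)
Your overall strategy---round the extremal unit vector $v$ coordinatewise into $D_2^p$ and control the resulting change in the quadratic form---is exactly the paper's, but the quantitative execution does not close, and the iteration you propose as a rescue is neither carried out nor capable, as sketched, of delivering the constant $10$. Two of your estimates are needlessly loose. First, the per-coordinate rounding error: since $2^{-(l_i+1)/2}<|v_i|\leq 2^{-l_i/2}=\sqrt{2}\cdot 2^{-(l_i+1)/2}$, you have $|v_i-y_i|=|v_i|\left(1-\tfrac{|y_i|}{|v_i|}\right)\leq\left(1-\tfrac{1}{\sqrt{2}}\right)|v_i|$, hence $(v_i-y_i)^2\leq(1-1/\sqrt{2})^2v_i^2\approx 0.086\,v_i^2$, a factor of two better than your $(3-2\sqrt{2})v_i^2\approx 0.172\,v_i^2$. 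This yields $\|v-y\|^2\leq(1-1/\sqrt{2})^2+1/8<(9/20)^2$, i.e.\ $c_0<9/20$. Second, the perturbation identity: writing $v^TMv-y^TMy=v^TM(v-y)+(v-y)^TMy$ instead of $2y^TM(v-y)+(v-y)^TM(v-y)$ gives $|v^TMv-y^TMy|\leq(\|v\|+\|y\|)\|v-y\|\,\|M\|\leq 2c_0\|M\|$, eliminating the $c_0^2$ term. With both fixes, $1-2c_0>1/10$ and your one-step argument closes with the factor $10$ exactly; this is precisely Lemma 5.4 of \cite{vershynin}, which the paper invokes after establishing $\|v-y\|<9/20$ by the same rounding. (Both you and the paper are cavalier about the sign: your reduction to $M$ or $-M$ proves a bound involving $\sup_y y^T(-M)y$, not the stated supremum, so the argument really establishes the version with $|y^TMy|$.)

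As written, however, the proof has a genuine gap: with your $c_0\approx 0.545$ even the linear bound gives $1-2c_0<0$, and the multi-step decomposition $v=\sum_k\alpha_k y_k'$ does not recover. The off-diagonal terms $(y_j')^TMy_k'$ require rounding $(y_j'+y_k')/2$ (which is not in $D_2^p$) back into the net, reintroducing an error of order $c_0\|M\|$ for every pair $(j,k)$; summing $\alpha_j\alpha_k$ over all pairs contributes on the order of $c_0(1-c_0)^{-2}\|M\|$, which already exceeds $\|M\|$ for your $c_0$, so the target inequality $\|M\|\leq C_1S+C_2\|M\|$ has $C_2>1$ and is vacuous. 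The concluding claim that one can arrange $C_2<9/10$ and $C_1\leq 10(1-C_2)$ is asserted rather than proved, and since $D_2^p$ is fixed by Definition \ref{def:D2} there is no freedom to ``refine the rounding'' beyond the $(1-1/\sqrt{2})^2$ bound above. The lemma is true, but only the sharpened one-step argument gives it.
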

\begin{proof}
For any $x \in \R^p$ with $\|x\|<1$, we may construct $y \in D_2^p$ such that
\[y_i=\begin{cases} 2^{-\frac{l}{2}}\sign(x_i) & 2^{-l} \leq x_i^2<2^{-l+1}\\
0 & x_i^2<2^{-m-3}. \end{cases}\]
Then $\|y\| \leq \|x\|<1$ and, letting $c=(1-1/\sqrt{2})^2$,
\begin{align*}
\|x-y\|^2&=\sum_{i:\,x_i^2 \geq 2^{-m-3}} (x_i-y_i)^2+\sum_{i:\,x_i^2<2^{-m-3}}
x_i^2
\leq \sum_{i:\,x_i^2 \geq 2^{-m-3}}cx_i^2
+\sum_{i:x_i^2<2^{-m-3}} x_i^2\\
&<c+(1-c)\sum_{i:x_i^2<2^{-m-3}} x_i^2
\leq c+\frac{1-c}{8}<(9/20)^2.
\end{align*}
The result then follows from Lemma 5.4 of \cite{vershynin}.
\end{proof}

\begin{lemma}\label{lemma:netsize}
Let $m=\lceil \log_2 p \rceil$. For some $C>0$ and all
$l \in \{0,1,\ldots,m+3\}$,
\[\log |\{\pi_l(y):y \in D_2^p\}| \leq C(m+4-l)2^l.\]
\end{lemma}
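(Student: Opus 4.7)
The plan is to count elements of $\{\pi_l(y) : y \in D_2^p\}$ by decomposing each vector according to the dyadic magnitudes of its nonzero entries, rather than by bounding the support size and value alphabet in a single step. For any $y \in D_2^p$, the nonzero entries of $\pi_l(y)$ satisfy $y_i^2 \in \{1,2^{-1},\ldots,2^{-l}\}$, so one may partition them by the exponent $j$ and set $N_j(y)=\{i : y_i^2 = 2^{-j}\}$ for $j = 0,1,\ldots,l$. The unit-norm constraint $\|y\|^2 \leq 1$ gives the key inequality $|N_j(y)| \cdot 2^{-j} \leq 1$, hence $|N_j(y)| \leq 2^j$.

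Given this decomposition, $\pi_l(y)$ is fully specified by choosing, for each $j \in \{0,\ldots,l\}$, a subset $N_j \subseteq \{1,\ldots,p\}$ of size at most $2^j$ together with a sign in $\{\pm 1\}$ on each element, so the number of level-$j$ options is at most $\binom{p}{\leq 2^j} \cdot 2^{2^j}$ (where $\binom{p}{\leq k}=\sum_{i=0}^{k}\binom{p}{i}$). Using $p \leq 2^{m}$ and the standard estimate $\binom{p}{\leq k} \leq (ep/k)^k$ when $k \leq p$, one checks that $\log\binom{p}{\leq 2^j} + 2^j \log 2 \leq C_0(m+4-j)\,2^j$ for some absolute constant $C_0$ and all $j \in \{0,\ldots,m+3\}$; when $2^j > p$ one uses instead the trivial bound $\log 2^p \leq 2^j$. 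Multiplying the level counts together and taking logs,
\[
\log |\{\pi_l(y) : y \in D_2^p\}| \;\leq\; C_0 \sum_{j=0}^{l}(m+4-j)\,2^j.
\]

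Reindexing the sum by $k = l-j$ and evaluating the resulting geometric-type series,
\[
\sum_{j=0}^{l}(m+4-j)\,2^j \;=\; 2^l \sum_{k=0}^{l}(m+4-l+k)\,2^{-k} \;\leq\; 2^l\bigl(2(m+4-l) + 2\bigr) \;\leq\; 4(m+4-l)\,2^l,
\]
where the last step uses $m+4-l \geq 1$. This yields the claimed bound with $C = 4C_0$.

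The main subtlety is the regime where $l$ is close to $m+3$, so that $2^l$ can exceed $p$. The naive single-step count $\binom{p}{\leq 2^l} \cdot (2(l+1))^{2^l}$ then picks up a factor of $\log(l+1) \sim \log m$ that cannot be absorbed into $m+4-l$ when the latter is of order one. The dyadic level-by-level decomposition avoids this: the sharper constraint $|N_j(y)| \leq 2^j$ applied separately at each level, combined with the geometric weights $2^j$ in the summation, ensures that the total bound is of the same order as its top-level contribution $(m+4-l)\,2^l$.
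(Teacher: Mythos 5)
Your proposal is correct and follows essentially the same route as the paper: the dyadic level sets $N_j(y)$ are exactly the supports of the paper's $\pi_{j\setminus j-1}(y)$, the per-level count $\binom{p}{\le 2^j}2^{2^j}$ with the $(ep/k)^k$ estimate (and a trivial bound when $2^j>p$) matches the paper's Lemma \ref{lemma:netsize} argument, and the final summation $\sum_{j=0}^l (m+4-j)2^j \le C(m+4-l)2^l$ is the same geometric-series step. No gaps.
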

\begin{proof}
Let $C>0$ denote a constant that may change from instance to instance.
For any $l \in \{0,1,\ldots,m\}$,
\[|\{\pi_{l \setminus l-1}(y):y \in D_2^p\}|
\leq \sum_{k=0}^{2^l} \binom{p}{k}2^k,\]
as there are at most $2^l$ non-zero entries of $\pi_{l \setminus l-1}(y)$,
and for each non-zero entry there are two choices of sign. Using $\binom{p}{k}
\leq \left(\frac{ep}{k}\right)^k$, and noting that $k \mapsto (2ep)^kk^{-k}$ is
monotonically increasing over $k \in [0,2p]$ and that $2^l \leq 2p$ for
$l \leq m$, this implies
\[\log |\{\pi_{l \setminus l-1}(y):y \in D_2^p\}| \leq
\log\left(1+2^l\left(\frac{2ep}{2^l}\right)^{2^l}\right)
\leq \log\left(1+2^l\left(2e2^{m-l}\right)^{2^l}\right)
\leq C(m+1-l)2^l.\]
For $l \in \{m+1,m+2,m+3\}$, we use the bound
$|\{\pi_{l \setminus l-1}(y):y \in D_2^p\}| \leq 3^p$, as each coordinate of
$\pi_{l \setminus l-1}(y)$ takes one of three values. Then
\[\log |\{\pi_{l \setminus l-1}(y):y \in D_2^p\}| \leq C2^m \leq C(m+4-l)2^l.\]
Combining these bounds,
\begin{align*}
\log |\pi_l(y)| &\leq \sum_{j=0}^l \log |\pi_{j \setminus j-1}(y)|
\leq C\sum_{j=0}^l (m+4-j)2^j \leq C(m+4-l)2^l.
\end{align*}
\end{proof}

\begin{lemma}\label{lemma:supbound}
Under the setup of Theorem \ref{thm:concentration}, let
$m=\lceil \log_2 p \rceil$ and let $\G(\alpha,\beta)$ be as in Lemma
\ref{lemma:goodset}. Then there are constants
$C,c>0$ depending only on $\alpha$ and $\beta$ such
that for any $l \in \{0,1,\ldots,m+3\}$, $j=l$ or $j=l-1$ (if $l \geq 1$),
and $t>0$,
\[\P\left[\sup_{y \in D_2^p} \pi_j(y)^TK(X)\pi_{l \setminus l-1}(y)>t
\text{ and } (X,X') \in \G(\alpha,\beta)\right] \leq
2\exp\left(C(m+4-l)2^l-\frac{ct^22^{l/2}n^2}{p^{1/2}(n+p)}\right).\]
\end{lemma}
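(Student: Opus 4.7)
The plan has three layers, parallel to the sketch in Section \ref{sec:proofoverview}: a union bound over a discrete net, application of Lemma \ref{lemma:MGF} together with Chernoff's inequality for each fixed pair, and a decoupling step to pass from a tail estimate on $F(X) - F(X')$ to one on $F(X)$ alone.

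First, because $\pi_j(y)$ and $\pi_{l\setminus l-1}(y)$ are each determined by $y \in D_2^p$, I would enlarge the supremum to a supremum over pairs $(u, v) \in \pi_j(D_2^p) \times \pi_{l \setminus l-1}(D_2^p)$ and take a union bound. Lemma \ref{lemma:netsize}, combined with the intermediate bound $\log|\pi_{l \setminus l-1}(D_2^p)| \lesssim (m+1-l) 2^l$ appearing inside its proof, yields a cardinality product of $\exp(C(m+4-l) 2^l)$ for both $j \in \{l-1, l\}$, matching the first term in the target exponent.

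For each fixed pair, write $F(X) = u^T K(X) v$ and observe the key quantitative fact: the nonzero entries of $v = \pi_{l\setminus l-1}(y)$ all have magnitude exactly $2^{-l/2}$, so $\|v\|_\infty = 2^{-l/2}$ while $\|u\|, \|v\| \leq 1$. Applying Lemma \ref{lemma:MGF} with $v$ placed in the low-$\|\cdot\|_\infty$ slot (using $K(X)^T = K(X)$ to swap the roles of $u$ and $v$) gives
\[
\E\bigl[e^{s(F(X)-F(X'))} \I\{(X,X') \in \G\}\bigr] \leq 2\exp\Bigl(\tfrac{C \cdot 2^{-l/2} s^2 p^{1/2}(n+p)}{n^2}\Bigr),
\]
and Chernoff's inequality, optimized in $s > 0$, produces the Gaussian tail
\[
\P\bigl[F(X) - F(X') > t/2,\ (X,X') \in \G\bigr] \leq 2\exp\Bigl(-\tfrac{c t^2 2^{l/2} n^2}{p^{1/2}(n+p)}\Bigr),
\]
which accounts for the second term in the target exponent.

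Finally, I would transfer the tail bound from $F(X) - F(X')$ to $F(X)$. Since $k$ is odd and $X_i^T X_{i'}$ has symmetric law for $i \neq i'$, the off-diagonal entries of $K(X)$ have mean zero and hence $\E F(X) = 0$; a symmetrization against the independent copy $X'$, exploiting the exchange symmetry of $\G$ under $(X, X') \leftrightarrow (X', X)$, then transfers the bound up to a constant factor and a halving of $t$. This decoupling under the joint indicator $\I\{\G\}$ is the main obstacle: unconditionally, Jensen's inequality combined with $\E F = 0$ directly yields $\E e^{sF(X)} \leq \E e^{s(F(X) - F(X'))}$, but once we restrict to $\G$, which couples $X$ and $X'$ through the cross-sum conditions of Lemma \ref{lemma:goodset}, the clean Jensen step no longer applies. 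I would handle this by decomposing $\{F(X) > t\} \cap \G$ according to whether $F(X') \leq t/2$ or $F(X') > t/2$, using the exchange symmetry of $\G$ to identify the second piece with $\{F(X) > t/2\} \cap \G$, and absorbing the residual error through the quantitative estimate $\P[\G^c] \leq C'(p^{-\alpha} + p e^{-\alpha n})$ from Lemma \ref{lemma:goodset} together with a coarse deterministic bound on $|F(X)|$ on the complementary set. Combining the three layers then yields the stated bound.
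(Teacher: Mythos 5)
Your outer layers (the union bound over the net with cardinality $e^{C(m+4-l)2^l}$, and the application of Lemma \ref{lemma:MGF} with $\pi_{l\setminus l-1}(y)$ in the $\|\cdot\|_\infty$ slot followed by Chernoff) match the paper and are fine. The genuine gap is in your third layer, the passage from $F(X)-F(X')$ back to $F(X)$ on the event $\G$. Your proposed decomposition gives $\P[F(X)>t,\,\G]\le \P[F(X)-F(X')>t/2,\,\G]+\P[F(X')>t/2,\,\G]$, and by exchange symmetry the second term equals $\P[F(X)>t/2,\,\G]$ — i.e.\ the same quantity you are trying to bound, at half the threshold. This recursion does not close: iterating drives the threshold to $0$ and the Gaussian tail terms toward a constant, so the bound is vacuous. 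Nor can the residual be "absorbed" by $\P[\G^c]\le C'(p^{-\alpha}+pe^{-\alpha n})$: any additive per-pair error of that size gets multiplied by the net cardinality $e^{C(m+4-l)2^l}$, which for $l$ near $m+3$ is $e^{cp}$, obliterating the final $p^{-\alpha}$ bound in Theorem \ref{thm:concentration}. A standard symmetrization ($\P[F(X)>t]\P[F(X')\le t/2]\le\P[F(X)-F(X')>t/2]$) would also require a separate a priori estimate $\P[F(X')>t/2]\le 1/2$, which you do not supply.

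The step you dismissed as unavailable is in fact exactly what the paper uses, and it works \emph{because of} a symmetry you did not exploit: let $\Lambda$ be the set of diagonal $\pm 1$ matrices; then $(X,X')\in\G(\alpha,\beta)$ iff $(X,DX')\in\G(\alpha,\beta)$ for every $D\in\Lambda$, since all the defining conditions involve only $\|X'\|$ and absolute values of inner products. Hence, conditional on $X$ and on $\cE=\{(X,X')\in\G\}$, $X'$ has the same law as $DX'$ with $D$ uniform on $\Lambda$, and since $k$ is odd, $\E[K(DX')\mid X']=0$, so $\E[K(X')\mid X,\cE]=0$. Conditional Jensen then gives $\E[e^{-\lambda F(X')}\mid X,\cE]\ge 1$, which lets you insert this factor for free and conclude $\E[e^{\lambda F(X)}\I\{\cE\}]\le\E[e^{\lambda(F(X)-F(X'))}\I\{\cE\}]$ directly at the moment-generating-function level, with no additive error and no recursion. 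With that replacement your argument goes through; without it, it does not.
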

\begin{proof}
For notational convenience, define the event
$\cE:=\{(X,X') \in \G(\alpha,\beta)\}$.
Applying Lemma \ref{lemma:netsize} and a union bound over $\{\pi_l(x):x \in
D_2^p\}$, for any $\lambda>0$,
\begin{align*}
&\phantom{=}
\P\left[\sup_{y \in D_2^p} \pi_j(y)K(X)
\pi_{l \setminus l-1}(y)>t \text{ and } \cE\right]\\
&\leq e^{C(m+4-l)2^l}\sup_{y \in \{\pi_l(x):x \in D_2^p\}}
\P\left[\pi_j(y)K(X)\pi_{l\setminus l-1}(y)>t \text{ and }\cE\right]\\
&\leq e^{C(m+4-l)2^l}e^{-\lambda t}\sup_{y \in \{\pi_l(x):x \in D_2^p\}}
\E\left[e^{\lambda \pi_j(y)K(X)\pi_{l\setminus l-1}(y)}\I\{\cE\}
\right].
\end{align*}
Let $\Lambda$ be the set of all diagonal matrices in $\R^{p \times p}$ with all
diagonal entries in $\{-1,1\}$. Note that $(X,X') \in \G(\alpha,\beta)$ if and
only if $(X,DX') \in \G(\alpha,\beta)$ for all $D \in \Lambda$.
Then, conditional on $X$ and the event $\cE$,
$X'$ equals $DX'$ in law for $D$ uniformly distributed over $\Lambda$. Hence
\[\E[K(X')|X,\cE]=\E[K(DX')|X,\cE]=\E[\E[K(DX')|X',X,\cE]|X,\cE]=0,\]
where the last equality follows from $\E[K(DX')|X']=0$ as the kernel
function $k$ is odd. Then Jensen's inequality yields, for
any $y \in D_2^p$ and $\lambda>0$,
\[\E\left[e^{-\lambda \pi_j(y)K(X')\pi_{l \setminus l-1}(y)}
\Big| X,\cE\right] \geq 1,\]
and so
\begin{align*}
\E\left[e^{\lambda \pi_j(y)K(X)
\pi_{l\setminus l-1}(y)}\I\{\cE\}\right]
&=\E\left[e^{\lambda \pi_j(y)K(X)\pi_{l\setminus l-1}(y)}\Big|\cE\right]
\P[\cE]\\
&\leq \E\left[e^{\lambda \pi_j(y)K(X)\pi_{l\setminus l-1}(y)}\E\left[
e^{-\lambda \pi_j(y)K(X')\pi_{l\setminus l-1}(y)} \Big|X,\cE\right]
\Big|\cE\right]\P[\cE]\\
&=\E\left[e^{\lambda \pi_j(y)(K(X)-K(X'))\pi_{l\setminus l-1}(y)}
\Big|\cE\right]\P[\cE]\\
&=\E\left[e^{\lambda \pi_j(y)(K(X)-K(X'))\pi_{l\setminus l-1}(y)}
\I\{\cE\}\right]\\
&\leq 2\exp\left(\frac{C\lambda^2 p^{1/2}(n+p)}{2^{l/2}n^2}\right),
\end{align*}
where the last line applies Lemma \ref{lemma:MGF}
and the bound $\|\pi_{l \setminus l-1}(y)\|_\infty \leq 2^{-l/2}$.
Optimizing over $\lambda$ yields the desired result.
\end{proof}

We now conclude the proof of Theorem \ref{thm:concentration}.
\begin{proof}[Proof of Theorem \ref{thm:concentration}]
For each $l=0,\ldots,m+3$, set
\[t_l^2=\frac{C_0(m+4-l)2^{l/2}p^{1/2}(n+p)}{n^2}\]
for a constant $C_0:=C_0(\alpha,\beta)$.
Let $X'$ be an independent copy of $X$. Then by Lemma \ref{lemma:supbound},
for each $l=0,\ldots,m+3$ and $j=l$ or $j=l-1$,
\[\P\left[\sup_{y \in D_2^p} \pi_j(y)^TK(X)\pi_{l \setminus l-1}(y)
>t_l \text{ and } (X,X') \in \G(\alpha,\beta)\right] \leq 2e^{-(C-cC_0)(m+4-l)
2^l}.\]
Recalling $m=\lceil \log_2 p \rceil$,
we may pick $C_0$ sufficiently large such that
\[\sum_{l=0}^{m+3} 4e^{-(C-cC_0)(m+4-l)2^l}
\leq 4(m+4)e^{-(C-cC_0)(m+4)} \leq C'p^{-\alpha}\]
for a constant $C':=C'(\alpha,\beta)$. Then (\ref{eq:yKy}) and a union bound
imply
\[\P\left[\sup_{y \in D_2^p} y^TK(X)y>2\sum_{l=0}^{m+3} t_l
\text{ and } (X,X') \in \G(\alpha,\beta)\right] \leq C'p^{-\alpha}.\]
Finally, the bound
\begin{align*}
2\sum_{l=0}^{m+3} t_l&<\frac{2C_0^{1/2}p^{1/4}(n+p)^{1/2}}{n}
\sum_{l=0}^{m+3} (m+4-l)2^{\frac{l}{4}}\\
&=\frac{2C_0^{1/2}p^{1/4}(n+p)^{1/2}}{n}
\sum_{l=0}^{m+3}\sum_{j=0}^l 2^{\frac{j}{4}}
\leq \frac{Cp^{1/4}(n+p)^{1/2}p^{1/4}}{n} \leq C\max\left(\frac{p}{n},
\sqrt{\frac{p}{n}}\right),
\end{align*}
the decomposition (\ref{eq:yKy}), and Lemmas \ref{lemma:goodset} and
\ref{lemma:netapprox} yield the desired result.
\end{proof}

\section{Decomposition of Hermite polynomials of sums of IID random variables}
\label{sec:hermiteapprox}
In this section, we prove the approximation (\ref{eqapproxhermiteidentity})
formalized as the following proposition:
\begin{proposition}\label{propdecomposition}
Let $Z=(z_j:1 \leq j \leq n) \in \R^n$, where $z_j$ are
IID random variables such that $\E[z_j]=\E[z_j^3]=0$, $\E[z_j^2]=1$, and
$\E[|z_j|^l]<\infty$ for each $l \geq 1$. Let $h_d$ denote the orthonormal
Hermite polynomial of degree $d$. Define
\begin{align}
q_{d,n}(Z)&=\sqrt{\frac{1}{n^dd!}}
\mathop{\sum_{j_1,\ldots,j_d=1}^n}_{j_1 \neq j_2 \neq \ldots \neq j_d}
\prod_{i=1}^d z_{j_i},\label{eqqd}\\
r_{d,n}(Z)&=\begin{cases}
0 & d=1\\
\displaystyle \sqrt{\frac{1}{n^dd!}}\binom{d}{2}
\mathop{\sum_{j_1,\ldots,j_{d-1}=1}^n}_{j_1 \neq j_2 \neq \ldots \neq
j_{d-1}} \left((z_{j_1}^2-1) \prod_{i=2}^{d-1} z_{j_i}\right)
& d \geq 2
\end{cases},\label{eqrd}\\
s_{d,n}(Z)&=h_{d}\left(\frac{1}{\sqrt{n}}\sum_{j=1}^n z_j\right)-q_{d,n}(Z)
-r_{d,n}(Z).\label{eqsd}
\end{align}
Then, for each $d \geq 1$ and any $\alpha,\beta>0$,
$\P[|s_{d,n}(Z)|>n^{-1+\alpha}]<n^{-\beta}$ for
all sufficiently large $n$ (i.e. for $n \geq N$ where $N$ may depend on
$\alpha$, $\beta$, $d$, and the distribution of $z_j$).
\end{proposition}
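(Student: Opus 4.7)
The plan is to proceed by strong induction on $d$, leveraging the orthonormal Hermite three-term recurrence $\sqrt{d+1}\,h_{d+1}(x) = x\,h_d(x) - \sqrt{d}\,h_{d-1}(x)$. I first verify the base cases $d=1,2$ directly: setting $S := n^{-1/2}\sum_{j}z_j$, one has $h_1(S)=S=q_{1,n}(Z)$ so that $s_{1,n}\equiv 0$, and a short expansion of $h_2(S)=(S^2-1)/\sqrt{2}$ into diagonal and off-diagonal pieces shows $h_2(S)=q_{2,n}(Z)+r_{2,n}(Z)$ so that $s_{2,n}\equiv 0$.

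For the inductive step I multiply $q_{d,n}(Z)$ and $r_{d,n}(Z)$ by $S=n^{-1/2}\sum_k z_k$ and partition the resulting sums according to whether the new index $k$ coincides with one of the existing indices. In $S\,q_{d,n}$, the ``all distinct'' configuration yields the main term $\sqrt{d+1}\,q_{d+1,n}$, while each coincidence $k=j_i$ contributes a factor $z_{j_i}^2$; splitting $z_{j_i}^2=(z_{j_i}^2-1)+1$ produces a term matching $r_{d+1,n}$ with coefficient $2/\sqrt{d+1}$ and a term that, after counting the valid $j_i$'s, yields $\sqrt{d}\,q_{d-1,n}$ plus an explicit $O(1/n)$ correction. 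The analogous expansion for $S\,r_{d,n}$ yields main terms $\tfrac{d-1}{\sqrt{d+1}}r_{d+1,n}+\sqrt{d}\,r_{d-1,n}$, together with two \emph{centered} polynomial chaos remainders coming from the coincidence $k=j_1$ (producing $(z_{j_1}^3-z_{j_1})$ factors) and from $k=j_i$ with $i\ge 2$ (which on splitting $z_{j_i}^2=(z_{j_i}^2-1)+1$ produces $(z_{j_1}^2-1)(z_{j_i}^2-1)$ factors in the mean-zero part), plus another $O(1/n)$ correction. Substituting these identities into $\sqrt{d+1}\,h_{d+1}(S)=S\,h_d(S)-\sqrt{d}\,h_{d-1}(S)$ using $h_d(S)=q_{d,n}+r_{d,n}+s_{d,n}$, every $q_{d+1,n}$, $r_{d+1,n}$, $q_{d-1,n}$, and $r_{d-1,n}$ term cancels exactly. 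The $r_{d+1,n}$ cancellation $\tfrac{2}{\sqrt{d+1}}+\tfrac{d-1}{\sqrt{d+1}}-\sqrt{d+1}=0$ is the key algebraic identity. This produces the clean recurrence
\[\sqrt{d+1}\,s_{d+1,n} = S\,s_{d,n} - \sqrt{d}\,s_{d-1,n} + E_{d,n},\]
where $E_{d,n}$ is the sum of the two centered polynomial chaoses above and two explicit $O(1/n)$ corrections.

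The induction then runs as follows. For the target $(\alpha,\beta)$, pick $\alpha'<\alpha/2$ and apply the hypothesis at $(\alpha',\beta+1)$ to $d$ and $d-1$. With probability at least $1-Cn^{-\beta-1}$ we have $|s_{d,n}|,|s_{d-1,n}|<n^{-1+\alpha'}$ and $|S|<n^{\alpha'}$, where the bound on $|S|$ follows from Markov applied to a high moment of $S$ (finite because all moments of $z_j$ are). The $O(1/n)$ pieces of $E_{d,n}$ are controlled since $q_{d-1,n},r_{d-1,n}$ are $O(1)$ in probability. Each centered chaos has the form $T=\sum_{j_1,\ldots,j_k \text{ distinct}}\prod_{i=1}^k f_i(z_{j_i})$ with every $f_i$ of mean zero; I bound $\E[T^{2M}]$ by expanding into a $2Mk$-fold sum, using independence and the mean-zero property to restrict attention to configurations in which every distinct $j$-value appears at least twice across the $2M$ copies. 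This forces at most $Mk$ distinct values and hence $\E[T^{2M}]\le C_{M,d,k}\,n^{Mk}$, using finiteness of all moments of $z_j$. Markov with $M$ sufficiently large then gives each chaos of size $\le n^{-1+\alpha'}$ with probability at least $1-n^{-\beta-1}$. A union bound on the recurrence yields $|s_{d+1,n}|\le C(n^{\alpha'}\cdot n^{-1+\alpha'}+n^{-1+\alpha'})<n^{-1+\alpha}$ with probability at least $1-n^{-\beta}$ for large $n$.

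The main obstacle is the moment bound $\E[T^{2M}]\le C_{M,d,k}n^{Mk}$ on the centered polynomial chaoses. The ``each distinct $j$-value contributes a factor of $n$ and is forced to appear at least twice'' counting is standard in the U-statistic literature, but with the mixed $f_i$'s (some $z$, some $z^2-1$, one $z^3-z$) and the within-tuple distinctness constraint, one must carefully account for which pairings of positions across the $2M$ copies remain compatible after the restriction. The algebraic cancellation of main terms in the second paragraph is also delicate enough that one should verify the coefficients $\tfrac{2}{\sqrt{d+1}}$, $\tfrac{d-1}{\sqrt{d+1}}$, $\sqrt{d}$, etc., by direct computation before relying on the identity. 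The remaining steps --- base cases, concentration of $S$, and propagation of constants --- are routine.
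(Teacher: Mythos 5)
Your proposal is correct and follows essentially the same route as the paper's proof: induction via the Hermite three-term recurrence (you use the orthonormal normalization, the paper the monic one, which only rescales the coefficients), the same identification of the error as two centered polynomial chaoses plus $O(1/n)$ corrections, and the same moment-method bound on centered U-statistics (the paper's Lemma~\ref{lemmaindependentsum}) with the "every index must repeat" counting. The coefficient cancellation you flag, $\tfrac{2}{\sqrt{d+1}}+\tfrac{d-1}{\sqrt{d+1}}=\sqrt{d+1}$, is exactly the identity the paper verifies.
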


The following lemma shows that
$\P\left[|q_{d,n}(Z)|>n^\alpha\right]<n^{-\beta}$ and
$\P\left[|r_{d,n}(Z)|>n^{-\frac{1}{2}+\alpha}\right]<n^{-\beta}$ for any
$\alpha,\beta>0$ and all sufficiently large $n$. Hence Proposition
\ref{propdecomposition} may be interpreted as decomposing
$h_d(n^{-1/2}\sum_{j=1}^n z_j)$ into the sum of an
$O(1)$ term $q_{d,n}(Z)$, an $O(n^{-1/2})$ term $r_{d,n}(Z)$,
and an $O(n^{-1})$ term $s_{d,n}(Z)$.

\begin{lemma}\label{lemmaindependentsum}
Suppose $z_1,\ldots,z_n$ are IID random variables, with $\E[|z_j|^l]<\infty$ 
for all $l \geq 1$. Let $p_1,\ldots,p_d:\R \to \R$ be any polynomial functions
such that $\E[p_i(z_j)]=0$ for each $i=1,\ldots,d$. Then for any
$\alpha,\beta>0$,
\[\P\left[n^{-\frac{d}{2}}\left|
\mathop{\sum_{j_1,\ldots,j_d=1}^n}_{j_1 \neq j_2 \neq \ldots
\neq j_d} \prod_{i=1}^d p_i(z_{j_i})\right|>n^\alpha\right]<n^{-\beta}\]
for all sufficiently large $n$.
\end{lemma}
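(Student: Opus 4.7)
The plan is to bound high even moments of $S_n := \sum_{j_1 \neq \ldots \neq j_d} \prod_{i=1}^d p_i(z_{j_i})$ and apply Markov's inequality. Specifically, for any positive integer $k$, Markov gives
\[\P\bigl[\,|S_n| > n^{d/2+\alpha}\,\bigr] \leq \frac{\E[S_n^{2k}]}{n^{k(d+2\alpha)}},\]
so it suffices to prove $\E[S_n^{2k}] \leq C(k,d)\,n^{kd}$ for a constant $C(k,d)$ depending only on $k$, $d$, and the distribution of $z_j$. Choosing $k$ so that $2k\alpha > \beta$ then yields the claim.

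Expanding the $2k$-th power, $\E[S_n^{2k}]$ is a sum over tuples $(\mathbf{j}^{(1)},\ldots,\mathbf{j}^{(2k)})$, where each $\mathbf{j}^{(m)} = (j_1^{(m)},\ldots,j_d^{(m)})$ consists of $d$ distinct indices in $\{1,\ldots,n\}$, of the quantities
\[\E\!\left[\prod_{m=1}^{2k} \prod_{i=1}^d p_i\bigl(z_{j_i^{(m)}}\bigr)\right].\]
By the independence of $z_1,\ldots,z_n$ together with the mean-zero hypothesis $\E[p_i(z_j)] = 0$, this expectation vanishes unless every distinct value appearing among the $2kd$ indices $\{j_i^{(m)}\}$ does so at least twice. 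This is the combinatorial input that produces the saving we need.

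Let $q$ denote the number of distinct indices appearing in such a non-vanishing term. Since each contributes at least two occurrences and there are $2kd$ occurrences in total, we have $q \leq kd$. The number of non-vanishing tuples is therefore at most $\binom{n}{q}$ times the number of ways (depending only on $k$ and $d$) to assign $q$ abstract labels to the $2kd$ positions so that each label appears $\geq 2$ times and labels within a single $\mathbf{j}^{(m)}$ are distinct; this is bounded by $C_1(k,d)\,n^{kd}$. Each surviving expectation is bounded in absolute value by $\prod_{i=1}^d \E[|p_i(z)|^{2k}] \leq C_2(k,d)$ using the moment assumption and Hölder's inequality applied factor-by-factor within each index's contribution. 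Combining gives $\E[S_n^{2k}] \leq C(k,d)\,n^{kd}$, which when fed into the Markov bound above completes the proof.

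The main substantive step is the combinatorial estimate that the non-vanishing index configurations number $O(n^{kd})$ rather than $O(n^{2kd})$; this is a direct consequence of the mean-zero property once one organizes the expansion correctly, and is standard for $U$-statistic-type sums. No delicate estimate is needed beyond this — all the moment quantities $\E[|p_i(z_j)|^{2k}]$ are finite constants by hypothesis on $z_j$, and the constant $C(k,d)$ is allowed to depend on $k$, $d$, and the law of $z_j$ but not on $n$.
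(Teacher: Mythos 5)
Your proposal is correct and follows essentially the same route as the paper: an even-moment Markov bound with exponent chosen so that the polynomial saving beats $n^{-\beta}$, combined with the observation that the mean-zero property forces every index in a non-vanishing term to appear at least twice, which caps the number of distinct indices at half the total and yields the $O(n^{kd})$ count. The paper's proof is the same argument applied to the normalized quantity $n^{-d/2}S_n$ directly, with the per-term constant bound stated without the explicit H\"older step.
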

\begin{proof}
Fix $\alpha,\beta>0$. Let
\[f(z_1,\ldots,z_n)=n^{-\frac{d}{2}}\left|\mathop{\sum_{j_1,\ldots,j_d=1}^n}_{
j_1 \neq j_2 \neq \ldots \neq j_d} \prod_{i=1}^d p_i(z_{j_i})\right|,\]
and let $l$ be an even integer such that $\alpha l>\beta$. Then
\[\P[f(z_1,\ldots,z_n)>n^\alpha] \leq \frac{\E[f(z_1,\ldots,z_n)^l]}{n^{\alpha
l}},\]
and it suffices to show $\E[f(z_1,\ldots,z_n)^l] \leq C$ for a constant $C$
independent of $n$. Note that
\[\E[f(z_1,\ldots,z_n)^l]=n^{-\frac{ld}{2}}
\mathop{\sum_{j_1^1,\ldots,j_d^1=1}^n}_{j_1^1 \neq \ldots \neq j_d^1}\ldots
\mathop{\sum_{j_1^l,\ldots,j_d^l=1}^n}_{j_1^l \neq \ldots \neq j_d^l}
\E\left[\prod_{i=1}^d \prod_{k=1}^l p_i\left(z_{j_i^k}\right)\right].\]
For each term of the above sum, if there is some $j$ such that $j=j_i^k$ for
exactly one pair of indices $i \in \{1,\ldots,d\}$ and $k \in \{1,\ldots,l\}$,
then the expectation of that term is 0 as $\E[p_i(z_j)]=0$ and $z_j$ is
independent of $z_1,\ldots,z_{j-1},z_{j+1},\ldots,z_n$. Hence, for terms in
the sum with non-zero expectation, there are at most $\frac{ld}{2}$ distinct
values of $j_i^k$. Then the number of such terms is at most
$Cn^{\frac{ld}{2}}$, and the magnitude of each such term is
at most $C'$, for some constants $C,C'$ independent of $n$,
establishing $\E[f(z_1,\ldots,z_n)^l] \leq C$.
\end{proof}

\begin{proof}[Proof of Proposition \ref{propdecomposition}]
Let $S=\frac{1}{\sqrt{n}}\sum_{j=1}^n z_j$.
It will be notationally convenient to work with the monic Hermite polynomials
$\tilde{h}_d=\sqrt{d!}h_d$.
Let us accordingly define $\tilde{q}_{d,n}=q_{d,n}\sqrt{d!}$,
$\tilde{r}_{d,n}=r_{d,n}\sqrt{d!}$,
and $\tilde{s}_{d,n}=s_{d,n}\sqrt{d!}$. Then
\[\tilde{h}_d(S)=\tilde{q}_{d,n}(Z)+\tilde{r}_{d,n}(Z)+\tilde{s}_{d,n}(Z),\]
and we wish to show for any $\alpha,\beta>0$,
$\P\left[|\tilde{s}_d(Z)|>n^{-1+\alpha}\right]<n^{-\beta}$
for all sufficiently large $n$.

We proceed by induction on $d$. Note that $\tilde{h}_0(x)=1$,
$\tilde{h}_1(x)=x$, and $\tilde{h}_2(x)=x^2-1$. Then for $d=1$,
$\tilde{h}_1(S)=S=\tilde{q}_{1,n}(Z)$, and for $d=2$,
\[\tilde{h}_2(S)=S^2-1
=n^{-1}\left(\mathop{\sum_{j_1,j_2=1}^n}_{j_1 \neq j_2}
z_{j_1}z_{j_2}+\sum_{j=1}^n (z_j^2-1)\right)
=\tilde{q}_{2,n}(Z)+\tilde{r}_{2,n}(Z).\]
Hence the proposition holds with $\tilde{s}_{1,n}(Z)=\tilde{s}_{2,n}(Z)=0$.

Let us assume by induction that the proposition holds for $d-1$ and $d$. Recall
that the monic Hermite polynomials satisfy the three-term recurrence
$\tilde{h}_{d+1}(x)=x\tilde{h}_d(x)-d\tilde{h}_{d-1}(x)$ (c.f. eq. (5.5.8) of
\cite{szego}). We may compute
\begin{align*}
S\tilde{q}_{d,n}(Z)
&=n^{-\frac{d+1}{2}}\sum_{j=1}^n z_j\mathop{\sum_{j_1,\ldots,j_d=1}^n}_{j_1
\neq \ldots \neq j_d} \prod_{i=1}^d z_{j_i}\\
&=n^{-\frac{d+1}{2}}\left(\mathop{\sum_{j_1,\ldots,j_{d+1}=1}^n}_{j_1
\neq \ldots \neq j_{d+1}} \prod_{i=1}^{d+1} z_{j_i}
+d\mathop{\sum_{j_1,\ldots,j_d=1}^n}_{j_1 \neq \ldots \neq j_d}
z_{j_1}^2\prod_{i=2}^d z_{j_i}\right)\\
&=\tilde{q}_{d+1,n}(Z)+\frac{2}{d+1}
\tilde{r}_{d+1,n}(Z)+\frac{d(n-d+1)}{n}\tilde{q}_{d-1,n}(Z),\\
S\tilde{r}_{d,n}(Z) &=n^{-\frac{d+1}{2}}\binom{d}{2}\sum_{j=1}^n z_j
\mathop{\sum_{j_1,\ldots,j_{d-1}=1}^n}_{j_1 \neq \ldots \neq
j_{d-1}} \left((z_{j_1}^2-1)
\prod_{i=2}^{d-1} z_{j_i}\right)\\
&=n^{-\frac{d+1}{2}}\binom{d}{2}
\left(\mathop{\sum_{j_1,\ldots,j_d=1}^n}_{j_1 \neq \ldots \neq
j_d} \left((z_{j_1}^2-1) \prod_{i=2}^d z_{j_i}\right)
+\mathop{\sum_{j_1,\ldots,j_{d-1}=1}^n}_{j_1 \neq \ldots \neq j_{d-1}}
\left((z_{j_1}^3-z_{j_1})\prod_{i=2}^{d-1} z_{j_i}\right)\right.\\
&\hspace{0.5in}\left.
+(d-2)\mathop{\sum_{j_1,\ldots,j_{d-1}=1}^n}_{j_1 \neq \ldots \neq j_{d-1}}
\left((z_{j_1}^2-1)z_{j_2}^2\prod_{i=3}^{d-1} z_{j_i}\right)\right)\\
&=\frac{d-1}{d+1}\tilde{r}_{d+1,n}(Z)+n^{-\frac{d+1}{2}}\binom{d}{2}
\mathop{\sum_{j_1,\ldots,j_{d-1}=1}^n}_{
j_1 \neq \ldots \neq j_{d-1}}\left((z_{j_1}^3-z_{j_1})
\prod_{i=2}^{d-1} z_{j_i}\right)\\
&\hspace{0.5in}+n^{-\frac{d+1}{2}}\binom{d}{2}(d-2)
\mathop{\sum_{j_1,\ldots,j_{d-1}=1}^n}_{j_1 \neq
\ldots \neq j_{d-1}} (z_{j_1}^2-1)(z_{j_2}^2-1)\prod_{i=3}^{d-1}z_{j_i}
+\frac{d(n-d+2)}{n}\tilde{r}_{d-1,n}(Z).\\
\end{align*}
Substituting these expressions into the three-term recurrence,
\begin{align*}
\tilde{h}_{d+1}(S)
&=S\left(\tilde{q}_{d,n}(Z)+\tilde{r}_{d,n}(Z)+\tilde{s}_{d,n}(Z)\right)
-d\left(\tilde{q}_{d-1,n}(Z)+\tilde{r}_{d-1,n}(Z)+\tilde{s}_{d-1,n}(Z)\right)\\
&=\tilde{q}_{d+1,n}(Z)+\tilde{r}_{d+1,n}(Z)+\tilde{s}_{d+1,n}(Z)
\end{align*}
for
\begin{align*}
\tilde{s}_{d+1,n}(Z)
&:=-\frac{d(d-1)}{n}\tilde{q}_{d-1,n}(Z)
+n^{-\frac{d+1}{2}}\binom{d}{2}\mathop{\sum_{j_1,\ldots,j_{d-1}=1}^n}_{
j_1 \neq \ldots \neq j_{d-1}}
\left((z_{j_1}^3-z_{j_1})\prod_{i=2}^{d-1} z_{j_i}\right)\\
&\hspace{0.2in}
+n^{-\frac{d+1}{2}}\binom{d}{2}(d-2)
\mathop{\sum_{j_1,\ldots,j_{d-1}=1}^n}_{j_1 \neq \ldots \neq j_{d-1}}\left(
(z_{j_1}^2-1)(z_{j_2}^2-1) \prod_{i=3}^{d-1} z_{j_i}\right)
-\frac{d(d-2)}{n}\tilde{r}_{d-1,n}(Z)\\
&\hspace{0.2in}+S\tilde{s}_{d,n}(Z)-d\tilde{s}_{d-1,n}(Z)
=:I+II+III+IV+V+VI.
\end{align*}
Fix $\alpha,\beta>0$. Note that $\E[z_j]=0$, $\E[z_j^2-1]=0$, and
$\E[z_j^3-z_j]=0$, so by Lemma \ref{lemmaindependentsum},
\[\max\left(\P\left[|I|>n^{-1+\frac{\alpha}{2}}\right],
\P\left[|II|>n^{-1+\frac{\alpha}{2}}\right],
\P\left[|III|>n^{-1+\frac{\alpha}{2}}\right],
\P\left[|IV|>n^{-1+\frac{\alpha}{2}}\right]\right)<n^{-2\beta}\]
for all large $n$. By the induction hypothesis,
$\P[|\tilde{s}_{d,n}|>n^{-1+\frac{\alpha}{4}}]<n^{-2\beta}/2$
for all large $n$, and also $\P[|S|>n^{\frac{\alpha}{4}}]<n^{-2\beta}/2$ for
all large $n$ by Lemma \ref{lemmaindependentsum} (applied to the simple case
where $d=1$ and $p_1(x)=x$). Then
$\P[|V|>n^{-1+\frac{\alpha}{2}}]<n^{-2\beta}$
for all large $n$. Similarly, the induction hypothesis implies
$\P[|VI|>n^{-1+\frac{\alpha}{2}}]<n^{-2\beta}$ for all large $n$.
Putting this together,
\[\P\left[|\tilde{s}_{d+1,n}(Z)|>n^{-1+\alpha}\right]
\leq \P\left[|I+II+III+IV+V+VI|>6n^{-1+\frac{\alpha}{2}}\right]
<6n^{-2\beta}<n^{-\beta}\]
for all large $n$, completing the induction.
\end{proof}

\section{Bounding the dominant matrix}\label{sec:Q}
Consider the polynomial kernel matrix $K(X)$ in Theorem \ref{thm:polynomial}.
Throughout this section, we let $D<\infty$ denote the (fixed) degree of
the polynomial $k$, and we write
\[k(x)=\sum_{d=1}^D a_dh_d(x).\]
Corresponding to the decomposition of $h_d$ given in Proposition
\ref{propdecomposition}, we consider the following decomposition of $K(X)$:
\begin{definition}\label{defQRS}
Define $Q(X)=(q_{ii'}:1 \leq i,i' \leq p) \in \R^{p \times p}$ with entries
\[q_{ii'}=\begin{cases}\displaystyle \frac{1}{\sqrt{n}}\sum_{d=1}^D
a_dq_{d,n}(x_{i1}x_{i'1},\ldots,x_{in}x_{i'n}), & i \neq i'\\
0, & i=i',
\end{cases}\]
where $q_{d,n}$ is as in (\ref{eqqd}). Define
$R(X) \in \R^{p \times p}$ and $S(X) \in \R^{p \times p}$
analogously with $r_{d,n}$ and $s_{d,n}$ in place of $q_{d,n}$,
where $r_{d,n}$ and $s_{d,n}$ are as in (\ref{eqrd}) and (\ref{eqsd}).
\end{definition}

With the above definitions, $K(X)=Q(X)+R(X)+S(X)$.
In this section, we establish the following result:
\begin{proposition}\label{propQnorm}
Under the conditions of Theorem \ref{thm:polynomial}, letting $Q(X)$ be as in
Definition \ref{defQRS},
$\limsup_{n,p \to \infty} \|Q(X)\| \leq \|\mu_{a,\nu,\gamma}\|$ almost
surely.
\end{proposition}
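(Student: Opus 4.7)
The strategy is the moment method, as outlined in Section \ref{sec:proofoverview}. For any even integer $l$, one has $\|Q(X)\|^l \leq \Tr Q(X)^l$; the goal is to show, for $l = l_n$ growing slowly with $n$ (specifically $l_n \asymp \log n$ suffices), that
\[\E[\Tr Q(X)^l]^{1/l} \leq \|\mu_{a,\nu,\gamma}\| + o(1).\]
Combined with Markov's inequality,
\[\P\left[\|Q(X)\| > \|\mu_{a,\nu,\gamma}\| + \eps\right] \leq \frac{\E[\Tr Q(X)^l]}{(\|\mu_{a,\nu,\gamma}\| + \eps)^l},\]
and an appropriate Borel--Cantelli argument (using that $p^{1/l}$ remains bounded when $l \asymp \log p$), this yields $\limsup \|Q(X)\| \leq \|\mu_{a,\nu,\gamma}\|$ almost surely.

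To analyze $\E[\Tr Q(X)^l]$, expand
\[\Tr Q(X)^l = \sum_{i_1,\ldots,i_l=1}^p Q(X)_{i_1 i_2} Q(X)_{i_2 i_3} \cdots Q(X)_{i_l i_1},\]
and apply Definition \ref{defQRS} to each factor: each $Q(X)_{i_s i_{s+1}}$ is itself a sum over Hermite degrees $d_s \in \{1,\ldots,D\}$ and tuples $(j_{s,1},\ldots,j_{s,d_s})$ of pairwise distinct column indices, contributing $\prod_{t=1}^{d_s} x_{i_s j_{s,t}} x_{i_{s+1} j_{s,t}}$. Taking expectations and using that the $x_{ij}$ are independent, mean-zero, and symmetric in law, a term contributes non-trivially only when the underlying bipartite multigraph on row- and column-vertices has every $x_{ij}$ appearing an even number of times; the moment bound (\ref{eq:momentassump}) then controls each such contribution.

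The main technical step is the combinatorial comparison. Introduce a deformed GUE matrix $M = W + V$ on a $p$-dimensional space, with $W$ a scaled GUE matrix producing the semicircular component $\sqrt{\gamma(\nu - a^2)}\mu_\semi$ and $V$ deterministic with spectral measure converging to $a(\mu_{\MP,\gamma} - 1)$, so that by Proposition \ref{prop:freeconvolution} the limiting spectrum of $M$ is $\mu_{a,\nu,\gamma}$. Expanding $\Tr M^l$ in the same way yields a parallel sum over graphs, whose dominant (non-crossing) contributions encode the free cumulants of $\mu_{a,\nu,\gamma}$ computed in the proof of Proposition \ref{prop:largesteig}. The hard part is to match the contributions graph-by-graph and show that $\E[\Tr Q(X)^l] \leq \E[\Tr M^l] (1 + o(1))$: the restriction that the column indices within each factor of $Q(X)_{i_s i_{s+1}}$ be pairwise distinct (imposed by the orthonormal Hermite decomposition of Proposition \ref{propdecomposition}) is precisely what kills the non-Gaussian Wick contractions that would otherwise appear, so that at leading order the $Q(X)$-graphs are in bijection with the ribbon-graph contributions for the Gaussian model $M$. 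Carrying this out uniformly in $l \asymp \log n$ while absorbing subleading corrections is the principal obstacle; I would organize it by induction on the edge count of the underlying graph, with the higher Hermite components $K_d(X)$ for $d \geq 2$ each compared to an independent GUE summand (cf.\ the decomposition (\ref{eq:mudecomposition})) and the linear component $K_1(X)$ compared to $V$.

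To conclude, having bounded $\E[\Tr Q(X)^l]$ by $\E[\Tr M^l]$, one invokes the result of \cite{capitaineetal} that $\|M\| \to \|\mu_{a,\nu,\gamma}\|$ almost surely, together with standard concentration of measure for GUE plus deterministic perturbation, to deduce $\E[\Tr M^l]^{1/l} \to \|\mu_{a,\nu,\gamma}\|$ for $l \to \infty$ slowly. Choosing $l_n = \lceil C \log n \rceil$ with $C$ large makes the Markov probability above decay polynomially in $n$ with arbitrarily large exponent, and Borel--Cantelli then completes the proof of Proposition \ref{propQnorm}.
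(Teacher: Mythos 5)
Your high-level strategy --- moment method with $l \asymp \log n$, expansion of $\Tr Q(X)^l$ over labeled graphs, comparison with a deformed GUE model $M$ whose norm converges to $\|\mu_{a,\nu,\gamma}\|$ by \cite{capitaineetal}, then Markov plus Borel--Cantelli --- is exactly the paper's strategy. But the proposal leaves the central step unproved, and the plan you sketch for it would run into trouble. First, you take $V$ deterministic; the paper's comparison requires $V$ to be a \emph{random} Wishart matrix $ZZ^T-\diag(\|Z_i\|_2^2)$ (Definition \ref{defdeformedGUE}), because only then does the expansion of $\E[\Tr M^l]$ produce a sum over labeled graphs (the simple-labelings of Definition \ref{defsimplelabeling}) carrying both row- and column-indices that can be matched against the multi-labelings arising from $\E[\Tr Q(X)^l]$; with deterministic $V$ there is nothing for the $n$-labels to match. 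Second, the claim that ``at leading order the $Q(X)$-graphs are in bijection with the ribbon-graph contributions for the Gaussian model'' is only correct for the zero-excess terms. Since $l \asymp \log n$, the positive-excess terms cannot be absorbed as $o(1)$ corrections: their number grows combinatorially in $l$, and each carries a moment factor as large as $(C\Delta)^{C\alpha\Delta}$ from the non-Gaussian entries via (\ref{eq:momentassump}). The paper handles this by constructing an explicit many-to-one map $\varphi$ from multi-labeling classes to simple-labeling classes (Proposition \ref{proplabelmapping}), proving that the excess shrinks by at most a bounded factor and that the fibers have weighted size at most $l^{C_3+C_4\Delta_0}$ times explicit constants, and then --- crucially --- running the GUE comparison at a \emph{much smaller} dimension $\tilde n \asymp n^{1/C_1}/\mathrm{poly}(l)$ so that the $\tilde n^{-\tilde\Delta}$ suppression on the GUE side dominates the $(\mathrm{poly}(l)/n)^{\Delta}$ suppression on the $Q$ side. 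None of this is supplied by ``induction on the edge count.''

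Two further points. Comparing each Hermite component $K_d(X)$, $d \ge 2$, to an independent GUE summand and $K_1(X)$ to $V$ separately does not control the cross-terms in $\Tr Q(X)^l$, where a single cycle mixes $n$-vertices of different Hermite degrees; at the level of $\log n$ moments this would require a quantitative asymptotic-freeness statement that is essentially the comparison you are trying to avoid, which is why the paper works with the full multi-labeling structure in which the degrees $d_1,\ldots,d_l$ vary along the cycle. Finally, the paper's map $\varphi$ only exists for $a \neq 0$ (the fiber bound (\ref{eq:proplabelmapping3}) involves powers of $\sqrt{\nu}/|a|$), so the case $a=0$ needs a separate continuity argument --- perturbing the kernel by $ax$ and letting $a \to 0$ --- which your proposal does not address.
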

Our proof uses the moment method and the moment comparison argument described in
Section \ref{sec:proofoverview}. The following definitions of an $l$-graph and 
a multi-labeling of such a graph will correspond to the
primary combinatorial object of interest in the subsequent analysis.
\begin{definition}\label{deflgraph}
For any integer $l \geq 2$, an {\bf $\bm{l}$-graph} is a graph consisting of a
single cycle with $2l$ vertices and $2l$ edges, with the vertices alternatingly
denoted as {\bf $\bm{p}$-vertices} and {\bf $\bm{n}$-vertices}.
\end{definition}
We will consider the vertices of the $l$-graph to be ordered by
picking an arbitrary $p$-vertex as the first vertex and ordering the remaining
vertices according to a traversal along the cycle. A vertex $V$ ``follows'' or
``precedes'' another vertex $W$ if $V$ comes before or after $W$, respectively,
in this ordering, and the last vertex of the cycle (which is an $n$-vertex) is
followed by the first $p$-vertex.
\begin{definition}\label{defmultilabeling}
A {\bf multi-labeling} of an $l$-graph is an assignment of a
{\bf $\bm{p}$-label} in $\{1,2,3,\ldots\}$ to each $p$-vertex and an ordered
tuple of {\bf $\bm{n}$-labels} in $\{1,2,3,\ldots\}$ to each $n$-vertex, such
that the following conditions are satisfied:
\begin{enumerate}
\item The $p$-label of each $p$-vertex is distinct from those of the two
$p$-vertices immediately preceding and following it in the cycle.
\item The number $d_s$ of $n$-labels in the tuple for each $s^\text{th}$
$n$-vertex
satisfies $1 \leq d_s \leq D$, and these $d_s$ $n$-labels are distinct.
\item For each distinct $p$-label $i$ and distinct $n$-label $j$, there are an
even number of edges in the cycle (possibly 0) such that its $p$-vertex
endpoint is labeled $i$ and its $n$-vertex endpoint has label $j$ in its tuple.
\end{enumerate}
A {\bf $\bm{(p,n)}$-multi-labeling} is a multi-labeling with all $p$-labels in
$\{1,\ldots,p\}$ and all $n$-labels in $\{1,\ldots,n\}$.
\end{definition}
A key bound on the number of possible distinct $p$-labels and $n$-labels that
appear in a multi-labeling of an $l$-graph is provided by the following lemma.
We will always consider $p$-labels to be distinct from $n$-labels, even though
(for notational convenience) we use the same label set $\{1,2,3,\ldots\}$ for
both.
\begin{restatable}{lemma}{lemmadistinctmultilabels}
\label{lemmadistinctmultilabels}
Suppose a multi-labeling of an $l$-graph has $d_1,\ldots,d_l$
$n$-labels on the first through $l^\text{th}$ $n$-vertices, respectively, and
suppose that it has $m$
total distinct $p$-labels and $n$-labels. Then $m \leq \frac{l+\sum_{s=1}^l
d_s}{2}+1$.
\end{restatable}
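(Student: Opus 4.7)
The goal is to bound $m$ via the cyclomatic number of the label multigraph $B$ on the distinct labels, whose multi-edges arise as follows: for each cycle edge of the $l$-graph with $p$-endpoint label $i$ and $n$-endpoint having tuple containing $j$, add an edge $(i,j)$. By condition 3 of Definition \ref{defmultilabeling}, every multiplicity $c(i,j)$ in $B$ is even, so the halved multigraph $B/2$ with multiplicities $c(i,j)/2$ has $\sum_s d_s$ edges. Since $B$ is the image of the connected cycle of the $l$-graph, it is connected on its $m$ vertices, giving
\[
m - 1 = \sum_s d_s - \mathrm{cyc}(B/2).
\]
The required inequality is thus equivalent to the cyclomatic lower bound $\mathrm{cyc}(B/2) \geq (\sum_s d_s - l)/2$.

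To establish this cyclomatic bound, I would decompose $B$ into the $l$ ``gadgets'' contributed by the $n$-vertices: the $s$-th gadget is $K_{2,d_s}$ on $\{i_s,i_{s+1}\} \cup J_s$, with $2d_s$ edges and cyclomatic number $d_s - 1$, so the gadgets contribute $\sum_s d_s - l$ to the cyclomatic number in isolation. Condition 3 forces each of the $2 d_s$ edges of any gadget to be duplicated by at least one other gadget (so that $c(i,j)$ is even), so the gadget overlaps produce additional independent cycles in $B$. Combined with the baseline from the gadgets and the factor-of-two passage to $B/2$, a careful accounting yields the required bound on $\mathrm{cyc}(B/2)$.

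The main obstacle is the bookkeeping for the overlap-induced cycles together with the halving. I would proceed by induction on $\sum_s d_s - l$. The base case $\sum_s d_s = l$ (all $d_s = 1$) follows directly, since then $B/2$ is a connected multigraph with $l$ edges on $m$ vertices, giving $m \leq l+1$. For the inductive step I pick an $n$-label $j^*$ in a tuple $J_{s_0}$ of size $\geq 2$; by condition 3 it occurs in at least one other tuple $J_{s^*}$, and I exploit the parity of the endpoint-pair vectors $v_s := e_{i_s}+e_{i_{s+1}}$ (for which condition 3 asserts $\sum_{s \in S_{j^*}} v_s = 0$ in $\mathbb{F}_2$) to choose $s^*$ so that removing $j^*$ from both $J_{s_0}$ and $J_{s^*}$ preserves conditions 1, 2, and 3. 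This reduces $\sum_s d_s$ by $2$ while lowering $m$ by at most $1$, and induction closes the argument. The delicate point is the $\mathbb{F}_2$-selection of $s^*$ when no perfectly matching partner exists, which I expect to handle by iterating the removal step with a sequence of labels chosen so that the accumulated parity defects cancel.
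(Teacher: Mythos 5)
Your reformulation is correct and is a genuinely different route from the paper's: the identity $m-1=\sum_s d_s-\mathrm{cyc}(B/2)$ for the connected halved label multigraph does reduce the lemma to the cyclomatic bound $\mathrm{cyc}(B/2)\geq(\sum_s d_s-l)/2$, and your base case $\sum_s d_s=l$ is fine. (For comparison, the paper inducts on $l$: if every $p$-label repeats, there are at most $l/2$ distinct $p$-labels and, by Lemma \ref{lemmatwicenlabels}, at most $\sum_s d_s/2$ distinct $n$-labels, which already gives the bound without the $+1$; otherwise some $p$-vertex carries a unique label and the vertex-deletion surgery of Lemma \ref{lemmainductionstep} reduces to an $(l-1)$- or $(l-2)$-graph.) However, your inductive step has two genuine gaps. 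First, removing $j^*$ from exactly two tuples $J_{s_0},J_{s^*}$ preserves condition (3) only if $\{i_{s_0},i_{s_0+1}\}=\{i_{s^*},i_{s^*+1}\}$ as sets, and the $\mathbb{F}_2$ relation $\sum_{s\in S_{j^*}}v_s=0$ does not supply such a partner: the incidence pairs of $j^*$ may form a single cycle of length at least $3$ in the graph on $p$-labels (e.g.\ pairs $\{1,2\},\{2,3\},\{3,4\},\{4,1\}$), in which case no two pairs coincide and every two-tuple removal breaks parity. Your proposed fix of iterating with accumulating parity defects is precisely the unproved part; note that deleting $j^*$ from \emph{all} $k\geq 2$ tuples containing it does preserve condition (3) and is still consistent with the arithmetic ($\sum_s d_s$ drops by $k$ while $m$ drops by $1$, and $2-k/2\leq 1$), so this particular gap is repairable.

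The second gap is more serious. Whichever removal you perform, some tuple containing $j^*$ may have size $1$, and the reduced object then has an empty $n$-label tuple, so it is not a multi-labeling and the induction hypothesis does not apply. You cannot simply enlarge the class to allow $d_s=0$: an $l$-graph with all tuples empty and all $l$ $p$-labels distinct satisfies conditions (1)--(3) vacuously but has $m=l>\frac{l+0}{2}+1$ already for $l=4$ (indeed for any $l\geq 3$), so the generalized statement is false. Repairing this requires a surgery that removes the empty-tuple $n$-vertices together with adjacent $p$-vertices while controlling both $m$ and $l$ --- which is essentially the content of the paper's Lemma \ref{lemmainductionstep} and its case analysis --- or a strengthened induction hypothesis that tracks the empty tuples. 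As written, the argument does not close.
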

\begin{figure}[t]
\includegraphics[width=2.5in]{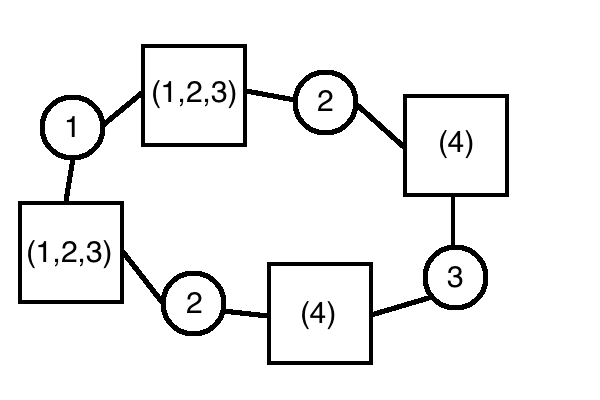}
\caption{A multi-labeling of an $l$-graph for $l=4$ and $D=3$.
$p$-vertices are depicted with a circle and $n$-vertices are depicted with a
square.}
\label{figlabeling}
\end{figure}
We defer the proof of Lemma \ref{lemmadistinctmultilabels} to Appendix
\ref{appendixcombinatorics}. 
Figure \ref{figlabeling} shows an example of a multi-labeling
of an $l$-graph for $l=4$ and $D=3$. In this multi-labeling,
$\sum_{s=1}^4 d_s=3+3+1+1=8$ and the number of total distinct labels is
$m=3+4=7$, so Lemma \ref{lemmadistinctmultilabels} holds with equality.

The non-negative quantity $\frac{l+\sum_{s=1}^l d_s}{2}+1-m$
appears in many of our combinatorial lemmas, and we give it a name:
\begin{definition}\label{defmultiexcess}
Suppose a multi-labeling of an $l$-graph has $d_1,\ldots,d_l$ $n$-labels on the
first through $l^\text{th}$ $n$-vertices, respectively, and suppose that it has 
$m$ total distinct $p$-labels and $n$-labels. The {\bf excess} of the
multi-labeling is $\Delta:=\frac{l+\sum_{s=1}^l d_s}{2}+1-m$.
\end{definition}
A high-level intuition, which we make precise in various ways in Appendix
\ref{appendixcombinatorics}, is that multi-labelings with zero or small
excess satisfy many regularity properties. For example, we prove the following
in Appendix \ref{appendixcombinatorics}:
\begin{restatable}{lemma}{lemmatripleijpairs}
\label{lemmatripleijpairs}
Suppose a multi-labeling of an $l$-graph has excess $\Delta$. For each
$i \in \{1,2,3,\ldots\}$ and $j \in \{1,2,3,\ldots\}$,
let $b_{ij}$ be the number of edges in the $l$-graph such that the $p$-vertex
endpoint is labeled $i$ and the $n$-vertex endpoint has label $j$ in its tuple.
Then $\sum_{i,j:b_{ij}>2} b_{ij} \leq 12\Delta$.
\end{restatable}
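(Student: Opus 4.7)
The plan is to deduce the inequality from two simpler bounds: an algebraic reduction $B := \sum_{i,j:\,b_{ij}>2} b_{ij} \leq 4(T - E)$, where $T = \sum_s d_s$ and $E = |\{(i,j):b_{ij}>0\}|$, followed by a combinatorial bound $T - E \leq 3\Delta$. Chaining these yields $B \leq 12\Delta$.

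For the algebraic reduction, each $n$-vertex $v_s$ has two incident cycle edges and a tuple of size $d_s$, contributing $2d_s$ annotated edges to the multiset counted by $\{b_{ij}\}$, so $\sum_{i,j} b_{ij} = 2T$. Condition (3) forces each nonzero $b_{ij}$ to be even and hence at least $2$. Letting $E_2$ count pairs with $b_{ij}=2$, we have $2E_2 + B = 2T$ and, since every summand of $B$ is at least $4$, $E - E_2 \leq B/4$. Combining these yields $B \leq 4(T-E)$.

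The main combinatorial step is to show $T - E \leq 3\Delta$, which rearranges to
\[6(m-1)\leq 3l+T+2E. \qquad (\star)\]
I would use the following ingredients: (i) $E \geq m-1$ from connectedness of the bipartite label graph $G^*$ with vertex set the $m$ labels and edges $\{(i,j):b_{ij}>0\}$; (ii) Lemma \ref{lemmadistinctmultilabels} giving $m \leq (l+T)/2+1$; and (iii) the fact that every $n$-label $j$ appears in at least two tuples, since otherwise $b_{P_s,j}=b_{P_{s+1},j}=1$ by condition (1), violating condition (3), so $m_n \leq T/2$. These bounds are not jointly sufficient for $(\star)$ (the case $l=2,T=4,m=E=4$, where $G^*$ is $K_{2,2}$, already saturates it), so I would additionally exploit the ``group decomposition'': each $n$-vertex $v_s$ together with its two $p$-neighbors $P_s, P_{s+1}$ and tuple $J_s$ embeds a $K_{2,d_s}$ subgraph into $G^*$ carrying $d_s-1$ independent cycles. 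Carefully tracking how cycles from overlapping groups share edges, using $P_s \neq P_{s+1}$ and the distinctness within each $J_s$, should yield the refined cyclomatic lower bound on $E - m + 1$ needed to establish $(\star)$.

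The principal obstacle is this last step: the accounting of overlapping group cycles is delicate and genuinely requires all of conditions (1)--(3), since Lemma \ref{lemmadistinctmultilabels} as a black box is insufficient. As a backup, induction on the number of pairs with $b_{ij} \geq 4$ — removing such a pair by an appropriate merge of two cycle edges with matching $(i,j)$ signature, and tracking the induced changes in $T-E$ and $\Delta$ — provides an alternative route, reducing to the trivial base case $\max b_{ij} \leq 2$ where $T=E$ and the inequality is immediate.
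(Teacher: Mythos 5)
Your algebraic reduction is sound: $\sum_{i,j} b_{ij}=2T$, condition (3) forces each nonzero $b_{ij}$ to be an even number $\geq 2$, and the count $E-E_2\leq B/4$ does give $B\leq 4(T-E)$. But the proof then rests entirely on the inequality $T-E\leq 3\Delta$, equivalently your $(\star)$, and this is exactly the step you do not establish. You correctly observe that connectedness of the bipartite label graph, Lemma \ref{lemmadistinctmultilabels}, and the fact that every $n$-label occurs at least twice are jointly insufficient, and the substitute you offer --- ``carefully tracking how cycles from overlapping $K_{2,d_s}$ groups share edges \ldots should yield the refined cyclomatic lower bound'' --- is a statement of intent, not an argument. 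Since the whole content of the lemma is concentrated in precisely this kind of excess-versus-multiplicity accounting, the proposal has a genuine gap at its core. The backup route is also not workable as described: an $l$-graph is a single cycle, and ``merging two cycle edges with matching $(i,j)$ signature'' does not produce a smaller cycle with a valid multi-labeling in any evident way, nor is it clear how $\Delta$ changes under such a surgery.

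For comparison, the paper proves the lemma by induction on $l$ with a case split. If some $p$-label occurs exactly once, Lemma \ref{lemmainductionstep} deletes two or four consecutive vertices to obtain a valid $(l-1)$- or $(l-2)$-graph multi-labeling; one then checks that $\sum_{i,j:b_{ij}>2}b_{ij}$ drops by at most $8$ per $n$-label shared with the rest of the graph while $\Delta$ drops by at most a compensating amount. If every $p$-label occurs at least twice, Lemma \ref{lemmafewdistinctp} shows $\sum_{j:N_j\geq 3}N_j\leq 6\Delta-6$, and pairs $(i,j)$ with $b_{ij}>2$ can only involve such $j$, giving the bound directly. If you want to salvage your approach, you would need to either prove $(\star)$ by a similar induction (at which point you have essentially reproduced the paper's argument with extra bookkeeping) or find a genuinely different global counting argument; as written, the proposal does not constitute a proof.
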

\noindent In particular, a multi-labeling with excess $\Delta=0$
has either $b_{ij}=2$ or $b_{ij}=0$ for every label-pair $(i,j)$, by the above
lemma and condition (3) of Definition \ref{defmultilabeling}. This indeed holds
for the example of Figure \ref{figlabeling}.

\begin{definition}\label{defmultiequivclass}
Two multi-labelings of an $l$-graph are {\bf equivalent} if there is a
permutation $\pi_p$ of $\{1,2,3,\ldots\}$ and a permutation $\pi_n$ of
$\{1,2,3,\ldots\}$ such that one labeling is the image of the other upon
applying $\pi_p$ to all of its $p$-labels and $\pi_n$ to all of its $n$-labels.
For any fixed $l$, the equivalence classes under this relation will be called
{\bf multi-labeling equivalence classes}.
\end{definition}
The number of distinct $p$-labels, number of
distinct $n$-labels, number of $n$-labels $d_1,\ldots,d_l$ on each of the
$l$ $n$-vertices, and excess $\Delta$ are equivalence class properties,
i.e.\ they are the same for all labelings in the same multi-labeling
equivalence class. The connection between Definition \ref{defmultilabeling}
of a multi-labeling and our matrix of interest $Q(X)$
is provided by the following lemma:
\begin{lemma}\label{lemmatraceQl}
Let $Q(X)$ be as in Proposition \ref{propQnorm}, and let $l \geq 2$ be an
even integer. Let $\mathcal{C}$ denote the set of all multi-labeling equivalence
classes for an $l$-graph. For each multi-labeling equivalence class
$\mathcal{L} \in \mathcal{C}$, let $\Delta(\mathcal{L})$ be the excess,
$r(\mathcal{L})$ the number of distinct $p$-labels,
and $d_1(\mathcal{L}),\ldots,d_l(\mathcal{L})$ the number of $n$-labels on the
first to $l^\text{th}$ $n$-vertices, respectively. Then, for $\alpha>0$ as in
(\ref{eq:momentassump}) and with the convention $0^0=1$,
\begin{equation}\label{eqtraceQl}
\E[\Tr Q(X)^l] \leq n\sum_{\mathcal{L} \in \mathcal{C}}
\left(\frac{(12\Delta(\mathcal{L}))^{12\alpha}}{n}\right)^{
\Delta(\mathcal{L})} \left(\frac{p}{n}\right)^{r(\mathcal{L})}
\left(\prod_{s=1}^l \frac{|a_{d_s(\mathcal{L})}|}{(d_s(\mathcal{L})!)^{1/2}}
\right).
\end{equation}
\end{lemma}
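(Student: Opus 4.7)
The plan is to expand $\Tr Q(X)^l$ combinatorially and then group terms according to their multi-labeling equivalence class. First, I would write
\[\Tr Q(X)^l=\mathop{\sum_{i_1,\ldots,i_l=1}^{p}}_{i_s\neq i_{s+1}\,\forall s}\prod_{s=1}^l q_{i_s i_{s+1}}\]
(with indices taken cyclically, $i_{l+1}=i_1$), then substitute the definition of $Q(X)$ and further expand each factor $q_{d_s,n}$ via (\ref{eqqd}) as a sum over ordered tuples of distinct $n$-indices. This rewrites $\Tr Q(X)^l$ as a sum indexed by $(i_s)_{s=1}^l$, degrees $(d_s)_{s=1}^l\in\{1,\ldots,D\}^l$, and for each $s$ an ordered tuple $(j_1^s,\ldots,j_{d_s}^s)$ of distinct indices in $\{1,\ldots,n\}$. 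Each such datum specifies a candidate labeling of an $l$-graph: the $s$-th $p$-vertex carries the $p$-label $i_s$ and the $s$-th $n$-vertex carries the ordered tuple $(j_1^s,\ldots,j_{d_s}^s)$.

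Next I would take expectations. Independence of the $x_{ij}$ across distinct $(i,j)$ factorizes the expected product as $\prod_{(i,j)}\E[x_{ij}^{b_{ij}}]$, with $b_{ij}$ as in Lemma \ref{lemmatripleijpairs}. The symmetry assumption $x_{ij}\overset{L}{=}-x_{ij}$ makes odd moments vanish, so a term survives only when every $b_{ij}$ is even---precisely condition (3) of Definition \ref{defmultilabeling}. Condition (1) is built in by $Q(X)$ having zero diagonal, and condition (2) is built into the expansion. Thus the surviving terms are exactly the $(p,n)$-multi-labelings. The magnitude of each contribution depends only on $(d_s)$ and on the multiset $\{b_{ij}\}$, both of which are equivalence class invariants, so every labeling in a class $\mathcal{L}$ contributes the same amount. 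Grouping by class yields
\[\E[\Tr Q(X)^l]\leq\sum_{\mathcal{L}\in\mathcal{C}}N(\mathcal{L})\prod_{s=1}^l\frac{|a_{d_s(\mathcal{L})}|}{\sqrt{n^{d_s(\mathcal{L})+1}\,d_s(\mathcal{L})!}}\prod_{(i,j)}\E[|x_{ij}|^{b_{ij}}],\]
where $N(\mathcal{L})$ is the number of $(p,n)$-labelings in $\mathcal{L}$.

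Third, I would estimate the two remaining factors. For a class with $r$ distinct $p$-labels and $m_n$ distinct $n$-labels, $N(\mathcal{L})\leq p^r n^{m_n}$. Lemma \ref{lemmadistinctmultilabels} gives $r+m_n\leq\tfrac{1}{2}(l+\sum_s d_s)+1-\Delta$, and combining this with the coefficient prefactor $n^{-(l+\sum_s d_s)/2}$ collapses everything to $n\cdot(p/n)^r\cdot n^{-\Delta}$. For the expectation, each $b_{ij}=2$ pair contributes $1$; for each $b_{ij}>2$ (necessarily even), assumption (\ref{eq:momentassump}) gives $\E[|x_{ij}|^{b_{ij}}]\leq b_{ij}^{\alpha b_{ij}}$. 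Lemma \ref{lemmatripleijpairs} supplies $\sum_{(i,j):b_{ij}>2}b_{ij}\leq 12\Delta$, which in particular forces each such $b_{ij}\leq 12\Delta$, so the product of higher-moment factors is at most $(12\Delta)^{\alpha\sum b_{ij}}\leq(12\Delta)^{12\alpha\Delta}$. Assembling the three pieces reproduces exactly (\ref{eqtraceQl}), with the convention $0^0=1$ covering the $\Delta=0$ case.

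The main obstacle is careful bookkeeping rather than any single hard step: one must track that the outer sum ranges over ordered (actual) labelings rather than equivalence classes, that the bound $p^r n^{m_n}$ harmlessly ignores the automorphism group of $\mathcal{L}$ because only an upper bound is needed, and that condition (3) of Definition \ref{defmultilabeling} arises from vanishing odd moments rather than being imposed a priori. The genuinely substantive combinatorial inputs---Lemmas \ref{lemmadistinctmultilabels} and \ref{lemmatripleijpairs}---are deferred to the appendix and do the real work in converting the raw index counting into the clean form involving $r(\mathcal{L})$ and $\Delta(\mathcal{L})$.
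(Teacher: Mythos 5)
Your proposal is correct and follows essentially the same route as the paper's proof: expand the trace, use symmetry of the entries to kill terms with odd exponents, identify the survivors with $(p,n)$-multi-labelings, count labelings per equivalence class via $p^r n^{m-r}$, and invoke Lemmas \ref{lemmadistinctmultilabels} and \ref{lemmatripleijpairs} to produce the $n^{-\Delta}$ and $(12\Delta)^{12\alpha\Delta}$ factors. The only cosmetic difference is that you bound each $b_{ij}$ by $12\Delta$ individually before summing exponents, whereas the paper first bounds each $b_{ij}$ by $\sum_{b_{ij}>2} b_{ij}$; both give the identical final estimate.
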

\begin{proof}
By Definition \ref{defQRS}, letting $i_{l+1}:=i_1$ for notational convenience,
\begin{align*}
\E[\Tr Q(X)^l] &= 
\mathop{\sum_{i_1,\ldots,i_l=1}}_{i_1 \neq i_2,i_2 \neq i_3,\ldots,
i_l \neq i_1}^p \E\left[\prod_{s=1}^l q_{i_si_{s+1}}\right]\\
&=\mathop{\sum_{i_1,\ldots,i_l=1}}_{i_1 \neq i_2,i_2 \neq i_3,\ldots,
i_l \neq i_1}^p
n^{-\frac{l}{2}} \E\left[\prod_{s=1}^l
\left(\sum_{d=1}^D a_d \sqrt{\frac{1}{n^dd!}}
\mathop{\sum_{j_1,\ldots,j_d=1}^n}_{j_1 \neq j_2 \neq \ldots \neq j_d}
\prod_{a=1}^d x_{i_sj_a}x_{i_{s+1}j_a}\right)\right]\\
&=\mathop{\sum_{i_1,\ldots,i_l=1}}_{i_1 \neq i_2,i_2 \neq i_3,\ldots,
i_l \neq i_1}^p \sum_{d_1,\ldots,d_l=1}^D
\mathop{\sum_{j^1_1,\ldots,j^1_{d_1}=1}^n}_{j^1_1\neq \ldots \neq j^1_{d_1}}
\ldots \mathop{\sum_{j^l_1,\ldots,j^l_{d_l}=1}^n}_{j^l_1 \neq \ldots \neq
j^l_{d_l}}\\
&\hspace{1in}n^{-\frac{l+\sum_{s=1}^l
d_s}{2}} \left(\prod_{s=1}^l \frac{a_{d_s}}{(d_s!)^{1/2}}\right)
\E\left[\prod_{s=1}^l \prod_{a=1}^{d_s} x_{i_sj^s_a}x_{i_{s+1}j^s_a}\right].
\end{align*}

Note that as $x_{ij} \overset{L}{=} -x_{ij}$ by assumption,
$\E[x_{ij}^c]=0$ for any positive
odd integer $c$. Hence, if any $x_{ij}$ appears an odd number of times in the
expression $\prod_{s=1}^l \prod_{a=1}^{d_s} x_{i_sj^s_a}x_{i_{s+1}j^s_a}$,
then as the entries of $X$ are independent, 
$\E\left[\prod_{s=1}^l \prod_{a=1}^{d_s} x_{i_sj^s_a}x_{i_{s+1}j^s_a}\right]=0$.
We identify the combination of sums above, over the
remaining non-zero terms, as the sum over all possible $(p,n)$-multi-labelings
of an $l$-graph. Here, the first sum over $i_1,\ldots,i_l$ is over all choices
of $p$-labels, with condition (1) in Definition \ref{defmultilabeling}
corresponding to the constraints
$i_1 \neq i_2,i_2 \neq i_3,\ldots,i_l \neq i_1$ in the sum.
The sum over $d_1,\ldots,d_l$ is over all choices of the number of
$n$-labels in the tuple for each $n$-vertex, and the sum over
$j^s_1,\ldots,j^s_{d_s}$ is over all choices of $d_s$ $n$-labels for
the $s^\text{th}$ $n$-vertex, with condition (2) in Definition
\ref{defmultilabeling} corresponding to the
constraint that $j^s_1,\ldots,j^s_{d_s}$ are distinct. The product expression
$\prod_{s=1}^l \prod_{a=1}^{d_s} x_{i_sj^s_a}x_{i_{s+1}j^s_a}$ then corresponds
to a product, over all $n$-vertices, all $d_s$ $n$-labels for that
$n$-vertex, and both $p$-vertices immediately preceding and immediately
following that $n$-vertex, of $x_{ij}$, where $j \in \{1,\ldots,n\}$ is the
$n$-label and $i \in \{1,\ldots,p\}$ is the $p$-label of the $p$-vertex. The
condition that each $x_{ij}$ appears an even number of times so that
this term has non-zero expectation is precisely condition (3) in Definition
\ref{defmultilabeling}. Thus, to summarize,
\[\E[\Tr Q(X)^l]=\sum_{l \text{-graph } (p,n)\text{-multi-labelings}}
n^{-\frac{l+\sum_{s=1}^l
d_s}{2}} \left(\prod_{s=1}^l \frac{a_{d_s}}{(d_s!)^{1/2}}\right)
\E\left[\prod_{s=1}^l \prod_{a=1}^{d_s} x_{i_sj^s_a}x_{i_{s+1}j^s_a}\right],\]
where $d_1,\ldots,d_l$ are the numbers of $n$-labels for the first through
$l^\text{th}$ $n$-vertices, respectively.

Consider a fixed $(p,n)$-multi-labeling and write $\prod_{s=1}^l
\prod_{a=1}^{d_s}
x_{i_sj^s_a}x_{i_{s+1}j^s_a}=\prod_{j=1}^n \prod_{i=1}^p x_{ij}^{b_{ij}}$, where
$b_{ij}$ is the number of times $x_{ij}$ appears as a term in this product.
Note that each $b_{ij}$ is even (possibly 0).
As $\E[x_{ij}^2]=1$, $\E[|x_{ij}|^k] \leq k^{\alpha k}$,
and the entries of $X$ are independent,
\[\E\left[\prod_{s=1}^l \prod_{a=1}^{d_s} x_{i_sj^s_a}x_{i_{s+1}j^s_a}\right]
=\prod_{i,j:b_{ij}>2} \E\left[x_{ij}^{b_{ij}}\right]
\leq \prod_{i,j:b_{ij}>2} b_{ij}^{\alpha b_{ij}}
\leq \left(\sum_{i,j:b_{ij}>2} b_{ij}\right)^{\alpha
\sum_{i,j:b_{ij}>2} b_{ij}} \leq (12\Delta)^{12\alpha\Delta},\]
where the last inequality applies Lemma \ref{lemmatripleijpairs}
and we use the convention $0^0=1$.
(\ref{eqtraceQl}) then follows upon noting that each
$(p,n)$-multi-labeling with $r$ distinct $p$-labels and $m-r$ distinct
$n$-labels has
$\frac{p!}{(p-r)!}\frac{n!}{(n-m+r)!} \leq n^m\left(\frac{p}{n}\right)^r$
$(p,n)$-multi-labelings in its equivalence class, and $n^{-\frac{l+\sum_{s=1}^l
d_s}{2}+m}=n^{1-\Delta}$.
\end{proof}

We wish to compare the upper bound in (\ref{eqtraceQl}) to an analogous
quantity for a deformed GUE matrix:
\begin{definition}\label{defdeformedGUE}
For $\tilde{n},\tilde{p} \geq 1$, let
$W=(w_{ii'}:1 \leq i,i' \leq \tilde{p}) \in \C^{\tilde{p} \times 
\tilde{p}}$ be distributed according to the GUE, i.e. $\{w_{ii}:1 \leq i
\leq \tilde{p}\} \cup \{\sqrt{2}\Re w_{ii'},\sqrt{2}\Im w_{ii'}:1 \leq i<i'
\leq \tilde{p}\}$ are IID $\N(0,1)$, and
$w_{ii'}=\overline{w_{i'i}}$ for $i>i'$.
Let $V \in \R^{\tilde{p} \times \tilde{p}}$ be
standard real Wishart-distributed with $\tilde{n}$
degrees of freedom and zero diagonal, i.e.\ $V
=ZZ^T-\diag(\|Z_i\|_2^2)$
where $Z=(z_{ij}:1 \leq i \leq \tilde{p},1 \leq j \leq
\tilde{n}) \in \R^{\tilde{p} \times \tilde{n}}$, $z_{ij} \overset{IID}{\sim}
\N(0,1)$, and $ZZ^T-\diag(\|Z_i\|_2^2)$ denotes $ZZ^T$ with its diagonal
set to 0. Take $V$ and $W$ to be independent, and define
\[M=\sqrt{\frac{\gamma(\nu-a^2)}{\tilde{p}}}W
+\frac{a}{\tilde{n}}V \in \C^{\tilde{p} \times \tilde{p}}.\]
\end{definition}

As $\tilde{n},\tilde{p} \to \infty$ with $\tilde{p}/\tilde{n} \to
\gamma$, the limiting spectral distribution of $M$ is
also $\mu_{a,\nu,\gamma}$.
It follows from the results of \cite{capitaineetal} that, in fact,
a norm convergence result holds for
$M$, i.e. $\lim_{\tilde{n},\tilde{p} \to \infty}
\|M\|=\|\mu_{a,\nu,\gamma}\|$, using which we may establish the following
Proposition:
\begin{proposition}\label{propMmomentbound}
Let $M$ be as in Definition \ref{defdeformedGUE}. Suppose
$l$ is an even integer and $\tilde{n},\tilde{p},l \to \infty$ with
$\tilde{p}/\tilde{n} \to \gamma$ and
$l \leq C\log \tilde{n}$ for some constant $C>0$. Then, for any $\eps>0$
and all sufficiently large $\tilde{n}$,
\[\E[\|M\|^l] \leq (\|\mu_{a,\nu,\gamma}\|+\eps)^l.\]
\end{proposition}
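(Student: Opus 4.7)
The plan is to upgrade the almost sure convergence $\|M\|\to\|\mu_{a,\nu,\gamma}\|$ quoted from \cite{capitaineetal} to a moment bound by combining Gaussian concentration of measure for $\|M\|$ with crude tail estimates, then integrate tails in the regime $l\leq C\log\tilde n$.

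First I would show $\E\|M\|\to\|\mu_{a,\nu,\gamma}\|$ by upgrading the a.s.\ convergence via uniform integrability. Since $\|M\|\leq\sqrt{\gamma(\nu-a^2)/\tilde p}\,\|W\|+(|a|/\tilde n)\|V\|$, standard Gaussian concentration for $\|W\|/\sqrt{\tilde p}$ around $2$ and for $\|Z\|/\sqrt{\tilde n}$ around $1+\sqrt{\gamma}$ (e.g.\ Corollary 5.35 of \cite{vershynin}) yields $\E[\|M\|^2]$ bounded uniformly in $\tilde n$, and hence $\E\|M\|\to\|\mu_{a,\nu,\gamma}\|$.

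Next I would establish a concentration inequality for $\|M\|$. Viewing $\|M\|$ as a function of the independent Gaussian entries of $W$ and $Z$, on the event $G_C:=\{\|Z\|\leq C\sqrt{\tilde n}\}$ the map
\[(W,Z)\mapsto\Bigl\|\sqrt{\gamma(\nu-a^2)/\tilde p}\,W+(a/\tilde n)(ZZ^T-\operatorname{diag}(\|Z_i\|^2))\Bigr\|\]
is Lipschitz from Frobenius norm to absolute value with constant $L_C=O(1/\sqrt{\tilde n})$, using that $A\mapsto\|A\|$ is $1$-Lipschitz in operator norm, $\|A\|\leq\|A\|_F$, and $Z\mapsto ZZ^T$ is $2C\sqrt{\tilde n}$-Lipschitz on $G_C$. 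Smoothing this function outside $G_C$ (so it becomes globally Lipschitz on $\R^{\tilde p^2+\tilde p\tilde n}$) and applying the Gaussian concentration inequality, with $C$ chosen large enough that $\P[G_C^c]\leq e^{-c'\tilde n}$, yields
\[\P\bigl[\,|\|M\|-\E\|M\||>t\,\bigr]\leq 2e^{-c\tilde n t^2}+e^{-c'\tilde n}\]
for all $t>0$, for some constants $c,c'>0$ depending on $C,\gamma,\nu,a$.

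Finally I would integrate tails. Fix $\eps>0$; by the first step $\E\|M\|\leq\|\mu_{a,\nu,\gamma}\|+\eps/4$ for large $\tilde n$. Writing
\[\E[\|M\|^l]=\int_0^\infty lt^{l-1}\P[\|M\|>t]\,dt,\]
the contribution from $t\leq\|\mu_{a,\nu,\gamma}\|+\eps/2$ is at most $(\|\mu_{a,\nu,\gamma}\|+\eps/2)^l$. For the contribution from $t\in[\|\mu_{a,\nu,\gamma}\|+\eps/2,\sqrt{\tilde n}]$, concentration gives integrand bounded by $lt^{l-1}\bigl(2e^{-c\tilde n\eps^2/16}+e^{-c'\tilde n}\bigr)$, and since $l\leq C\log\tilde n$ we have $lt^{l-1}\leq\exp(O((\log\tilde n)^2))$, so this contribution is $\exp(-c\tilde n\eps^2/16+O((\log\tilde n)^2))=o(1)$. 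For $t>\sqrt{\tilde n}$, crude tail bounds for $\|W\|$ and $\|Z\|$ (sub-Gaussian up to $t\asymp\sqrt{\tilde p}$, sub-exponential beyond) give $\P[\|M\|>t]\leq e^{-c''t\sqrt{\tilde n}}$, so $\int_{\sqrt{\tilde n}}^\infty lt^{l-1}e^{-c''t\sqrt{\tilde n}}\,dt$ is exponentially small. Combining, $\E[\|M\|^l]\leq(\|\mu_{a,\nu,\gamma}\|+\eps/2)^l+o(1)\leq(\|\mu_{a,\nu,\gamma}\|+\eps)^l$ for all sufficiently large $\tilde n$.

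The main obstacle I anticipate is the Gaussian concentration step: because $V$ is quadratic in $Z$, the map is not globally Lipschitz, so the Lipschitz-based concentration must be localized to a high-probability event via truncation/smoothing, and one must verify that the effective Lipschitz constant is genuinely of order $\tilde n^{-1/2}$ (giving a tail decay rate fast enough to overpower $lt^{l-1}=e^{O((\log\tilde n)^2)}$). Once that is in place, the remaining steps are routine tail integration balanced against the $l\leq C\log\tilde n$ regime.
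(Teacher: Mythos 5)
Your proposal is correct and follows essentially the same route as the paper: mean convergence via almost-sure convergence plus uniform integrability, Gaussian concentration for $\|M\|$ obtained by restricting to a convex high-probability set where $\|Z\|\lesssim\sqrt{\tilde n}$ (making the map Lipschitz with constant $O(\tilde n^{-1/2})$) and extending it globally, and finally tail integration balanced against $l\leq C\log\tilde n$. The obstacle you flag — that the quadratic dependence on $Z$ forces the localization/extension step and that one must control the discrepancy between the means of $\|M\|$ and its Lipschitz extension — is exactly what the paper's Lemmas \ref{lemmalipschitzextension} and \ref{lemmaGUEwishartconcentration} handle.
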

The proof of Proposition \ref{propMmomentbound} is deferred to Appendix
\ref{appendixdeformedGUE}.
As $\tilde{p}^{-1}\E[\Tr M^l] \leq \E[\|M\|^l]$, our
strategy for proving Proposition \ref{propQnorm}
will be to show that the upper bound in
(\ref{eqtraceQl}) can in turn be bounded above using the quantity $\E[\Tr
M^l]$, for some choices of $\tilde{p}$ and $\tilde{n}$.
To analyze $\E[\Tr M^l]$, we
consider the following notion of a simple-labeling of an $l$-graph:
\begin{definition}\label{defsimplelabeling}
A {\bf simple-labeling} of an $l$-graph is an assignment of a
{\bf $\bm{p}$-label} in $\{1,2,3,\ldots\}$ to each $p$-vertex and either one
{\bf $\bm{n}$-label} in $\{1,2,3,\ldots\}$ or the empty label $\emptyset$ to
each $n$-vertex, such that the following conditions are satisfied:
\begin{enumerate}
\item The $p$-label of each $p$-vertex is distinct from those of the two
$p$-vertices immediately preceding and following it in the cycle.
\item For each distinct $p$-label $i$ and distinct non-empty $n$-label $j$,
there are an even number of
edges in the cycle (possibly 0) such that its $p$-vertex endpoint is labeled
$i$ and its $n$-vertex endpoint is labeled $j$.
\item For any two distinct $p$-labels $i$ and $i'$, the
number of occurrences (possibly 0) of the three consecutive labels
$i,\emptyset,i'$ on a $p$-vertex, its following $n$-vertex, and its following
$p$-vertex is equal to the number of
occurrences of the three consecutive labels $i',\emptyset,i$.
\end{enumerate}
A {\bf $\bm{(p,n)}$-simple-labeling} is a simple-labeling with all $p$-labels in
$\{1,\ldots,p\}$ and all non-empty $n$-labels in $\{1,\ldots,n\}$.
\end{definition}
Analogous to Lemma \ref{lemmadistinctmultilabels}, the following lemma provides
a key bound on the number of possible distinct $p$-labels and $n$-labels that
appear in a simple-labeling of an $l$-graph.
\begin{restatable}{lemma}{lemmadistinctsimplelabels}
\label{lemmadistinctsimplelabels}
Suppose a simple-labeling of an $l$-graph has
$\tilde{k}$ $n$-vertices with non-empty label and
$\tilde{m}$ total distinct $p$-labels and distinct non-empty $n$-labels. Then
$\tilde{m} \leq \frac{l+\tilde{k}}{2}+1$.
\end{restatable}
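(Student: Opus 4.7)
The plan is to recast the combinatorics of a simple-labeling as a closed walk in an auxiliary multigraph $H$, and then use connectedness plus evenness of edge multiplicities to bound the number of vertices.

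First I would define $H$ as follows. Its vertex set is the union of the distinct $p$-labels and the distinct non-empty $n$-labels appearing in the simple-labeling, so $|V(H)| = \tilde{m}$. For each $n$-vertex of the $l$-graph carrying a non-empty label $j$ whose two neighboring $p$-vertices are labeled $i$ and $i'$, place two edges in $H$, namely $\{i,j\}$ and $\{i',j\}$. For each $\emptyset$-labeled $n$-vertex with neighboring $p$-labels $i,i'$ (necessarily distinct by condition (1) of Definition \ref{defsimplelabeling}), place a single edge $\{i,i'\}$ in $H$. The total edge count in $H$, with multiplicity, is then $2\tilde{k}+(l-\tilde{k})=l+\tilde{k}$. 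Traversing the $l$-graph cycle in order lifts to a closed walk in $H$ that uses each edge exactly the number of times it was placed and visits every vertex of $H$; hence $H$, and equivalently its underlying simple graph $\tilde{H}$, is connected, giving $|E(\tilde{H})| \geq \tilde{m}-1$.

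The core step is to show that every edge of $H$ has even multiplicity, which would give $|E(\tilde{H})| \leq (l+\tilde{k})/2$. For an edge $\{i,j\}$ between a $p$-label $i$ and a non-empty $n$-label $j$, the multiplicity in $H$ equals the number of cycle edges whose $p$-endpoint is labeled $i$ and whose $n$-endpoint is labeled $j$, which is even directly by condition (2) of Definition \ref{defsimplelabeling}. For an edge $\{i,i'\}$ between two $p$-labels, the multiplicity equals the number of $\emptyset$-vertices flanked by $p$-vertices labeled $\{i,i'\}$; splitting this count by cycle orientation into $(i,\emptyset,i')$ triples and $(i',\emptyset,i)$ triples, condition (3) forces these two sub-counts to be equal, so their sum is even. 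Combining $|E(\tilde{H})|\leq(l+\tilde{k})/2$ with the connectivity lower bound yields $\tilde{m}-1 \leq (l+\tilde{k})/2$, which is the claim.

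The only delicate point I anticipate is the role of condition (3) of Definition \ref{defsimplelabeling}: this condition is tailor-made to ensure that the $\emptyset$-edges come in balanced oriented pairs and hence appear with even multiplicity in $H$. Once this is verified, the remainder is the standard observation that a connected simple graph on $\tilde{m}$ vertices has at least $\tilde{m}-1$ edges.
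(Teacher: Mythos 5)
Your proof is correct and is essentially the same argument as the paper's: the paper builds exactly your underlying simple graph (edges between $p$-labels across $\emptyset$-vertices, and between $p$-labels and non-empty $n$-labels), uses connectedness for the lower bound $\tilde m-1$ on the edge count, and uses conditions (2) and (3) to show each edge accounts for at least two of the $l+\tilde k$ incidences. The multigraph/even-multiplicity framing is only a cosmetic difference.
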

The proof of Lemma \ref{lemmadistinctsimplelabels} is deferred to Appendix
\ref{appendixcombinatorics}. We may then define the excess of a
simple-labeling, analogous to Definition \ref{defmultiexcess}, and note that the
excess is always nonnegative.
\begin{definition}
Suppose a simple-labeling of an $l$-graph has
$\tilde{k}$ $n$-vertices with non-empty label and
$\tilde{m}$ total distinct $p$-labels and distinct non-empty $n$-labels. 
The {\bf excess} of the simple-labeling is
$\tilde{\Delta}:=\frac{l+\tilde{k}}{2}+1-\tilde{m}$.
\end{definition}
\begin{figure}
\includegraphics[width=2.5in]{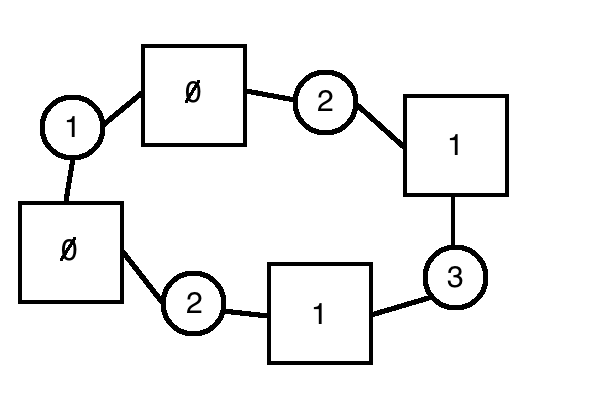}
\caption{A simple labeling of an $l$-graph for $l=4$.
$p$-vertices are depicted with a circle and $n$-vertices are depicted with a
square.}
\label{figsimplelabeling}
\end{figure}
Figure \ref{figsimplelabeling} shows a
simple-labeling of an $l$-graph for $l=4$, with $\tilde{k}=2$ $n$-vertices
having non-empty label and $\tilde{m}=3+1=4$ distinct $p$-labels and non-empty
$n$-labels. Hence in this example, Lemma \ref{lemmadistinctsimplelabels} holds
with equality, and the excess is $\tilde{\Delta}=0$.

\begin{definition}\label{defsimpleequivclass}
Two simple-labelings of an $l$-graph are {\bf equivalent} if there is a
permutation $\pi_p$ of $\{1,2,3,\ldots\}$ and a permutation $\pi_n$ of
$\{1,2,3,\ldots\}$ such that one labeling is the image of the other upon
applying $\pi_p$ to all of its $p$-labels and $\pi_n$ to all of its $n$-labels.
(The empty $n$-label remains empty under any such permutation $\pi_n$.)
For any fixed $l$, the equivalence classes under this relation will be called
{\bf simple-labeling equivalence classes}.
\end{definition}
Motivation for Definition \ref{defsimplelabeling} of a simple labeling is
provided by the following lemma, which gives a lower bound for the
quantity $\E[\Tr M^l]$:
\begin{lemma}\label{lemmatraceMl}
Let $M$ be as in Definition \ref{defdeformedGUE}, and let
$l \geq 2$ be an even integer. Let $\tilde{\mathcal{C}}$ denote the set of all
simple-labeling equivalence classes for an $l$-graph.
For each simple-labeling equivalence class $\tilde{\mathcal{L}} \in
\tilde{\mathcal{C}}$, let $\tilde{\Delta}(\tilde{\mathcal{L}})$ be its excess,
$\tilde{k}(\tilde{\mathcal{L}})$
be the number of $n$-vertices with non-empty label, and
$\tilde{r}(\tilde{\mathcal{L}})$ be the number of distinct $p$-labels. Then,
with the convention $0^0=1$,
\begin{equation}\label{eqtraceMl}
\E[\Tr M^l]
\geq \tilde{n} \left(\frac{\tilde{p}-l}{\tilde{p}}\right)^l
\left(\frac{\tilde{n}-l}{\tilde{n}}\right)^l
\sum_{\tilde{\mathcal{L}} \in \tilde{\mathcal{C}}}
\left(\frac{1}{\tilde{n}}\right)^{\tilde{\Delta}(\tilde{\mathcal{L}})}
\left(\frac{\tilde{p}}{\tilde{n}}\right)^{\tilde{r}(\tilde{\mathcal{L}})
-\frac{l-\tilde{k}(\tilde{\mathcal{L}})}{2}}
|a|^{\tilde{k}(\tilde{\mathcal{L}})}(\gamma(\nu-a^2))
^{\frac{l-\tilde{k}(\tilde{\mathcal{L}})}{2}}.
\end{equation}
\end{lemma}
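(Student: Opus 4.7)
The plan is to expand $\E[\Tr M^l] = \sum_{i_1,\ldots,i_l=1}^{\tilde{p}} \E\bigl[\prod_{s=1}^l M_{i_s i_{s+1}}\bigr]$ (with $i_{l+1}:=i_1$), substitute $M = \sqrt{\gamma(\nu-a^2)/\tilde{p}}\,W + (a/\tilde{n}) V$, and split each factor into either a ``$W$-contribution'' $\sqrt{\gamma(\nu-a^2)/\tilde{p}}\,w_{i_s i_{s+1}}$ or a ``$V$-contribution'' $(a/\tilde{n})\sum_{j_s=1}^{\tilde{n}} z_{i_s j_s} z_{i_{s+1} j_s}$ (the latter vanishing when $i_s = i_{s+1}$, since $V$ has zero diagonal). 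I identify the resulting sum over tuples of $p$-labels $(i_1,\ldots,i_l)$, $W/V$ choices per edge, and $n$-labels $j_s$ per $V$-edge with a sum over labelings of the $l$-graph in which each $n$-vertex is decorated by either an $n$-label (for $V$-edges) or the empty label (for $W$-edges). To obtain condition~(1) of Definition~\ref{defsimplelabeling}, I restrict to sequences with $i_s \neq i_{s+1}$ for all $s$; this is automatic for $V$-edges, and for $W$-only terms gives a sub-sum which, combined with the sign control below, furnishes a lower bound on the full trace.

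By independence of $W$ and $Z$, each summand expectation factors as $\E[\prod w\text{'s}] \cdot \E[\prod z\text{'s}]$. Real Gaussian Wick's theorem makes $\E[\prod z\text{'s}]$ vanish unless every $z_{ij}$ appears an even number of times, which is condition~(2), and when nonzero the factor equals $\prod_{i,j}(b_{ij}-1)!! \geq 1$. The GUE Wick theorem with $w_{ij}=\overline{w_{ji}}$ makes $\E[\prod w\text{'s}]$ vanish unless, for every ordered pair $(i,i')$ of distinct $p$-labels, the number of occurrences of the three consecutive labels $(i,\emptyset,i')$ along the cycle equals the number of occurrences of $(i',\emptyset,i)$, which is condition~(3); when nonzero, the Wick contraction that pairs each $W$-edge $(i,i')$ with one specific reverse $(i',i)$ contributes exactly $1$, with all other contractions non-negative. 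Assuming $a \geq 0$ (the case $a < 0$ reduces to this by replacing $k$ with $-k$), every surviving term is non-negative, so each $(p,n)$-simple-labeling contributes at least
\[(a/\tilde{n})^{\tilde{k}} (\gamma(\nu-a^2)/\tilde{p})^{(l-\tilde{k})/2}\]
to $\E[\Tr M^l]$.

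It remains to group $(p,n)$-simple-labelings by equivalence class $\tilde{\mathcal{L}}$. The number of such labelings in a class with $\tilde{r}$ distinct $p$-labels and $\tilde{m}-\tilde{r}$ distinct $n$-labels is $\tilde{p}!/(\tilde{p}-\tilde{r})! \cdot \tilde{n}!/(\tilde{n}-\tilde{m}+\tilde{r})!$, up to a factor for class automorphisms. Using the falling factorial bounds $\tilde{p}!/(\tilde{p}-\tilde{r})! \geq \tilde{p}^{\tilde{r}}\left((\tilde{p}-l)/\tilde{p}\right)^{l}$ (valid since $\tilde{r} \leq l$) and $\tilde{n}!/(\tilde{n}-\tilde{m}+\tilde{r})! \geq \tilde{n}^{\tilde{m}-\tilde{r}}\left((\tilde{n}-l)/\tilde{n}\right)^{l}$ (valid since there are only $l$ $n$-vertices, hence $\tilde{m}-\tilde{r} \leq l$), I obtain the prefactor $\left((\tilde{p}-l)/\tilde{p}\right)^l \left((\tilde{n}-l)/\tilde{n}\right)^l$ multiplying $\sum_{\tilde{\mathcal{L}}} \tilde{p}^{\tilde{r}} \tilde{n}^{\tilde{m}-\tilde{r}} (a/\tilde{n})^{\tilde{k}} (\gamma(\nu-a^2)/\tilde{p})^{(l-\tilde{k})/2}$. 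Applying the identity $\tilde{\Delta} = (l+\tilde{k})/2 + 1 - \tilde{m}$ to collect powers gives $\tilde{p}^{\tilde{r}-(l-\tilde{k})/2}\tilde{n}^{\tilde{m}-\tilde{r}-\tilde{k}} = \tilde{n}^{1-\tilde{\Delta}}(\tilde{p}/\tilde{n})^{\tilde{r}-(l-\tilde{k})/2}$, producing exactly the stated formula.

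I expect the main technical obstacle to be controlling the sign of the $V$-edge coefficient: when $a < 0$, simple-labelings with an odd number of $V$-edges carry negative coefficient, so the ``drop the negative terms'' strategy does not directly yield a lower bound, and I rely on the $k \mapsto -k$ symmetry to reduce to $a \geq 0$. A secondary bookkeeping issue is confirming that class automorphism groups contribute at most a combinatorial factor absorbed into the counting, which I verify via a straightforward orbit-stabilizer argument tailored to simple-labelings with small excess $\tilde{\Delta}$ (the regime that drives the dominant contribution).
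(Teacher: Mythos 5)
Your proposal is correct and follows essentially the same route as the paper: expand the trace over $W$/$V$ choices per edge, restrict to $i_s\neq i_{s+1}$ for all $s$ (legitimate since all surviving terms are nonnegative), identify the nonvanishing terms with $(p,n)$-simple-labelings via the parity conditions (2) and (3), lower-bound each surviving Gaussian moment by $1$, and count labelings per equivalence class with the falling-factorial bounds. The only cosmetic differences are that the paper handles the sign of $a^{|S|}$ by observing that the GUE factor forces $|S|$ to be even (so no reduction to $a\ge 0$ via $M(-a)\overset{L}{=}-M(a)$ is needed), it computes $\E[w^a\overline{w}^b]$ in polar coordinates rather than by Wick pairings, and the automorphism factor you set aside is trivially $1$, since any label permutation fixing a labeling of the (ordered) $l$-graph must fix every appearing label.
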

\begin{proof}
By Definition \ref{defdeformedGUE}, letting $i_{l+1}:=i_1$ for notational
convenience,
\begin{align*}
\E\left[\Tr M^l\right]
&=\E\left[\Tr \left(\sqrt{\frac{\gamma(\nu-a^2)}{\tilde{p}}}W
+\frac{a}{\tilde{n}}V\right)^l\right]\\
&=\sum_{i_1,\ldots,i_l=1}^{\tilde{p}} \E\left[
\prod_{s=1}^l \left(\sqrt{\frac{\gamma(\nu-a^2)}{\tilde{p}}}w_{i_si_{s+1}}
+\frac{a}{\tilde{n}}v_{i_si_{s+1}}\right)\right]\\
&=\sum_{i_1,\ldots,i_l=1}^{\tilde{p}} \sum_{S \subseteq \{1,\ldots,l\}}
\left(\frac{a}{\tilde{n}}\right)^{|S|}
\left(\frac{\gamma(\nu-a^2)}{\tilde{p}}\right)^{\frac{l-|S|}{2}}
\E\left[\prod_{s \in S} v_{i_si_{s+1}}\right]
\E\left[\prod_{s \notin S} w_{i_si_{s+1}}\right]\\
&=\sum_{S \subseteq \{1,\ldots,l\}}
\mathop{\sum_{i_1,\ldots,i_l=1}^{\tilde{p}}}_{i_s \neq i_{s+1} \forall s \in S}
\tilde{n}^{-\frac{l+|S|}{2}}\left(\frac{\tilde{p}}{\tilde{n}}\right)
^{-\frac{l-|S|}{2}}a^{|S|}(\gamma(\nu-a^2))^{\frac{l-|S|}{2}}
\E\left[\prod_{s \in S} v_{i_si_{s+1}}\right]
\E\left[\prod_{s \notin S} w_{i_si_{s+1}}\right]\\
&=\sum_{S \subseteq \{1,\ldots,l\}}
\mathop{\sum_{i_1,\ldots,i_l=1}^{\tilde{p}}}_{i_s \neq i_{s+1} \forall s \in S}
\sum_{(j_s:s \in S) \in \{1,\ldots,\tilde{n}\}^{|S|}}\\
&\hspace{1in}
\tilde{n}^{-\frac{l+|S|}{2}}\left(\frac{\tilde{p}}{\tilde{n}}\right)
^{-\frac{l-|S|}{2}}a^{|S|}(\gamma(\nu-a^2))^{\frac{l-|S|}{2}}
\E\left[\prod_{s \in S} z_{i_sj_s}z_{i_{s+1}j_s}\right]
\E\left[\prod_{s \notin S} w_{i_si_{s+1}}\right].
\end{align*}
In the fourth line above, we restricted the summation to
$i_s \neq i_{s+1}\,\forall s \in S$, as $v_{ii}=0$ for each $i=1,\ldots,
\tilde{p}$ by Definition \ref{defdeformedGUE}.

Let us write $\prod_{s \in S} z_{i_sj_s}z_{i_{s+1}j_s}=\prod_{i=1}^{\tilde{p}}
\prod_{j=1}^{\tilde{n}} z_{ij}^{c_{ij}}$ where $c_{ij}$ is the number of times
$z_{ij}$
appears in this product, and let us write $\prod_{s \notin S} w_{i_si_{s+1}}
=\prod_{i=1}^{\tilde{p}} w_{ii}^{a_{ii}}\prod_{1 \leq i<i' \leq \tilde{p}}
w_{ii'}^{a_{ii'}}w_{i'i}^{b_{ii'}}$, where $a_{ii'}$ and $b_{ii'}$ are the
numbers of times $w_{ii'}$ and $w_{i'i}$ appear in this product, respectively.
$\E[\prod_{i=1}^{\tilde{p}} \prod_{j=1}^{\tilde{n}} z_{ij}^{c_{ij}}] \neq 0$
only if each $c_{ij}$ is even (possibly zero), in which case this quantity is at
least 1. Similarly, note that if $w=re^{i\theta}$ is such that $\sqrt{2}\Re w,
\sqrt{2}\Im w \overset{IID}{\sim} \N(0,1)$, then $r$ and $\theta$ are
independent with $r^2 \sim \chi^2_2/2$ and
$\theta \sim \operatorname{Unif}[0,2\pi)$. Then $\E[w^a\overline{w}^b]
=\E[r^{a+b}]\E[e^{i(a-b)\theta}]$ for all nonnegative integers $a,b$, and this
is 0 if $a \neq b$ and at least 1 if $a=b \geq 0$. Hence
$\E[\prod_{i=1}^{\tilde{p}} w_{ii}^{a_{ii}}\prod_{1 \leq i<i' \leq \tilde{p}}
w_{ii'}^{a_{ii'}}w_{i'i}^{b_{ii'}}]=0$ unless $a_{ii'}=b_{ii'}$ 
for each $i'>i$ and $a_{ii}$ is even (possibly zero) for each $1 \leq i \leq
\tilde{p}$, in which case this quantity is also at least 1.

The above arguments imply, in particular, that
$\E\left[\prod_{s \notin S} w_{i_si_{s+1}}\right]=0$ unless $l-|S|$ is even. As
$l$ is even by assumption, then $|S|$ must also be even,
in which case $a^{|S|}=|a|^{|S|} \geq 0$. Hence each term of the sum in the
above expression for $\E[\Tr M^l]$ is nonnegative, so a lower bound is obtained
if we further restrict the summation to $i_s \neq i_{s+1}\,\forall s \in
\{1,\ldots,l\}$ (rather than just $\forall s \in S$), i.e.
\begin{align*}
\E\left[\Tr M^l\right]
&\geq \sum_{S \subseteq \{1,\ldots,l\}}
\mathop{\sum_{i_1,\ldots,i_l=1}^{\tilde{p}}}_{i_1 \neq i_2,i_2 \neq i_3,\ldots,
i_l \neq i_1} \sum_{(j_s:s \in S) \in \{1,\ldots,\tilde{n}\}^{|S|}}\\
&\hspace{1in}
\tilde{n}^{-\frac{l+|S|}{2}}\left(\frac{\tilde{p}}{\tilde{n}}\right)
^{-\frac{l-|S|}{2}}|a|^{|S|}(\gamma(\nu-a^2))^{\frac{l-|S|}{2}}
\E\left[\prod_{s \in S} z_{i_sj_s}z_{i_{s+1}j_s}\right]
\E\left[\prod_{s \notin S} w_{i_si_{s+1}}\right].
\end{align*}

We identify the combination of these sums
as a sum over all $(\tilde{p},\tilde{n})$-simple-labelings of an $l$-graph.
Here, the first sum over $S$ is over all choices of the subset of
$n$-vertices having non-empty label. The second sum over $i_1,\ldots,i_l$ is
over all choices of $p$-labels, with condition (1) in Definition
\ref{defsimplelabeling} corresponding to the constraints
$i_1 \neq i_2,i_2 \neq i_3,\ldots,i_l \neq i_1$.
The last sum over $(j_s:s \in S)$ is over all choices of
$n$-labels for the $n$-vertices that have nonempty label. The product expression
$\prod_{s \in S} z_{i_sj_s}z_{i_{s+1}j_s}$ then corresponds to a product over
all $n$-vertices with non-empty label and both $p$-vertices immediately
preceding and following that $n$-vertex, and the condition that each $z_{ij}$
appears an even number of times corresponds to condition (2) in Definition
\ref{defsimplelabeling}. Similarly, the product expression $\prod_{s \notin S}
w_{i_si_{s+1}}$ corresponds to a product over all $n$-vertices with empty
label, and the condition that each $w_{ii'}$ appears the same number of times
as $w_{i'i}$ is precisely condition (3) in Definition \ref{defsimplelabeling}.
(By restricting the sum to $i_s \neq i_{s+1}$ for all $s$, no diagonal terms
$w_{ii}$ appear in this product.) Applying the bound
$\E[\prod_{s \in S} z_{i_sj_s}z_{i_{s+1}j_s}]\E[\prod_{s \notin S}
w_{i_si_{s+1}}] \geq 1$ whenever this quantity is nonzero,
\[\E\left[\Tr M^l\right]
\geq \sum_{l \text{-graph } (\tilde{p},\tilde{n}) \text{-simple-labelings}}
\tilde{n}^{-\frac{l+\tilde{k}}{2}}\left(\frac{\tilde{p}}{\tilde{n}}\right)
^{-\frac{l-\tilde{k}}{2}}|a|^{\tilde{k}}
(\gamma(\nu-a^2))^{\frac{l-\tilde{k}}{2}},\]
where $\tilde{k}=|S|$ is the number of $n$-vertices in the simple-labeling with
non-empty label. Any simple labeling
with $\tilde{r}$ distinct $p$-labels and at most $\tilde{m}-\tilde{r}$
distinct non-empty $n$-labels has at most
$\frac{\tilde{n}!}{(\tilde{n}-\tilde{m}+\tilde{r})!}
\frac{\tilde{p}!}{(\tilde{p}-\tilde{r})!} \geq
\tilde{n}^{\tilde{m}} \left(\frac{\tilde{p}}{\tilde{n}}\right)^{\tilde{r}}
\left(\frac{\tilde{p}-l}{\tilde{p}}\right)^l
\left(\frac{\tilde{n}-l}{\tilde{p}}\right)^l$ labelings in its equivalence class
(where we have used $\tilde{m}-\tilde{r} \leq l$ and $\tilde{r} \leq l$). The
desired result then follows upon identifying $\tilde{n}^{1-\tilde{\Delta}}
=\tilde{n}^{-\frac{l+\tilde{k}}{2}+\tilde{m}}$.
\end{proof}
The remainder of the proof of Proposition \ref{propQnorm} involves a comparison
of the upper bound in (\ref{eqtraceQl}) and the lower bound in
(\ref{eqtraceMl}). The intuition for the comparison is the following:
The dominant contributions to the sums in (\ref{eqtraceQl}) and
(\ref{eqtraceMl}) come from labelings with small excess. Focusing on
labelings with excess 0, if we take any multi-labeling equivalence class
$\mathcal{L}$ with $\Delta(\mathcal{L})=0$ and replace the labels of
$n$-vertices having more than one $n$-label with $\emptyset$, then it may
be shown that we obtain a valid simple-labeling equivalence class
$\tilde{\mathcal{L}}$ with $\tilde{\Delta}(\tilde{\mathcal{L}})=0$. For example,
the multi-labeling of Figure \ref{figlabeling} is mapped to the simple
labeling of Figure \ref{figsimplelabeling} under this procedure.
Furthermore, for any $\tilde{\mathcal{L}}$ with
$\tilde{\Delta}(\tilde{\mathcal{L}})=0$, we may show
\[\sum_{\mathcal{L}:\mathcal{L} \text{ maps to } \tilde{\mathcal{L}}}\;\;
\prod_{s=1}^l \frac{|a_{d_s}(\mathcal{L})|}{(d_s(\mathcal{L})!)^{1/2}}
=|a|^{|\tilde{k}(\tilde{\mathcal{L}})|}(\nu-a^2)^{\frac{l-\tilde{k}
(\tilde{\mathcal{L}})}{2}}.\]
(The arguments that establish these claims are a specialization of our
combinatorial lemmas in Appendix \ref{appendixcombinatorics} to the cases of
$\Delta=0$ and $\tilde{\Delta}=0$.) Hence, this mapping yields an exact
correspondence between terms in (\ref{eqtraceQl}) with excess
$\Delta(\mathcal{L})=0$ and terms in (\ref{eqtraceMl}) with excess
$\tilde{\Delta}(\mathcal{L})=0$.

As we must consider $l \asymp \log n$ to establish a tight bound in spectral
norm, we need to also handle terms in (\ref{eqtraceQl}) where
$\Delta(\mathcal{L}) \neq 0$. We do so by extending the above mapping to
all multi-labeling equivalence classes $\mathcal{L}$, in the
case $a \neq 0$. The properties of this mapping that we will need are
summarized in the following proposition.

\begin{proposition}\label{proplabelmapping}
Suppose $a \neq 0$ and $l \geq 2$.
Let $\mathcal{C}$ and $\tilde{\mathcal{C}}$ denote the set of all
multi-labeling and simple labeling equivalence classes of an $l$-graph,
respectively. For $\mathcal{L}
\in \mathcal{C}$, let $\Delta(\mathcal{L})$ be its excess and $r(\mathcal{L})$
be the number of distinct $p$-labels, and for $\tilde{\mathcal{L}} \in
\tilde{\mathcal{C}}$, let $\tilde{\Delta}(\mathcal{L})$ be its excess, 
$\tilde{r}(\tilde{\mathcal{L}})$ be the number of distinct $p$-labels, and
$\tilde{k}(\tilde{\mathcal{L}})$ be the number of $n$-vertices with non-empty
label. Then there
exists a map $\varphi:\mathcal{C} \to \tilde{\mathcal{C}}$ such that, for some
constants $C_1,C_2,C_3,C_4>0$ depending only on $D$,
\begin{enumerate}
\item For all $\mathcal{L} \in \mathcal{C}$,
$r(\mathcal{L})=\tilde{r}(\tilde{\mathcal{L}})$,
\item For all $\mathcal{L} \in \mathcal{C}$,
$\tilde{\Delta}(\varphi(\mathcal{L})) \leq C_1\Delta(\mathcal{L})$, and
\item For any $\tilde{\mathcal{L}} \in \tilde{\mathcal{C}}$ and $\Delta_0 \geq
0$,
\begin{equation}\label{eq:proplabelmapping3}
\mathop{\sum_{\mathcal{L} \in \varphi^{-1}(\tilde{\mathcal{L}})}}_{
\Delta(\mathcal{L})=\Delta_0}
\prod_{s=1}^l \frac{|a_{d_s(\mathcal{L})}|}{(d_s(\mathcal{L})!)^{1/2}}
\leq \left(\frac{\sqrt{\nu}}{|a|}\right)
^{C_2\Delta_0}|a|^{\tilde{k}(\tilde{\mathcal{L}})} (\nu-a^2)
^{\frac{l-\tilde{k}(\tilde{\mathcal{L}})}{2}}l^{C_3+C_4\Delta_0}.
\end{equation}
\end{enumerate}
\end{proposition}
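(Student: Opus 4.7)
The plan is to define $\varphi$ via a natural ``forgetting'' map on multi-labelings, then verify the three properties. Specifically, given $\mathcal{L} \in \mathcal{C}$, I would start by retaining all $p$-labels, keeping the single $n$-label of each $n$-vertex with $d_s = 1$, and assigning $\emptyset$ to each $n$-vertex with $d_s \geq 2$. The first step is to verify that the result is a valid simple-labeling equivalence class. Condition (1) of Definition \ref{defsimplelabeling} is immediate from multi-labeling condition (1), but conditions (2) and (3) are more delicate: condition (2) can fail when an $n$-label is shared between a size-$1$ and a size-$\geq 2$ tuple, and condition (3) is a ``flow-balance'' requirement on $(i,\emptyset,i')$ triples that can fail even when multi-labeling condition (3) holds. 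In both cases I would further empty additional size-$1$ $n$-vertices to restore validity, using Lemma \ref{lemmatripleijpairs} and related structural estimates from Appendix \ref{appendixcombinatorics} to show that the number of additional empties is bounded by a constant multiple of $\Delta(\mathcal{L})$.

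Property (1) holds because $\varphi$ never alters $p$-labels. For property (2), I would expand
\[
\tilde{\Delta}(\varphi(\mathcal{L})) - \Delta(\mathcal{L}) = \frac{\tilde{k} - \sum_s d_s}{2} - (\tilde{m} - m),
\]
where $m,\tilde{m}$ are the two label counts and $\tilde{k}$ is the number of surviving size-$1$ $n$-vertices. Since $\sum_s d_s \geq 2l - k$ where $k$ is the number of size-$1$ $n$-vertices of $\mathcal{L}$, and since $m - \tilde{m}$ (the number of $n$-labels removed by $\varphi$) is controlled by $O(\Delta(\mathcal{L}))$ via the step 1 estimate, one obtains $\tilde{\Delta}(\varphi(\mathcal{L})) \leq C_1 \Delta(\mathcal{L})$.

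The main obstacle is property (3). Fix $\tilde{\mathcal{L}}$ and $\Delta_0 \geq 0$. A multi-labeling $\mathcal{L} \in \varphi^{-1}(\tilde{\mathcal{L}})$ with $\Delta(\mathcal{L}) = \Delta_0$ is specified by (i) the inherited $p$-labels and non-empty $n$-labels of $\tilde{\mathcal{L}}$; (ii) for each $\emptyset$-vertex of $\tilde{\mathcal{L}}$, a choice of size $d_s \in \{2,\ldots,D\}$ and an ordered tuple of distinct $n$-labels; and (iii) a specification of which size-$1$ vertices (if any) were additionally emptied by $\varphi$. The surviving size-$1$ vertices contribute a factor $|a_1|^{\tilde{k}} = |a|^{\tilde{k}}$. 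For the size-$\geq 2$ vertices, the multi-labeling condition (3) forces $n$-labels to pair up across tuples. When $\Delta_0 = 0$ this pairing is tight, and applying the identity $\sum_{d \geq 2} a_d^2 = \nu - a^2$ at each matched pair yields exactly $(\nu - a^2)^{(l - \tilde{k})/2}$. For $\Delta_0 > 0$, Lemma \ref{lemmatripleijpairs} bounds the number of ``defective'' label-pairs by $O(\Delta_0)$; a union bound over the $O(l^{C_4 \Delta_0})$ possible locations of these defects, combined with a Cauchy--Schwarz bound at each defect using $\nu = \sum_d a_d^2$, produces the correction factors $(\sqrt{\nu}/|a|)^{C_2 \Delta_0}$ and $l^{C_3 + C_4 \Delta_0}$.

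The hardest part will be making this structural enumeration for $\Delta_0 > 0$ quantitatively rigorous: one must classify defect configurations, pair the size-$\geq 2$ labels consistently with the structure inherited from $\tilde{\mathcal{L}}$, and verify that all combinatorial penalties combine cleanly into the stated bound. I expect this to require several auxiliary lemmas beyond Lemma \ref{lemmatripleijpairs}, developed in Appendix \ref{appendixcombinatorics} in parallel, giving refined control over the multi-set of edges $(i,j)$ with $b_{ij} > 2$ and over the cyclic flow of $\emptyset$-triples.
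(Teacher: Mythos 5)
Your proposed map $\varphi$ differs from the paper's in a way that creates a genuine gap: you empty \emph{every} $n$-vertex with $d_s\geq 2$ and hope to restore validity by emptying further size-$1$ vertices, but condition (3) of Definition \ref{defsimplelabeling} --- the requirement that $(i,\emptyset,i')$ triples be balanced against $(i',\emptyset,i)$ triples --- cannot in general be repaired this way. Emptying additional vertices only \emph{adds} $\emptyset$-triples, and there is no reason the remaining single vertices sit between the right $p$-labels in the right cyclic orientation to cancel an imbalance. The paper's resolution is structural rather than additive: it empties only the ``good pairs'' (non-single vertices whose label-tuples match and whose labels occur nowhere else), and before emptying them it performs a sequence of \emph{segment reversals} of the cycle (step (1) of Definition \ref{deflabelmap}) that turns every improper good pair into a proper one, so that the two vertices of each pair contribute oppositely oriented $\emptyset$-triples and condition (3) holds exactly. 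The remaining non-single vertices (the $O(\Delta)$ ``bad non-singles'') are not emptied at all but collapsed onto a fresh non-empty label $n+1$, together with the bad singles, precisely so that the parity condition (2) survives. Your proposal contains no analogue of the reversal step, and without it the image is not a valid simple-labeling, so the comparison with $\E[\Tr M^l]$ (which needs the balance condition because $\E[w^a\overline{w}^b]=0$ for GUE entries unless $a=b$) does not go through.

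There is a second substantive gap in your treatment of property (3). Even granting a valid map, the inverse enumeration must account for (i) undoing the reversals, which the paper controls via Lemma \ref{lemmareverseoptions} (at most $2\Delta_0$ reversible good pairs, giving a factor $(4\Delta_0 l+1)^{2\Delta_0}$), and (ii) the labels of the bad single $n$-vertices, whose \emph{number is not} $O(\Delta_0)$. The paper handles (ii) with the connector/connected-cycle argument of Lemma \ref{lemmaconnectedbadsingles}, showing that all but $O(\Delta_0)$ of the bad singles have their labels forced by the $p$-label structure, so only $O(\Delta_0)$ free choices (each costing a factor $Dl$) remain. Your sketch attributes all defect-counting to Lemma \ref{lemmatripleijpairs} and a generic union bound, but that lemma controls $\sum_{b_{ij}>2}b_{ij}$, not the number of freely labelable single vertices; without the connector argument the count of inverse images is not bounded by $l^{C_3+C_4\Delta_0}$. (A smaller slip: in your property (2) computation, $m-\tilde m$ is not $O(\Delta)$ --- it is roughly $\frac{\sum_s d_s-\tilde k}{2}+O(\Delta)$, and the bound only closes because this large term cancels against $\frac{\tilde k-\sum_s d_s}{2}$.)
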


The proof of this proposition and the explicit construction of the map 
$\varphi$ require some detailed combinatorial arguments, which we defer to
Appendix \ref{appendixcombinatorics}. Using this result, we may complete the
proof of Proposition \ref{propQnorm} in the case $a \neq 0$.

\begin{proof}[Proof of Proposition \ref{propQnorm} (Case $a \neq 0$)]
For any $\eps>0$ and even integer $l \geq 2$,
\[\P\left[\|Q(X)\|>(1+\eps)\|\mu_{a,\nu,\gamma}\|\right]
\leq \P\left[\Tr Q(X)^l>\left((1+\eps)\|\mu_{a,\nu,\gamma}\|\right)^l
\right]
\leq \frac{\E[\Tr Q(X)^l]}{(1+\eps)^l\|\mu_{a,\nu,\gamma}\|^l}.\]
By Lemma \ref{lemmatraceQl}, Definition \ref{defmultiexcess}, and Proposition
\ref{proplabelmapping}, 
\begin{align*}
\E[\Tr Q(X)^l] &\leq n
\sum_{\mathcal{L} \in \mathcal{C}}
\left(\frac{(12(\frac{l+Dl}{2}))^{12\alpha}}{n}
\right)^{\Delta(\mathcal{L})}
\left(\frac{p}{n}\right)^{r(\mathcal{L})}
\left(\prod_{s=1}^l
\frac{|a_{d_s(\mathcal{L})}|}{(d_s(\mathcal{L})!)^{1/2}}\right)\\
&=n
\sum_{\tilde{\mathcal{L}} \in \tilde{\mathcal{C}}}
\sum_{\Delta_0=\left\lceil \frac{\tilde{\Delta}(\tilde{\mathcal{L}})}{C_1}
\right \rceil}^{\frac{l+Dl}{2}}
\mathop{\sum_{\mathcal{L} \in \varphi^{-1}
(\tilde{\mathcal{L}})}}_{\Delta(\mathcal{L})=\Delta_0}
\left(\frac{(6l+6Dl)^{12\alpha}}{n} \right)^{\Delta_0}
\left(\frac{p}{n}\right)^{\tilde{r}(\tilde{\mathcal{L}})}
\prod_{s=1}^l
\frac{|a_{d_s(\mathcal{L})}|}{(d_s(\mathcal{L})!)^{1/2}}\\
&\leq n
\sum_{\tilde{\mathcal{L}} \in \tilde{\mathcal{C}}}
\left(\frac{p}{n}\right)^{\tilde{r}(\tilde{\mathcal{L}})}
\sum_{\Delta_0=\left\lceil \frac{\tilde{\Delta}(\tilde{\mathcal{L}})}{C_1}
\right \rceil}^{\frac{l+Dl}{2}}
\left(\frac{(6l+6Dl)^{12\alpha}}{n} \right)^{\Delta_0}
\left(\frac{\sqrt{\nu}}{|a|}\right)^{C_2\Delta_0}
|a|^{\tilde{k}(\tilde{\mathcal{L}})}
(\nu-a^2)^{\frac{l-\tilde{k}(\tilde{\mathcal{L}})}{2}}
l^{C_3+C_4\Delta_0}\\
&\leq nl^{C_3}\left(\tfrac{l+Dl}{2}+1\right)
\sum_{\tilde{\mathcal{L}} \in \tilde{\mathcal{C}}}
\left(\frac{p}{n}\right)^{\tilde{r}(\tilde{\mathcal{L}})}
|a|^{\tilde{k}(\tilde{\mathcal{L}})}
(\nu-a^2)^{\frac{l-\tilde{k}(\tilde{\mathcal{L}})}{2}}
\left(\frac{(6l+6Dl)^{12\alpha}
\left(\frac{\sqrt{\nu}}{|a|}\right)^{C_2}
l^{C_4}}{n}\right)^{\frac{\tilde{\Delta}(\tilde{\mathcal{L}})}{C_1}},
\end{align*}
where the last line holds for all sufficiently large $n$ if
$l \asymp \log n$. Let
\[\tilde{n}=\left\lfloor
\frac{n^{\frac{1}{C_1}}}{(6l+6Dl)^{\frac{12\alpha}{C_1}}
\left(\frac{\sqrt{\nu}}{|a|}\right)^{\frac{C_2}{C_1}}l^{\frac{C_4}{C_1}}}
\right\rfloor,\]
and let $\tilde{p}=\lfloor \frac{\tilde{n}p}{n} \rfloor$. Then
for all sufficiently large $n$ and $l \asymp \log n$,
$nl^{C_3}\left(\frac{l+Dl}{2}+1\right) \leq n^2$,
and also $(\frac{p}{n})^{\tilde{r}(\tilde{\mathcal{L}})} \leq
(\frac{\tilde{p}}{\tilde{n}})^{\tilde{r}(\tilde{\mathcal{L}})}
(1+\frac{\eps}{4})^l$ (as $\tilde{r}(\tilde{L}) \leq l$, $p/n \to \gamma$, and
$\tilde{p}/\tilde{n} \to \gamma$).
Then
\[\E[\Tr Q(X)^l] \leq n^2\left(1+\tfrac{\eps}{4}\right)^l
\sum_{\tilde{\mathcal{L}} \in
\tilde{\mathcal{C}}} \left(\frac{1}{\tilde{n}}\right)^{\tilde{\Delta}
(\tilde{\mathcal{L}})} \left(\frac{\tilde{p}}{\tilde{n}}\right)^{\tilde{r}
(\tilde{\mathcal{L}})} |a|^{\tilde{k}(\tilde{\mathcal{L}})}
(\nu-a^2)^{\frac{l-\tilde{k}(\tilde{\mathcal{L}})}{2}}.\]
On the other hand, by Lemma \ref{lemmatraceMl},
\[\left(1-\tfrac{\eps}{4}\right)^l\tilde{n}
\sum_{\tilde{\mathcal{L}} \in
\tilde{\mathcal{C}}} \left(\frac{1}{\tilde{n}}\right)^{\tilde{\Delta}
(\tilde{\mathcal{L}})} \left(\frac{\tilde{p}}{\tilde{n}}\right)^{\tilde{r}
(\tilde{\mathcal{L}})} |a|^{\tilde{k}(\tilde{\mathcal{L}})}
(\nu-a^2)^{\frac{l-\tilde{k}(\tilde{\mathcal{L}})}{2}}
\leq \E\left[\Tr M^l\right]\]
for all sufficiently large $n$. Since $\tilde{p}/\tilde{n} \to \gamma$
and $l \sim BC_1\log \tilde{n}$ if $l \sim B\log n$, Proposition
\ref{propMmomentbound} implies $\E[\Tr M^l]
\leq \tilde{p}\,\E[\|M\|^l] \leq
\tilde{p}\left(\|\mu_{a,\nu,\gamma}\|\left(1+\frac{\eps}{4}\right)\right)^l$
for all large $n$. Thus
\[\P\left[\|Q(X)\|>(1+\eps)\|\mu_{a,\nu,\gamma}\|\right]
\leq n^2\frac{\tilde{p}}{\tilde{n}}\left(
\frac{\left(1+\tfrac{\eps}{4}\right)^2}{\left(1-\tfrac{\eps}{4}\right)
(1+\eps)}\right)^l.\]
Taking $l \sim B \log n$ with $B>0$ sufficiently large such that
$B \log \frac{\left(1+\tfrac{\eps}{4}\right)^2}{\left(1-\tfrac{\eps}{4}\right)
(1+\eps)}<-4$ (which is possible for any sufficiently small $\eps>0$), this
implies
$\P\left[\|Q(X)\|>(1+\eps)\|\mu_{a,\nu,\gamma}\|\right]<n^{-2}$ for
all large $n$. Then $\limsup_{n,p \to \infty} \|Q(X)\| \leq
(1+\eps)\|\mu_{a,\nu,\gamma}\|$ almost surely,
and taking $\eps \to 0$ concludes the proof.
\end{proof}

As $\|\mu_{a,\nu,\gamma}\|$ is continuous in $a$, $\nu$, and $\gamma$,
Proposition \ref{propQnorm}
in the case $a=0$ may be established via a continuity argument:
\begin{proof}[Proof of Proposition \ref{propQnorm} (Case $a=0$)]
For any $a>0$, let $k_a(x)=k(x)+ax$, and let $Q_a(X)$ be the matrix as
defined in Definition \ref{defQRS} for the kernel function $k_a$. Then
$Q_a(X)=Q(X)+\frac{a}{n}V(X)$, where $V(X)$ has zero
diagonal and equals $XX^T$ off of the diagonal. By Proposition \ref{propQnorm}
for the $a \neq 0$ case, established above, $\limsup_{n,p \to \infty}
\|Q_a(X)\| \leq \|\mu_{a,(\nu+a^2),\gamma}\|$. By standard results for
covariance matrices (see e.g.\ \cite{geman}),
$\limsup_{n,p \to \infty} \|\frac{1}{n}V(X)\| \leq C_\gamma$ almost surely
under the assumption (\ref{eq:momentassump}), for a constant $C_\gamma>0$. This
implies $\limsup_{n,p \to \infty} \|Q(X)\| \leq \|\mu_{a,(\nu+a^2),
\gamma}\|-aC_\gamma$
for any $a>0$, and the desired result follows by taking $a \to 0$.
\end{proof}

\section{Analyzing the remainder matrices}\label{sec:RS}
To conclude the proof of Theorem \ref{thm:polynomial}, we analyze in this
section the remainder matrices $R(X)$ and $S(X)$ of Definition \ref{defQRS}.
\begin{lemma}\label{lemmaS}
As $n,p \to \infty$ with $p/n \to \gamma \in (0,\infty)$,
$\|S(X)\| \to 0$ almost surely.
\end{lemma}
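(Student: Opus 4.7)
The plan is to reduce the spectral norm bound to an entrywise bound via the Frobenius norm. By Definition \ref{defQRS}, the off-diagonal entries are
\[
s_{ii'} = \frac{1}{\sqrt{n}}\sum_{d=1}^D a_d\, s_{d,n}(x_{i1}x_{i'1},\ldots,x_{in}x_{i'n}),
\]
so it suffices to show that uniformly in $i \neq i'$ and $d$, the quantity $s_{d,n}$ evaluated at $(x_{i1}x_{i'1},\ldots,x_{in}x_{i'n})$ is of order at most $n^{-1+\alpha}$ for some small $\alpha$, on an event of overwhelming probability.

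For each fixed pair $i \neq i'$, the variables $z_j := x_{ij}x_{i'j}$, $j=1,\ldots,n$, are IID. The symmetry-in-law assumption on $x_{ij}$ and the normalization $\E[x_{ij}^2]=1$ yield $\E[z_j] = 0$, $\E[z_j^2] = 1$, and $\E[z_j^3] = \E[x_{ij}^3]\E[x_{i'j}^3] = 0$, while the moment assumption (\ref{eq:momentassump}) gives $\E[|z_j|^l] < \infty$ for every $l$. Hence Proposition \ref{propdecomposition} applies: for any $\alpha,\beta > 0$ and all large $n$,
\[
\P\!\left[\,|s_{d,n}(z_1,\ldots,z_n)| > n^{-1+\alpha}\,\right] < n^{-\beta}.
\]

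Next I would fix $\alpha \in (0,1/2)$ and choose $\beta > 4$. A union bound over the at most $p^2 \leq C n^2$ pairs $(i,i')$ and the fixed finite number of $d \in \{1,\ldots,D\}$ gives
\[
\P\!\left[\,\max_{i \neq i',\,d}\bigl|s_{d,n}(x_{i1}x_{i'1},\ldots,x_{in}x_{i'n})\bigr| > n^{-1+\alpha}\,\right] \lesssim n^{2-\beta},
\]
which is summable. By Borel--Cantelli, almost surely for all sufficiently large $n$, every $s_{d,n}$ appearing in the definition of $S(X)$ is at most $n^{-1+\alpha}$ in absolute value, so $|s_{ii'}| \leq C n^{-3/2+\alpha}$ uniformly.

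The Frobenius-to-spectral comparison then gives
\[
\|S(X)\|^2 \leq \|S(X)\|_F^2 = \sum_{i \neq i'} s_{ii'}^2 \leq p^2 \cdot C^2 n^{-3+2\alpha} = O(n^{-1+2\alpha}),
\]
which vanishes since $\alpha < 1/2$ and $p \asymp n$. This yields $\|S(X)\| \to 0$ almost surely. There is no substantive obstacle: the argument just leverages that the $\beta$ in Proposition \ref{propdecomposition} can be taken arbitrarily large, which comfortably absorbs the $O(n^2)$ union bound over entries.
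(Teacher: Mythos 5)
Your proof is correct and follows essentially the same route as the paper's: bound $\|S(X)\|$ by the Frobenius norm, control each entry $s_{ii'}$ via Proposition \ref{propdecomposition} applied to the IID products $z_j = x_{ij}x_{i'j}$, take a union bound over the $O(p^2)$ entries using a large $\beta$, and conclude with $\alpha < 1/2$. Your version is slightly more explicit than the paper's in verifying the hypotheses $\E[z_j]=\E[z_j^3]=0$, $\E[z_j^2]=1$ and in invoking Borel--Cantelli, but there is no substantive difference.
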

\begin{proof}
Note that
$\|S(X)\| \leq \|S(X)\|_F \leq
p\max_{1 \leq i,i' \leq p} |s_{ii'}|$ where $\|\cdot\|_F$
is the Frobenius norm. By Definition \ref{defQRS} and Proposition
\ref{propdecomposition}, for any $1 \leq i,i' \leq p$ and $\alpha>0$,
$|s_{ii'}| \leq n^{-\frac{3}{2}+\alpha} \sum_{d=1}^D |a_d|$ with probability at
least $1-n^{-4}$, for all large $n$. Then
$p\max_{1 \leq i,i' \leq p} |s_{ii'}| \leq Cpn^{-\frac{3}{2}+\alpha}$ with
probability at least $1-p^2n^{-4}$. Taking any $\alpha<1/2$
yields the desired result.
\end{proof}

\begin{definition}\label{defRd}
For $d \geq 2$, define
$R_d(X)=(r_{ii'}:1 \leq i,i' \leq p) \in \R^{p \times p}$ with entries
\[r_{ii'}=\begin{cases}
\displaystyle
\frac{\binom{d}{2}}{\sqrt{d!}}
n^{-\frac{d+1}{2}} \mathop{\sum_{j_1,\ldots,j_{d-1}=1}^n}_{j_1 \neq j_2 \neq
\ldots \neq j_{d-1}} \left((x_{ij_1}^2x_{i'j_1}^2-1)\prod_{a=2}^{d-1}
x_{ij_a}x_{i'j_a}\right) & i \neq i'\\
0 & i=i'.
\end{cases}\]
\end{definition}
Note that $R(X)$ in Definition \ref{defQRS} is given by
$R(X)=\sum_{d=2}^D a_dR_d(X)$.

\begin{lemma}\label{lemmaRdgeq3}
As $n,p \to \infty$ with $p/n \to \gamma \in (0,\infty)$,
$\|R_d(X)\| \to 0$ almost surely for any $d \geq 3$.
\end{lemma}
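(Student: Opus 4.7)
The plan is to apply the moment method in the spirit of Proposition \ref{propQnorm}, exploiting the fact that entries of $R_d(X)$ have typical magnitude $O(n^{-1})$---a factor of $n^{-1/2}$ smaller than entries of $Q_d(X)$---because the ``special'' factor $x_{ij_1}^2 x_{i'j_1}^2 - 1$ has mean zero. The concrete target, for an even integer $l \asymp \log n$, is a bound of the form
\[
\E[\Tr R_d(X)^l] \;\lesssim\; p \cdot \bigl(C/\sqrt{n}\bigr)^l \cdot \mathrm{poly}(l),
\]
which, together with $\|R_d(X)\|^l \leq \Tr R_d(X)^l$ (since $R_d(X)$ is symmetric), Markov's inequality, and Borel-Cantelli, yields $\|R_d(X)\| \to 0$ almost surely at the rate $n^{-1/2+o(1)}$.

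I would first expand $\E[\Tr R_d(X)^l]$, using Definition \ref{defRd}, into a sum over multi-labelings of an $l$-graph in which each $n$-vertex carries $d-1$ distinct $n$-labels, the first of which is ``special''. Writing $x_{ij_1}^2 x_{i'j_1}^2 - 1 = x_{ij_1}^2 x_{i'j_1}^2 + (-1)$ and expanding the product over special factors turns the result into a signed sum indexed by subsets $S \subseteq \{1, \ldots, l\}$ with signs $(-1)^{l - |S|}$: for $s \in S$ the special factor contributes $x_{i_s j_1^s}^2 x_{i_{s+1} j_1^s}^2$ (even powers at both endpoints), while for $s \notin S$ the label $j_1^s$ persists as a slot constrained to be distinct from $j_2^s, \ldots, j_{d-1}^s$ but contributes no variable. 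Next I would observe that the summation over $S$ cancels any multi-labeling in which some special factor $Y_s := x_{i_s j_1^s}^2 x_{i_{s+1} j_1^s}^2 - 1$ is \emph{isolated}---that is, its variables are disjoint from every other factor in the product---since in that case $\E[Y_s] = 0$ factors out of the expectation. Hence only ``entangled'' multi-labelings, in which every special factor shares at least one variable with some other special or non-special factor, survive. Finally, I would adapt Lemma \ref{lemmadistinctmultilabels} and the excess analysis of Appendix \ref{appendixcombinatorics} to this setting, arguing that the entanglement constraint reduces either the maximum number of distinct labels or the count of surviving equivalence classes by enough to supply the target $n^{-l/2}$ gain relative to the $Q_d$-analysis.

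The main obstacle is the third step: making rigorous the claim that enforced entanglement of special labels produces an extra factor of $n^{-l/2}$ in the moment bound. This will require refining the notion of multi-labeling equivalence class (Definition \ref{defmultiequivclass}) to record the sharing pattern of each special label---whether it pairs with another special label or with some non-special $j_a^{s'}$---extending the proofs of Lemma \ref{lemmadistinctmultilabels} and the combinatorial lemmas of Appendix \ref{appendixcombinatorics} to this refined setting, and carefully tracking the interaction of the signed cancellations across the $2^l$ choices of $S$ so that the resulting $\mathrm{poly}(l)$ combinatorial factor does not swamp the $(C/\sqrt{n})^l$ decay. Particular care will be needed for ``partial'' cancellations when a special label shares its value with a non-special label at a different $n$-vertex, since the sign structure then interacts nontrivially with the equivalence-class enumeration.
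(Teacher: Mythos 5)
Your proposal is a plausible heuristic plan, but its central step is not carried out, and you say so yourself: the claim that ``enforced entanglement of special labels produces an extra factor of $n^{-l/2}$'' is exactly the content of the lemma, and deferring it to an unspecified extension of Lemma \ref{lemmadistinctmultilabels} and the excess machinery of Appendix \ref{appendixcombinatorics} leaves the proof incomplete. The signed expansion over subsets $S$ is also a detour: to see that a labeling with an ``isolated'' special factor contributes zero you only need that $Y_s$ is independent of the remaining factors and $\E[Y_s]=0$, which requires no inclusion--exclusion. The genuinely delicate case is the one you flag only in passing at the end: when the special label $j_1^s$ coincides with a \emph{non-special} label $j_a^{s'}$ at another $n$-vertex, or with the special label of an adjacent vertex, so that some $x_{ij}$ appears with odd multiplicity $1$ or $3$. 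Handling this requires the symmetry assumption $\E[x_{ij}]=\E[x_{ij}^3]=0$ and a propagation argument showing that the coincidence forced at one vertex forces further coincidences around the cycle; without that, the counting does not close.

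The paper avoids all of this heavy machinery by observing that, since one only needs $\|R_d(X)\|\to 0$ rather than a sharp limiting value, a \emph{fixed} even moment suffices: it bounds $\E[\Tr R_d(X)^6]\le Cn^{-2}$ directly, which with Markov and Borel--Cantelli gives almost sure convergence. The $n^{-2}$ is obtained by a case analysis on the number of distinct row indices among $i_1,\dots,i_6$, combined with the chain argument sketched above (if $i_2$ is distinct from its neighbors' labels, then $j_2^1$ must equal some $j_a^2$ with $a\ge 2$, which must equal some $j_{a'}^3$, and so on), which strictly reduces the number of distinct column indices below the generic count $3d-3$. If you want to salvage your approach, I would recommend abandoning $l\asymp\log n$ and the equivalence-class refinement, fixing $l=6$ (or any even $l\ge 6$ with $p^2 n^{-l/3}$ summable), and writing out the coincidence-propagation argument explicitly; as it stands, the proof has a real gap at its key combinatorial step.
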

\begin{proof}
Letting $i_7:=i_1$ for notational convenience, note that
\[\E\left[\Tr R_d(X)^6\right]
=\mathop{\sum_{i_1,\ldots,i_6=1}^p}_{i_1 \neq i_2,i_2 \neq
i_3,\ldots, i_6 \neq i_1} \E\left[\prod_{s=1}^6 r_{i_si_{s+1}}\right]
=\frac{\binom{d}{2}^6}{(d!)^3}
n^{-3(d+1)}V_d\]
where we set
\begin{equation}\label{eq:Vd}
V_d:=\mathop{\sum_{i_1,\ldots,i_6=1}^p}_{i_1 \neq i_2,i_2 \neq
i_3,\ldots, i_6 \neq i_1}
\mathop{\sum_{j_1^1,\ldots,j_{d-1}^1=1}^n}_{j_1^1 \neq j_2^1 \neq
\ldots \neq j_{d-1}^1} \ldots
\mathop{\sum_{j_1^6,\ldots,j_{d-1}^6=1}^n}_{j_1^6 \neq j_2^6 \neq
\ldots \neq j_{d-1}^6} X(\mathbf{i},\mathbf{j})
\end{equation}
and
\begin{equation}\label{eq:Xij}
X(\mathbf{i},\mathbf{j})
:=\E\left[\prod_{s=1}^6 \left((x_{i_sj_1^s}^2x_{i_{s+1}j_1^s}^2-1)
\prod_{a=2}^{d-1} x_{i_sj_a^s}x_{i_{s+1}j_a^s}\right)\right].
\end{equation}
If there is some $j^* \in \{1,\ldots,n\}$ such that $j_a^s=j^*$ for
exactly one pair of indices $(s,a) \in \{1,\ldots,6\} \times \{1,\ldots,d-1\}$,
then as $i_s \neq i_{s+1}$, independence of the entries of $X$ implies
$X(\mathbf{i},\mathbf{j})=0$. Hence, we may restrict the sum in (\ref{eq:Vd})
to terms where each index $j_a^s$ equals some other index $j_{a'}^{s'}$.
Then the number of distinct indices $j_a^s$ is at most $6(d-1)/2=3d-3$.
Furthermore, it is clear that $|X(\mathbf{i},\mathbf{j})| \leq C$ always,
for a constant $C$ independent of $n$ and $p$.

We now consider several cases for a nonzero term $X(\mathbf{i},\mathbf{j})$,
depending on the number of distinct indices among $\{i_1,\ldots,i_6\}$:

{\bf Case 1: $|\{i_1,\ldots,i_6\}| \leq 4$.} Letting $V_{d,4}$ denote the sum
over terms of (\ref{eq:Vd}) belonging to this case, the above implies
$V_{d,4} \leq Cp^4n^{3d-3}$ for a constant $C$ independent of $n$ and $p$.

{\bf Case 2: $|\{i_1,\ldots,i_6\}|=5$.} Then either
$i_s=i_{s+2}$ or $i_s=i_{s+3}$ for some $s$ (where $s+2$ and $s+3$
are taken modulo 6), with the remaining indices all distinct. Suppose without
loss of generality that $i_2$ and $i_3$ are distinct from each other and from
$\{i_1,i_4,i_5,i_6\}$.
Let $i^*=i_2$ and $j^*=j_2^1$ (which exists when $d \geq 3$). By the
distinctness conditions in (\ref{eq:Vd}), $j^* \neq j_a^1$ for all $a \neq 2$.
If furthermore $j^* \neq j_a^2$ for all $a \in \{1,\ldots,d-1\}$, then
$x_{i^*j^*}$ appears exactly once in (\ref{eq:Xij}), so
$X(\mathbf{i},\mathbf{j})=0$. If $j^*=j_1^2$, then
$x_{i^*j^*}$ appears twice, once as the term $x_{i_2j_2^1}$ and
once in the term $(x_{i_2j_1^2}^2x_{i_3j_1^2}^2-1)$. The product of these
terms is $x_{i^*j^*}^3x_{i_3j_1^2}-x_{i^*j^*}$, and as $\E[x_{i^*j^*}^3]=0$
and $\E[x_{i^*j^*}]=0$, this also implies $X(\mathbf{i},\mathbf{j})=0$. Hence
we must have $j^*=j_2^1=j_a^2$ for some $a \geq 2$.
The same argument applied to $i^*:=i_3$ and $j^*:=j_a^2$ shows that
we must have $j^*=j_a^2=j_{a'}^3$ for some $a' \geq 2$.
Then $j_2^1=j_a^2=j_{a'}^3$, so there cannot be exactly $3d-3$ distinct indices
$j_a^s$. Then there are at most $3d-4$ such distinct indices, and letting
$V_{d,5}$ denote the sum over terms of (\ref{eq:Vd}) belonging to this case,
we obtain $V_{d,5} \leq Cp^5n^{3d-4}$ for a constant $C$.

{\bf Case 3: $|\{i_1,\ldots,i_6\}|=6$.} Then all indices $i_1,\ldots,i_6$ are
distinct. Applying the argument of Case 2, there exists $a \geq 2$ such that
$j_2^1=j_a^2$. There exists further $a' \geq 2$ such that $j_a^2=j_{a'}^3$,
$a'' \geq 2$ such that $j_{a'}^3=j_{a''}^4$, etc., and for each $s=1,\ldots,6$
we obtain some $a \geq 2$ such that $j_2^1=j_a^s$. Then the number of distinct
indices $j_a^s$ is at most $\frac{6(d-1)-6}{2}+1=3d-5$. Letting $V_{d,6}$ denote
the sum over terms of (\ref{eq:Vd}) belonging to this case, we obtain
$V_{d,6} \leq Cp^6n^{3d-5}$.

Putting the cases together,
\[\E\left[\Tr R_d(X)^6\right] \leq Cn^{-3(d+1)}(V_{d,4}+V_{d,5}+V_{d,6})
\leq Cn^{-2}\]
for a constant $C>0$ and all large $n$ and $p$. Then for any $\eps>0$,
\[\P\left[\|R_d(X)\|>\eps\right] \leq \frac{\E[\Tr
R_d(X)^6]}{\eps^6} \leq \frac{C}{\eps^6n^2},\]
so $\limsup_{n,p \to \infty} \|R_d(X)\|
\leq \eps$ almost surely, and the result follows by taking $\eps \to 0$.
\end{proof}

\begin{lemma}\label{lemmaR2}
Let $R_d(X)$ be as in Definition \ref{defRd}, and let $\tilde{R}(X)$
be as in (\ref{eq:tildeR}).
As $n,p \to \infty$ with $p/n \to \gamma \in (0,\infty)$,
$\|a_2R_2(X)-\tilde{R}(X)\| \to 0$ almost surely.
\end{lemma}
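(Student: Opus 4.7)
The plan is to compute $a_2 R_2(X) - \tilde{R}(X)$ entrywise, then split it into a sample-covariance-type piece and a small diagonal piece, each of which vanishes in spectral norm almost surely. First, I would apply the algebraic identity
\[
x_{ij}^2 x_{i'j}^2 - 1 = (x_{ij}^2-1)(x_{i'j}^2-1) + (x_{ij}^2-1) + (x_{i'j}^2-1)
\]
to Definition \ref{defRd} at $d=2$. For $i \neq i'$, the last two summands assemble exactly into the off-diagonal entries of $\tilde{R}(X)$ from (\ref{eq:tildeR}), so one obtains
\[
(a_2 R_2(X) - \tilde{R}(X))_{ii'} = \frac{a_2}{\sqrt{2}\,n^{3/2}} \sum_{j=1}^n (x_{ij}^2-1)(x_{i'j}^2-1).
\]
On the diagonal $a_2 R_2(X)_{ii} = 0$ while $\tilde{R}(X)_{ii} = \frac{2a_2}{n\sqrt{2}} v(X)_i$, giving $(a_2 R_2(X) - \tilde{R}(X))_{ii} = -\sqrt{2}\,a_2\,n^{-3/2}\sum_j (x_{ij}^2-1)$.

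Writing $y_{ij} := x_{ij}^2 - 1$ (which are IID, mean zero, and, by (\ref{eq:momentassump}), satisfy $\E[|y_{ij}|^k] \leq C^k k^{2\alpha k}$ for every $k$), and letting $Y \in \R^{p \times n}$ be the matrix with entries $y_{ij}$, I would decompose
\[
a_2 R_2(X) - \tilde{R}(X) = \frac{a_2}{\sqrt{2}\,n^{3/2}}\bigl(YY^T - \diag(YY^T)\bigr) + D,
\]
where $D$ is diagonal with $D_{ii} = -\sqrt{2}\,a_2\,n^{-3/2}\sum_j y_{ij}$. Since $Y$ has IID centered entries with finite fourth moment (indeed finite moments of all orders), the classical Bai--Yin theorem gives $\|YY^T\| = O(n)$ almost surely in the regime $p/n \to \gamma$, hence $\|\frac{a_2}{\sqrt{2}} n^{-3/2} YY^T\| = O(n^{-1/2}) \to 0$. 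The diagonal correction $n^{-3/2}\diag(YY^T)$ has entries of typical size $n^{-1/2}$ by the law of large numbers, and a union bound over $i \in \{1,\dots,p\}$ using the polynomial moment bound on $y_{ij}^2$ controls the maximum, yielding $\|\diag(YY^T)/n^{3/2}\| \to 0$ almost surely.

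For the diagonal contribution $D$, the moment bound (\ref{eq:momentassump}) gives a Rosenthal-type inequality $\E[|\sum_j y_{ij}|^l] \leq C_l n^{l/2}$ for each even $l$. Applying Markov's inequality together with a union bound over $i \in \{1,\dots,p\}$ and taking $l$ large (depending on $\eps$), one obtains $\max_i |\sum_j y_{ij}| = O(n^{1/2+\eps})$ almost surely for any $\eps>0$, so $\|D\| = O(n^{-1+\eps}) \to 0$. Putting the three pieces together,
\[
\|a_2 R_2(X) - \tilde{R}(X)\| \;\leq\; \Bigl\|\tfrac{a_2}{\sqrt{2}\,n^{3/2}}YY^T\Bigr\| + \Bigl\|\tfrac{a_2}{\sqrt{2}\,n^{3/2}}\diag(YY^T)\Bigr\| + \|D\| \;\to\; 0
\]
almost surely. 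I do not expect a significant obstacle: the core calculation is the one-line algebraic identity, and the remaining ingredients are standard spectral-norm and concentration estimates for matrices with IID entries whose moments are controlled by (\ref{eq:momentassump}).
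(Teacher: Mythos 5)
Your proof is correct and follows essentially the same route as the paper: the same algebraic identity $x_{ij}^2x_{i'j}^2-1=(x_{ij}^2-1)(x_{i'j}^2-1)+(x_{ij}^2-1)+(x_{i'j}^2-1)$ reduces the off-diagonal of $a_2R_2(X)-\tilde{R}(X)$ to $\tfrac{a_2}{\sqrt{2}}n^{-3/2}YY^T$ with $y_{ij}=x_{ij}^2-1$, which is $O(n^{-1/2})$ in norm, and the diagonal discrepancies are killed by union bounds. If anything you are slightly more careful than the paper, which records the diagonal of the difference as $0$ and thereby glosses over the nonzero diagonal of $\tilde{R}(X)$ that your term $D$ handles explicitly.
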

\begin{proof}
Let $T(X)=a_2R_2(X)-\tilde{R}(X)$. Then $T(X)$ has entries
\begin{align*}
t_{ii'}&=\begin{cases}
\frac{a_2}{\sqrt{2}}n^{-\frac{3}{2}}\sum_{j=1}^n
\left((x_{ij}^2x_{i'j}^2-1)-(x_{ij}^2-1)-(x_{i'j}^2-1)\right) & i \neq i'\\
0 & i=i'
\end{cases}\\
&=\begin{cases}
\frac{a_2}{\sqrt{2}}n^{-\frac{3}{2}}\sum_{j=1}^n
(x_{ij}^2-1)(x_{i'j}^2-1) & i \neq i'\\
0 & i=i'
\end{cases}.
\end{align*}
Thus, excluding the diagonal, $T(X)$ equals
$\frac{a_2}{\sqrt{2}}n^{-\frac{3}{2}}YY^T$ where $Y=(y_{ij}) \in \R^{p \times
n}$ and $y_{ij}=x_{ij}^2-1$. Under the assumption (\ref{eq:momentassump}),
$\frac{1}{n}\|YY^T\|$ converges to a finite limit almost surely (see
e.g.\ \cite{geman}), so $n^{-\frac{3}{2}}\|YY^T\| \to 0$. Furthermore,
$T(X)-\frac{a_2}{\sqrt{2}}n^{-\frac{3}{2}}YY^T$ is diagonal and
$\left\|T(X)-\frac{a_2}{\sqrt{2}}n^{-\frac{3}{2}}YY^T\right\| \to 0$
is easily verified by a union bound, implying $\|T(X)\| \to 0$.
\end{proof}

We now conclude the proof of Theorem \ref{thm:polynomial}.
\begin{proof}[Proof of Theorem \ref{thm:polynomial}]
Recall Definitions \ref{defQRS} and \ref{defRd} and the decompositions
$K(X)=Q(X)+R(X)+S(X)$ and $R(X)=\sum_{d=2}^D a_dR_d(X)$.
Proposition \ref{propQnorm} and Lemmas \ref{lemmaS}, \ref{lemmaRdgeq3},
and \ref{lemmaR2} imply
$\limsup_{n,p \to \infty} \|K(X)-\tilde{R}(X)\|
\leq \|\mu_{a,\nu,\gamma}\|$. The conditions of Theorem \ref{thm:ESD} are
verified for the kernel function $k(x)$ as in the proof of Corollary
\ref{cor:largesteig} in Section \ref{sec:proofoverview}. Furthermore, $K(X)$
and $\tilde{K}(X):=K(X)-\tilde{R}(X)$ have the same limiting empirical
spectral distribution, since $\tilde{R}(X)$ has finite rank. Then
Theorem \ref{thm:ESD} implies $\liminf_{n,p \to \infty}
\|\tilde{K}(X)\| \geq \|\mu_{a,\nu,\gamma}\|$ almost surely, and this
establishes property (1) of Theorem \ref{thm:polynomial}.

To verify the claim regarding the non-zero eigenvalues of $\tilde{R}(X)$ in
property (2) of Theorem \ref{thm:polynomial}, we compute from (\ref{eq:tildeR})
$\Tr \tilde{R}(X)=\frac{a_2\sqrt{2}}{n} v(X)^T\1$
and $\Tr \tilde{R}(X)^2=\frac{a_2^2}{n^2}((
v(X)^T\1)^2+p\|v(X)\|^2)$. If $\lambda_1$ and $\lambda_2$ are the
two non-zero eigenvalues of $\tilde{R}(X)$, then
$\lambda_1+\lambda_2=\Tr \tilde{R}(X)$ and
$\lambda_1\lambda_2=\frac{1}{2}((\lambda_1+\lambda_2)^2-\lambda_1^2-\lambda_2^2)
=\frac{1}{2}((\Tr \tilde{R}(X))^2-\Tr \tilde{R}(X)^2)$, so
$\lambda_1$ and $\lambda_2$ are the roots of the equation
\[\lambda^2-\left(\frac{a_2\sqrt{2}}{n}v(X)^T\1\right)\lambda+
\frac{a_2^2}{2n^2}\left((v(X)^T\1)^2-p\|v(X)\|^2\right)=0.\]
By the law of large numbers, $n^{-1}v(X)^T\1 \to 0$
and $(p/n^2)\|v(X)\|^2 \to \gamma^2(\E x_{ij}^4-1)$ almost surely.
Since the roots of a polynomial are
continuous in its coefficients, the result follows.
\end{proof}

\appendix
\section{Combinatorial results}\label{appendixcombinatorics}
This appendix contains the proofs of Lemmas \ref{lemmadistinctmultilabels},
\ref{lemmatripleijpairs}, and \ref{lemmadistinctsimplelabels} used in
Section \ref{sec:Q}, as well as the proof of Proposition \ref{proplabelmapping}
and the explicit construction of the map $\varphi$ in that proposition.

\subsection{Proof of Lemmas \ref{lemmadistinctmultilabels},
\ref{lemmatripleijpairs}, and \ref{lemmadistinctsimplelabels}}
We restate the lemmas using their original numbering.
\lemmadistinctsimplelabels*
\begin{proof}
Let $I=\{1,\ldots,p\}$ and $J=\{1,\ldots,n\}$, and consider an undirected
graph $G$ on the vertex set $I \sqcup J$ (the disjoint union of $I$ and $J$
with $n+p$ elements, treating elements of $I$ and the elements of $J$ as
distinct). Let $G$ have an edge between $i,i' \in I$ if there are three
consecutive vertices ($p$, $n$, $p$) of the $l$-graph
with the labels $i$, $\emptyset$, $i'$ or $i'$, $\emptyset$, $i$. Let $G$ have
an edge between $i \in I$ and $j \in J$ if there are two
consecutive vertices of the $l$-graph such that the $p$-vertex has label $i$ and
the $n$-vertex has label $j$. The number of vertices of $G$ incident to at least
one edge is $\tilde{m}$, and $G$ must be connected, so it has at least
$\tilde{m}-1$ edges. An edge in $G$ between $i,i' \in I$ corresponds
to at least two consecutive pairs of $p$-vertices in the $l$-graph having an
$n$-vertex with empty label in between, by condition (3) of Definition
\ref{defsimplelabeling}, so the number of such edges is at
most $\frac{l-\tilde{k}}{2}$. Similarly, an edge in $G$ between $i \in I$ and
$j \in J$ corresponds to at least two pairs of consecutive $n$ and $p$-vertices
of the $l$-graph such that the $n$-vertex has non-empty label, by condition (2)
of \ref{defsimplelabeling}, so the
number of such edges is at most $\frac{2\tilde{k}}{2}$. Then $\tilde{m}-1 \leq
\frac{l+\tilde{k}}{2}$.
\end{proof}

Turning now to multi-labelings, for each $j \in \{1,2,3,\ldots\}$
and a given multi-labeling, let us denote throughout
\[N_j:=\text{number of appearances of } j \text{ as an } n\text{-label}.\]
Then the following two lemmas hold:
\begin{lemma}\label{lemmatwicenlabels}
In any multi-labeling of an $l$-graph, each $j$ that appears as an $n$-label
has $N_j \geq 2$.
\end{lemma}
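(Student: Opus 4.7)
The plan is to derive the bound $N_j \geq 2$ directly from conditions (1) and (3) of Definition \ref{defmultilabeling}, via a short parity argument.

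First I would argue by contradiction: suppose some $n$-label $j$ has $N_j = 1$, so $j$ appears in the tuple of exactly one $n$-vertex $V$. Let $U_1$ and $U_2$ be the two $p$-vertices adjacent to $V$ in the cycle, with $p$-labels $i_1$ and $i_2$. The key preparatory observation is that $U_1$ and $U_2$ are precisely the two $p$-vertices immediately preceding and following each other in the cyclic ordering (they are separated only by $V$), so condition (1) applied to $U_1$ forces $i_1 \neq i_2$.

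Next I would look at condition (3) applied to the label pair $(i_1, j)$. Since $V$ is the unique $n$-vertex whose tuple contains $j$, the only edges of the $l$-graph whose $n$-endpoint has $j$ in its tuple are the two edges $(V, U_1)$ and $(V, U_2)$. Among these two edges, exactly one, namely $(V, U_1)$, has its $p$-endpoint labeled $i_1$ (since $i_2 \neq i_1$). Hence the count demanded to be even by condition (3) equals $1$, which is the desired contradiction, forcing $N_j \geq 2$.

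There is no real obstacle here: the only subtlety is correctly reading condition (1) to conclude that the two $p$-neighbors of a single $n$-vertex must carry distinct $p$-labels, which then converts condition (3) into a parity statement that instantly rules out $N_j = 1$.
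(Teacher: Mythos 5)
Your proof is correct and is essentially identical to the paper's: both argue by contradiction that if $N_j=1$, condition (1) forces the two $p$-neighbors of the unique $n$-vertex carrying $j$ to have distinct labels $i_1 \neq i_2$, so exactly one edge pairs $i_1$ with $j$, violating the parity required by condition (3). No issues.
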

\begin{proof}
Suppose that an $n$-label $j$ appears only once. The two $p$-vertices
preceding and following that $n$-vertex must have distinct labels, say $i_1$
and $i_2$, by condition (1) of Definition \ref{defmultilabeling}. Then exactly
one edge in the $l$-graph has $p$-vertex endpoint labeled $i_1$ and
$n$-vertex endpoint having label $j$ (and similarly for $i_2$ and $j$),
contradicting condition (3) of Definition \ref{defmultilabeling}.
\end{proof}
\begin{lemma}\label{lemmafewdistinctp}
Suppose a multi-labeling of an $l$-graph has at most $\frac{l}{2}$
distinct $p$-labels. If this multi-labeling has excess $\Delta$, then
\[\sum_{j:N_j \geq 3} N_j \leq 6\Delta-6.\]
Consequently, the number of $n$-vertices having any label $j$ for which
$N_j \geq 3$ is also at most $6\Delta-6$.
\end{lemma}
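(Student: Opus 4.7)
The plan is to extract the desired bound by combining the defining identity for the excess $\Delta$ with the multiplicity floor from Lemma \ref{lemmatwicenlabels}, then exploiting the ratio between the total number of $n$-label appearances and the number of distinct $n$-labels of high multiplicity.

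First, let $r$ denote the number of distinct $p$-labels and $k$ the number of distinct $n$-labels, so that $m = r + k$. By Definition \ref{defmultiexcess}, the identity $\Delta = \frac{l + \sum_s d_s}{2} + 1 - m$ rearranges to
\[\sum_{s=1}^{l} d_s \;=\; 2\Delta + 2r + 2k - l - 2.\]
Since the labels listed at each $n$-vertex are distinct within its tuple, $\sum_s d_s$ coincides with $\sum_j N_j$ summed over the distinct $n$-labels $j$. I would then partition these $n$-labels according to whether $N_j = 2$ or $N_j \geq 3$; the case $N_j = 1$ is ruled out by Lemma \ref{lemmatwicenlabels}. Writing $k_{\geq 3}$ for the count of $n$-labels with $N_j \geq 3$, this partition yields $\sum_j N_j = 2(k - k_{\geq 3}) + \sum_{N_j \geq 3} N_j$, and substituting into the previous identity produces
\[\sum_{N_j \geq 3} N_j \;=\; 2\Delta + 2r + 2 k_{\geq 3} - l - 2.\]

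The decisive step is to eliminate $k_{\geq 3}$ using the crude estimate $3 k_{\geq 3} \leq \sum_{N_j \geq 3} N_j$, forced by the very definition of the index set. Rearranging the resulting inequality gives
\[\sum_{N_j \geq 3} N_j \;\leq\; 6\Delta + 6r - 3l - 6,\]
and invoking the hypothesis $r \leq l/2$ cancels the $6r - 3l$ term exactly, delivering the claimed bound $\sum_{N_j \geq 3} N_j \leq 6\Delta - 6$. The consequence about the count of $n$-vertices will then follow because each such $n$-vertex contributes at least one incidence to the sum $\sum_{N_j \geq 3} N_j$ through one of its tuple labels, so the number of such vertices is bounded by the same quantity.

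I do not anticipate a real obstacle here, as the argument is a short linear manipulation of three counting identities. The only subtle feature, and the reason the lemma takes the specific threshold $l/2$ rather than some other fraction of $l$, is that the coefficient $3$ in the estimate $3 k_{\geq 3} \leq \sum_{N_j \geq 3} N_j$ produces precisely the coefficient $6r - 3l$ that the hypothesis cancels; a weaker constraint on $r$ would leave an uncontrolled positive term behind. A brief sanity check also confirms that under the hypothesis $r \leq l/2$ one automatically has $\Delta \geq 1$, so the bound $6\Delta - 6$ is non-negative whenever the sum on the left is non-zero.
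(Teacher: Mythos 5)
Your proof is correct and is essentially the same argument as the paper's: both combine the excess identity with the multiplicity floor $N_j\geq 2$ from Lemma \ref{lemmatwicenlabels} and the bound $3k_{\geq 3}\leq\sum_{N_j\geq 3}N_j$, differing only in whether the bookkeeping is done through the count of labels with $N_j=2$ or with $N_j\geq 3$. The handling of the second claim and the observation about why the threshold $l/2$ is exactly what the coefficients require are also sound.
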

\begin{proof}
Observe that if $m$ total distinct $p$-labels and $n$-labels appear in the
labeling, and at most $\frac{l}{2}$ of these are $p$-labels, then the labeling
has at least $m-\frac{l}{2}$ distinct $n$-labels. Let $c=|\{j:N_j=2\}|$.
Then Lemma \ref{lemmatwicenlabels} 
implies $2c+3\left(m-\frac{l}{2}-c\right) \leq \sum_{s=1}^l d_s$
(where $d_1,\ldots,d_l$ are the numbers of $n$-labels on the $l$ $n$-vertices),
so $c \geq 3m-\frac{3l}{2}-\sum_{s=1}^l d_s$. Then the $n$-labels in
$\{j:N_j=2\}$ account for at least
$6m-3l-2\sum_{s=1}^l d_s$ of the $\sum_{s=1}^l d_s$ total $n$-labels,
implying that at most $3l+3\sum_{s=1}^l d_s-6m=6\Delta-6$ total
$n$-labels remain. This establishes the first claim, and the second follows
directly from the first.
\end{proof}

We will prove many subsequent claims regarding multi-labelings by
induction on $l$. The following two lemmas describe the base case of the
induction and the basic inductive step.
\begin{lemma}\label{lemmal23}
Suppose $l=2$ or $l=3$. Then for any multi-labeling of the $l$-graph, all $l$
$p$-labels are distinct, and all $l$ $n$-vertices have the same tuple of
$n$-labels, up to reordering.
\end{lemma}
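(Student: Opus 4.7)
The plan is to verify both claims directly from Definition \ref{defmultilabeling}, exploiting the fact that for $l \in \{2,3\}$ the cyclic structure of the $l$-graph is extremely constrained. For the first assertion, observe that when $l \in \{2,3\}$ every pair of distinct $p$-vertices stands in the relation of ``immediate predecessor/successor $p$-vertex'' in the cycle. Indeed, for $l=2$ the single other $p$-vertex is both the immediate predecessor and the immediate successor, and for $l=3$ the two other $p$-vertices are exactly the immediate predecessor and the immediate successor. Condition (1) of Definition \ref{defmultilabeling} then forces all $l$ of the $p$-labels to be pairwise distinct.

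For the second assertion, I would write the $p$-labels (in cyclic order) as $i_1,\ldots,i_l$ and the tuples of $n$-labels as $T_1,\ldots,T_l$, noting that the $p$-vertex carrying label $i_s$ is adjacent in the $l$-graph precisely to the $n$-vertices carrying tuples $T_{s-1}$ and $T_s$ (indices mod $l$). Fix any $n$-label $j$. Since $i_1,\ldots,i_l$ are pairwise distinct, the number of edges of the $l$-graph whose $p$-endpoint is labeled $i_s$ and whose $n$-endpoint's tuple contains $j$ is exactly $\I\{j \in T_{s-1}\}+\I\{j \in T_s\}$. By condition (3) this count must be even, and since it lies in $\{0,1,2\}$ it must be $0$ or $2$, i.e.\ $\I\{j \in T_{s-1}\}=\I\{j \in T_s\}$ for every $s$. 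Iterating this equality around the cycle of $n$-vertices shows that $j$ lies in all of $T_1,\ldots,T_l$ or in none, so $T_1=\cdots=T_l$ as sets. Condition (2) guarantees the entries within each $T_s$ are distinct, so these tuples agree up to reordering, and in particular $d_1=\cdots=d_l$.

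There is no serious obstacle here: the lemma is a direct unpacking of the three defining conditions, made trivial by the fact that the $p$-cycle has at most three vertices, which simultaneously (via condition (1)) forces all $p$-labels to be distinct and (via condition (3)) makes the parity constraint on each label $j$ reduce to a pairwise equality of indicators that propagates around a short cycle. This lemma will serve as the base case for later inductive arguments on $l$ in the subsequent combinatorial lemmas.
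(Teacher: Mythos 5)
Your proof is correct and follows essentially the same route as the paper's: condition (1) forces distinct $p$-labels because every pair of $p$-vertices is adjacent (in the sense of the cycle) when $l\le 3$, and conditions (2)–(3) then force equal label-tuples on consecutive $n$-vertices. Your parity/indicator computation simply makes explicit the step the paper states in one line.
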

\begin{proof}
That all $l$ $p$-labels are distinct is a consequence of condition (1) of
Definition \ref{defmultilabeling}. Then by conditions (2) and (3) of Definition
\ref{defmultilabeling}, the $n$-vertices immediately preceding and following
each $p$-vertex must have the same tuple of $n$-labels, up to reordering.
\end{proof}
\begin{lemma}\label{lemmainductionstep}
In a multi-labeling of an $l$-graph with $l \geq 4$, suppose a 
$p$-vertex $V$
is such that its $p$-label appears on no other $p$-vertices. Let the $n$-vertex
preceding $V$ be $U$, the $p$-vertex preceding $U$ be $T$, the $n$-vertex
following $V$ be $W$, and the $p$-vertex following $W$ be $X$.
\begin{enumerate}
\item If $T$ and $X$ have different $p$-labels, then the graph obtained by
deleting $V$ and $W$ and connecting $U$ to $X$ is an $(l-1)$-graph with valid
multi-labeling.
\item If $T$ and $X$ have the same $p$-label, then the graph obtained by
deleting $U$, $V$, $W$, and $X$ and connecting $T$ to the $n$-vertex after $X$
is an $(l-2)$-graph with valid multi-labeling.
\end{enumerate}
\end{lemma}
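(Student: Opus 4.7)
The plan is to verify the three conditions of Definition \ref{defmultilabeling} for the reduced graphs in each of the two cases. The whole argument rests on one preliminary observation, which I would establish first: since the $p$-label $i_V$ of $V$ appears on no other $p$-vertex, the only edges in the $l$-graph whose $p$-endpoint carries label $i_V$ are $UV$ and $VW$. Applying condition (3) of Definition \ref{defmultilabeling} to the pair $(i_V, j)$ for each $n$-label $j$, the sum $[j \in U\text{'s tuple}] + [j \in W\text{'s tuple}]$ must be even, so these two indicators agree for every $j$. Let $\tau$ denote this common tuple.

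For case (1), the new graph alternates $p$- and $n$-vertices and has $2(l-1)$ vertices, so it is combinatorially an $(l-1)$-graph. Condition (1) at the unique new adjacency is exactly the hypothesis that $T$ and $X$ have distinct $p$-labels, and all other $p$-adjacencies are unchanged. Condition (2) is immediate since no $n$-tuple is altered. For condition (3), the removed edges are $UV, VW, WX$ and the added edge is $UX$. The count for $(i_V, j)$ drops by $2[j \in \tau]$, which is even. For $(i_X, j)$, where $i_X$ is the $p$-label of $X$, removing $WX$ and adding $UX$ changes the count by $-[j \in \tau] + [j \in \tau] = 0$. All other counts are unchanged.

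For case (2), let $Y$ be the $n$-vertex following $X$ and $Z$ the $p$-vertex following $Y$. The new graph on $2(l-2)$ vertices is an $(l-2)$-graph. Condition (1) at the new adjacency requires $T$ and $Z$ to have distinct $p$-labels, which holds because $T$ and $X$ share a $p$-label while $X$ and $Z$ were required to differ in the original labeling. Condition (2) is immediate. For condition (3), five edges $TU, UV, VW, WX, XY$ are removed and $TY$ is added. Writing $i$ for the common $p$-label of $T$ and $X$ and $\sigma$ for the $n$-tuple of $Y$, the net change to the count for $(i, j)$ is $-[j\in\tau] - [j\in\tau] - [j\in\sigma] + [j\in\sigma] = -2[j\in\tau]$, which is even; the $(i_V, j)$ count drops by $2[j\in\tau]$; and all other counts are unchanged.

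The essential content is the observation that $U$ and $W$ share their $n$-tuples; once this is in hand, the remaining work is elementary parity bookkeeping. The main place to be careful is case (2), where five edges disappear and one is added, and where the possible coincidence of $p$-labels between $T$, $X$, and other vertices in the cycle must be correctly accommodated in the count for label pair $(i, \cdot)$.
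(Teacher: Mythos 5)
Your proposal is correct and follows essentially the same route as the paper: both hinge on the observation that condition (3) applied to the unique label of $V$ forces $U$ and $W$ to carry the same $n$-tuple (up to reordering), after which conditions (1)--(3) for the reduced graph are checked by the same edge-count bookkeeping. No gaps.
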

\begin{proof}
First consider case (1). As $T$ and $X$ have distinct $p$-labels, it remains
true that no two consecutive $p$-vertices in the $(l-1)$-graph have the same
$p$-label, so condition (1) of Definition \ref{defmultilabeling} holds.
Condition (2) of Definition \ref{defmultilabeling} clearly still holds as well.
If $V$ has $p$-label $i$ and $W$ has $n$-labels $(j_1,\ldots,j_d)$, then $U$ has
$n$-labels $(j_1,\ldots,j_d)$ as well, up to reordering, by conditions (2) and
(3) of Definition \ref{defmultilabeling} and the fact
that $V$ is the only $p$-vertex with label $i$. Then in the $(l-1)$-graph
obtained by deleting $V$ and $W$,
the number of edges with $p$-vertex endpoint labeled $i$ and
$n$-vertex endpoint having label $j_s$ for any $s=1,\ldots,d$ is zero, and the
number of edges with $p$-vertex endpoint labeled $i'$ and $n$-vertex endpoint
having label $j'$ is the same as in the original $l$-graph for all other pairs
$(i',j')$. Thus condition (3) of Definition \ref{defmultilabeling} still holds
as well, so the $(l-1)$-graph has a valid multi-labeling.

Now consider case (2). $X$ and the $p$-vertex after $X$ must have different
$p$-labels in the original $l$-graph, by condition (1) of Definition
\ref{defmultilabeling}. As $T$ and $X$ have the same $p$-label, this implies $T$
and the $p$-vertex after $X$ must have different $p$-labels, so condition
(1) of Definition \ref{defmultilabeling} still holds in the $(l-2)$-graph.
Condition (2) of Definition \ref{defmultilabeling} clearly still holds in the
$(l-2)$-graph as well. Suppose $V$ has $p$-label $i_1$, $T$ and $X$ have
$p$-label $i_2$, and $W$ has $n$-labels $(j_1,\ldots,j_d)$. As in case (1),
$U$ must also have $n$-labels $(j_1,\ldots,j_d)$ up to reordering. Then in the
$(l-2)$-graph obtained by deleting $U$, $V$, $W$, and $X$, the
number of edges with $p$-vertex endpoint labeled $i_1$ and $n$-vertex endpoint
having label $j_s$ for any $s=1,\ldots,d$ is zero, the number of edges with
$p$-vertex endpoint labeled $i_2$ and $n$-vertex endpoint having label $j_s$ for
any $s=1,\ldots,d$ is two less than in the original $l$-graph, and the number of
edges with $p$-vertex endpoint labeled $i'$ and $n$-vertex endpoint having label
$j'$ is the same as in the original $l$-graph for all other pairs $(i',j')$.
Hence condition (3) of Definition \ref{defmultilabeling} still holds as well, so
the $(l-2)$-graph has a valid multi-labeling.
\end{proof}

\lemmadistinctmultilabels*
\begin{proof}
We induct on $l$. For $l=2$, a multi-labeling must have $d_1=d_2$ and $m=d_1+2$,
and for $l=3$, a multi-labeling must have $d_1=d_2=d_3$ and $m=d_1+3$, by Lemma
\ref{lemmal23}. The result is then easily verified in these two cases.

Suppose by induction that the result holds for $l-2$ and $l-1$, and consider a
multi-labeling of an $l$-graph with $l \geq 4$.
If each distinct $p$-label appears at least twice,
then there are at most $\frac{l}{2}$ distinct $p$-labels. Lemma
\ref{lemmatwicenlabels} implies there are
at most $\frac{\sum_{s=1}^l d_s}{2}$ distinct $n$-labels, so $m \leq
\frac{l+\sum_{s=1}^l d_s}{2}$, establishing the result.

Thus, suppose that some $p$-vertex $V$ has a label that appears exactly
once, and let $T,U,W,X$ be as in Lemma \ref{lemmainductionstep}. If $T$ and $X$
have different $p$-labels, follow procedure (1) in Lemma
\ref{lemmainductionstep} to obtain a multi-labeling of an $(l-1)$-graph.
This multi-labeling now has $m-1$
total distinct $p$-labels and $n$-labels, and so the induction hypothesis
implies $m-1 \leq \frac{l-1+\sum_{s=1}^l d_s-d}{2}+1$ where $d$ is the number of
$n$-labels of the deleted $n$-vertex $W$. Hence
$m \leq \frac{l+\sum_{s=1}^l d_s}{2}-\frac{d+1}{2}+2 \leq
\frac{l+\sum_{s=1}^l d_s}{2}+1$.

If $T$ and $X$ have the same $p$-label, follow procedure (2) of Lemma
\ref{lemmainductionstep} to obtain a multi-labeling of an $(l-2)$-graph.
This multi-labeling has between $m-d-1$ and $m-1$ (inclusive) total
distinct $p$-labels and $n$-labels, where $d$ is the number of $n$-labels of
the deleted $n$-vertex $W$. The induction hypothesis implies
$m-d-1 \leq \frac{l-2+\sum_{s=1}^l d_s-2d}{2}+1$,
so $m \leq \frac{l+\sum_{s=1}^l d_s}{2}+1$. This completes the induction in both
cases, establishing the desired result.
\end{proof}

\lemmatripleijpairs*
\begin{proof}
We induct on $l$. For $l=2$ or 3, we must have $b_{ij}=0$ or 2 for all $(i,j)$
by Lemma \ref{lemmal23}, and $\Delta \geq 0$ by Lemma
\ref{lemmadistinctmultilabels}, so the result holds.

Suppose the result holds for $l-2$ and $l-1$, and consider a multi-labeling of
an $l$-graph with $l \geq 4$. If each distinct $p$-label appears at
least twice, then there are at most $\frac{l}{2}$ distinct $p$-labels, so Lemma
\ref{lemmafewdistinctp} applies. For any $j$ with $N_j=2$, we have
$b_{ij}=2$ or $b_{ij}=0$ for all $i$,
by conditions (1) and (3) of Definition
\ref{defmultilabeling}. For any $j$ with $N_j \geq 3$, we apply the bound
$\sum_{i:b_{ij}>2} b_{ij} \leq 2N_j$.
Then $\sum_{i,j:b_{ij}>2} b_{ij} \leq 2(6\Delta-6) \leq 12\Delta$ by Lemma
\ref{lemmafewdistinctp}.

Now suppose that some $p$-vertex $V$ has a $p$-label appearing exactly once.
Consider the $(l-1)$-graph or $(l-2)$-graph obtained by Lemma
\ref{lemmainductionstep}. In the case of the $(l-1)$-graph, it is easily
verified that $\sum_{i,j:b_{ij}>2} b_{ij}$ is the same as in the original
$l$-graph, so the induction hypothesis implies $\sum_{i,j:b_{ij}>2} b_{ij}
\leq 12\left(\frac{l-1+\sum_{s=1}^l d_s-d}{2}+1-(m-1)\right)
\leq 12\Delta$, where $d \geq 1$ is the number of $n$-labels on the deleted
$n$-vertex $W$.

In the case of the $(l-2)$-graph, suppose the deleted $n$-vertex $W$ (and $U$)
has $d$ $n$-labels, of which $d'$ also appear on an $n$-vertex different from
$W$ and $U$. If $j$ does not appear on $W$ or $U$, then
clearly $b_{ij}$ is the same in the $(l-2)$-graph and the original $l$-graph for
all $i$. If $j$ is one of the $d-d'$ $n$-label values appearing only on
$W$ and $U$, then $b_{ij}=0$ or 2 in both the $(l-2)$-graph and the original
$l$-graph for all $i$. If $j$ is one of the other $d'$ $n$-label values 
appearing on $W$ and $U$, then
in deleting $U$, $V$, $W$, and $X$, we may have reduced $b_{ij}$ by 2 for at
most two distinct values of $i$ (corresponding to the $p$-labels of $V$ and
$X$). This implies that
$\sum_{i:b_{ij}>2} b_{ij}$ reduces by at most 8 for this $j$, with the maximal
reduction occurring if $b_{ij}=4$ for both of these values of $i$ in the
original $l$-graph. Then
by the induction hypothesis, $\sum_{i,j:b_{ij}>2} b_{ij}-8d' \leq 12\left(
\frac{l-2+\sum_{s=1}^l d_s-2d}{2}+1-(m-1-(d-d')\right)$, as the $(l-2)$-graph
has $m-1-(d-d')$ total distinct $n$ and $p$-labels. Then $\sum_{i,j:b_{ij}>2}
b_{ij} \leq 12\left(\frac{l+\sum_{s=1}^l d_s}{2}+1-m-d'\right)+8d' \leq
12\Delta$, so the result holds in this case as well, completing the induction.
\end{proof}

\subsection{Construction of the map $\varphi$}
\begin{definition}\label{defsingle}
In an $l$-graph with a multi-labeling, an $n$-vertex is {\bf single}
if it has only one $n$-label. It is a {\bf good single} if it is single and if
its $n$-label $j$ appears only on single $n$-vertices. Otherwise,
it is a {\bf bad single}.
\end{definition}
\begin{definition}\label{defgoodpair}
In an $l$-graph with a multi-labeling, a pair $(V,V')$ of distinct (not
necessarily consecutive) $n$-vertices is a {\bf good pair} if the following
conditions hold:
\begin{enumerate}
\item $V$ and $V'$ have the same tuple of $n$-labels, up to reordering,
\item $V$ and $V'$ are not single, and
\item $N_j=2$ for each $j$ appearing as an $n$-label on $V$ and $V'$ (i.e.\ this
label $j$ appears on no other $n$-vertices).
\end{enumerate}
If an $n$-vertex $V$ is not single and not part of any good pair,
then $V$ is a {\bf bad non-single}.
\end{definition}

Thus, every $n$-vertex is either a good single, a bad single, a bad
non-single, or part of a good pair.
Conditions (1) and (3) of Definition \ref{defmultilabeling} require that,
if $(V,V')$ is a good pair, then the two (distinct) $p$-labels of the
$p$-vertices preceding and following $V$ are the same
as those of the $p$-vertices preceding and following $V'$ (but not necessarily
in the same order).

\begin{definition}
Suppose $(V,V')$ is a good pair of $n$-vertices. Let the $p$-vertices preceding
and following $V$ be $U$ and $W$, respectively, and let the $p$-vertices
preceding and following $V'$ be $U'$ and $W'$, respectively. Then the good pair
$(V,V')$ is {\bf proper} if $U$ has the same label as $W'$ and $U'$ has the same
label as $W$, and it is {\bf improper} if $U$ has the same label as $U'$ and $W$
has the same label as $W'$.
\end{definition}

\begin{definition}\label{deflabelmap}
The {\bf label-simplifying map} is the map from $(p,n)$-multi-labelings of an
$l$-graph to $(p,n+1)$-simple-labelings of an $l$-graph, defined by the
following procedure:
\begin{enumerate}
\item While there exists an improper good pair of $n$-vertices $(V,V')$, iterate
the following: Let $W$ be the $p$-vertex following $V$ and $W'$ be the
$p$-vertex following $V'$, and reverse the sequence of vertices starting at $W$
and ending at $W'$ (together with their labels).
\item For each $n$-vertex in a good pair, relabel it with the empty label.
\item For each $n$-vertex that is a bad single or a bad non-single,
relabel it with the single label $n+1$.
\end{enumerate}
\end{definition}
\begin{remark}
In the case where there are multiple improper good pairs in step (1) of this
procedure, it will not be important for our later arguments in which order the
pairs $(V,V')$ are selected and which vertex we choose as $V$ and which as $V'$.
For concreteness, we may always select $\{V,V'\}$
to be the improper good pair whose sorted $n$-label-tuple is smallest
lexicographically, and we may take $V$ to come before $V'$ in the $l$-graph
cycle.
\end{remark}
\begin{lemma}\label{lemmalabelmapvalidity}
The following are true for the label-simplifying map in Definition
\ref{deflabelmap}:
\begin{enumerate}
\item Step (1) of the procedure in Definition \ref{deflabelmap} always
terminates in a valid $(p,n)$-multi-labeling with no improper good pairs.
\item The image of any $(p,n)$-multi-labeling under the map is
a valid $(p,n+1)$-simple-labeling.
\item If two multi-labelings are equivalent, then their image simple-labelings
are also equivalent.
\end{enumerate}
\end{lemma}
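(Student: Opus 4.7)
The plan is to verify the three claims in order. For (1), fix an improper good pair $(V,V')$ with $\mathrm{label}(U)=\mathrm{label}(U')=a$ and $\mathrm{label}(W)=\mathrm{label}(W')=b$ (which are distinct by condition (1) of the original multi-labeling). Reversing the segment from $W$ to $W'$ changes only two boundary edges of the cycle: the edge from $V$ to $W$ becomes an edge from $V$ to $W'$, and the edge from $W'$ to $V''$ becomes an edge from $W$ to $V''$, where $V''$ is the $n$-vertex following $W'$. Because both $W$ and $W'$ carry $p$-label $b$, each replaced edge has identical $(p\text{-label},n\text{-tuple})$ data, and the interior edges within the reversed segment remain between the same vertex-pairs; hence condition (3) of Definition \ref{defmultilabeling} is preserved. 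Condition (1) at the boundaries holds because $\mathrm{label}(U)\neq b$ and the $p$-label after $V''$ originally differed from $\mathrm{label}(W')=b$. Condition (2) is trivially unchanged. A direct local check shows $(V,V')$ itself becomes proper: the labels around the two $n$-vertices now read $(a,V,b)$ and $(b,V',a)$.

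The main difficulty in (1) is termination, since reversing for $(V,V')$ flips the properness of any other good pair with exactly one endpoint in the interior of the reversed segment, potentially creating new improper pairs. The plan is to choose at each step an improper good pair whose reversal segment is minimal, ordered first by the number of interior $n$-vertices and then lexicographically by the sorted tuple of interior $n$-labels, and to argue by induction that no reversal introduces an improper pair with strictly smaller segment. As an alternative, since the procedure acts on the finite set of multi-labelings equivalent to the initial one, one can argue termination by exhibiting a strict total order on configurations compatible with the greedy reversals; this monovariant argument is the step I expect to require the most care.

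For (2), condition (1) of Definition \ref{defsimplelabeling} is inherited from the multi-labeling. For condition (2), the non-empty $n$-labels in the image are either good-single labels $j\in\{1,\ldots,n\}$ (which appear on exactly the good-single $n$-vertices, by Definition \ref{defsingle}) or the synthetic label $n+1$. For a good-single label $j$, the edge count with $p$-label $i$ matches the multi-labeling's count, which is even by its condition (3). For $j=n+1$, I use a parity decomposition: the total edges at $p$-label $i$ is $2\cdot|\{p\text{-vertices labeled }i\}|$ (even); the edges to good singles sum to an even number as above; and the edges to $\emptyset$-labeled $n$-vertices (i.e. those in good pairs) are even because each good pair's four adjacent $p$-labels form $(a,b,a,b)$ with $a\neq b$, contributing $0$ or $2$ edges per $p$-label. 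Hence the remainder at label $n+1$ is even. For condition (3), the $\emptyset$-labeled $n$-vertices are exactly those in good pairs, all of which are proper by (1); each proper good pair with surrounding labels $(a,b)$ and $(b,a)$ contributes one occurrence of $(a,\emptyset,b)$ and one of $(b,\emptyset,a)$, producing the required symmetric counts.

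For (3), the classification of each $n$-vertex as good single, bad single, in a good pair (and its properness), or bad non-single depends only on the $n$-label tuples and the adjacent $p$-labels, which transform covariantly under any relabeling $(\pi_p,\pi_n)$. Consequently each reversal in step (1) and each final relabeling in steps (2)–(3) commutes with the $(\pi_p,\pi_n)$ action, once $\pi_n$ is extended to fix $\emptyset$ and map $n+1$ to $n+1$; the canonical selection convention in the remark is preserved under $\pi_n$. Therefore equivalent input multi-labelings produce equivalent output simple-labelings.
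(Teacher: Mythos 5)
Your local verifications in part (1) (that each reversal preserves conditions (1)--(3) of Definition \ref{defmultilabeling} and renders the processed pair $(V,V')$ proper), and your arguments for parts (2) and (3), are correct and essentially the same as the paper's. The parity decomposition for the label $n+1$ and the use of properness of all surviving good pairs for condition (3) of Definition \ref{defsimplelabeling} match the paper's proof.

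However, there is a genuine gap in part (1): you explicitly identify termination as "the step I expect to require the most care" and then do not prove it, offering only two unexecuted plans. Neither plan is needed, and the first (re-ordering the selection by minimal segment length) would in any case prove termination of a different procedure than the one defined in the Remark (lexicographic selection). The paper's argument is a simple monovariant that works for \emph{any} selection order, and it hinges on an observation you missed: after the reversal for $(V,V')$, the two $n$-vertices $V$ and $V'$ become \emph{consecutive} in the cycle, separated only by the $p$-vertex $W'$. They then remain consecutive under every subsequent reversal, because a reversal segment runs from a $p$-vertex following one $n$-vertex of a good pair to the $p$-vertex following its partner, and since each $n$-vertex belongs to at most one good pair (its labels $j$ have $N_j=2$), no segment boundary can fall at $W'$, i.e.\ strictly between $V$ and $V'$. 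A direct check (of the cases where the block $V,W',V'$ lies entirely inside or entirely outside a reversed segment) shows that a consecutive proper pair stays proper. Hence each iteration permanently converts one improper pair into a proper one and never un-converts a previously processed pair, so the procedure terminates after at most as many iterations as there are good pairs. Without this (or some completed substitute), your proof of (1) is incomplete, and since (2) relies on all good pairs being proper after step (1), the gap propagates.
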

\begin{proof}
Clearly each reversal in step (1) of the procedure
preserves condition (2) of Definition \ref{defmultilabeling}
as well as the number of good pairs and $n$-labels of each good pair. As $W$ and
$W'$ have the same $p$-label because $(V,V')$ is improper, it also preserves
conditions (1) and (3) of Definition \ref{defmultilabeling}, so the resulting
labeling is still a valid $(p,n)$-multi-labeling. Each time this reversal is
performed, $V$ and $V'$ become consecutive $n$-vertices in the $l$-graph, and
the pair $(V,V')$ becomes a proper good pair. As $V$ and $V'$ are consecutive,
they must remain consecutive under each subsequent reversal, so their
properness is preserved. Hence the procedure must
terminate after a number of iterations at
most the total number of good pairs in the multi-labeling, and the final
multi-labeling is such that all good pairs are proper. This establishes (1).

To prove (2), note that the image labeling has either one $n$-label or the
empty label for each $n$-vertex. Condition (1) of
Definition \ref{defsimplelabeling} holds for the image labeling by condition
(1) of Definition \ref{defmultilabeling}, as the
$p$-labels are preserved. As all good pairs in the multi-labeling obtained after
applying step (1) of the procedure are proper, and step (2) of the procedure
maps their labels to the empty label, condition (3) of Definition
\ref{defsimplelabeling} holds for the image labeling. Finally, note
that if $j$ is an $n$-label appearing on good single
vertices in the multi-labeling, then condition (2) of Definition
\ref{defsimplelabeling} holds in the image labeling for this $j$ and all
$p$-labels $i$ by condition (3) in Definition \ref{defmultilabeling}.
For the new $n$-label $n+1$ created in step (3) of the map, note that for each
$i \in \{1,2,3,\ldots\}$ there must be an even number of
edges in the $l$-graph with $p$-endpoint
labeled $i$. Of these, there must be an even number with $n$-endpoint $j$ for
any good single label $j$, by the above argument, and there must also be an
even number with $n$-endpoint belonging to a good pair 
since these edges must come in pairs. Hence the number of remaining edges
adjacent to any $p$-vertex with label $i$ must also be even. These are precisely
the edges with $p$-endpoint labeled $i$ and $n$-endpoint labeled
$n+1$ in the image labeling, so condition (2) of Definition
\ref{defsimplelabeling} holds for the new $n$-label $n+1$ and all $p$-labels
$i$ as well. Hence the image labeling is a valid $(p,n+1)$-simple-labeling,
establishing (2).

(3) is evident, as equivalent multi-labelings have the same proper and improper
good pairs of $n$-vertices and the same good single $n$-vertices.
\end{proof}

\begin{definition}\label{defphi}
Let $\mathcal{C}$ and $\tilde{\mathcal{C}}$ be the set of all multi-labeling
equivalence classes and simple-labeling equivalence classes, respectively,
of an $l$-graph. For $\mathcal{L} \in \mathcal{C}$ and any multi-labeling in
$\mathcal{L}$, let $\tilde{\mathcal{L}} \in \tilde{\mathcal{C}}$ contain
its image simple-labeling under the label-simplifying map of Definition
\ref{deflabelmap}, and define $\varphi:\mathcal{C} \to \tilde{\mathcal{C}}$ by
$\varphi(\mathcal{L})=\tilde{\mathcal{L}}$.
\end{definition}

\subsection{Verification of Proposition \ref{proplabelmapping}, properties
(1) and (2)}
For the map $\varphi$ of Definition \ref{defphi},
property (1) of Proposition \ref{proplabelmapping} is evident as the $p$-labels
are preserved. We verify property (2) by bounding the number of bad non-single
$n$-vertices.

For each pair $i,i' \in \{1,2,3,\ldots\}$ with $i<i'$, and for a given
multi-labeling, let us denote
\[P_{i,i'}:=\text{number of appearances of } i,i' \text{ as the }
p\text{-labels of two consecutive } p\text{-vertices (in some order)}.\]

\begin{lemma}\label{lemmaconsecutivep}
Suppose a multi-labeling of an $l$-graph has excess $\Delta$. Then
\[\sum_{i<i':P_{i,i'} \geq 3} P_{i,i'} \leq 42\Delta.\]
\end{lemma}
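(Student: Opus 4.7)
The plan is to prove Lemma \ref{lemmaconsecutivep} by induction on $l$, in close parallel with the proof of Lemma \ref{lemmatripleijpairs}. The base cases $l = 2, 3$ are immediate: Lemma \ref{lemmal23} forces all $p$-labels to be distinct, so $P_{i,i'} \leq 2$ for every pair and the sum is $0$.

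For the inductive step with $l \geq 4$, I would first handle the sub-case in which some $p$-label $v$ appears exactly once, by applying Lemma \ref{lemmainductionstep}. In its case (1), the two pairs $(T,V)$ and $(V,X)$ involving $v$ both have $P = 1$ (contributing nothing to the bad sum), and they are replaced in the $(l-1)$-graph by $(T,X)$ whose multiplicity increases by one; a short case analysis on the old value of $P_{\text{label}(T),\text{label}(X)}$ shows the bad sum weakly increases, so the original sum is at most the new one, which is $\leq 42\Delta'$ by induction, and the calculation $\Delta' \leq \Delta$ in the proof of Lemma \ref{lemmadistinctmultilabels} closes this subcase. Case (2) is even cleaner: only $P_{t,v}$ changes, dropping from $2$ to $0$, so the bad sum is unchanged and induction with $\Delta' \leq \Delta$ suffices.

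The hard part will be the remaining case, where every $p$-label appears at least twice, so $r \leq l/2$; combined with Lemma \ref{lemmatwicenlabels} this gives $\Delta \geq l/2 + 1 - r$, hence $l - 2r + 2 \leq 2\Delta$. I would pass to the multigraph $G_p$ on the distinct $p$-labels, with an edge of multiplicity $P_{i,i'}$ for each unordered pair; the $p$-cycle traces an Eulerian circuit on $G_p$, so $G_p$ is connected and hence its number of distinct edges satisfies $D \geq r - 1$. Writing $E_1, E_2, Q$ for the numbers of distinct edges of multiplicities $1$, $2$, and $\geq 3$, the identities $D = E_1 + E_2 + Q$ and $l = E_1 + 2E_2 + S$ together with the trivial bound $Q \leq S/3$ combine to give $S \leq 3\bigl(E_1 + (l - 2r + 2)\bigr)$.

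It then suffices to bound $E_1$ linearly in $\Delta$. For a lone edge of $G_p$ (i.e.\ with $P_{i,i'} = 1$), there is a single $n$-vertex $V$ between the corresponding $\{i,i'\}$ $p$-pair; for each label $j \in V$'s tuple, the even-$b$ condition of Definition \ref{defmultilabeling}(3) forces $j$ to appear on some other $n$-vertex adjacent to a $p$-vertex labeled $i$ and yet another adjacent to one labeled $i'$. The key check, which is the one delicate point to verify carefully, is that $P_{i,i'} = 1$ together with the consecutive-distinctness of $p$-labels forces these two extra $n$-vertices to be distinct from $V$ and from each other, yielding $N_j \geq 3$. Then $E_1 \leq \sum_{V \text{ lone}} d_V \leq \sum_{j:N_j \geq 3} N_j \leq 6\Delta - 6$ by Lemma \ref{lemmafewdistinctp}, and combining gives $S \leq 3(6\Delta - 6) + 3(2\Delta) = 24\Delta - 18 \leq 42\Delta$, as required.
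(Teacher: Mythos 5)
Your proof is correct and follows essentially the same route as the paper's: induction on $l$ with Lemma \ref{lemmainductionstep} handling a once-occurring $p$-label, and in the main case a double count over the multigraph of consecutive $p$-label pairs using connectivity, the bound $l-2r+2\le 2\Delta$, and the observation that a multiplicity-one pair forces every $n$-label of the intervening $n$-vertex to satisfy $N_j\ge 3$ so that Lemma \ref{lemmafewdistinctp} applies. The only differences are cosmetic bookkeeping: you solve for $S$ via $Q\le S/3$ where the paper solves for the number of multiplicity-two edges, and in the reduction step you note (correctly, and slightly more carefully than the paper's claim of equality) that the bad sum can only weakly increase, which is the direction needed.
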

\begin{proof}
We induct on $l$. For $l=2$ and 3, $P_{i,i'}=0$ or 1 for all pairs $i<i'$,
and $\Delta \geq 0$ by Lemma
\ref{lemmadistinctmultilabels}, so the result holds.

Suppose by induction that the result holds for $l-2$ and $l-1$, and consider a
multi-labeling of an $l$-graph with $l \geq 4$. First suppose each distinct
$p$-label appears at least twice, so there are at most $\frac{l}{2}$ distinct
$p$-labels. If an $n$-label $j$ is such that $N_j=2$, then the pairs of
$p$-vertices before and after the two $n$-vertices with label $j$ must have the
same pairs of $p$-labels, by conditions (1) and (3) of Definition
\ref{defmultilabeling}. Thus the number of pairs $i<i'$ with $P_{i,i'}=1$
is at most
the number of $n$-vertices for which $N_j \geq 3$ for all of its $n$-labels $j$.
This is at most $6\Delta$ by Lemma \ref{lemmafewdistinctp}.
On the other hand, the number of distinct $p$-labels
is at most one more than the number of distinct pairs of consecutive
$p$-labels. (This is easily seen by considering the
undirected graph with vertices $\{1,\ldots,p\}$ having an edge between
$i,i'$ if and only if some consecutive pair of $p$-vertices
have labels $i$ and $i'$, and noting that this graph is connected.)
Lemma \ref{lemmatwicenlabels} implies
there are at most $\frac{\sum_{s=1}^l d_s}{2}$ distinct $n$-labels,
and hence at least $m-\frac{\sum_{s=1}^l d_s}{2}-1=\frac{l}{2}-\Delta$
distinct pairs $i<i'$ of consecutive $p$-labels. At least
$\frac{l}{2}-7\Delta$ of these have $P_{i,i'} \geq 2$.
If $c$ of these have $P_{i,i'}=2$, then
$2c+3\left(\frac{l}{2}-7\Delta-c\right) \leq l$,
so $c \geq \frac{l}{2}-21\Delta$. These account for at least $l-42\Delta$
pairs of consecutive $p$-vertices, implying that at most $42\Delta$ pairs of
consecutive $p$-vertices remain. This establishes the result in this case.

Now suppose that there is some $p$-vertex $V$ whose $p$-label appears only
once. Consider the $(l-1)$-graph or $(l-2)$-graph obtained by Lemma
\ref{lemmainductionstep}. It is easily verified that $\sum_{i<i':P_{i,i'} \geq
3} P_{i,i'}$ is the same in this graph as in the original $l$-graph, because
if $P_{i,i'} \geq 3$ in the original $l$-graph, then neither $i$ nor $i'$
can be the $p$-label of $V$. On the other hand, our proof of Lemma
\ref{lemmadistinctmultilabels} verified that this $(l-1)$-graph or $(l-2)$-graph
has excess at most that of the original $l$-graph, so the
desired result follows from the induction hypothesis.
\end{proof}

The next lemma bounds the number of bad non-single $n$-vertices, i.e.\ it
shows that in any multi-labeling with small excess $\Delta$,
most of the non-single $n$-vertices must belong to a good pair.

\begin{lemma}\label{lemmagoodpairscount}
Suppose a multi-labeling of an $l$-graph has excess $\Delta$ and $k$
single $n$-vertices. Then there are at least $\frac{l-k}{2}-48\Delta$ good
pairs of $n$-vertices.
\end{lemma}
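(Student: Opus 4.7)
The plan is to induct on $l$, paralleling the structure of the proofs of Lemmas \ref{lemmadistinctmultilabels}, \ref{lemmatripleijpairs}, and \ref{lemmaconsecutivep}. For $l \in \{2,3\}$, Lemma \ref{lemmal23} forces all $n$-vertices to share a common tuple, and direct case analysis (either all singles so the claim is trivial, or the two $n$-vertices form a good pair when $l=2$ and $d\geq 2$, or the common label has $N_j = 3$ forcing $\Delta \geq (d-1)/2$ large enough to absorb the loss when $l=3$) handles the base case.

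The inductive step splits according to whether every $p$-label appears at least twice. In Case A (every $p$-label appears at least twice, so at most $l/2$ distinct $p$-labels), I bound the number of bad non-singles (non-single $n$-vertices not in any good pair) directly. Partition them into Type 1 (some label $j$ with $N_j \geq 3$) and Type 2 (all labels satisfy $N_j = 2$). Lemma \ref{lemmafewdistinctp} gives at most $6\Delta - 6$ Type 1 bad non-singles. The key observation for Type 2 is a \emph{$p$-compatibility} argument: if $V$ has $p$-neighbor labels $\{i, i'\}$ and tuple $\{j_1, \ldots, j_d\}$ with each $N_{j_s} = 2$, then for each $s$ the unique other vertex $V_s$ containing $j_s$ must lie in the same $p$-compatibility class $\{i, i'\}$, since $V$ and $V_s$ are the only contributors to $b_{x, j_s}$ and condition~(3) of Definition \ref{defmultilabeling} forces $V_s$'s $p$-neighbor labels to match $V$'s. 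If the partners $V_1, \ldots, V_d$ are not all equal (sub-case 2a), or if they coincide at some $V'$ with $T_{V'} \supsetneq T_V$ where $V'$ is itself clean (sub-case 2b-clean), this produces three distinct vertices in class $\{i, i'\}$, forcing $P_{i, i'} \geq 3$; such vertices are bounded by $\sum_{P \geq 3} P_{i, i'} \leq 42\Delta$ via Lemma \ref{lemmaconsecutivep}. In the residual sub-case 2b-dirty (common partner $V'$ is itself Type 1), the argument only yields $V, V' \in $ the same class, but in $P_{i, i'} = 2$ classes each such $V$ is uniquely associated to a Type 1 vertex, so their count is bounded by the number of Type 1 vertices, at most $6\Delta$. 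The combined bound of $\leq 54\Delta$ is comfortably inside $96\Delta$, which is the bad-non-single formulation of the target.

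In Case B (some $p$-vertex $V^*$ has a unique $p$-label), I apply Lemma \ref{lemmainductionstep} to reduce to an $(l-1)$-graph or $(l-2)$-graph and invoke the induction hypothesis. A short computation shows that the excess changes by $(1-d)/2$ in case (1), where $d = |T_{W^*}|$, and by $t - d$ in case (2), where $t = |\{j \in T_{U^*} : N_j = 2\}|$. Crucially, in case (1) no old good pair is lost, because the validity of the reduced graph forces $N_j \geq 3$ for $j \in T_{W^*}$ and good pairs require $N_j = 2$; at most one new good pair can form in the reduced graph, namely $(U^*, B)$ when all labels of $U^*$ had old $N_j = 3$ with a common third occurrence $B$. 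Case (2) similarly admits at most one old good pair lost, namely $(U^*, W^*)$ when $t = d$, or at most one new good pair gained when $t < d$ (involving labels with old $N_j = 4$), and these outcomes are mutually exclusive. Substituting the induction hypothesis in each sub-case yields the bound with room to spare. The main obstacle is the case-by-case bookkeeping in Case B, tracking $l$, $k$, $g$, and $\Delta$ simultaneously across all sub-cases and verifying that the $\pm O(1)$ fluctuations from lost or newly-formed good pairs are absorbed by the slack $\Delta' \leq \Delta$ from the reduction.
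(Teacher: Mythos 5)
Your proposal is correct and follows essentially the same route as the paper: induction on $l$ with base cases from Lemma \ref{lemmal23}, the dichotomy between "every $p$-label appears twice" (handled by combining Lemmas \ref{lemmafewdistinctp} and \ref{lemmaconsecutivep} with the $p$-compatibility forced by condition (3) of Definition \ref{defmultilabeling}) and "some $p$-label is unique" (handled by Lemma \ref{lemmainductionstep} plus bookkeeping of how the excess, single count, and good-pair count change). Your reorganization of the sub-cases in the first branch is only cosmetic, and your excess computations $(1-d)/2$ and $t-d$ in the reduction step match the paper's.
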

\begin{proof}
Let $m$ be the number of distinct $n$ and $p$-labels and let $d_1,\ldots,d_l$ be
the numbers of $n$-labels on the $l$ $n$-vertices. We induct on $l$.
If $l=2$, then Lemma \ref{lemmal23} implies 
$d_1=d_2$, $m=d_1+2$, and $\Delta=0$. If $d_1=d_2=1$, then $k=2$ and there are
no good pairs, and if $d_1=d_2 \geq 2$, then $k=0$ and there is one good pair.
Hence the result holds. If $l=3$, then Lemma \ref{lemmal23} implies 
$d_1=d_2=d_3$, $m=d_1+3$, and $\Delta=\frac{d_1-1}{2}$. If $d_1=d_2=d_3=1$, then
$k=3$, $\Delta=0$, and there are no good pairs. If $d_1=d_2=d_3 \geq 2$, then
$k=0$, $\Delta \geq \frac{1}{2}$, and there are still no good pairs. In either
case, the result also holds.

Consider $l \geq 4$, and
assume by induction that the result holds for $l-2$ and $l-1$.
First suppose each distinct $p$-label appears at least twice,
so there are at most $\frac{l}{2}$ distinct $p$-labels. By Lemma
\ref{lemmafewdistinctp} there are at most $6\Delta$ $n$-vertices with some
$n$-label $j$ such that $N_j \geq 3$, so there are at least $l-k-6\Delta$
non-single $n$-vertices for which each of its $n$-labels $j$ has $N_j=2$.
Let $V$ be one such $n$-vertex. We consider three cases:

{\bf Case 1:} $V$ has two $n$-labels $j_1$ and $j_2$ that
appear on two different other $n$-vertices $W_1$ and $W_2$. Then
Definition \ref{defmultilabeling} implies that the three pairs
of consecutive $p$-vertices around $V$, $W_1$, and $W_2$ must have the same
pair of $p$-labels. By Lemma \ref{lemmaconsecutivep},
there are at most $42\Delta$ such $n$-vertices $V$.

{\bf Case 2:} All $n$-labels of $V$ appear on a single other $n$-vertex
$W_1$, but $W_1$ has some additional $n$-label $j$ not appearing on
$V$. Then either all such additional $n$-labels $j$ have $N_j \geq 3$, or there
is some such $j$ with $N_j=2$. In the former case, the number of
such vertices $W_1$ is at most $6\Delta$ by Lemma \ref{lemmafewdistinctp}. As
$V$ is the unique $n$-vertex sharing an $n$-label $j$ with $W_1$ for which
$N_j=2$, this implies the number of such vertices $V$ is also at most
$6\Delta$. In the latter case, $j$ appears on a vertex $W_2$ distinct from $V$
and $W_1$. Then the three pairs of
$p$-vertices around $V$, $W_1$, and $W_2$ must have the same pair of
$p$-labels, and by Lemma \ref{lemmaconsecutivep} the number of
such vertices $V$ is at most $42\Delta$. Hence the number of $n$-vertices $V$
belonging to this case is at most $48\Delta$

{\bf Case 3:} $V$ forms a good pair with some other vertex $V'$.
By the bounds in cases 1 and 2, there are at least $l-k-96\Delta$ such 
vertices $V$, hence at least $\frac{l-k}{2}-48\Delta$ good pairs,
and the result holds.

Now suppose there is some $p$-vertex $V$ whose $p$-label appears
only once. Let $T,U,W,X$ be as in Lemma \ref{lemmainductionstep},
and recall that $U$ and $W$ have the same $n$-labels up to reordering.
Consider four cases:

{\bf Case 1:} $T$ and $X$ have different $p$-labels, and $U$ and $W$ are single.
Lemma \ref{lemmainductionstep} yields an $(l-1)$-graph with $k-1$ single
$n$-vertices, $\sum_{s=1}^l d_s-1$ total
$n$-labels, and $m-1$ total distinct $p$- and $n$-labels. By the
induction hypothesis, this $(l-1)$-graph has at least
\[\frac{(l-1)-(k-1)}{2}-48\left(\frac{(l-1)+(\sum_{s=1}^l d_s-1)}{2}+1-(m-1)
\right)=\frac{l-k}{2}-48\Delta\]
good pairs, which are also good pairs in the $l$-graph.

{\bf Case 2:} $T$ and $X$ have different $p$-labels, and $U$ and $W$ each have
$d \geq 2$ $n$-labels. Lemma \ref{lemmainductionstep} yields an $(l-1)$-graph
with $k$ single $n$-vertices,
$\sum_{s=1}^l d_s-d$ total $n$-labels, and $m-1$ distinct $p$- and
$n$-labels. By the induction hypothesis, this $(l-1)$-graph has at 
least
\[\frac{(l-1)-k}{2}-48\left(\frac{(l-1)+(\sum_{s=1}^l d_s-d)}{2}+1-(m-1)
\right)>\frac{l-k}{2}-48\Delta+1\]
good pairs. It can have at most one more good
pair than the original $l$-graph (which occurs if $W$
has a tuple of $n$-labels appearing on exactly three different $n$-vertices in
the $l$-graph).

{\bf Case 3:} $T$ and $X$ have the same $p$-label, and $U$ and $W$ are single.
Lemma \ref{lemmainductionstep} yields an $(l-2)$-graph with $k-2$ single
$n$-vertices, $\sum_{s=1}^l d_s-2$ total
$n$-labels, and either $m-2$ distinct $p$- and $n$-labels if $U$ and $W$
have an $n$-label appearing only those two
times, or $m-1$ distinct $p$- and $n$-labels otherwise. Supposing the
former, this $(l-2)$-graph has at least
\[\frac{(l-2)-(k-2)}{2}-48\left(\frac{(l-2)+(\sum_{s=1}^l d_s-2)}{2}+1-(m-2)
\right)=\frac{l-k}{2}-48\Delta\]
good pairs, and it has the
same number of good pairs as the original $l$-graph. Supposing the latter,
this $(l-2)$-graph has at least
\[\frac{(l-2)-(k-2)}{2}-48\left(\frac{(l-2)+(\sum_{s=1}^l d_s-2)}{2}+1-(m-1)
\right)>\frac{l-k}{2}-48\Delta+1\]
good pairs, and it can have at most one more good pair than the original
$l$-graph (which occurs if the
$(l-2)$-graph has a good pair containing the $n$-label
of the removed vertices $U$ and $W$).

{\bf Case 4:}
$T$ and $X$ have the same $p$-label, and $U$ and $W$ each have $d \geq 2$
$n$-labels. Lemma \ref{lemmainductionstep} yields an
$(l-2)$-graph with $k$ single $n$-vertices,
$\sum_{s=1}^l d_s-2d$ total $n$-labels, and between $m-d-1$ and
$m-1$ (inclusive) distinct $p$- and $n$-labels. If it has exactly $m-d-1$
distinct $p$- and $n$-labels, then we must have removed a good pair, and
the $(l-2)$-graph has at
least
\[\frac{(l-2)-k}{2}-48\left(\frac{(l-2)+(\sum_{s=1}^l d_s-2d)}{2}+1-
(m-d-1)\right)=\frac{l-k}{2}-48\Delta-1\]
good pairs. If,
instead, the $(l-2)$-graph has $m-c-1$ distinct $p$- and $n$-labels for
$0 \leq c < d$, then $U$ and
$W$ cannot be a good pair in the original $l$-graph as they have $d-c$
$n$-labels $j$ for which $N_j \geq 3$, and the
$(l-2)$-graph can have at most $d-c$ more good pairs than the $l$-graph, one
for each such $j$. The $(l-2)$-graph has at least
\[\frac{(l-2)-k}{2}-48\left(\frac{(l-2)+(\sum_{s=1}^l d_s-2d)}{2}+1
-(m-c-1)\right)>\frac{l-k}{2}-48\Delta+d-c\]
good pairs.
In all cases, we establish that the $l$-graph has at least
$\frac{l-k}{2}-48\Delta$ good pairs, completing the induction.
\end{proof}

\begin{proof}[Proof of Proposition \ref{proplabelmapping}, property (2)]
Let $\mathcal{L} \in \mathcal{C}$ be any multi-labeling equivalence class.
Let $\varphi(\mathcal{L})$ have $\tilde{k}$ $n$-vertices with non-empty label.
This means $\mathcal{L}$ has $\tilde{k}$ $n$-vertices that do not belong to a
good pair. These vertices have at least $\tilde{k}$ total $n$-labels in
$\mathcal{L}$, implying that there are at most $\sum_{s=1}^l d_s-\tilde{k}$
total $n$-labels on the good pair vertices.
These good pair vertices account for at most
$\frac{\sum_{s=1}^l d_s-\tilde{k}}{2}$ distinct $n$-labels in $\mathcal{L}$,
and these are mapped to the empty label under the label-simplifying map.
Furthermore,
by Lemma \ref{lemmagoodpairscount}, there are at most $96\Delta(\mathcal{L})$
bad non-single $n$-vertices,
and these have at most $96D\Delta(\mathcal{L})$ additional
distinct $n$-labels that are mapped to the new $n$-label $n+1$.
Any bad single $n$-vertex has
an $n$-label that is the same as one of these $96D\Delta(\mathcal{L})$ distinct
$n$-labels (otherwise it is a good single by definition), and the $n$-label of
any good single $n$-vertex is preserved under the label-simplifying map.
Hence, if $m$ is the number of total
distinct $p$- and $n$-labels in $\mathcal{L}$ and $\tilde{m}$ is the
number of total distinct $p$-labels and non-empty $n$-labels in
$\varphi(\mathcal{L})$, then
$\tilde{m} \geq m-\frac{\sum_{s=1}^l d_s-\tilde{k}}{2}-96D\Delta(\mathcal{L})$,
so $\tilde{\Delta}(\varphi(\mathcal{L}))
=\frac{l+\tilde{k}}{2}+1-\tilde{m} \leq (96D+1)\Delta(\mathcal{L})$. Hence
property (2) holds.
\end{proof}

\subsection{Verification of Proposition \ref{proplabelmapping}, property (3)}
Recall that we order the vertices of an $l$-graph according to a cyclic
traversal starting from a (arbitrary) $p$-vertex.
\begin{definition}
The {\bf canonical simple labeling} in a simple labeling equivalence class
$\tilde{\mathcal{L}}$ is the one in which each
$i^\text{th}$ new $p$-vertex label that appears in the cyclic traversal is $i$,
and each $j^\text{th}$ new non-empty $n$-vertex label is $j$.

The {\bf canonical multi-labeling} in a multi-labeling equivalence class
$\mathcal{L}$ is the one in which each $i^\text{th}$ new $p$-vertex label is
$i$ and each $j^\text{th}$ new $n$-vertex label is $j$, with the new $n$-vertex
labels in the label-tuple for each $n$-vertex appearing in sorted order.
\end{definition}

Each $\tilde{\mathcal{L}}$ has a unique canonical simple-labeling, which is
an $(l,l)$-simple labeling, and each $\mathcal{L}$ has a unique canonical
multi-labeling, which is an $(l,Dl)$-multi-labeling.

For each $\tilde{\mathcal{L}} \in \tilde{\mathcal{C}}$ and $\Delta_0 \geq 0$,
property (3) of Proposition \ref{proplabelmapping} is a bound on a certain
weighted cardinality of the set
\[\mathcal{S}(\Delta_0,\tilde{\mathcal{L}}):=\varphi^{-1}(\tilde{\mathcal{L}})
\cap \{\mathcal{L}:\Delta(\mathcal{L})=\Delta_0\}.\]
We describe a series of non-determined steps by which the mapping $\varphi$ may
be ``inverted'' to obtain the canonical multi-labeling $L$
of any $\mathcal{L} \in \varphi^{-1}(\tilde{\mathcal{L}})$, given
$\tilde{\mathcal{L}}$:
\begin{enumerate}
\item Choose a non-empty $n$-label value appearing in $\tilde{\mathcal{L}}$ to
be ``n+1'', or assume there is no such label. (The $n$-vertices with empty label
will be the good pairs, and the remaining $n$-vertices with label different
from ``n+1'' will be the good singles.)
\item Choose a subset $S$ of $n$-vertices with label ``n+1'' to be
the bad non-singles in $L$. (The remaining $n$-vertices with label ``n+1''
will be the bad singles.)
\item For each $n$-vertex in $S$, choose the size of its
$n$-label tuple in $L$ to be between 2 and $D$ (inclusive),
and pick $n$-labels from $\{1,\ldots,Dl\}$ for that tuple.
\item For each $n$-vertex with label ``n+1'' not in $S$,
pick a single value in $\{1,\ldots,Dl\}$ for its $n$-label in $L$.
\item For all $n$-vertices with empty label in $\tilde{\mathcal{L}}$, pair
them up into good pairs for $L$.
\item For each good pair, choose the size of its $n$-label tuple in
$L$ to be between 2 and $D$ (inclusive), and choose a permutation of
the second $n$-label tuple of the pair that matches the first.
\item Let $\mathcal{G}$ be the set of good pairs $(V,V')$
that are consecutive $n$-vertices in the $l$-graph and such that the $p$-label
(in $\tilde{\mathcal{L}}$) of the $p$-vertex between them appears at least
twice. Choose an ordered subset of $\mathcal{G}$. For each $(V,V')$ in this
subset, if $W$ is the $p$-vertex between $V$ and $V'$, choose some
other $p$-vertex $W'$ having the same $p$-label as $W$, and reverse the
sequence of vertices from $W$ to $W'$ or from $W'$ to $W$.
\item Choose $p$-labels for $L$ such that the resulting labeling is 
canonical and two $p$-vertices have the same label if and only if they do in 
$\tilde{\mathcal{L}}$. Choose the remaining $n$-labels for $L$
(corresponding to the good pairs and good singles) such that the resulting
labeling is canonical, the properties of Definitions \ref{defsingle}
and \ref{defgoodpair} are satisfied, and two good single vertices have the same
$n$-label if and only if they do in $\tilde{\mathcal{L}}$.
\end{enumerate}
These steps are non-determined in the sense that each step may be performed in
multiple ways, yielding many possible output multi-labelings $L$.
They ``invert'' $\varphi$ in the following sense:

\begin{lemma}
For any $\mathcal{L} \in \varphi^{-1}(\mathcal{\tilde{L}})$,
the canonical multi-labeling $L$ of $\mathcal{L}$ is a possible output of the 
above procedure.
\end{lemma}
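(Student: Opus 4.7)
My plan is to construct, from $L$ itself, a specific valid sequence of choices at steps (1)--(8) that produces $L$ as output, by reading each choice off the structure of $L$. I would begin by running the label-simplifying map directly on $L$, keeping an explicit record of the sequence of segment-reversals carried out in step (1) of $\varphi$; this yields a specific simple-labeling $\tilde{L} \in \tilde{\mathcal{L}}$, together with, by Lemma \ref{lemmalabelmapvalidity}(3), a label permutation $(\pi_p,\pi_n)$ identifying $\tilde{L}$ with the canonical representative $\tilde{L}_c$ of $\tilde{\mathcal{L}}$ on which the procedure operates. This identification provides a dictionary for transferring the data in $L$ onto the object manipulated by the procedure.

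Steps (1)--(6) are then essentially bookkeeping: I would take the ``n+1'' label in step (1) to be $\pi_n(n+1)$ (or ``no such label'' if $L$ has no bad vertices), take $S$ in step (2) to be the set of positions of bad non-singles of $L$, read the relevant $n$-label tuples and single $n$-labels in steps (3)--(4) from $L$ (choosing representatives in $\{1,\ldots,Dl\}$ as needed), take the pairing in step (5) from the good-pair structure of $L$, and in step (6) take each tuple size from $L$ together with the identity permutation (which matches the sorted tuples in the canonical form). The validity of each choice follows directly from the definitions of bad/good vertices, good pairs, and conditions (2)--(3) of Definition \ref{defmultilabeling}.

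The main obstacle is step (7), which must reintroduce exactly the segment-reversals carried out in step (1) of $\varphi$. The two structural observations I would use are: every reversal in $\varphi$ is performed between two $p$-vertices with equal $p$-label (namely $W$ and $W'$, matched by improperness of the pair being processed), and after the reversal the newly proper good pair lies between two $p$-vertices still carrying that same (now repeated) $p$-label, so this pair belongs to $\mathcal{G}$. I would therefore take the ordered subset of $\mathcal{G}$ in step (7) to consist of these pairs listed in the reverse of the order in which they were produced by $\varphi$, and for each pair choose $W'$ to be the surviving endpoint of the corresponding $\varphi$-reversal. The delicate verification is that the required $W'$ is available (i.e.\ still shares the matching $p$-label) at each stage of the inversion: this follows by induction from the fact that the endpoint $p$-labels of each reversed segment are left unchanged by every subsequent reversal in $\varphi$, so undoing the reversals in reverse order encounters at each stage the same $p$-label configuration as the corresponding forward stage of $\varphi$.

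Finally, step (8) is forced. The positions and equality pattern of all $p$-labels are now fixed (and match $L$'s after relabeling by $(\pi_p,\pi_n)$), the good-single $n$-labels are determined by the ``iff they do in $\tilde{\mathcal{L}}$'' condition, and the remaining good-pair $n$-labels are determined by condition (3) of Definition \ref{defmultilabeling} together with the good-pair requirement. Among all equivalent completions only one is canonical, and by construction this canonical completion lies in the equivalence class $\mathcal{L}$, so it must equal $L$.
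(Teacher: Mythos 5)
Your proposal is correct and follows essentially the same route as the paper's proof: apply the label-simplifying map to $L$ itself, read the choices in steps (1)--(6) off the resulting partition into good singles, bad singles, bad non-singles, and good pairs, and undo the reversals of step (1) of the map in reverse order in step (7), using the fact that each reversal creates a consecutive proper good pair whose middle $p$-vertex carries a repeated $p$-label and that this configuration survives all subsequent reversals. One small correction: in step (6) the identity permutation does not always suffice --- canonicity only forces the \emph{new} $n$-labels of a tuple into sorted order, so the second vertex of a good pair (all of whose labels are old) may carry any of the $d!$ orderings, and you should read this permutation off $L$ exactly as you do for the other choices.
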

\begin{proof}
Let $L^*$ denote the $(l,Dl)$-multi-labeling obtained by applying step (1) of 
the label-simplifying map in Definition \ref{deflabelmap} to $L$. (It is
an $(l,Dl)$-multi-labeling by Lemma \ref{lemmalabelmapvalidity}.)

$L$ may be obtained by the above procedures as follows: Perform steps (1) and
(2) to correctly partition the $n$-vertices into the good pair, good single,
bad single, and bad non-single $n$-vertices of $L^*$. 
Perform steps (3) and (4) to recover the $n$-labels in $L^*$ of the bad single
and bad non-single $n$-vertices. Perform steps (5) and (6) to
correctly identify the good pairs of $L^*$ and the permutation that maps the
label-tuple of the second vertex to that of the first vertex in each pair.
Perform step (7) to invert the reversals that mapped $L$ to $L^*$ (in the
reverse order of how they were applied in the label-simplifying map): This is
possible because each reversal in step (1) of the label-simplifying map
causes an additional good pair $(V,V')$ of $n$-vertices to become consecutive
in the $l$-graph, with the $p$-vertex between them having $p$-label appearing at
least twice, and these three vertices remain consecutive after each subsequent
reversal. Finally, perform step (8) to recover the $p$-labels and the good 
single and good pair $n$-labels of $L$, which is possible because
(by assumption) $L$ is a valid canonical multi-labeling.
\end{proof}

To obtain the desired weighted cardinality bound for
$\mathcal{S}(\Delta_0,\tilde{\mathcal{L}})$, we bound the number of ways 
each of the above 8 steps may be performed such that the 
final output $L$ is the canonical multi-labeling for some
$\mathcal{L} \in \mathcal{S}(\Delta_0,\tilde{\mathcal{L}})$. The bounds for
all but steps (4) and (7) follow from our preceding combinatorial
estimates. The following simple lemma will yield a bound for step (7):
\begin{lemma}\label{lemmareverseoptions}
Suppose a multi-labeling of an $l$-graph has excess $\Delta$. Then there are at
most $2\Delta$ good pairs of $n$-vertices such that the two vertices in the pair
are consecutive in the $l$-graph cycle and the $p$-label of the $p$-vertex
between them appears at least twice in the labeling.
\end{lemma}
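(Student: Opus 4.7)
The plan is to prove Lemma \ref{lemmareverseoptions} by a simultaneous ``removal'' argument that tracks the resulting change in the excess. For each qualifying configuration $c$, denote the five local vertices $U_c, V_c, W_c, V'_c, W'_c$ (where $V_c, V'_c$ are the two $n$-vertices of the good pair and $W_c$ is the $p$-vertex between them). First I will verify that any consecutive good pair must be proper: in the improper case, the adjacent $p$-vertices $U_c$ and $W_c$ would share a label, violating condition (1) of Definition \ref{defmultilabeling}. So $i_{U_c} = i_{W'_c}$; call this common value $i_c$ and set $i'_c := i_{W_c}$. A disjointness check --- two distinct good pairs cannot share any $n$-vertex without forcing some label $j$ to satisfy $N_j \geq 3$, contradicting the $N_j = 2$ requirement --- shows that the vertex sets $\{V_c, V'_c\}_c$ and $\{W_c, W'_c\}_c$ are each pairwise disjoint, so simultaneously removing all of them deletes exactly $2C$ distinct $n$-vertices and $2C$ distinct $p$-vertices.

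I will then check that this simultaneous removal (bridging $U_c$ to the $n$-vertex that originally followed $W'_c$) preserves the three conditions of Definition \ref{defmultilabeling} and yields a valid $(l-2C)$-graph. The $n$-labels lost equal exactly $\sum_c d_c$ (using $N_j = 2$ for every good-pair label and the disjointness of tuples across configurations), so if $|L|$ denotes the number of distinct $p$-labels lost, a direct substitution into the excess formula shows that the new excess equals $\Delta - C + |L|$. Provided $l - 2C \geq 2$, Lemma \ref{lemmadistinctmultilabels} applied to the reduced graph gives $C \leq \Delta + |L|$, so the key task becomes establishing $|L| \leq C/2$.

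To bound $|L|$, I will organize the configurations into chains via the rule $c \to c'$ iff $W'_c = U_{c'}$ (note that configurations in a common chain necessarily share their $i$-value, which I call $i^*$). The three steps of the analysis will then be: (i) a closed chain of length $r$ traces out exactly $4r$ vertices of the underlying cycle, so it must coincide with the entire $l$-graph --- hence there is at most one closed chain, and its presence forces $l = 2C$; (ii) absent any closed chain, every chain has a starting configuration $c_1$ whose flanking $U_{c_1}$ is not removed, keeping $i^*$ alive in the reduced graph, so every element of $L$ must be some $i'_c$ whose every occurrence appears as a $W_{c''}$-vertex, and the hypothesis that $i'_c$ appears at least twice then requires at least two configurations per lost label, giving $|L| \leq C/2$ and hence $C \leq 2\Delta$; (iii) the closed-chain boundary case $l = 2C$ is handled directly: the $p$-label sequence alternates $i^*, i'_{c_1}, i^*, i'_{c_2}, \ldots$, and since each $i'_{c_j}$ must appear at least twice among these positions there are at most $C/2$ distinct $i'$-labels, which together with the disjointness of good-pair tuples yields $m \leq 1 + C/2 + \sum_c d_c$ and thus $\Delta \geq C/2$ upon substitution. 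A short parity argument --- if $l - 2C = 1$ the single leftover vertex would be an $n$-vertex forcing two adjacent $p$-vertices both labeled $i^*$, violating condition (1) --- rules out the degenerate reduced case $l' = 1$. The main obstacle is the disciplined bookkeeping in step (ii) to avoid overcounting across overlapping chains and $i'$-equivalence classes, together with the need to isolate the single-closed-chain case in step (iii) to prevent an off-by-two slack in the final bound.
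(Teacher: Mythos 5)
Your plan is sound: I checked the key claims (consecutive good pairs are forced to be proper; the removed vertex sets are pairwise disjoint; the reduced labeling satisfies all three conditions of Definition \ref{defmultilabeling} because the bridging replaces the edge from $W'_{c_r}$ to the surviving $n$-vertex by an edge from $U_{c_1}$ carrying the same $p$-label $i^*$; the excess identity $\Delta'=\Delta-C+|L|$; the bound $|L|\le C/2$ via the ``appears at least twice'' hypothesis; and the two degenerate cases $l'=0,1$), and each one holds. But the route is genuinely different from, and considerably heavier than, the paper's. The paper never touches the graph: it keeps all $2l$ vertices and simply \emph{relabels} each sandwiched $p$-vertex with a fresh $p$-label. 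This is legal because all edges incident to a sandwiched vertex go to the two $n$-vertices of its good pair, whose labels satisfy $N_j=2$ and occur nowhere else, so condition (3) is preserved under any relabeling of that vertex; the bookkeeping is then a one-line count showing at least $x/2$ genuinely new labels are created (for a label with $b$ occurrences of which $c$ are sandwiched, one creates $c$ new labels if $b>c$ and $c-1\ge c/2$ if $b=c\ge 2$), and Lemma \ref{lemmadistinctmultilabels} applied to the \emph{same} $l$-graph with the new labeling gives $m+x/2\le \frac{l+\sum_s d_s}{2}+1$, i.e.\ $x\le 2\Delta$. Both arguments ultimately extract the factor $2$ from the hypothesis that the sandwiched label appears at least twice, and both close with Lemma \ref{lemmadistinctmultilabels}; what the relabeling buys is that $l$, $\sum_s d_s$, and the graph itself are unchanged, so all of your chain/closed-chain/parity casework, the validity check for the surgery, and the separate treatment of $l'\in\{0,1\}$ evaporate. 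If you pursue your version, the remaining work is real but routine: you must actually verify the disjointness of the chain arcs and the preservation of conditions (1)--(3) under simultaneous removal, which your sketch defers.
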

\begin{proof}
Call a $p$-vertex ``sandwiched'' if it is between two consecutive
$n$-vertices that form a good pair. Let $i$ be a $p$-label appearing on
a total of $b \geq 2$ $p$-vertices, of which $c \geq 1$ are sandwiched.
If $b>c$, then change the $c$ appearances of $i$ on the sandwiched $p$-vertices
to $c$ new $p$-labels not yet appearing in the labeling. Otherwise if $b=c$ (so
$c \geq 2$), then change $c-1$ appearances of $i$ on the sandwiched $p$-vertices
to $c-1$ new $p$-labels not yet appearing in the labeling.
Do this for every such $i$. Note that changing the $p$-label of any sandwiched
$p$-vertex does not violate any of the 
conditions of Definition \ref{defmultilabeling}, so the resulting labeling is
still a valid multi-labeling. If $x$ is the number of good pairs originally
satisfying the
condition of the lemma, then we have added at least $\frac{x}{2}$ new
$p$-labels to the labeling. Hence Lemma \ref{lemmadistinctmultilabels}
implies $m+\frac{x}{2} \leq \frac{l+\sum_{s=l} d_s}{2}+1$, so $x \leq 2\Delta$.
\end{proof}

The remaining challenge is to bound the number of ways of performing
step (4). This bound is not straightforward because
the number of bad singles is not necessarily small when $\Delta$ is small.
We instead show that the number of bad singles that we may
``freely label'' is small:

\begin{definition}\label{defconnected}
In a multi-labeling of an $l$-graph, $i \in \{1,2,3,\ldots\}$ is
a {\bf connector} if it appears as a $p$-label and, among all $n$-vertices that 
are adjacent to any $p$-vertex with label $i$, exactly two are bad
singles and none are bad non-singles; these two bad singles
are {\bf connected}. A sequence of bad singles
$W_1,\ldots,W_a$ is a {\bf connected cycle} if $W_1$ is connected
to $W_2$, $W_2$ is connected to $W_3$, etc., and $W_a$ is connected to $W_1$.
\end{definition}
Note that ``connector'' refers to a label $i$, not to any specific $p$-vertex
having $i$ as its label, and two ``connected''
bad singles are adjacent to $p$-vertices having the connector
label $i$, but these $p$-vertices may be distinct in the $l$-graph. Each
bad single $n$-vertex may be connected to at most two other bad single
$n$-vertices (where the connectors are the $p$-labels of its two adjacent
$p$-vertices), and hence this notion of connectedness
partitions the set of bad single $n$-vertices into connected components that are
either individual vertices, linear chains, or cycles.

Motivation for this definition comes from the observation that if two bad
single $n$-vertices are connected, then they must have the same $n$-label,
as follows from condition (3) of Definition \ref{defmultilabeling} and the
fact that $n$-labels appearing on good singles and good pairs must be distinct
from those appearing on the remaining $n$-vertices.

\begin{lemma}\label{lemmaconnectedbadsingles}
Suppose a multi-labeling of an $l$-graph has excess $\Delta$ and $k$ single
$n$-vertices, of which $k'$ are good single and $k-k'$ are bad single. Then at
least
$k-k'-(288D+2)\Delta$ distinct $p$-labels are connectors, and there are at most
$(192D+1)\Delta$ connected cycles of bad single $n$-vertices.
\end{lemma}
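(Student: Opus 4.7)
The plan is to prove both claims by bounding non-connector adjacencies and cycle components in the ``connection graph'' on bad singles, attributing each anomaly either to the bad non-single count (Lemma~\ref{lemmagoodpairscount} gives at most $96\Delta$ bad non-singles) or to the triple-edge budget of Lemma~\ref{lemmatripleijpairs}. Let $c_i$ denote the number of bad singles adjacent to some label-$i$ $p$-vertex. Since the two $p$-neighbors of any $n$-vertex have distinct labels (by condition~(1) of Definition~\ref{defmultilabeling}), $\sum_i c_i = 2(k-k')$, and each connector contributes exactly $2$; hence the first claim reduces to showing $\sum_{i \text{ non-connector}} c_i \leq 2(288D+2)\Delta$.

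The key structural observation is that for any uncontaminated $p$-label $i$ adjacent to a bad single $V$ with $n$-label $j$, condition~(3) of Definition~\ref{defmultilabeling} forces $b_{ij} \geq 2$ (being even), and the additional edge $(i,j)$ must come from another bad single: good single and good pair $n$-labels form classes disjoint from bad single $n$-labels (any bad single $n$-label must appear on some bad non-single, which is incompatible with good-single or good-pair status), while bad non-singles are ruled out by the uncontamination of $i$. Setting $J_i := \{j : \text{some bad single adjacent to a label-}i\ p\text{-vertex has $n$-label }j\}$, we obtain $c_i = \sum_{j \in J_i} b_{ij}$, a sum of even terms each $\geq 2$, so $c_i$ is even and $c_i \neq 2$ forces $c_i \geq 4$.

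We then split non-connector $p$-labels into two categories. Contaminated $p$-labels number at most $192\Delta$ (two per bad non-single), and each contributes to $\sum_{i\text{ contam}} c_i$ in a manner that can be locally attributed to its associated bad non-single, using the $\leq D$ $n$-labels per bad non-single for the bookkeeping. Uncontaminated $i$ with $c_i \geq 4$ splits further: if $|J_i| = 1$, then $b_{ij} \geq 4$ and Lemma~\ref{lemmatripleijpairs} bounds the total by $12\Delta$; if $|J_i| \geq 2$, each $j \in J_i$ is a bad single $n$-label, hence appears on a bad non-single and is drawn from a pool of at most $96D\Delta$ distinct values, and the excess $c_i - 2$ is charged to these labels. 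Summing the contributions yields the target bound $(288D+2)\Delta$.

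For the second claim, the crucial observation is that two bad singles connected by a common connector must share the same $n$-label: otherwise distinct labels $j_1 \neq j_2$ would force $b_{i,j_1}, b_{i,j_2} \geq 2$ and hence $c_i \geq 4$, contradicting $c_i = 2$. Thus each connected cycle is indexed by a unique bad single $n$-label $j$, of which there are at most $96D\Delta$ values. We then argue that for each such $j$ the connection subgraph restricted to bad singles with label $j$ generically contributes at most one cycle, with any additional cycle attributable either to an additional bad non-single carrying $j$ in its tuple or to a violation of Lemma~\ref{lemmatripleijpairs}'s budget; this gives $(192D+1)\Delta$. The main obstacle will be the precise bookkeeping in the contaminated case and in the multi-cycle analysis for the second claim: each anomaly must be attributed cleanly either to the $96\Delta$ bad non-singles or to the $12\Delta$ triple-edge budget of Lemma~\ref{lemmatripleijpairs}, without double counting.
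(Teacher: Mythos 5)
Your reduction of the first claim to bounding $\sum_{i\ \text{non-connector}} c_i$ is sound, and several of your structural observations are correct and match the paper's (bad-single $n$-labels are disjoint from good-single and good-pair labels; for an uncontaminated $i$ each $b_{ij}$ with $j$ a bad-single label is even, so $c_i$ is even; two bad singles joined by a connector share an $n$-label). But the charging scheme cannot close either claim, because the budgets you invoke do not control the quantity that actually matters: how few \emph{distinct} $p$-labels the $2(k-k')$ bad-single adjacencies can concentrate on. Concretely, a family of uncontaminated $p$-labels $i$ with $|J_i|=2$ and $b_{ij}=2$ for both $j\in J_i$ costs nothing against Lemma \ref{lemmatripleijpairs} (no $b_{ij}>2$ occurs) and is consistent with the $96D\Delta$ pool of bad-single label values (that pool bounds the number of distinct values $j$, not the number of $p$-labels $i$ pairing with them, and a single $j$ with large $N_j$ can meet many such $i$). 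The contaminated case leaks even worse: a single contaminated $i$ can be adjacent to one bad non-single and arbitrarily many bad singles with distinct labels, each $b_{ij}=2$, so per-label you only get $c_i \lesssim D\Delta$, and summing over up to $192\Delta$ contaminated labels gives $O(D\Delta^2)$, not $O(D\Delta)$. The same issue defeats the cycle count: many vertex-disjoint $2$-cycles of bad singles all carrying the \emph{same} label $j$ involve no extra bad non-singles and no $b_{ij}>2$, so your proposed attribution assigns them zero cost.

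What rules these configurations out is a global connectivity/excess argument, and that is exactly what the paper supplies. It builds a multigraph $G$ on the $p$-labels with one $j$-labeled edge per $n$-label occurrence, notes $G$ is connected with $\#\{\text{active vertices}\}+\#\{\text{labels}\}-\#\{\text{components}\}=m-1$, and shows that deleting the good-pair and good-single edges decreases this quantity by at most $\tfrac{\sum_s d_s-k}{2}+\tfrac{l-k}{2}+k'$, yielding for the residual graph $G'$ the bound $\#\{\text{active}\}-\#\{\text{components}\}\ge k-k'-(96D+1)\Delta$. Combined with the edge count $\le k-k'+96D\Delta$ and the parity fact that every active vertex of $G'$ has degree $\ge 2$ (with connectors being exactly the degree-$2$ vertices), a two-line degree count gives both conclusions. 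The lower bound on $\#\{\text{active}\}-\#\{\text{components}\}$ — equivalently, that the bad-single/bad-non-single incidences must spread over nearly $k-k'$ distinct $p$-labels and form few components — is the ingredient your proposal is missing, and it is not recoverable from Lemma \ref{lemmatripleijpairs} plus the bad-non-single count alone. As written (and as you yourself flag), the bookkeeping is not merely unfinished; the proposed local attributions would not sum to a bound linear in $\Delta$.
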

\begin{proof}
Suppose the multi-labeling is a $(p,n)$-multi-labeling.
Construct an undirected multi-graph $G$ with vertex set
$\{1,\ldots,p\}$, where each edge of $G$ has one label in $\{1,\ldots,n\}$,
as follows: For each $n$-vertex $V$ in the $l$-graph and each $n$-label $j$
of $V$, if $V$ is preceded and
followed by $p$-vertices with labels $i_1$ and $i_2$, then add an edge
$i_1 \sim i_2$ in $G$ with label $j$. (Thus $G$ has
$\sum_{s=1}^l d_s$ total edges.) Condition (3) of Definition
\ref{defmultilabeling} implies for any $j$, each vertex of $G$
has even degree in the sub-graph consisting of only edges with label $j$.

We will sequentially remove edges of $G$ corresponding to good
pairs and good singles, until only edges corresponding to bad singles and bad
non-singles remain. 
At any stage of this removal process, let us call a vertex of $G$
``active'' if there is at least one edge still adjacent to that vertex.
Let us define a ``component'' as the set of active vertices that may be
reached by traversing the remaining edges of $G$ from a particular active
vertex. (Hence a component of $G$ is a connected component, in the standard
sense, that contains at least two vertices.) We will track the quantity
\[M=\#\{\text{active vertices}\}+\#\{\text{distinct edge labels}\}-
\#\{\text{components}\}.\]

Initially, $G$ has $m$ active vertices plus distinct edge labels (where $m$ is
the number of distinct $n$- and $p$-vertices of the $l$-graph), and one
component, so $M=m-1$. Let us remove the edges of $G$ corresponding to good
pairs. If an $n$-vertex of a good pair has $d$ $n$-labels, then
the good pair corresponds to $2d$ edges between a single pair of vertices
in $G$ whose edge labels do not appear elsewhere in $G$.
Removing these $2d$ edges removes $d$ distinct edge labels,
and if this also changes the connectivity structure of $G$, then either
$\#\{\text{components}\}$ increases by 1, $\#\{\text{active vertices}\}$
decreases by 1, or $\#\{\text{components}\}$
decreases by 1 and $\#\{\text{active vertices}\}$ decreases by 2. In all
cases, $M$ decreases by at most $d+1$.
Then after removing all edges of $G$ corresponding to good pairs,
$M \geq m-1-\left(\frac{\sum_{s=1}^l d_s-k}{2}\right)-\left(\frac{l-k}{2}\right)
=k-\Delta$,
as there are at most $\frac{\sum_{s=1}^l d_s-k}{2}$ distinct $n$-labels
for the good pairs and at most $\frac{l-k}{2}$ good pairs.

Let us now remove the edges of $G$ corresponding to good singles. Let $j$ be 
an $n$-label of a good single, and consider removing the edges of $G$ with
label $j$ one at a time.
As each vertex of $G$ has even degree in the subgraph of edges with
label $j$, when the first such edge is removed, the number of components and
active vertices cannot change. Subsequently, the removal of each additional
edge might increase $\#\{\text{components}\}-\#\{\text{active vertices}\}$ by
1 upon considering the same three cases as above. When the last such
edge is removed, there are no longer any edges with label $j$ by the definition
of a good single, so $\#\{\text{distinct edge labels}\}$ decreases by 1. Hence
removing all edges with label $j$ decreases $M$ by at most the number of such
edges, and $M \geq k-k'-\Delta$ after removing the edges corresponding to all
$k'$ good singles.

Call the resulting graph $G'$. Every vertex of $G'$ still has even
degree in the subgraph of edges with label $j$, for any $j$. In
particular, every active vertex of $G'$ has degree at least two.
By Definition \ref{defconnected}, $i \in \{1,\ldots,p\}$ is a 
connector if and only if $i$ has degree exactly two in $G'$, in which
case the edges incident to $i$ in $G'$ must have the same label $j$, and the
$n$-vertices with label $j$ in the $l$-graph are the bad singles connected by
$i$. A connected cycle of bad singles
corresponds to the edges of a cycle of (necessarily distinct) vertices in $G'$
with degree exactly two.

The number of distinct edge labels in $G'$ equals the number of distinct
$n$-labels in the $l$-graph appearing on bad non-singles (as any $n$-label
appearing on a bad single also appears on some bad non-single).
By Lemma \ref{lemmagoodpairscount} this is at most $96D\Delta$.
Hence $\#\{\text{active vertices}\}-\#\{\text{components}\} \geq
k-k'-(96D+1)\Delta$ for $G'$. The number of total
edges in $G'$ is at most $k-k'+96D\Delta$, with $k-k'$ of them corresponding to
bad singles and at most $96D\Delta$ corresponding to bad non-singles.
Then the total vertex degree of $G'$ is at most $2(k-k'+96D\Delta)$. As each
active vertex in $G'$ has degree at least two, this implies
$\#\{\text{active vertices}\} \leq k-k'+96D\Delta$. Then
$\#\{\text{components}\} \leq (192D+1)\Delta$, so there are at most
$(192D+1)\Delta$ connected cycles of bad singles. Furthermore,
if there are $x$ connectors (i.e.\ active vertices with degree exactly two),
then since there are at least $k-k'-(96D+1)\Delta$ active vertices,
$2x+4(k-k'-(96D+1)\Delta-x) \leq 2(k-k'+96D\Delta)$, so $x \geq
k-k'-(288D+2)\Delta$.
\end{proof}
\begin{proof}[Proof of Proposition \ref{proplabelmapping}, property (3)]
Let $C$ denote a positive constant that may depend on $D$ and that may change
from instance to instance. Fix $\Delta_0 \geq 0$ and $\tilde{\mathcal{L}}$.
We upper bound the number of ways in which steps (1)--(8) of the inversion
procedure may be performed, such that the resulting multi-labeling $L$ is
canonical for some $\mathcal{L} \in \mathcal{S}(\Delta_0,\tilde{\mathcal{L}})$:

There are at most $l+1$ ways of performing step (1).

By Lemma \ref{lemmagoodpairscount}, to yield $L$ with excess $\Delta_0$, there
can be at most $C\Delta_0$ bad non-single $n$-vertices, and hence
we must take $|S| \leq C\Delta_0$ in step (2).

To perform step (3), for each vertex in $S$, we may first choose the number
of $n$-labels $d$ between 2 and $D$, and then there are at most
$(Dl)^d$ ways of choosing the $n$-labels for that vertex.

For step (4), suppose $k'$ good single and $k-k'$ bad single
$n$-vertices were identified in steps (1) and (2). By Lemma
\ref{lemmaconnectedbadsingles}, there are at least $k-k'-C\Delta_0$
connectors, and any two connected bad single $n$-vertices 
must be given the same $n$-label. (The $p$-labels of $\tilde{\mathcal{L}}$
are known and are preserved in $L$,
so after steps (1) and (2) we know which labels are
connectors and which bad singles must be connected in $L$.)
Going through the connectors one-by-one, each successive connector constrains
the $n$-label of one more bad single $n$-vertex, unless that
connector closes a connected cycle. But as there are at most
$C\Delta_0$ connected cycles by Lemma \ref{lemmaconnectedbadsingles},
the number of bad single $n$-vertices that we can freely label at most
$C\Delta_0$. Then there are at most $(Dl)^{C\Delta_0}$ ways to perform step (4).

For step (5), recall that the pairs of $p$-vertices
surrounding the two $n$-vertices of a good pair must have the same pair of
$p$-labels. By Lemma \ref{lemmaconsecutivep}, for all but at
most $C\Delta_0$ of the $n$-vertices with empty label, this pairing is
uniquely determined, so there are at most
$(C\Delta_0)^{C\Delta_0}$ ways of performing step (5).

For step (6), there are $(l-\tilde{k}(\tilde{\mathcal{L}}))/2$ good pairs,
and for each pair we may choose the number of $n$-labels $d$ between 2 and $D$
and then one of $d!$ permutations.

Lemma \ref{lemmareverseoptions} shows that $|\mathcal{G}| \leq 2\Delta_0$
for step (7). For each element that we add to the ordered subset of
$\mathcal{G}$, there are at most $2\Delta_0$ choices for this element and
at most $2l$ ways of choosing $W'$ and which half of the cycle to reverse,
or we may choose to not add any more elements. We make such a choice at most
$2\Delta_0$ times, so there are at most $(4\Delta_0 l+1)^{2\Delta_0}$ ways of
performing step (7).

Finally, there is at most one way to perform step (8), as the labels on the good
single and good pair $n$-vertices are distinct from those on the bad single and
bad non-single $n$-vertices, and each new $n$-label and $p$-label has a unique
choice to make $L$ canonical.

We may incorporate the product $\prod_{s=1}^l |a_{d_s}(\mathcal{L})|/
(d_s(\mathcal{L})!)^{1/2}$ on the left side of (\ref{eq:proplabelmapping3})
into the cardinality count by noting that this
product contributes $|a_d|/(d!)^{1/2}$ for each vertex in $S$ having $d$
$n$-labels, $a_d^2/d!$ for each good pair having $d$ $n$-labels per vertex
of the pair, and $|a_1|$ for each of the $\tilde{k}(\tilde{\mathcal{L}})-|S|$
single vertices in $L$. Combining the above bounds then
yields
\begin{align*}
\mathop{\sum_{\mathcal{L} \in \varphi^{-1}(\tilde{\mathcal{L}})}}
_{\Delta(\mathcal{L})=\Delta_0} \prod_{s=1}^l \frac{|a_{d_s}(\mathcal{L})|}
{(d_s(\mathcal{L})!)^{1/2}}
&\leq (l+1)\sum_S \left(\sum_{d=2}^D (Dl)^d\frac{|a_d|}{(d!)^{1/2}}
\right)^{|S|}(Dl)^{C\Delta_0}(C\Delta_0)^{C\Delta_0}\\
&\hspace{1in}\left(\sum_{d=2}^D d!\frac{a_d^2}{d!}
\right)^{\frac{l-\tilde{k}(\tilde{\mathcal{L}})}{2}}(4\Delta_0l+1)^{2\Delta_0}
|a_1|^{\tilde{k}(\tilde{\mathcal{L}})-|S|}\\
&\leq (l+1)(Cl)^{C\Delta_0}|a|^{\tilde{k}(\tilde{\mathcal{L}})}
(\nu-a^2)^{\frac{l-\tilde{k}(\tilde{\mathcal{L}})}{2}}\sum_S |a|^{-|S|}
\left(\sum_{d=2}^D (Dl)^d\frac{|a_d|}{(d!)^{1/2}}\right)^{|S|},
\end{align*}
where $\sum_S$ denotes the sum over all possible sets $S$
selected by step (2), and the second line applies $\Delta_0 \leq Cl$ and
$\sum_{d=2}^D a_d^2=\nu-a^2$. As $|S| \leq C\Delta_0$, this implies
by Cauchy-Schwarz
\[|a|^{-|S|}\left(\sum_{d=2}^D (Dl)^d
\frac{|a_d|}{(d!)^{1/2}}\right)^{|S|} \leq
(Cl)^{C\Delta_0} |a|^{-|S|}\left(\sum_{d=2}^D
\frac{a_d^2}{d!}\right)^{\frac{|S|}{2}} \leq 
(Cl)^{C\Delta_0} \left(\frac{\sqrt{\nu}}{|a|}\right)^{C\Delta_0}.\]
The sum is over at most $l^{C\Delta_0}$ possible sets $S$, so this verifies
condition (3) of the proposition upon noting that
$(Cl)^{C\Delta_0} \leq l^{C_3+C_4\Delta_0}$ for some
constants $C_3,C_4>0$ and all $l \geq 2$.
\end{proof}

\section{Moment bound for a deformed GUE matrix}\label{appendixdeformedGUE}
In this appendix, we prove Proposition \ref{propMmomentbound}. Recall
Definition \ref{defdeformedGUE} of $M$, $W$, $V$, and $Z$, which implicitly
depend on $\tilde{p}$ and $\tilde{n}$. Throughout this
section, we will use $p$ and $n$ in place of $\tilde{p}$ and $\tilde{n}$.
\begin{lemma}\label{lemmaMnorm}
Suppose $n,p \to \infty$ with $p/n \to \gamma$. Then
$\|M\| \to \|\mu_{a,\nu,\gamma}\|$ almost surely
\end{lemma}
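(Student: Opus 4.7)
The plan is to combine three ingredients. First, the empirical spectral distribution of $M$ converges weakly almost surely to $\mu_{a,\nu,\gamma}$: the standard asymptotic-freeness statement for an independent GUE matrix and sample-covariance matrix (Voiculescu \cite{voiculescumatrix}) gives that $(a/n)V$ (with zeroed diagonal, which affects at most a rank-correction that does not change the limiting spectrum) and $\sqrt{\gamma(\nu-a^2)/p}\,W$ are asymptotically free with marginals $a(\mu_{\MP,\gamma}-1)$ and $\sqrt{\gamma(\nu-a^2)}\mu_{\semi}$, and Proposition \ref{prop:freeconvolution} identifies their free convolution as $\mu_{a,\nu,\gamma}$. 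This immediately yields the lower bound $\liminf_{n,p \to \infty} \|M\| \geq \|\mu_{a,\nu,\gamma}\|$ almost surely, so only the matching upper bound requires work.

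Second, I would replace $(a/n)V$ by its ``diagonal-restored'' version $a(ZZ^T/n - \Id)$, which has the same limiting spectrum but now fits exactly the setup of additive deformation of GUE by an independent Wishart-type matrix with norm-convergent spectrum. The error is the diagonal matrix $(a/n)\mathrm{diag}(n-\|Z_i\|^2)$, whose operator norm is $(|a|/n)\max_i |\|Z_i\|^2-n|$; by the $\chi^2_n$ tail bound of \cite{laurentmassart} together with a union bound over $i=1,\ldots,p$, this quantity tends to $0$ almost surely as $n,p \to \infty$. Writing $A := a(ZZ^T/n-\Id)$, it therefore suffices to prove
\[
\limsup_{n,p \to \infty}\Big\|\sqrt{\tfrac{\gamma(\nu-a^2)}{p}}\,W + A\Big\| \leq \|\mu_{a,\nu,\gamma}\|
\]
almost surely.

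Third, I would condition on $Z$ and apply the deformed-GUE edge result of Capitaine, Donati-Martin, and F\'eral \cite{capitaineetal}: for GUE perturbed additively by an independent Hermitian sequence $A_n$ whose spectral measure converges weakly to some compactly supported $\mu_A$ and whose extremal eigenvalues converge to the edges of $\mathrm{supp}(\mu_A)$ (i.e., $A_n$ has no outliers), the extremal eigenvalues of $W_n + A_n$ almost surely converge to the edges of $\mathrm{supp}(\mu_A \boxplus \sigma\mu_{\semi})$, where $\sigma\mu_{\semi}$ is the limit of the scaled GUE. For our $A$, classical results on sample covariance matrices \cite{geman,yinetallargest,baiyinsmallest} give that both the empirical spectral distribution of $ZZ^T/n$ and its extremal eigenvalues converge to the bulk and edges of $\mu_{\MP,\gamma}$ almost surely, so $A$ satisfies the no-outlier hypothesis of \cite{capitaineetal} with $\mu_A = a(\mu_{\MP,\gamma}-1)$. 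Applying their theorem on the probability-$1$ event where these hypotheses hold (via Fubini on the probability space of $Z$), and invoking Proposition \ref{prop:freeconvolution} to identify $a(\mu_{\MP,\gamma}-1) \boxplus \sqrt{\gamma(\nu-a^2)}\mu_{\semi} = \mu_{a,\nu,\gamma}$, yields the required upper bound.

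The main subtlety is the third step: the theorem in \cite{capitaineetal} is naturally stated for a deterministic deformation sequence, so one must argue that conditioning on $Z$ (independent of $W$) gives an almost-sure statement unconditionally. This is a standard Fubini / sample-path argument — the key point is that the no-outlier and weak-convergence hypotheses on $A$ hold on a measurable event of full $\mathbb{P}_Z$-probability, on which the conditional statement $\|W+A\|\to \|\mu_{a,\nu,\gamma}\|$ is deterministic in $Z$, so unconditional almost-sure convergence follows. Everything else (the chi-square tail bound for the diagonal correction and the classical Mar\v{c}enko--Pastur edge results) is off-the-shelf.
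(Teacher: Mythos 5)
Your proposal is correct and follows essentially the same route as the paper: control the diagonal correction via a chi-squared tail bound and union bound, use the classical Mar\v{c}enko--Pastur edge results to verify the no-outlier hypothesis for the Wishart deformation, apply the deformed-GUE edge theorem of \cite{capitaineetal} conditionally on $Z$ (the paper makes the reduction to a diagonal deformation explicit by conjugating the GUE by the orthogonal matrix diagonalizing $V$, which is the same unitary-invariance point you use implicitly), and conclude by a Fubini argument. The only cosmetic difference is that you derive the lower bound separately from weak convergence of the spectral measure, whereas the paper obtains both bounds at once from the convergence of extremal eigenvalues in \cite{capitaineetal}.
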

\begin{proof}
Recall $M=\sqrt{\frac{\gamma(\nu-a^2)}{p}}W+\frac{a}{n}V$, where
$V=ZZ^T-D$ and $D=\diag(\|Z_i\|_2^2)$. The
empirical spectral distribution of $\frac{1}{n}ZZ^T$ converges weakly almost
surely to $\mu_{\MP,\gamma}$. By a chi-squared tail bound and a union bound,
$\|\frac{1}{n}D-\Id\| \to 0$, so
the empirical spectral distribution of $\frac{a}{n}V$ converges weakly almost
surely to $a(\mu_{\MP,\gamma}-1)$. Furthermore, the maximal distance between
an eigenvalue of $\frac{a}{n}V$ and the support of $a(\mu_{\MP,\gamma}-1)$
converges to 0 almost surely by the results of
\cite{yinetallargest} and \cite{baiyinsmallest}.

Let $V=O\Lambda O^T$ where $O$ is the real
orthogonal matrix that diagonalizes $V$. Then the spectrum of
$M$ is the same as that of $\sqrt{\frac{\gamma(\nu-a^2)}{p}}O^TWO
+\frac{a}{n}\Lambda$, and $O^TWO$ is still distributed as the GUE.
Conditional on $V$, the above arguments and Proposition 8.1
of \cite{capitaineetal} imply
$\|\sqrt{\frac{\gamma(\nu-a^2)}{p}}O^TWO+\frac{a}{n}\Lambda\| \to
\|\mu_{a,\nu,\gamma}\|$ almost surely. As this convergence holds
almost surely in $V$, it holds unconditionally as well.
\end{proof}
\begin{lemma}\label{lemmaGUEwisharttailbound}
Suppose $n,p \to \infty$ with $p/n \to \gamma$, let $l:=l(n)$ be such
that $l(n)/n \to 0$, and let $\mathcal{B}_n$ be any event. Then there
exist positive constants $C:=C_{a,\nu,\gamma}$ and $c:=c_{a,\nu,\gamma}$ such
that $\E[\|M\|^l\I\{\mathcal{B}_n\}] \leq C^l\P[\mathcal{B}_n]+e^{-cn}$
for all large $n$.
\end{lemma}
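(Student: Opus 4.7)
The plan is to combine a quantitative tail estimate for $\|M\|$ with the layer-cake representation of $\E[\|M\|^l\,\I_{\mathcal{B}_n}]$. First, I would establish constants $K, c_0 > 0$ depending only on $a,\nu,\gamma$ such that
\[\P(\|M\|>s) \leq 4\, e^{-c_0 s n} \qquad \text{for all } s \geq K \text{ and all sufficiently large } n.\]
Granted this, the layer-cake identity gives
\[\E[\|M\|^l\,\I_{\mathcal{B}_n}] = \int_0^\infty l s^{l-1}\,\P(\|M\|>s,\,\mathcal{B}_n)\,ds \leq K^l\,\P(\mathcal{B}_n) + \int_K^\infty 4 l s^{l-1} e^{-c_0 s n}\,ds,\]
so it suffices to bound the tail integral by $e^{-cn}$ for some $c=c(a,\nu,\gamma)>0$.

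For the tail estimate, I would use the triangle inequality $\|M\| \leq \sqrt{\gamma(\nu-a^2)/p}\,\|W\| + (|a|/n)\|V\|$ together with $\|V\| \leq \|ZZ^T\|+\|D\| \leq 2\|Z\|^2$, where the last bound uses $\|D\|=\max_i \|Z_i\|^2 \leq \|Z\|^2$. Both $\|W\|$ (as a function of the $p^2$ real Gaussian parameters generating the Hermitian matrix $W$) and $\|Z\|$ (as a function of the $np$ iid standard Gaussians $z_{ij}$) are $1$-Lipschitz, so standard Gaussian concentration of measure gives $\P(|\|W\|-\E\|W\||>t) \leq 2e^{-t^2/2}$ and $\P(|\|Z\|-\E\|Z\||>t) \leq 2e^{-t^2/2}$. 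Combined with the standard moment estimates $\E\|W\| \leq (2+o(1))\sqrt{p}$ and $\E\|Z\| \leq \sqrt{n}+\sqrt{p}$, and after rescaling, this yields, for $s$ exceeding a suitable threshold $K=K(a,\nu,\gamma)$,
\[\P\Bigl(\sqrt{\gamma(\nu-a^2)/p}\,\|W\| > s/2\Bigr) \leq 2e^{-c_1 p s^2}, \qquad \P\bigl(2|a|\|Z\|^2/n > s/2\bigr) \leq 2e^{-c_2 s n};\]
using $p\asymp n$ and combining these gives $\P(\|M\|>s)\leq 4 e^{-c_0 s n}$ on the range $s \geq K$.

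For the tail integral, I would substitute $u = c_0 s n$ and split $e^{-u} = e^{-u/2}\cdot e^{-u/2}$, obtaining
\[\int_K^\infty 4 l s^{l-1} e^{-c_0 s n}\,ds = \frac{4 l}{(c_0 n)^l}\int_{c_0 K n}^\infty u^{l-1} e^{-u}\,du \leq \frac{4 l \cdot 2^l \Gamma(l)}{(c_0 n)^l}\,e^{-c_0 K n/2}.\]
Stirling's bound $\Gamma(l)\leq (l/e)^l \sqrt{2\pi l}$ then controls this by $4 l\sqrt{2\pi l}\,\bigl(2l/(c_0 n e)\bigr)^l\,e^{-c_0 K n/2}$. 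Under the hypothesis $l/n\to 0$, the factor $(2l/(c_0 ne))^l$ is eventually $\leq 1$ and the polynomial prefactor $4l\sqrt{2\pi l}$ is dominated by $e^{c_0 K n/4}$ for large $n$, so the tail integral is at most $e^{-c_0 K n/4}$. Setting $C=K$ and $c=c_0 K/4$ concludes the proof.

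The main (mild) obstacle will be checking that the exponential decay $e^{-c_0 s n}$ of the tail is strong enough to dominate the polynomial-in-$s$ growth $l s^{l-1}$ for exponents $l=l(n)$ that may grow without bound; the assumption $l/n\to 0$ enters precisely at this step, via the Stirling estimate $\Gamma(l)\lesssim (l/e)^l$, to keep the resulting $l$-dependent prefactor from overwhelming the $e^{-c_0 K n/2}$ factor.
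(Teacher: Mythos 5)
Your proposal is correct and follows essentially the same route as the paper: the same decomposition of $\|M\|$ into the GUE and Wishart pieces, the same exponential tail bound $\P(\|M\|>s)\lesssim e^{-csn}$ valid for $s$ above a constant threshold, the same layer-cake split at that threshold producing the $C^l\P[\mathcal{B}_n]$ term, and the same use of $l=o(n)$ to make the tail integral exponentially small. The only cosmetic difference is that you evaluate the tail integral via a Gamma-function substitution and Stirling, whereas the paper bounds $s^{l-1}\le e^{ls}$ and integrates $e^{-(\eps n-l)s}$ directly.
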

\begin{proof}
Note
\[\|M\| \leq \sqrt{\frac{\gamma(\nu-a^2)}{p}}\|W\|+\frac{|a|}{n}\|ZZ^T\|
+\frac{|a|}{n}\max_{1 \leq i \leq p} \|Z_i\|_2^2.\]
Applying standard tail bounds (e.g.\ Corollary 2.3.5 of \cite{taobook},
Corollary 5.35 of \cite{vershynin}, and Lemma 1 of \cite{laurentmassart}),
there exist constants $C,\eps>0$ depending on $a,\nu,\gamma$ such
that, for all $t \geq C$ and sufficiently large $n$,
$\P[\|M\|>t] \leq e^{-\eps tn}$. Then we may write
\begin{align*}
\E\left[\|M\|^l\I\{\mathcal{B}_n\}\right]
&=\E\left[\|M\|^l\I\{\mathcal{B}_n\}\I\{\|M\| \leq C\}\right]
+\E\left[\|M\|^l\I\{\mathcal{B}_n\}\I\{\|M\|>C\}\right]\\
&\leq C^l\P[\mathcal{B}_n]+\int_{C^l}^\infty \P\left[\|M\|^l>t\right]dt\\
&=C^l\P[\mathcal{B}_n]+\int_C^\infty \P[\|M\|>s]\cdot ls^{l-1}ds\\
&\leq C^l\P[\mathcal{B}_n]+l\int_C^\infty e^{-\eps sn+(l-1)\log s}ds\\
&\leq C^l\P[\mathcal{B}_n]+l\int_C^\infty e^{-(\eps n-l)s}ds\\
&=C^l\P[\mathcal{B}_n]+\frac{l}{\eps n-l}e^{-(\eps n-l)C}
\end{align*}
for all large $n$. As $l=o(n)$, the result follows upon setting
$c=C\eps/2$.
\end{proof}
\begin{lemma}\label{lemmaGUEwishartmean}
Suppose $n,p \to \infty$ with $p/n \to \gamma$. Then
$\E[\|M\|] \to \|\mu_{a,\nu,\gamma}\|$.
\end{lemma}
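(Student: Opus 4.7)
The plan is to upgrade the almost sure convergence of Lemma \ref{lemmaMnorm} to convergence in mean by establishing uniform integrability of the family $\{\|M\|\}$ (for $n$ sufficiently large), using the moment bound of Lemma \ref{lemmaGUEwisharttailbound}.

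First, I would invoke Lemma \ref{lemmaGUEwisharttailbound} with $l=2$ (which satisfies $l/n \to 0$) and $\mathcal{B}_n$ the full probability space, obtaining a uniform second-moment bound $\E[\|M\|^2] \leq C^2 + e^{-cn}$, hence $\sup_n \E[\|M\|^2] < \infty$. By the de la Vallée Poussin criterion (or directly by Cauchy--Schwarz applied to $\E[\|M\|\I\{\|M\|>T\}] \leq \E[\|M\|^2]^{1/2}\P[\|M\|>T]^{1/2}$ together with Markov's inequality), this implies that the sequence $\{\|M\|\}_n$ is uniformly integrable.

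Next, by Lemma \ref{lemmaMnorm}, $\|M\| \to \|\mu_{a,\nu,\gamma}\|$ almost surely. Combining almost sure convergence with uniform integrability yields convergence in $L^1$ by the standard Vitali convergence theorem, and in particular $\E[\|M\|] \to \|\mu_{a,\nu,\gamma}\|$, as desired.

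There is no substantive obstacle here; the lemma is essentially a corollary of the two preceding lemmas. The only mildly delicate point is making sure to apply Lemma \ref{lemmaGUEwisharttailbound} in a regime where it gives a genuinely uniform bound (i.e.\ a fixed $l$ such as $l=2$, so that the hypothesis $l/n \to 0$ is trivially satisfied and the constant $C$ does not depend on $n$). Everything else is standard measure-theoretic machinery.
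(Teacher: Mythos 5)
Your proof is correct and rests on exactly the same two ingredients as the paper's: the almost sure convergence of Lemma \ref{lemmaMnorm} and the moment/tail control of Lemma \ref{lemmaGUEwisharttailbound}. The paper packages the integrability step slightly differently (Fatou for the lower bound, then Lemma \ref{lemmaGUEwisharttailbound} with $l=1$ applied to the event $\{\|M\|>\|\mu_{a,\nu,\gamma}\|+\eps\}$, whose probability vanishes), but this is the same argument as your uniform-integrability formulation with $l=2$.
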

\begin{proof}
Lemma \ref{lemmaMnorm} and Fatou's lemma imply
$\liminf \E[\|M\|] \geq \|\mu_{a,\nu,\gamma}\|$. For any $\eps>0$, let
$\mathcal{B}_n=\left\{\|M\|>\|\mu_{a,\nu,\gamma}\|
+\eps\right\}$. Then
\[\E[\|M\|]=\E[\|M\|\I\{\mathcal{B}_n^C\}]
+\E[\|M\|\I\{\mathcal{B}_n\}]
\leq \|\mu_{a,\nu,\gamma}\|+\eps
+\E[\|M\|\I\{\mathcal{B}_n\}].\]
Lemma \ref{lemmaMnorm} implies $\P[\mathcal{B}_n] \to 0$, so
Lemma \ref{lemmaGUEwisharttailbound} (with $l=1$) implies
$\E[\|M\|\I\{\mathcal{B}_n\}] \to 0$ as well. Then $\E[\|M\|] \leq
\|\mu_{a,\nu,\gamma}\|+2\eps$ for all large $n$, and the result follows by
taking $\eps \to 0$.
\end{proof}
\begin{lemma}\label{lemmalipschitzextension}
Suppose $F:\R^d \to \R$ is $L$-Lipschitz on a set $G \subseteq \R^k$, i.e.
$|F(x)-F(y)| \leq L\|x-y\|_2$ for all $x,y \in G$. Let $\xi \sim N(0,I_d)$. Then
there exists a function $\tilde{F}:\R^d \to \R$ such that $\tilde{F}(x)=F(x)$
for all $x \in G$, $|\tilde{F}(x)-\tilde{F}(y)| \leq L\|x-y\|_2$ for all
$x,y \in \R^k$, and, for all $\Delta>0$,
\[\P[F(\xi)-\E F(\xi) \geq \Delta+|\E F(\xi)-\E \tilde{F}(\xi)| \text{ and }
\xi \in G] \leq e^{-\frac{\Delta^2}{2L^2}}.\]
\end{lemma}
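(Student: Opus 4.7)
The plan is to produce $\tilde F$ via the standard McShane--Whitney Lipschitz extension and then apply the Gaussian concentration inequality for Lipschitz functions to $\tilde F$, using the fact that $F$ and $\tilde F$ agree on $G$.

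Concretely, I would first define
\[
\tilde F(x) := \inf_{y \in G}\bigl[F(y) + L\|x-y\|_2\bigr], \qquad x \in \R^d.
\]
A short, routine check shows that this infimum is finite (for any fixed $y_0 \in G$ it is bounded above by $F(y_0)+L\|x-y_0\|_2$, and Lipschitzness of $F$ on $G$ prevents $-\infty$), that $\tilde F(x)=F(x)$ for $x \in G$, and that $|\tilde F(x)-\tilde F(x')| \leq L\|x-x'\|_2$ for all $x,x' \in \R^d$. (If $G$ is empty the lemma is vacuous.)

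Next, I would invoke the Gaussian concentration inequality for Lipschitz functions: since $\tilde F$ is globally $L$-Lipschitz and $\xi \sim \N(0,I_d)$,
\[
\P\bigl[\tilde F(\xi) - \E \tilde F(\xi) \geq \Delta\bigr] \leq e^{-\Delta^2/(2L^2)}
\]
for every $\Delta > 0$.

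The final step is to transfer this tail bound from $\tilde F$ to $F$ on the event $\{\xi \in G\}$, on which $F(\xi)=\tilde F(\xi)$. Writing $\delta := \E F(\xi) - \E \tilde F(\xi)$, on the event $\{\xi \in G\}$ we have $F(\xi) - \E F(\xi) = \tilde F(\xi) - \E \tilde F(\xi) - \delta$. Hence
\[
\bigl\{F(\xi) - \E F(\xi) \geq \Delta + |\delta|\text{ and }\xi \in G\bigr\}
\subseteq \bigl\{\tilde F(\xi) - \E \tilde F(\xi) \geq \Delta + |\delta| + \delta\bigr\}
\subseteq \bigl\{\tilde F(\xi) - \E \tilde F(\xi) \geq \Delta\bigr\},
\]
the last inclusion using $|\delta|+\delta \geq 0$. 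Combining with the Gaussian concentration bound above yields the claimed inequality.

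No step here is really an obstacle; the only delicate point is recognizing that we cannot directly concentrate $F$ (it is only Lipschitz on $G$, which typically has Gaussian measure strictly less than one), and that this forces us to pay the ``bias'' term $|\E F(\xi)-\E \tilde F(\xi)|$ in the threshold. The McShane formula is the cleanest extension to use because it gives the same Lipschitz constant $L$ without any dimensional blow-up, which is essential so that Gaussian concentration gives the sharp exponent $\Delta^2/(2L^2)$.
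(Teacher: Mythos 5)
Your proposal is correct and follows essentially the same route as the paper: the paper also defines $\tilde{F}(x)=\inf_{x' \in G}(F(x')+L\|x-x'\|_2)$, verifies the extension and Lipschitz properties, and then applies Gaussian concentration to $\tilde{F}$, absorbing the bias $|\E F(\xi)-\E\tilde{F}(\xi)|$ exactly as you do.
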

\begin{proof}
Let $\tilde{F}(x)=\inf_{x' \in G} (F(x')+L\|x-x'\|_2)$. Note that if $x \in G$,
then $F(x) \leq F(x')+L\|x-x'\|_2$ for all $x' \in G$, so $\tilde{F}(x)=F(x)$.
Also, for any $x,y \in \R^k$ and $\eps>0$, there exists $x' \in G$ such that
$\tilde{F}(x) \geq F(x')+L\|x-x'\|_2-\eps$. Then by definition,
$\tilde{F}(y) \leq F(x')+L\|y-x'\|_2$, so $\tilde{F}(y)-\tilde{F}(x) \leq
L\|y-x'\|_2-L\|x-x'\|_2+\eps \leq L\|x-y\|_2+\eps$. Similarly,
$\tilde{F}(x)-\tilde{F}(y) \leq L\|x-y\|_2+\eps$. This holds for all
$\eps>0$, so $|\tilde{F}(x)-\tilde{F}(y)| \leq L\|x-y\|_2$. Finally, applying
Gaussian concentration of measure for the Lipschitz function $\tilde{F}$,
\begin{align*}
&\P[F(\xi)-\E F(\xi) \geq \Delta+|\E F(\xi)-\E \tilde{F}(\xi)|
\text{ and } \xi \in G]\\
&\hspace{0.5in}=
\P[\tilde{F}(\xi) \geq \Delta+|\E F(\xi)-\E \tilde{F}(\xi)|+\E F(\xi)
\text{ and } \xi \in G]\\
&\hspace{0.5in}\leq
\P[\tilde{F}(\xi) \geq \Delta+\E \tilde{F}(\xi)]
\leq e^{-\frac{\Delta^2}{2L^2}}.
\end{align*}
\end{proof}
\begin{lemma}\label{lemmaGUEwishartconcentration}
Suppose $n,p \to \infty$ with $p/n \to \gamma$, and let $\eps>0$. Then
there exist $c:=c_{a,\nu,\gamma}>0$ and $N:=N_{a,\nu,\gamma,\eps}>0$ and a set
$G:=G_{n,p} \subset \R^{p \times n}$ with
$\P[Z \in G] \geq 1-2e^{-\frac{n}{2}}$, such that for all $t>\eps$ and $n>N$,
\[\P[\|M\| \geq \|\mu_{a,\nu,\gamma}\|+t \text{ and } Z \in G]
\leq e^{-cnt^2}.\]
\end{lemma}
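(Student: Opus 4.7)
The plan is to express $\|M\|$ as a function $F(\xi)$ of the underlying IID $\N(0,1)$ parameters $\xi \in \R^d$ defining $W$ and $Z$, show that $F$ is $L$-Lipschitz on a high-probability set with $L^2 = O(1/n)$, and then combine Lemma~\ref{lemmalipschitzextension} with Gaussian concentration of measure. Concretely, I would take $G := \{Z \in \R^{p\times n} : \|Z\| \leq 2\sqrt{p}+2\sqrt{n}\}$, so $\P[Z \in G] \geq 1-2e^{-n/2}$ by Corollary 5.35 of \cite{vershynin}; parametrize $W$ by the $p^2$ IID $\N(0,1)$ variables $\{w_{ii}\}_i \cup \{\sqrt{2}\Re w_{ii'},\sqrt{2}\Im w_{ii'}\}_{i<i'}$ and $Z$ by its $pn$ IID $\N(0,1)$ entries, so $\xi \sim \N(0,I_d)$ with $d:=p^2+pn$; and set $G':=\{\xi:Z(\xi)\in G\}$ and $F(\xi):=\|M\|$.

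The central technical step is verifying that $F$ is $L$-Lipschitz on $G'$ with $L^2 \leq C/n$ for some $C:=C_{a,\nu,\gamma}$. The $W$-coordinates contribute $\gamma(\nu-a^2)/p$: with the above parametrization, $\xi^{(W)} \mapsto \sqrt{\gamma(\nu-a^2)/p}\,W$ is an isometry (up to scale) from $\ell_2$ into the Frobenius inner-product space, and spectral norm is $1$-Lipschitz in Frobenius norm. The $Z$-coordinates are the delicate piece because $V(Z)=ZZ^T-\diag(\|Z_i\|^2)$ is quadratic in $Z$. Expanding
\[
V(Z+H)-V(Z)=HZ^T+ZH^T+HH^T-\diag(2\langle Z_i,H_i\rangle+\|H_i\|^2)
\]
and estimating the first-order part via $\|HZ^T+ZH^T\| + \|\diag(2\langle Z_i,H_i\rangle)\| \leq 4\|Z\|\,\|H\|_F$, on $G$ this yields a Lipschitz constant at most $4C'|a|/\sqrt{n}$ for $Z \mapsto (a/n)V(Z)$, and combining the two contributions gives $L^2 \leq C/n$ as claimed.

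Then I would invoke Lemma~\ref{lemmalipschitzextension} to extend $F|_{G'}$ to a globally $L$-Lipschitz $\tilde F:\R^d\to\R$. To control $|\E F - \E\tilde F|$, I note $F=\tilde F$ on $G'$ and $\tilde F(0)=F(0)=0$, so $|\tilde F(\xi)| \leq L\|\xi\|$, and Cauchy--Schwarz gives
\[
|\E F - \E\tilde F| \leq \bigl(2\E[\|M\|^2]+2L^2 d\bigr)^{1/2}\,\P[\xi \notin G']^{1/2} = O(\sqrt{n}\,e^{-n/4}),
\]
using Lemma~\ref{lemmaGUEwisharttailbound} with $l=2$ and $L^2 d = O(n)$. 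Combined with Lemma~\ref{lemmaGUEwishartmean}, this yields $\E F(\xi) \leq \|\mu_{a,\nu,\gamma}\|+\eps/8$ and $|\E F-\E\tilde F|<\eps/8$ for $n>N_{a,\nu,\gamma,\eps}$. For $t>\eps$, on $\{Z\in G\}$ we have $F=\tilde F$, so $\|M\|\geq\|\mu_{a,\nu,\gamma}\|+t$ forces $F(\xi)-\E F(\xi) \geq (t/2) + |\E F - \E\tilde F|$, and Lemma~\ref{lemmalipschitzextension} with $\Delta=t/2$ produces the bound $e^{-t^2/(8L^2)} \leq e^{-cnt^2}$ with $c$ depending only on $a,\nu,\gamma$. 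The hard part will be the Lipschitz estimate in the $Z$-variables: the quadratic dependence of $V$ on $Z$ prevents $F$ from being globally Lipschitz, which is precisely why restricting to $G$ and applying the extension mechanism of Lemma~\ref{lemmalipschitzextension} are both essential.
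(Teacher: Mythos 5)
Your proposal is correct and follows essentially the same route as the paper: the same Gaussian parametrization of $(W,Z)$, the same convex high-probability set $G$ controlling $\|Z\|$, an $O(n^{-1/2})$ Lipschitz bound on that set (the paper gets it by bounding $\|\nabla f_v\|_2$ for $f_v(\mathcal{W},Z)=v^*Mv$, you by directly expanding $V(Z+H)-V(Z)$, which is equivalent), followed by Lemma \ref{lemmalipschitzextension}, the $o(1)$ control of $|\E F-\E\tilde F|$, and Lemma \ref{lemmaGUEwishartmean}. The only point to tidy is that your expansion must also absorb the quadratic remainder $HH^T-\diag(\|H_i\|^2)$, which on $G$ is again $O(\sqrt{n})\|H\|_F$ since $\|H\|\leq\|Z\|+\|Z+H\|$, so the conclusion stands.
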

\begin{proof}
Recall $M=\sqrt{\frac{\gamma(\nu-a^2)}{p}}W+\frac{a}{n}(ZZ^T
-\diag(\|Z_i\|_2^2))$. Denote
\[\mathcal{W}=\big((w_{ii})_{1 \leq i \leq p},(\sqrt{2} \Re w_{ij},\sqrt{2}
\Im w_{ij})_{1 \leq i<j \leq p}\big) \in \R^{p^2},\]
so that the entries of $\mathcal{W}$ and $Z$ are IID $\N(0,1)$.
Define $f:\R^{p^2+np} \to \R$ and
$f_v:\R^{p^2+np} \to \R$ for $v \in \C^p$ by
$f(\mathcal{W},Z)=\|M\|$ and $f_v(\mathcal{W},Z)=v^*Mv$,
so that $f(\mathcal{W},Z)=\sup_{v \in \C^p:\|v\|_2=1}
|f_v(\mathcal{W},Z)|$. By elementary calculations, and denoting $Z_i$ as the
$i^\text{th}$ row of $Z$,
\begin{align*}
&\frac{\partial f_v(\mathcal{W},Z)}{\partial w_{ii}}
=\sqrt{\frac{\gamma(\nu-a^2)}{p}}|v_i|^2,\;\;
\frac{\partial f_v(\mathcal{W},Z)}{\partial (\sqrt{2}\Re w_{ij})}
=\sqrt{\frac{2\gamma(\nu-a^2)}{p}}\Re(\overline{v_i}v_j),\\
&\frac{\partial f_v(\mathcal{W},Z)}{\partial (\sqrt{2}\Im w_{ij})}
=-\sqrt{\frac{2\gamma(\nu-a^2)}{p}}\Im(\overline{v_i}v_j),\;\;
\nabla_{Z_i} f_v(\mathcal{W},Z)=\frac{2a}{n}\mathop{\sum_{j=1}^p}_{j
\neq i} \Re(\overline{v_i}v_j)Z_j.
\end{align*}
Then, for any $v \in \C^p$ such that $\|v\|_2=1$,
\begin{align*}
\|\nabla f_v(\mathcal{W},Z)\|_2^2
&=\frac{\gamma(\nu-a^2)}{p}\left(\sum_{i=1}^p |v_i|^4+2\sum_{1 \leq i<j \leq p}
|\overline{v_i}v_j|^2\right)
+\frac{4a^2}{n^2}\sum_{i=1}^p \left\|\mathop{\sum_{j=1}^p}_{j \neq i}
\Re(\overline{v_i}v_j)Z_j\right\|_2^2\\
&\leq \frac{\gamma(\nu-a^2)}{p}\left(\sum_{i=1}^p |v_i|^2\right)^2
+\frac{4a^2}{n^2} \sum_{i=1}^p |v_i|^2 \|Z\|^2\|v\|_2^2\\
&=\frac{\gamma(\nu-a^2)}{p}+\frac{4a^2\|Z\|^2}{n^2}.
\end{align*}
Take $G=\{Z \in \R^{p \times n}:\|Z\| \leq 2\sqrt{n}+\sqrt{p}\}$. Then by
Corollary 5.35 of \cite{vershynin}, $\P[Z \notin G] \leq 2e^{-\frac{n}{2}}$. As
$\R^{p^2} \times G$ is convex, the above inequality implies
$f_v(\mathcal{W},Z)$ is $L$-Lipschitz on
$\R^{p^2} \times G$ for $L=O(n^{-1/2})$. Then
\begin{align*}
f(\mathcal{W},Z)-f(\mathcal{W}',Z') &\leq \sup_{v \in \C^p:\|v\|_2=1}
\big(|f_v(\mathcal{W},Z)|-|f_v(\mathcal{W}',Z')|\big)\\
&\leq \sup_{v \in \C^p:\|v\|_2=1} \big|f_v(\mathcal{W},Z)-f_v(\mathcal{W}',Z')
\big| \leq L\|(\mathcal{W},Z)-(\mathcal{W}',Z')\|_2
\end{align*}
for all $\mathcal{W},\mathcal{W}' \in \R^{p^2}$ and $Z,Z' \in G$, so
$f$ is also $L$-Lipschitz on $\R^{p^2} \times G$.

Let $\tilde{f}:\R^{p^2+np} \to \R$ be the $L$-Lipschitz extension of
$f$ on $\R^{p^2} \times G$ given by Lemma \ref{lemmalipschitzextension}.
Note that
\begin{align*}
|\E f(\mathcal{W},Z)-\E \tilde{f}(\mathcal{W},Z)|
&=|\E[(f(\mathcal{W},Z)-\tilde{f}(\mathcal{W},Z))\I\{Z \notin G\}]|\\
&\leq \E|f(\mathcal{W},Z)\I\{Z \notin G\}|
+\E|\tilde{f}(\mathcal{W},Z)\I\{Z \notin G\}|.
\end{align*}
Lemma \ref{lemmaGUEwisharttailbound} (with $l=1$) implies
$\E|f(\mathcal{W},Z)\I\{Z \notin G\}|=\E[\|M\|\I\{Z \notin G\}]=o(1)$. As
$\tilde{f}$ is $L$-Lipschitz,
\[|\tilde{f}(\mathcal{W},Z)| \leq |\tilde{f}(0,0)|+L\|(\mathcal{W},Z)\|_2
=|f(0,0)|+L\|(\mathcal{W},Z)\|_2=L\|(\mathcal{W},Z)\|_2.\]
Let $\mathcal{A}_n=\left\{\left\|(\mathcal{W},Z)\right\|_2 \leq \sqrt{2(p^2+np)}
\right\}$. As $\|(\mathcal{W},Z)\|_2^2$ is chi-squared distributed with $p^2+np$
degrees of freedom, a standard tail bound gives
$\P\left[\left\|(\mathcal{W},Z)\right\|_2^2 \geq p^2+np+t\right]
\leq e^{-\frac{t^2}{8(p^2+np)}}$. Then
\begin{align*}
\E\left[\left\|(\mathcal{W},Z)\right\|_2^2 \I\{\mathcal{A}_n^C\}\right]
&=\int_{p^2+np}^\infty \P\left[\left\|(\mathcal{W},Z)\right\|_2^2
\geq p^2+np+t\right] dt \leq \int_{p^2+np}^\infty e^{-\frac{t^2}{8(p^2+np)}}dt\\
&=2\sqrt{p^2+np}\int_{\frac{\sqrt{p^2+np}}{2}}^\infty e^{-\frac{s^2}{2}}ds
\sim 4e^{-\frac{p^2+np}{8}}.
\end{align*}
This implies
\begin{align*}
\E|\tilde{f}(\mathcal{W},Z)\I\{Z \notin G\}|
&\leq \E[|\tilde{f}(\mathcal{W},Z)|\I\{Z \notin G\}\I\{\mathcal{A}_n\}]
+\E[|\tilde{f}(\mathcal{W},Z)|\I\{Z \notin G\}\I\{\mathcal{A}_n^C\}]\\
&\leq L\sqrt{2(p^2+np)}\P[Z \notin G]
+L\E\left[\left\|(\mathcal{W},Z)\right\|_2^2\I\{\mathcal{A}_n^C\}\right]^{1/2}
=o(1).
\end{align*}
Then $|\E f(\mathcal{W},Z)-\E \tilde{f}(\mathcal{W},Z)|=o(1)$, so
Lemmas \ref{lemmaGUEwishartmean} and \ref{lemmalipschitzextension} imply, for
all $t>\eps$ and all sufficiently large $n$ (i.e. $n>N_{a,\nu,\gamma,\eps}$
independent of $t$),
\begin{align*}
&\P[\|M_{n,p}\| \geq \|\mu_{a,\nu,\gamma}\|+t \text{ and } Z \in G]\\
&\hspace{0.5in}\leq
\P\left[\|M_{n,p}\|-\E\|M_{n,p}\| \geq t-\tfrac{\eps}{2}+|\E f(\mathcal{W},Z)
-\E\tilde{f}(\mathcal{W},Z)| \text{ and } Z \in G \right]\\
&\hspace{0.5in}\leq e^{-\frac{(t-\eps/2)^2}{2L^2}}\leq e^{-\frac{t^2}{8L^2}}.
\end{align*}
The result follows upon noting that $L=O(n^{-1/2})$.
\end{proof}

\begin{proof}[Proof of Proposition \ref{propMmomentbound}]
Let $c>0$ and $G \subset \R^{p \times n}$ be as in Lemma
\ref{lemmaGUEwishartconcentration}. Then, for any $\eps>0$,
\begin{align*}
\E[\|M\|^l\I\{Z \in G\}] &\leq (\|\mu_{a,\nu,\gamma}\|+\eps)^l
+\E\left[\|M\|^l\I\{\|M\| \geq \|\mu_{a,\nu,\gamma}\|+\eps\}
\I\{Z \in G\}\right]\\
&=(\|\mu_{a,\nu,\gamma}\|+\eps)^l+\int_{(\|\mu_{a,\nu,\gamma}\|+\eps)^l}^\infty
\P\left[\|M\|^l \geq t \text{ and } Z \in G \right]dt\\
&=(\|\mu_{a,\nu,\gamma}\|+\eps)^l+\int_{\|\mu_{a,\nu,\gamma}\|+\eps}^\infty
\P[\|M\| \geq s \text{ and } Z \in G] \cdot ls^{l-1}ds\\
&\leq (\|\mu_{a,\nu,\gamma}\|+\eps)^l+l\int_\eps^\infty e^{-cns^2}
(\|\mu_{a,\nu,\gamma}\|+s)^{l-1}ds
\end{align*}
for all sufficiently large $n$, where we have applied Lemma
\ref{lemmaGUEwishartconcentration}. Note that
\begin{align*}
l\int_\eps^\infty e^{-cns^2}(\|\mu_{a,\nu,\gamma}\|+s)^{l-1}ds
&\leq l\int_\eps^\infty e^{-cns^2+l(\|\mu_{a,\nu,\gamma}\|+s)}ds\\
&=le^{l\|\mu_{a,\nu,\gamma}\|+\frac{l^2}{4cn}}\int_\eps^\infty
e^{-cn\left(s-\frac{l}{2cn}\right)^2}ds\\
&=\frac{le^{l\|\mu_{a,\nu,\gamma}\|+\frac{l^2}{4cn}}}{\sqrt{2cn}}
\int_{\sqrt{2cn}\left(\eps-\frac{l}{2cn}\right)}^\infty e^{-\frac{t^2}{2}}dt\\
&\sim \frac{le^{l\|\mu_{a,\nu,\gamma}\|+\frac{l^2}{4cn}}}
{2cn\left(\eps-\frac{l}{2cn}\right)}
e^{-cn\left(\eps-\frac{l}{2cn}\right)^2} \to 0
\end{align*}
for $l=O(\log n)$, so $\E[\|M\|^l\I\{Z \in G\}] \leq
(\|\mu_{a,\nu,\gamma}\|+\eps)^l+o(1)$. On the other hand, $\P[Z \notin G]
\leq 2e^{-\frac{n}{2}}$ by Lemma \ref{lemmaGUEwishartconcentration}, so Lemma
\ref{lemmaGUEwisharttailbound} implies
$\E[\|M\|^l\I\{Z \notin G\}]=o(1)$ for $l=O(\log n)$. Hence
$\E[\|M\|^l] \leq (\|\mu_{a,\nu,\gamma}\|+\eps)^l+o(1)$, and taking $\eps \to 0$
concludes the proof.
\end{proof}

\bibliography{references}{}

\begin{thebibliography}{10}

\bibitem{aminiwainwright}
Arash~A Amini and Martin~J Wainwright.
\newblock High-dimensional analysis of semidefinite relaxations for sparse
  principal components.
\newblock {\em The Annals of Statistics}, 37(5B):2877--2921, 2009.

\bibitem{baiyinsmallest}
Z~D Bai and Y~Q Yin.
\newblock Limit of the smallest eigenvalue of a large dimensional sample
  covariance matrix.
\newblock {\em The Annals of Probability}, 21(3):1275--1294, 1993.

\bibitem{baiketal}
Jinho Baik, G{\'e}rard~Ben Arous, and Sandrine P{\'e}ch{\'e}.
\newblock Phase transition of the largest eigenvalue for nonnull complex sample
  covariance matrices.
\newblock {\em The Annals of Probability}, 33(5):1643--1697, 2005.

\bibitem{baiksilverstein}
Jinho Baik and Jack~W Silverstein.
\newblock Eigenvalues of large sample covariance matrices of spiked population
  models.
\newblock {\em Journal of Multivariate Analysis}, 97(6):1382--1408, 2006.

\bibitem{berthetrigollet1}
Quentin Berthet and Philippe Rigollet.
\newblock Complexity theoretic lower bounds for sparse principal component
  detection.
\newblock In {\em Conference on Learning Theory}, pages 1046--1066, 2013.

\bibitem{berthetrigollet2}
Quentin Berthet and Philippe Rigollet.
\newblock Optimal detection of sparse principal components in high dimension.
\newblock {\em The Annals of Statistics}, 41(4):1780--1815, 2013.

\bibitem{biane}
Philippe Biane.
\newblock On the free convolution with a semi-circular distribution.
\newblock {\em Indiana University Mathematics Journal}, 46(3):705--718, 1997.

\bibitem{bickellevina}
Peter~J Bickel and Elizaveta Levina.
\newblock Covariance regularization by thresholding.
\newblock {\em The Annals of Statistics}, 36(6):2577--2604, 2008.

\bibitem{birnbaumetal}
Aharon Birnbaum, Iain~M Johnstone, Boaz Nadler, and Debashis Paul.
\newblock Minimax bounds for sparse {PCA} with noisy high-dimensional data.
\newblock {\em The Annals of Statistics}, 41(3):1055--1084, 2013.

\bibitem{boseretal}
Bernhard~E Boser, Isabelle~M Guyon, and Vladimir~N Vapnik.
\newblock A training algorithm for optimal margin classifiers.
\newblock In {\em Proceedings of the fifth annual workshop on computational
  learning theory}, pages 144--152. ACM, 1992.

\bibitem{cailiu}
T~Tony Cai and Weidong Liu.
\newblock Adaptive thresholding for sparse covariance matrix estimation.
\newblock {\em Journal of the American Statistical Association},
  106(494):672--684, 2011.

\bibitem{caietal1}
T~Tony Cai, Zongming Ma, and Yihong Wu.
\newblock Sparse {PCA}: Optimal rates and adaptive estimation.
\newblock {\em The Annals of Statistics}, 41(6):3074--3110, 2013.

\bibitem{caietal2}
T~Tony Cai, Zongming Ma, and Yihong Wu.
\newblock Optimal estimation and rank detection for sparse spiked covariance
  matrices.
\newblock {\em Probability Theory and Related Fields}, 161(3-4):781--815, 2015.

\bibitem{caizhou2}
T~Tony Cai and Harrison~H Zhou.
\newblock Minimax estimation of large covariance matrices under $\ell_1$-norm.
\newblock {\em Statistica Sinica}, 22(4):1319--1349, 2012.

\bibitem{caizhou}
T~Tony Cai and Harrison~H Zhou.
\newblock Optimal rates of convergence for sparse covariance matrix estimation.
\newblock {\em The Annals of Statistics}, 40(5):2389--2420, 2012.

\bibitem{capitaineetal}
Mireille Capitaine, Catherine Donati-Martin, Delphine F{\'e}ral, and Maxime
  F{\'e}vrier.
\newblock Free convolution with a semicircular distribution and eigenvalues of
  spiked deformations of {W}igner matrices.
\newblock {\em Electronic Journal of Probability}, 16(64):1750--1792, 2011.

\bibitem{capitainepeche}
Mireille Capitaine and Sandrine P{\'e}ch{\'e}.
\newblock Fluctuations at the edges of the spectrum of the full rank deformed
  {GUE}.
\newblock {\em Probability Theory and Related Fields}, 165(1):117--161, 2016.

\bibitem{carleson}
Lennart Carleson.
\newblock On {B}ernstein's approximation problem.
\newblock {\em Proceedings of the American Mathematical Society},
  2(6):953--961, 1951.

\bibitem{chafaitikhomirov}
Djalil Chafa{\"\i} and Konstantin Tikhomirov.
\newblock On the convergence of the extremal eigenvalues of empirical
  covariance matrices with dependence.
\newblock {\em arXiv preprint arXiv:1509.02231}, 2015.

\bibitem{chengsinger}
Xiuyuan Cheng and Amit Singer.
\newblock The spectrum of random inner-product kernel matrices.
\newblock {\em Random Matrices: Theory and Applications}, 2(4), 2013.

\bibitem{daspremontetal}
Alexandre d'Aspremont, Laurent El~Ghaoui, Michael~I Jordan, and Gert~RG
  Lanckriet.
\newblock A direct formulation for sparse {PCA} using semidefinite programming.
\newblock {\em SIAM review}, 49(3):434--448, 2007.

\bibitem{deshpandemontanari}
Yash Deshpande and Andrea Montanari.
\newblock Sparse {PCA} via covariance thresholding.
\newblock {\em Journal of Machine Learning Research}, 17(141):1--41, 2016.

\bibitem{dovu}
Yen Do and Van Vu.
\newblock The spectrum of random kernel matrices: {U}niversality results for
  rough and varying kernels.
\newblock {\em Random Matrices: Theory and Applications}, 2(3), 2013.

\bibitem{elkarouicovariance}
Noureddine El~Karoui.
\newblock Operator norm consistent estimation of large-dimensional sparse
  covariance matrices.
\newblock {\em The Annals of Statistics}, 36(6):2717--2756, 2008.

\bibitem{elkarouikernel}
Noureddine El~Karoui.
\newblock The spectrum of kernel random matrices.
\newblock {\em The Annals of Statistics}, 38(1):1--50, 2010.

\bibitem{erdosetal}
L{\'a}szl{\'o} Erd{\"{o}}s, Benjamin Schlein, Horng-Tzer Yau, and Jun Yin.
\newblock The local relaxation flow approach to universality of the local
  statistics for random matrices.
\newblock {\em Annales de l'Institut Henri Poincar{\'e}, Probabilit{\'e}s et
  Statistiques}, 48(1):1--46, 2012.

\bibitem{fuknagaev}
D~Kh Fuk and S~V Nagaev.
\newblock Probability inequalities for sums of independent random variables.
\newblock {\em Theory of Probability \& Its Applications}, 16(4):643--660,
  1971.

\bibitem{furedikomlos}
Zolt{\'a}n F{\"u}redi and J{\'a}nos Koml{\'o}s.
\newblock The eigenvalues of random symmetric matrices.
\newblock {\em Combinatorica}, 1(3):233--241, 1981.

\bibitem{geman}
Stuart Geman.
\newblock A limit theorem for the norm of random matrices.
\newblock {\em The Annals of Probability}, 8(2):252--261, 1980.

\bibitem{gotzetikhomirov}
Friedrich G{\"o}tze and Alexander Tikhomirov.
\newblock Rate of convergence in probability to the {M}archenko-{P}astur law.
\newblock {\em Bernoulli}, 10(3):503--548, 2004.

\bibitem{johnstonelu1}
Iain~M Johnstone and Arthur~Yu Lu.
\newblock Sparse principal components analysis.
\newblock {\em Unpublished manuscript}, 2004.

\bibitem{johnstonelu2}
Iain~M Johnstone and Arthur~Yu Lu.
\newblock On consistency and sparsity for principal components analysis in high
  dimensions.
\newblock {\em Journal of the American Statistical Association},
  104(486):682--693, 2009.

\bibitem{jolliffeetal}
Ian~T Jolliffe, Nickolay~T Trendafilov, and Mudassir Uddin.
\newblock A modified principal component technique based on the lasso.
\newblock {\em Journal of computational and Graphical Statistics},
  12(3):531--547, 2003.

\bibitem{kasiviswanathanrudelson}
Shiva~Prasad Kasiviswanathan and Mark Rudelson.
\newblock Spectral norm of random kernel matrices with applications to privacy.
\newblock {\em arXiv preprint arXiv:1504.05880}, 2015.

\bibitem{koltchinskiigine}
Vladimir Koltchinskii and Evarist Gin{\'e}.
\newblock Random matrix approximation of spectra of integral operators.
\newblock {\em Bernoulli}, 6(1):113--167, 2000.

\bibitem{krauthgameretal}
Robert Krauthgamer, Boaz Nadler, Dan Vilenchik, et~al.
\newblock Do semidefinite relaxations solve sparse {PCA} up to the information
  limit?
\newblock {\em The Annals of Statistics}, 43(3):1300--1322, 2015.

\bibitem{latala}
Rafal Latala.
\newblock Some estimates of norms of random matrices.
\newblock {\em Proceedings of the American Mathematical Society},
  133(5):1273--1282, 2005.

\bibitem{laurentmassart}
Beatrice Laurent and Pascal Massart.
\newblock Adaptive estimation of a quadratic functional by model selection.
\newblock {\em The Annals of Statistics}, 28(5):1302--1338, 2000.

\bibitem{leeschnelli}
Ji~Oon Lee and Kevin Schnelli.
\newblock Edge universality for deformed {W}igner matrices.
\newblock {\em Reviews in Mathematical Physics}, 27(8):1550018, 2015.

\bibitem{ma}
Zongming Ma.
\newblock Sparse principal component analysis and iterative thresholding.
\newblock {\em The Annals of Statistics}, 41(2):772--801, 2013.

\bibitem{male}
Camille Male.
\newblock The norm of polynomials in large random and deterministic matrices.
\newblock {\em Probability Theory and Related Fields}, 154(3-4):477--532, 2012.

\bibitem{nadler}
Boaz Nadler.
\newblock Finite sample approximation results for principal component analysis:
  A matrix perturbation approach.
\newblock {\em The Annals of Statistics}, 36(6):2791--2817, 2008.

\bibitem{onatskietal}
Alexei Onatski, Marcelo~J Moreira, and Marc Hallin.
\newblock Asymptotic power of sphericity tests for high-dimensional data.
\newblock {\em The Annals of Statistics}, 41(3):1204--1231, 2013.

\bibitem{paul}
Debashis Paul.
\newblock Asymptotics of sample eigenstructure for a large dimensional spiked
  covariance model.
\newblock {\em Statistica Sinica}, 17(4):1617--1642, 2007.

\bibitem{pillaiyin}
Natesh~S Pillai and Jun Yin.
\newblock Universality of covariance matrices.
\newblock {\em The Annals of Applied Probability}, 24(3):935--1001, 2014.

\bibitem{rothmanetal}
Adam~J Rothman, Elizaveta Levina, and Ji~Zhu.
\newblock Generalized thresholding of large covariance matrices.
\newblock {\em Journal of the American Statistical Association},
  104(485):177--186, 2009.

\bibitem{scholkopfetal}
Bernhard Sch{\"o}lkopf, Alexander Smola, and Klaus-Robert M{\"u}ller.
\newblock Kernel principal component analysis.
\newblock In {\em International Conference on Artificial Neural Networks},
  pages 583--588. Springer, 1997.

\bibitem{shcherbina}
Tatyana Shcherbina.
\newblock On universality of local edge regime for the deformed {G}aussian
  unitary ensemble.
\newblock {\em Journal of Statistical Physics}, 143(3):455--481, 2011.

\bibitem{shenhuang}
Haipeng Shen and Jianhua~Z Huang.
\newblock Sparse principal component analysis via regularized low rank matrix
  approximation.
\newblock {\em Journal of Multivariate Analysis}, 99(6):1015--1034, 2008.

\bibitem{silversteinbai}
Jack~W Silverstein and Zhidong Bai.
\newblock On the empirical distribution of eigenvalues of a class of large
  dimensional random matrices.
\newblock {\em Journal of Multivariate Analysis}, 54(2):175--192, 1995.

\bibitem{speicher}
Roland Speicher.
\newblock Multiplicative functions on the lattice of non-crossing partitions
  and free convolution.
\newblock {\em Mathematische Annalen}, 298(1):611--628, 1994.

\bibitem{srivastavavershynin}
Nikhil Srivastava and Roman Vershynin.
\newblock Covariance estimation for distributions with $2+\varepsilon$ moments.
\newblock {\em The Annals of Probability}, 41(5):3081--3111, 2013.

\bibitem{szego}
Gabor Szeg{\"o}.
\newblock {\em Orthogonal Polynomials}.
\newblock American Mathematical Society, 1939.

\bibitem{taobook}
Terence Tao.
\newblock {\em Topics in Random Matrix Theory}.
\newblock American Mathematical Society, 2012.

\bibitem{taovu}
Terence Tao and Van Vu.
\newblock Random covariance matrices: Universality of local statistics of
  eigenvalues.
\newblock {\em The Annals of Probability}, 40(3):1285--1315, 2012.

\bibitem{vershynin}
Roman Vershynin.
\newblock Introduction to the non-asymptotic analysis of random matrices.
\newblock In {\em Compressed Sensing}, pages 210--268. Cambridge University
  Press, 2012.

\bibitem{voiculescuconvolution}
Dan Voiculescu.
\newblock Addition of certain non-commuting random variables.
\newblock {\em Journal of Functional Analysis}, 66(3):323--346, 1986.

\bibitem{voiculescumatrix}
Dan Voiculescu.
\newblock Limit laws for random matrices and free products.
\newblock {\em Inventiones mathematicae}, 104(1):201--220, 1991.

\bibitem{vulei}
Vincent~Q Vu and Jing Lei.
\newblock Minimax rates of estimation for sparse {PCA} in high dimensions.
\newblock In {\em AISTATS}, volume~15, pages 1278--1286, 2012.

\bibitem{wittenetal}
Daniela~M Witten, Robert Tibshirani, and Trevor Hastie.
\newblock A penalized matrix decomposition, with applications to sparse
  principal components and canonical correlation analysis.
\newblock {\em Biostatistics}, 10(3):515--534, 2009.

\bibitem{yinetallargest}
Y~Q Yin, Z~D Bai, and P~R Krishnaiah.
\newblock On the limit of the largest eigenvalue of the large dimensional
  sample covariance matrix.
\newblock {\em Probability Theory and Related Fields}, 78(4):509--521, 1988.

\bibitem{zouetal}
Hui Zou, Trevor Hastie, and Robert Tibshirani.
\newblock Sparse principal component analysis.
\newblock {\em Journal of computational and graphical statistics},
  15(2):265--286, 2006.

\end{thebibliography}
\bibliographystyle{plain}
\end{document}